\numberwithin{equation}{section}
\theoremstyle{plain}
\newtheorem{thm}{\protect\theoremname}[section]
\newtheorem{rem}[thm]{\protect\remarkname}
\theoremstyle{plain}
\newtheorem{lem}[thm]{\protect\lemmaname}
\theoremstyle{definition}
\newtheorem{defn}[thm]{\protect\definitionname}
\theoremstyle{plain}
\newtheorem{cor}[thm]{\protect\corollaryname}
\theoremstyle{definition}
\newtheorem{example}[thm]{\protect\examplename}
\theoremstyle{plain}
\newtheorem{proposition}[thm]{\protect\propositionname}
\newtheorem*{convention}{Convention}
\providecommand{\corollaryname}{Corollary}
\providecommand{\definitionname}{Definition}
\providecommand{\examplename}{Example}
\providecommand{\lemmaname}{Lemma}
\providecommand{\remarkname}{Remark}
\providecommand{\theoremname}{Theorem}
\providecommand{\propositionname}{Proposition}
\newtheorem*{assumption}{Assumption}
\newtheorem*{theoremnoname}{Theorem}
\date{}
\begin{document}
\title{Cartan's Path Development, the Logarithmic Signature and a Conjecture of Lyons-Sidorova}
\author{Horatio Boedihardjo\thanks{Department of Statistics, University of Warwick, Coventry CV4 7AL,
United Kingdom. Email:horatio.boedihardjo@warwick.ac.uk.}$,\ \  $Xi Geng\thanks{School of Mathematics and Statistics, University of Melbourne,  Parkville VIC 3010, Australia.
Email: xi.geng@unimelb.edu.au.}$\ \ $and  Sheng Wang\thanks{School of Mathematics and Statistics, University of Melbourne, Parkville VIC 3010, Australia.
Email: shewang4@student.unimelb.edu.au.}}
\maketitle
\begin{abstract}
The signature transform, which is defined in terms of iterated path integrals of all orders, provides a faithful representation of the group of tree-reduced geometric rough paths. While the signature coefficients are known to decay factorially fast, the coefficients of the logarithimic signature generically only possess geometric decay. It was conjectured by T. Lyons and N. Sidorova \cite{LS06} that the only tree-reduced paths with bounded variation (BV) whose logarithmic signature can have infinite radius of convergence are straight lines. This conjecture was confirmed in the same work for certain types of paths and the general BV case remains unsolved.

The aim of the present article is to develop a deeper understanding towards the Lyons-Sidorova conjecture. We prove that, if the logarithmic signature has infinite radius of convergence, the signature coefficients must satisfy an infinite system of rigid algebraic identities defined in terms of iterated integrals along complex exponential one-forms. These iterated integral identities impose strong geometric constraints on the underlying path, and in some special situations, confirm the conjecture. 

As a non-trivial application of our integral identities, we prove a strengthened version of the conjecture, which asserts that if the logarithmic signature of a BV path has infinite radius of convergence over all sub-intervals of time, the underlying path must be a straight line. 

Our methodology relies on Cartan's path development onto the complex semisimple Lie algebras $\mathfrak{sl}_m(\mathbb{C})$. The special root patterns of $\mathfrak{sl}_m(\mathbb{C})$ allow one to project the infinite-dimensional Baker-Campbell-Hausdorff (BCH) formula in a very special finite dimensional manner to yield meaningful quantitative relations between BCH-type singularities and the vanishing of certain iterated path integrals.
  
\tableofcontents{}
\end{abstract}

\section{Introduction }

The \textit{signature transform} (or simply the signature) of a multidimensional
continuous path $\gamma:[0,T]\rightarrow\mathbb{R}^{d}$ with bounded
variation (BV) is the formal tensor series 
\begin{align*}
S(\gamma) & \triangleq\big(1,\gamma_{T}-\gamma_{0},\int_{0<s<t<T}d\gamma_{s}\otimes d\gamma_{t},\cdots,\int_{0<t_{1}<\cdots<t_{n}<T}d\gamma_{t_{1}}\otimes\cdots\otimes d\gamma_{t_{n}},\cdots\big)
\end{align*}
defined by the global iterated path integrals of all orders. This
path transformation was originally introduced by the geometer K.T.
Chen \cite{Che73} in the 1950s to study the topology of loop spaces over
manifolds. The signature transform plays a fundamental role in Lyons'
rough path theory as well as its broad applications to problems in
stochastic analysis and more recently, in machine learning. Its theoretical
importance partly comes from the \textit{signature uniqueness theorem},
which asserts that every rough path is uniquely determined by its
signature up to tree-like equivalence (cf. \cite{Che58, HL10, BGL16}). A probabilistic
counterpart of this theorem was obtained by Chevyrev-Lyons \cite{CL16}.
The monographs \cite{FV10, LCL07} contain excellent exposition on the mathematical
theory of the signature transform and some applications to stochastic
analysis are discussed e.g. in \cite{Bau04, KS87, LV04}. Some recent applications of signature-based
methods in machine learning are contained e.g. in \cite{CGS23, NSW23, SCF21} (and references therein).

While the signature encodes essential geometric information about
the underlying path, it contains a lot of algebraic redundancies since
different components of the signature are related by some universal
algebraic constraints (the shuffle product formula). An effective way of removing
such algebraic dependencies is to pass to the so-called \textit{logarithmic
signature transform}. The two objects are naturally isomorphic, however,
it was a deep theorem of Chen \cite{Che57} that the logarithmic signature
is a free Lie series and thereby does not contain algebraic relations
among its components. This leads to dimension-reduction from a practical
viewpoint. An important motivation for studying the logarithmic signature
is related to the study of controlled differential equations, which
will be elaborated further in Section \ref{subsec:Motivation} below.
In recent years, methods based on the logarithmic signature have also
been developed to study various problems in machine learning (cf. \cite{FLM23, WMQ24} and references therein).

Analytic properties of the signature and logarithmic signature are
very different on the other hand. It is well-known (which is rather
trivial in the BV case) that the signature components decay factorially
fast with respect to the degree. However, it is highly non-trivial
that components of the logarithmic signature generically only possess
geometric decay. This is like the infinite dimensional analogue of
the elementary fact that the exponential function $e^{z}$ is entire
while the logarithmic function $\log(1+z)$ only has a finite radius
of convergence (R.O.C.). Understanding such a property, even just
in the BV case, is still an unsolved open problem in rough path theory. 

\subsection{\label{subsec:Motivation}Motivation: a conjecture
of T. Lyons and N. Sidorova}

The logarithmic signature arises naturally when one attempts to solve
a controlled differential equation (CDE) using the so-called \textit{log-ODE
method}. Consider the following CDE
\begin{equation}
dX_{t}=\sum_{i=1}^{d}V_{i}(X_{t})d\gamma_{t}^{i},\ \ \ 0\leqslant t\leqslant T\label{eq:CDEIntro}
\end{equation}
on a differentiable manifold $M$, where $V_{1},\cdots,V_{d}$ are
smooth vector fields on $M$ and $\gamma_{t}$ is a smooth $\mathbb{R}^{d}$-valued
path. At a formal level, the flow of diffeomorphisms on $M$ induced
by (\ref{eq:CDEIntro}) admits a ``logarithm'' in the Lie algebra
${\cal L}(V_{1},\cdots,V_{d})$ of smooth vector fields generated
by the $V_{i}$'s. Such a logarithm can be explicitly expressed in
terms of the logarithmic signature of $\gamma$. 

To elaborate this, let $W$ be a smooth vector field on $M$. We use
$\exp(W)$ to denote the diffeomorphism given by the time-one map
of the flow induced by $W$. In other words, 
\[
\exp(W):M\rightarrow M,\ \exp(W)x=y_{1}
\]
where $(y_{t})_{0\leqslant t\leqslant1}$ solves the ODE 
\[
\dot{y}_{t}=W(y_{t}),\ y_{0}=x.
\]
The solution at time $T$ to the CDE (\ref{eq:CDEIntro}) can formally
be expressed as 
\begin{equation}
X_{T}=\exp\big(\sum_{I}\Lambda_{I}(\gamma)V_{[I]}\big)(x).\label{eq:CSForm}
\end{equation}
Here the summation is taken over all words $I=(i_{1},\cdots,i_{n})$
where $n\geqslant1$ and $i_{j}\in\{1,\cdots,d\}.$ The numbers $\Lambda_{I}(\gamma)$
are the logarithmic signature coefficients of $\gamma$; one
has
\[
\log S(\gamma)=\sum_{I=(i_{1},\cdots,i_{n})}\Lambda_{I}(\gamma)[e_{i_{1}},[e_{i_{2}},\cdots,[e_{i_{n-1}},e_{i_{n}}]\cdots]].
\]
The vector fields $V_{[I]}$ are defined by $V_{[I]}\triangleq[V_{i_{1}},[V_{i_{2}},\cdots,[V_{i_{n-1}},V_{i_{n}}]]]$.
The relation (\ref{eq:CSForm}) is often known as the \textit{Chen-Strichartz
formula} (cf. \cite{Bau04, Str87}). 

However, the formula (\ref{eq:CSForm}) is only formal and it requires
additional analytic assumptions on $\log S(\gamma)$ and the vector
fields $V_{i}$ to make it precise. It is accurate if $\gamma$ is
a straight line or if the Lie algebra ${\cal L}(V_{1},\cdots,V_{d})$
is nilpotent (in both cases, the series (\ref{eq:CSForm}) becomes
a finite sum). In general, one expects that the series (\ref{eq:CSForm})
should converge if the ``size'' of the vector fields is within
the R.O.C. of $\log S(\gamma)$. This makes the R.O.C. for the logarithmic
signature a natural object of study. However, it is still unclear
how one can make such intuition mathematically precise. In practice,
one usually applies a truncated version of (\ref{eq:CSForm}) over
a partition of $[0,T]$ to obtain an approximating solution (numerical
scheme) and tries to prove convergence when the mesh size is sent
to zero (cf. \cite{Bau04, FL15}). 

\subsubsection*{Cartan's path development}

There is a simple yet useful situation where the analysis can be made
very precise. Let $G$ be a matrix Lie group with Lie algebra $\mathfrak{g}$.
Let $V=(V_{1},\cdots,V_{d})$ be $d$ left invariant vector fields
on $G$. The Lie algebra ${\cal L}(V_{1},\cdots,V_{d})$ is now
a sub-algebra of $\mathfrak{g}$. One can prove that the series (\ref{eq:CSForm})
is convergent if $\|V\|<{\rm R.O.C}.$ of $\log S(\gamma)$, where
certain matrix norms has to be taken and fixed. It is known from \cite{CL16} that the R.O.C. is strictly positive and in
particular, the series (\ref{eq:CSForm}) is always convergent (in
the current Lie group setting) as long as the vector fields $V_{i}$
are ``small enough''. The resulting solution $X_{t}$ to the CDE
(\ref{eq:CDEIntro}) defines a path in the Lie group $G$ which is
known as\textit{ Cartan's path development }of $\gamma$ in $G$. If $\gamma$
is a straight line, say $\gamma_{t}=tw$ ($w\in\mathbb{R}^{d},t\in[0,T]$),
the formula (\ref{eq:CSForm}) is well-defined for \textit{all} left
invariant vector fields. The solution $X_{t}$ is just the one-parameter
subgroup defined by 
\[
X_{t}=\exp(t\sum_{i=1}^{d}w^{i}V_{i}),
\]
where $\exp$ is now the exponential map for $G.$ This aligns with
the fact that $\log S(\gamma)$ has infinite R.O.C.

Initiated by the celebrated work of Lyons-Sidorova \cite{LS06}, it has
been widely believed that\textit{ straight lines are the only (tree-reduced)
BV paths whose logarithmic signatures can have infinite R.O.C.} The
logic in Lyons-Sidorova's argument can be summarised as follows. Suppose
that $\log S(\gamma)$ has infinite R.O.C. Then the end point $X_{T}$
of Cartan's development of $\gamma$ into any Lie group always admits
a logarithm, namely, $X_{T}$ is always an element in the image of
the exponential map. In other words, if one is able to construct a
special Cartan's development such that $X_{T}$ turns out to be outside
${\rm Im}\exp$, this immediately suggests that $\log S(\gamma)$
only has finite R.O.C. Of course, this method requires the use of
(non-compact) Lie groups with non-surjective exponential map (e.g.
the special linear group $\mathrm{SL}_{2}(\mathbb{R})$). Lyons and
Sidorova \cite{LS06} proved the above claim for two special classes
of paths based on such a geometric viewpoint: paths which are strictly
increasing in one particular direction (what they called\textit{ $1$-monotone
paths}) and a generic class of piecewise linear paths (which they
called \textit{non-double paths}). They conjecturered that the claim
should be true in the most general BV setting. To the best of our
knowledge, this problem remains unsolved. 

\subsection{Summary of main results and novelty}

The main goal of the present article is to study the aforementioned
Lyons-Sidorova (LS) conjecture in depth. Although we have not resolved
the conjecture in its full generality, in what follows we summarise
our main findings and disucss the novelty of our work.

\vspace{2mm}\noindent \textit{Main results.} The general philosophy of our
results is that having infinite R.O.C. for the logarithmic signature
leads to rigid algebraic constraints on signature coefficients and
thus geometric constraints on the underlying path. 

\vspace{2mm}\noindent (A) The very first (and important) comment
is that the conjecture turns out to be a \textit{conjugacy class property.}
As a result, it needs to be modified accordingly (for otherwise, it
cannot be true in its original form). See Section \ref{subsec:LSConj}
(in particular, Proposition \ref{prop:RevConj}) below for the details.

\vspace{2mm}\noindent (B) Our first main result is that \textit{if
the logarithmic signature of a non-closed path has infinite R.O.C.,
the path has to satisfy a specific type of line integral identities}.
Such integral identities naturally lead to geometric constraints on
the underlying path, and in certain special situations, yield the
conclusion of the conjecture. See Section \ref{sec:LineInt} below for the precise
formulation.

\vspace{2mm}\noindent (C) Our second main result is a strengthened
version of the conjecture which holds in the most general BV setting.
More precisely, we prove that \textit{if the logarithmic signature
of an arbitrary BV path has infinite R.O.C. over any time interval
$[s,t]$, the path must live on a straight line}. This result is a non-trivial
application of Theorem \ref{thm:LineInt} together with fine analysis of path-geometry based on winding number considerations. See Section \ref{sec:StrCj} below for more details. 

\vspace{2mm}\noindent (D) Our last main result is the higher order
extension of (B). We show that if the logarithmic signature of
a non-closed path has infinite R.O.C., the path has to satisfy \textit{an
infinite system of specific iterated integral identities}. These iterated
integral identities impose much stronger geometric constraints on
the path than the line integral identities given by Theorem
\ref{thm:LineInt} do. In particular, they allow one to handle paths that cannot
be detected by Theorem \ref{thm:LineInt} (cf. Example \ref{exam:Eight} below).

\vspace{2mm}\noindent\textit{Novelty}. Apart from the nature of the aforementioned
main results, an important aspect of novelty in the current work is
the methodology we develop. The original technique of Lyons-Sidorova
\cite{LS06} has a geometric nature; they studied the geometric behaviour of the lifted
path in the Lie group and showed that the lifted path will eventually
leave the image of the exponential map under some very special choice
of Cartan's development. 

We also work with Cartan's development as a starting point, however,
our method is very different in nature to the one developed in \cite{LS06}.
Essentially, the finiteness of R.O.C. property is closely related
to how the Baker-Campbell-Hausdorff (BCH) formula exhibits singularity
properties in a certain sense. This problem is already quite delicate
in the simplest case of a piecewise linear path with two edges, say
$v,w$ (which corresponds to the classical BCH formula for $\log e^{v}e^{w}$).
The Lyons-Sidorova conjecture can be viewed as an infinite dimensional
version of singularity analysis for the BCH formula. 

A main novelty of the current work is that such singularity properties
can be studied through \textit{Cartan's developments into complex
semisimple Lie algebras}. It turns out that the special root patterns of semisimple Lie algebras allows one to project
the infinite dimensional BCH formula in a very special (finite dimensional)
manner, so that one can perform singularity analysis at various explicit
and quantitative levels. 

Our approach thus has a strong algebraic nature in contrast to Lyons-Sidorova's
geometric perspective. Stated in vague terms, after performing suitable
Cartan's developemnt at the Lie algebra level, the logarithmic signature
$\log S(\gamma)$ is transformed into linear combinations of products
between certain path functionals, say $S_{m}(a_{1},\cdots,a_{m})$
(defined through iterated path integrals) and meromorphic functions,
say $\phi(a_{1},\cdots,a_{m})$ (arising from the Hausdorff series).
If $\log S(\gamma)$ has infinite R.O.C., its Cartan's development
must define an entire function. As a consequence, at the singularities
of the function $\phi(a_{1},\cdots,a_{m})$ one must have $S_{m}(a_{1},\cdots,a_{m})=0$.
This is precisely how the integral identities in our main theorems
arise. 

\vspace{2mm} Cartan's development was first used by Hambly-Lyons \cite{HL10} for studying the signature uniqueness problem in the BV setting. Such a method was used in various works related to the study of (both deterministic and random) rough paths and their signatures (see e.g. \cite{BG23, CT24, CL16, LX15}). The use of Cartan's development at the logarithmic signature level first appeared in \cite{BGS20} for the study of signature asymptotics for pure rough paths.

\subsection{Further questions}

There are a few basic questions which naturally arise from the current work but remain to be answered.
\begin{enumerate}
\item The strengthened LS conjecture is proved in Theorem \ref{thm:StrCj} for BV paths only.
It is reasonable to expect that for weakly geometric rough paths,
the logarithmic signature has infinite R.O.C. over all time intervals
$[s,t]$ if and only if the path is a \textit{pure rough path}, i.e.
the exponential of a line segment in the space of Lie polynomials.
Is such an extension true?
\item It is not particularly clear what exact geometric constraints are
imposed on the path by Theorems \ref{thm:DoubInt}, \ref{thm:IterIntCond} and whether they eventually
lead to the conclusion that the path is conjugate to a line segment. We
expect that there is still a non-trivial gap which requires deeper
analysis and new ideas. 
\item Theorems \ref{thm:DoubInt}, \ref{thm:IterIntCond} only hold for non-closed paths, which is indeed a limitation
of the current analysis. Can one extend the current approach to the
case when the underlying path is a loop? The modified LS conjecture
in the context of loops takes a particularly elegant form: the logarithmic
signature of a tree-reduced, closed, BV path $\gamma$ has infinite R.O.C.
if and only if $\gamma$ is constant. 
\item Let $\xi\in G((V))$ be a group-like element. What are the exact analytic
conditions on $\xi$ so that $\xi$ is the signature of
a BV path over $V$? At the signature level, having exact factorial
decay seems to be the natural analytic constraint on $\xi$. However, at the
logarithmic signature level one encounters the finiteness of R.O.C.
property. Are these two analytic properties related to each other
and how do they yield a suitable characterisation of the image of
the signature transform?
\end{enumerate}

\section{The logarithmic signature transform}

In this section, we review the background materials. In particular,
we discuss several basic properties of the signature and logarithmic
signature transforms where most details can be found in \cite{FV10, LCL07}.
We also recall the precise formulation of the Lyons-Sidorova conjecture from \cite{LS06}, which provides the main source of motivation for the current
work. 

\subsection{Definitions and basic properties}

Let $V$ be a finite dimensional normed vector space over $\mathbb{K}=\mathbb{R}$
or $\mathbb{C}$. We define the infinite tensor algebra 
\[
T((V))\triangleq\prod_{n=0}^{\infty}V^{\otimes n}
\]
where $V^{\otimes0}\triangleq\mathbb{K}$ as a convention. Addition
in $T((V))$ is defined component-wisely and multiplication is defined
by 
\[
(\xi\otimes\eta)_{n}\triangleq\sum_{k=0}^{n}\xi_{k}\otimes\eta_{n-k},\ \ \ n\geqslant0
\]
where $\xi=(\xi_{0},\xi_{1},\cdots)$, $\eta=(\eta_{0},\eta_{1},\cdots)$
are given tensor series. This makes $T((V))$ into an associative
$\mathbb{K}$-algebra with unit ${\bf 1}\triangleq(1,0,0,\cdots).$
Elements in $T((V))$ are known as \textit{formal tensor series} over $V.$
We use 
\[
\pi_{n}:T((V))\rightarrow V^{\otimes n},\ \pi^{(n)}:T((V))\rightarrow\bigoplus_{k=0}^{n}V^{\otimes k}
\]
to denote the canonical projections. 

Any tensor series of the form $\xi=(1,\xi_{1},\xi_{2},\cdots)$ has
a multiplicative inverse given by 
\[
\xi^{-1}=\sum_{n=0}^{\infty}(-1)^{n}(\xi-{\bf 1})^{\otimes n}.
\]
The above series is well-defined since the summation is locally finite
(i.e. the projection onto the $m$-th component for each fixed $m$
only involves finitely many nonzero terms).

There are two basic operations over $T((V))$ that will be important
to us. To define them, let us introduce two subspaces: 
\[
T_{0}((V))\triangleq\{\xi=(\xi_{0},\xi_{1},\cdots)\in T((V)):\xi_{0}=0\},\ T_{1}((V))\triangleq\{\xi\in T((V)):\xi_{0}=1\}.
\]
The \textit{exponential} and \textit{logarithmic} transforms are defined
by 
\[
e^{(\cdot)}:T_{0}((V))\rightarrow T_{1}((V)),\ e^{\xi}\triangleq\sum_{n=0}^{\infty}\frac{1}{n!}\xi^{\otimes n};
\]
\[
\log(\cdot):T_{1}((V))\rightarrow T_{0}((V)),\ \log\xi\triangleq\sum_{n=1}^{\infty}\frac{(-1)^{n-1}}{n}(\xi-{\bf 1})^{\otimes n}.
\]
It can be shown that these two functions are inverse to each other.
It is also easy to see that $(e^{\xi})^{-1}=e^{-\xi}$.
\begin{defn}
Let $\gamma:[0,T]\rightarrow V$ be a continuous path with bounded
variation. The \textit{signature transform} of $\gamma$ (or simply
the signature) is the formal tensor series defined by 
\[
S(\gamma)\triangleq\big(1,\gamma_{T}-\gamma_{0},\int_{0<s<t<T}d\gamma_{s}\otimes d\gamma_{t},\cdots,\int_{0<t_{1}<\cdots<t_{n}<T}d\gamma_{t_{1}}\otimes\cdots\otimes d\gamma_{t_{n}},\cdots\big),
\]
where the path integrals are defined in the Lebesgue-Stieltjes sense.
The tensor series $\log S(\gamma)$ is known as the \textit{logarithmic
signature} of $\gamma$. 
\end{defn}
One can also consider the signature $S(\gamma)_{s,t}$ over $[s,t]\subseteq[0,T]$
by replacing the above iterated integrals with the ones over $[s,t]$.
It is easily check that
\begin{equation}
S(\gamma)_{s,u}=S(\gamma)_{s,t}\otimes S(\gamma)_{t,u}\ \ \ \forall s\leqslant t\leqslant u\in[0,T].\label{eq:ChenIden}
\end{equation}
This is known as \textit{Chen's identity}. Stated in a more general
form, one has 
\[
S(\alpha\sqcup\beta)=S(\alpha)\otimes S(\beta),
\]
where $\alpha\sqcup\beta$ means the concatenation between the two
paths $\alpha,\beta.$ It can also be shown that $S(\overleftarrow{\gamma})=S(\gamma)^{-1}$
where $\overleftarrow{\gamma}$ denotes the reversal of $\gamma$. 

\subsubsection*{Algebraic properties}

There are universal algebraic dependencies among different signature
components, which are precisely described by the so-called \textit{shuffle
product formula}:
\begin{equation}
X^{m}\otimes X^{n}=\sum_{\sigma\in{\cal P}(m,n)}P_{\sigma}(X^{m+n})\ \ \ \forall m,n\in\mathbb{N}.\label{eq:Shuffle}
\end{equation}
Here $X^{m}$ is the $m$-th component of $S(\gamma)$. ${\cal P}(m,n)$ denotes the set of $(m,n)$-shuffles,
i.e. the set of permutations $\sigma$ of order $m+n$
satisfying 
\[
\sigma(1)<\cdots<\sigma(m),\ \sigma(m+1)<\cdots<\sigma(m+n).
\]
Given any permutation $\sigma$, the operator $P_{\sigma}$ is the linear transformation over $V^{\otimes(m+n)}$
induced by 
\[
P_{\sigma}(v_{1}\otimes\cdots\otimes v_{m+n})\triangleq v_{\sigma(1)}\otimes\cdots\otimes v_{\sigma(m+n)}.
\]The set of permutations of order $m$ is denoted as $\mathcal{S}_m$.

\begin{defn}
A tensor series $\xi=(1,\xi_{1},\xi_{2},\cdots)\in T_{1}((V))$ is
 \textit{group-like} if it satisfies the relation (\ref{eq:Shuffle})
(with $X^{n}$ replaced by $\xi_{n}$). The space of group-like elements
is denoted as $G((V))$. 
\end{defn}

An important reason for considering the logarithmic signature is that
there are no algebraic dependencies among its components, because
it takes values in the ``free'' Lie algebra. This is the content
of the celebrated \textit{Chen's theorem}. To state this theorem,
we first define 
\[
{\cal L}((V))\triangleq\prod_{n=1}^{\infty}{\cal L}_{n}(V)\subseteq T_0((V))
\]
where ${\cal L}_{1}(V)\triangleq V$ and 
\[
\ {\cal L}_{n+1}(V)\triangleq[{\cal L}_{n}(V),V]\triangleq{\rm Span}\big\{[\xi,v]:\xi\in{\cal L}_{n}(V),v\in V\big\}
\]
for all $n\geqslant1.$ Here $[\cdot,\cdot]$ is the commutator defined
by $[\xi,\eta]\triangleq\xi\otimes\eta-\eta\otimes\xi$. Elements
in ${\cal L}((V))$ are \textit{formal Lie series} and elements in $\oplus_{k=1}^{n}{\cal L}_{k}(V)$
are \textit{Lie polynomials} of degree $n$. Chen's theorem is stated as follows
(cf. \cite{Che57, Reu93}).
\begin{thm}
A tensor series $\xi\in T_{1}((V))$ is group-like if and only if
$\log\xi\in{\cal L}((V))$. 
\end{thm}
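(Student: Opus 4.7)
The plan is to recast both sides in Hopf-algebraic language and reduce the equivalence to the standard exp/log bijection between primitive and group-like elements in a graded connected Hopf algebra. First I would enhance the associative algebra $T((V))$ (with its concatenation product) to a Hopf algebra by introducing the \emph{deshuffle coproduct} $\Delta : T((V)) \to T((V)) \otimes T((V))$. This is defined by requiring that every $v \in V$ be primitive, $\Delta v = v \otimes {\bf 1} + {\bf 1} \otimes v$, and extending as an algebra homomorphism with respect to the concatenation product; on a homogeneous word one obtains the explicit formula
\[
\Delta(v_{1} \otimes \cdots \otimes v_{n}) = \sum_{S \subseteq \{1,\ldots,n\}} v_{S} \otimes v_{S^{c}},
\]
where $v_{S}$ denotes the sub-tensor indexed by $S$ in its natural order and $S^c$ is its complement in $\{1,\ldots,n\}$.

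Next I would establish two combinatorial dictionaries. On one hand, unpacking $\Delta$ component-by-component shows that the shuffle relations (\ref{eq:Shuffle}) for a tensor series $\xi \in T_{1}((V))$ are \emph{equivalent} to the Hopf-algebraic group-like condition $\Delta(\xi) = \xi \otimes \xi$; thus $G((V))$ coincides with the set of group-like elements in the Hopf sense. On the other hand, I would prove Friedrichs' criterion: an element $\ell \in T_{0}((V))$ lies in ${\cal L}((V))$ if and only if it is primitive, $\Delta(\ell) = \ell \otimes {\bf 1} + {\bf 1} \otimes \ell$. The easy direction is immediate, since $V$ consists of primitives and primitive elements are stable under the commutator bracket. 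The substantive direction I would prove by induction on degree using the Dynkin-Specht-Wever projector $\pi(v_{1} \otimes \cdots \otimes v_{n}) = \tfrac{1}{n}[v_{1},[v_{2}, \ldots, [v_{n-1}, v_{n}] \cdots]]$, combined with the identity $\pi|_{{\cal L}_{n}(V)} = \mathrm{id}$ on homogeneous Lie polynomials.

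With these two identifications in hand, the theorem follows from a short formal calculation. Given $\ell \in {\cal L}((V))$, the elements $\ell \otimes {\bf 1}$ and ${\bf 1} \otimes \ell$ commute inside $T((V)) \otimes T((V))$, so that
\[
\Delta(e^{\ell}) = e^{\Delta \ell} = e^{\ell \otimes {\bf 1} + {\bf 1} \otimes \ell} = e^{\ell \otimes {\bf 1}} \cdot e^{{\bf 1} \otimes \ell} = e^{\ell} \otimes e^{\ell},
\]
which places $e^{\ell}$ in $G((V))$. The converse is dual: if $\Delta(\xi) = \xi \otimes \xi$, the same commutativity allows one to take logs, yielding $\Delta(\log \xi) = (\log \xi) \otimes {\bf 1} + {\bf 1} \otimes (\log \xi)$, whence Friedrichs' criterion gives $\log \xi \in {\cal L}((V))$. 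The main obstacle in this plan is the non-trivial direction of Friedrichs' criterion; everything else amounts to routine combinatorial bookkeeping on the explicit coproduct formula and standard manipulations of well-defined formal power series in $T((V))$.
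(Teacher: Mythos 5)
The paper does not actually prove this theorem: it is quoted as Chen's theorem with references to \cite{Che57, Reu93}, so there is no in-paper proof to compare against. Your Hopf-algebraic route is the standard textbook argument (essentially the one in the cited Reutenauer reference): equip $T((V))$, with concatenation product, with the deshuffle coproduct $\Delta$; observe that the shuffle relations (\ref{eq:Shuffle}) are exactly the componentwise statement of $\Delta(\xi)=\xi\otimes\xi$; invoke Friedrichs' criterion identifying ${\cal L}((V))$ with the primitives; and transport between the two via $\exp/\log$, using that $\xi\otimes{\bf 1}$ and ${\bf 1}\otimes\xi$ commute. The outline is correct. Two caveats you should make explicit when writing it up. First, on formal series $\Delta$ must be understood as taking values in the degreewise-completed tensor product $T((V))\,\hat{\otimes}\,T((V))$, so that $\Delta(e^{\ell})=e^{\Delta\ell}$ and $\Delta(\log\xi)=\log(\Delta\xi)$ are legitimate (harmless, since every identity is checked in each bidegree, where the sums are finite). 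Second, the whole weight of the proof rests on the hard direction of Friedrichs' criterion, and ``induction on degree'' by itself will not produce it: the standard execution of your Dynkin--Specht--Wever plan is via the convolution identity $\mathrm{id}\star r = Y$, where $r(v_{1}\otimes\cdots\otimes v_{n})=[v_{1},[\cdots,[v_{n-1},v_{n}]\cdots]]$ and $Y$ is the grading (Euler) operator; evaluating this on a primitive homogeneous $x$ of degree $n$ gives $r(x)=nx$, hence $x=\tfrac{1}{n}r(x)\in{\cal L}_{n}(V)$, which combined with the DSW identity $\pi|_{{\cal L}_{n}(V)}=\mathrm{id}$ closes the argument. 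With that step spelled out, your proof is complete and matches the approach of the literature the paper cites.
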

One can also consider truncated versions of signatures and logarithmic
signatures. Let $T^{(N)}(V),$ $G^{(N)}(V)$ and ${\cal L}^{(N)}(V)$
be the truncations up to level $N$ (i.e. taking the first $N$
components) of the previous infinite dimensional spaces. The tensor
product and Lie bracket restrict to these truncated spaces in the
obvious way. 

\subsubsection*{Analytic properties}

We also need to deal with analytic properties of the signature. For
this purpose, we need to consider suitable tensor norms on the tensor
product spaces. From now on, we assume that $V^{\otimes n}$ is equipped
with a given norm $\|\cdot\|_{n}$. The norms $\{\|\cdot\|_{n}\}$
are \textit{admissible} in the sense that 
\[
\|\xi\otimes\eta\|_{m+n}\leqslant\|\xi\|_{m}\|\eta\|_{n}\ \ \ \forall m,n\in\mathbb{N}
\]
and 
\[
\|P_{\sigma}(\xi)\|_{n}=\|\xi\|_{n}\ \ \ \forall\xi\in V^{\otimes n},\sigma\in{\cal S}_{n}.
\]
An explicit example is the Hilbert-Schmidt tensor norm on $V^{\otimes n}$
induced by an Euclidean (or Hermitian if $\mathbb{K}=\mathbb{C}$)
metric on $V$. What plays a basic role in rough path theory is the so-called projective norm. 

\begin{defn}
The \textit{projective tensor norm} of $\xi\in V^{\otimes n}$ is defined by \[
\|\xi\|_{n;{\rm proj}}\triangleq\inf\big\{\sum_{j}|v_{1}^{j}|_{V}\cdots|v_{n}^{j}|_{V}\big\},
\]where the infimum is taken over all possible representations of $\xi$ as linear combinations of tensor monomials: \[
\xi=\sum_{j}v_{1}^{j}\otimes\cdots\otimes v_{n}^{j},\ \ \ v_{i}^j\in V.
\]
\end{defn}

Throughout the rest of this article, we will be exclusively using the projective tensor norm. For simplicity, we will also omit the subscripts if no confusion will be caused.

The signature transform can be defined for arbitrary rough paths.
Let $p\geqslant1$ be a given fixed number. A $p$-\textit{rough path}
is a continuous functional 
\[
{\bf X}_{\cdot,\cdot}=(1,X_{\cdot,\cdot}^{1},\cdots,X_{\cdot,\cdot}^{[p]}):\Delta_{T}\triangleq\{(s,t):0\leqslant s\leqslant t\leqslant T\}\rightarrow T^{([p])}(V)
\]
such that 
\[
{\bf X}_{s,u}={\bf X}_{s,t}\otimes{\bf X}_{t,u}\ \ \ \forall s\leqslant t\leqslant u\in[0,T]
\]
and ${\bf X}$ has finite $p$-variation in the sense that 
\begin{equation}\label{eq:pVar}
\omega_{{\bf X}}\triangleq\|\mathbf{X}\|^p_{p\text{-var}}\triangleq\sum_{i=1}^{[p]}\sup_{{\cal P}}\sum_{t_{l}\in{\cal P}}\|X_{t_{l-1},t_{l}}^{i}\|^{p/i}<\infty,
\end{equation}
where the supremum is taken over all finite partitions ${\cal P}$
of $[0,T]$. A $p$-rough path ${\bf X}$ is \textit{weakly geometric}
if it takes values in $G^{([p])}(V)$. Sometimes we just call $\gamma:[0,T]\rightarrow V$
a rough path over $V$ but its precise meaning is the multi-level
functional ${\bf X}$ ($\gamma$ is the first level component of ${\bf X}$).

Let ${\bf X}$ be a $p$-rough path. According to \textit{Lyons' extension
theorem} (cf. \cite{Lyo98}), there exists a unique extension of ${\bf X}_{\cdot,\cdot}$
to a functional 
\[
S({\bf X})_{\cdot,\cdot}=(1,X_{\cdot,\cdot}^{1},\cdots,X_{\cdot,\cdot}^{n},\cdots):\Delta_{T}\rightarrow T((V)),
\]
such that Chen's identity (\ref{eq:ChenIden}) holds for $S({\bf X})_{\cdot,\cdot}$
in $T((V))$ and its truncation up to any level $n\geqslant[p]$ has
finite $p$-variation in $T^{(n)}(V)$. The functional $S({\bf X})_{\cdot,\cdot}$
is called the \textit{signature path} of ${\bf X}$. The tensor series $S({\bf X})_{0,T}$
(respectively, $\log S({\bf X})_{0,T}$) is the \textit{signature} (respectively,
\textit{logarithmic signature}) of ${\bf X}$. If ${\bf X}$ is weakly geometric,
it can be shown that its signature path takes values in $G((V))$. 

An important analytic property of the signature is its rapid decay
with respect to the degree. More precisely, there exists a universal
number $\beta>0$, such that the following estimate
\begin{equation}
\|X_{0,T}^{n}\|\leqslant\frac{\omega_{{\bf X}}^{n/p}}{\beta\cdot(n/p)!}\ \ \ \forall n\geqslant1\label{eq:FacDec}
\end{equation}
holds for all $p$-rough paths. Here $(n/p)!\triangleq\Gamma(n/p+1)$
where $\Gamma(\cdot)$ is the Gamma function. When $p=1$, ${\bf X}$
is just a classical path in $V$ with bounded variation and (\ref{eq:FacDec})
follows trivially from the triangle inequality. The estimate (\ref{eq:FacDec})
for the general rough paths is contained as part of Lyons' extension theorem.

\subsection{\label{subsec:LSConj}The Lyons-Sidorova conjecture}

To state the main question of our study, we first introduce the following
definition. 
\begin{defn}
Let $\xi=(\xi_{0},\xi_{1},\cdots)\in T((V))$ be a given tensor series.
Its \textit{radius of convergence} (R.O.C.) is the radius of convergence
for the power series 
\[
P_{\xi}(z)\triangleq\sum_{n=0}^{\infty}\|\xi_{n}\|z^{n}
\]
in the standard real analysis sense. 
\end{defn}
The factorial decay (\ref{eq:FacDec}) of signature shows that the
signature of a rough path always has infinite R.O.C. However, a highly
non-trivial fact is that the logarithmic signature (generically) 
only decays geometrically fast and should thus have a \textit{finite}
R.O.C. Consider a line segment $\gamma_{t}=tv$ ($0\leqslant t\leqslant1$) in $V.$ It is immediate that $\log S(\gamma)=v$ and
thus $\log S(\gamma)$ has infinite R.O.C. It was conjectured
by T. Lyons and N. Sidorova \cite{LS06} that these are the only 
tree-reduced BV paths whose logarithmic signatures
can have infinite R.O.C. 

\vspace{2mm}\noindent \textbf{The Lyons-Sidorova (LS) Conjecture}.
The logarithmic signature of a continuous, tree-reduced BV path $\gamma$
has infinite R.O.C. if and only if $\gamma$ is a line segment. 

\vspace{2mm}\noindent In their original work \cite{LS06}, the conjecture
was confirmed for two special classes of paths: $1$-monotone paths
and non-double piecewise linear paths. The general BV case is still an unsolved open
problem in rough path theory.

Before developing our main results, the very first comment we shall
make is that \textit{the LS conjecture needs to be modified} to reflect
the following simple observation.
\begin{proposition}
\label{prop:RevConj}Let ${\bf X},{\bf Y}$ be given rough paths.
Then $\log S({\bf X})$ has infinite R.O.C. if and only if $\log S({\bf Y}\sqcup{\bf X}\sqcup\overleftarrow{{\bf Y}})$
has infinite R.O.C. 
\end{proposition}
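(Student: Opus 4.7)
The plan is to reduce the claim to a simple conjugation identity in the tensor algebra, combined with the factorial decay estimate (\ref{eq:FacDec}) for signatures.

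First, Chen's identity yields $S(\mathbf{Y}\sqcup\mathbf{X}\sqcup\overleftarrow{\mathbf{Y}})=S(\mathbf{Y})\otimes S(\mathbf{X})\otimes S(\overleftarrow{\mathbf{Y}})=g\otimes S(\mathbf{X})\otimes g^{-1}$, where $g\triangleq S(\mathbf{Y})\in T_{1}((V))$ and the identity $S(\overleftarrow{\mathbf{Y}})=g^{-1}$ was recorded after (\ref{eq:ChenIden}). Thus the question is really about how R.O.C. behaves under conjugation of a group-like element in $T((V))$.

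Next I would establish the algebraic identity
\[
\log(g\otimes\xi\otimes g^{-1})=g\otimes(\log\xi)\otimes g^{-1},\qquad\xi\in T_{1}((V)).
\]
This follows by writing $\xi=\mathbf{1}+\eta$, observing that $g\otimes\xi\otimes g^{-1}-\mathbf{1}=g\otimes\eta\otimes g^{-1}$, and noting that $(g\otimes\eta\otimes g^{-1})^{\otimes n}=g\otimes\eta^{\otimes n}\otimes g^{-1}$ for every $n\geqslant1$; substituting into the definition of $\log$ produces a locally finite telescoping. Applied to $\xi=S(\mathbf{X})$ this gives
\[
\log S(\mathbf{Y}\sqcup\mathbf{X}\sqcup\overleftarrow{\mathbf{Y}})=g\otimes\log S(\mathbf{X})\otimes g^{-1}.
\]

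To translate this identity into an R.O.C. statement, I would use the submultiplicativity of the majorising series $P_{(\cdot)}(z)$. Admissibility of the projective tensor norm gives $\|(\alpha\otimes\beta)_{n}\|\leqslant\sum_{k}\|\alpha_{k}\|\|\beta_{n-k}\|$ and hence $P_{\alpha\otimes\beta}(z)\leqslant P_{\alpha}(z)P_{\beta}(z)$ for any tensor series $\alpha,\beta$. Applied twice, this yields
\[
P_{\log S(\mathbf{Y}\sqcup\mathbf{X}\sqcup\overleftarrow{\mathbf{Y}})}(z)\leqslant P_{g}(z)\cdot P_{\log S(\mathbf{X})}(z)\cdot P_{g^{-1}}(z).
\]
Now both $g=S(\mathbf{Y})$ and $g^{-1}=S(\overleftarrow{\mathbf{Y}})$ are signatures of rough paths, so the estimate (\ref{eq:FacDec}) makes $P_{g}$ and $P_{g^{-1}}$ entire functions of $z$. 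Consequently, if $\log S(\mathbf{X})$ has infinite R.O.C. then so does $\log S(\mathbf{Y}\sqcup\mathbf{X}\sqcup\overleftarrow{\mathbf{Y}})$. The reverse direction is completely symmetric: inverting the conjugation gives $\log S(\mathbf{X})=g^{-1}\otimes\log S(\mathbf{Y}\sqcup\mathbf{X}\sqcup\overleftarrow{\mathbf{Y}})\otimes g$, and the same majorisation argument applies.

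There is no real obstacle: the only point that deserves care is the verification of the conjugation-logarithm identity, which must be done inside the formal tensor algebra rather than by borrowing a Lie-theoretic statement (no convergence hypotheses on $\log\xi$ are needed because the defining series is locally finite on $T_{1}((V))$). Once this identity is in place, the R.O.C. comparison is automatic from admissibility and (\ref{eq:FacDec}).
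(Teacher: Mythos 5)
Your proposal is correct, and its overall skeleton matches the paper's: both reduce the statement to the conjugation identity $\log S({\bf Y}\sqcup{\bf X}\sqcup\overleftarrow{{\bf Y}})=g\otimes\log S({\bf X})\otimes g^{-1}$ with $g=S({\bf Y})$ (the paper obtains it through $g\otimes e^{l}\otimes g^{-1}=e^{g\otimes l\otimes g^{-1}}$, you through the $\log$ series; the locally finite interchange you flag is indeed the only point needing care, and your justification is fine). Where you genuinely diverge is in the analytic step. The paper proves the comparison quantitatively: it bounds $\|\pi_{n}(g\otimes l\otimes g^{-1})\|$ using the factorial decay (\ref{eq:FacDec}) together with the neo-classical inequality (\ref{eq:Neo}), splits the sum at a level $K$ where $\|l_{k}\|\leqslant\rho^{k}$, and concludes $\overline{\lim}_{n}\|\pi_{n}\|^{1/n}\leqslant\rho$ for every $\rho>0$. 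You instead observe that the triangle inequality and admissibility give the coefficient-wise majorisation $\|\pi_{n}(g\otimes l\otimes g^{-1})\|\leqslant\sum_{i+j+k=n}\|\pi_{i}(g)\|\,\|l_{j}\|\,\|\pi_{k}(g^{-1})\|$, and that the majorising series is the Cauchy product of three power series with nonnegative coefficients, each entire (two by (\ref{eq:FacDec}), one by hypothesis), hence entire; by Tonelli/rearrangement the comparison is immediate. Your route is shorter and avoids the neo-classical inequality entirely, at the cost of being purely qualitative; the paper's computation yields explicit bounds on $\|\pi_{n}(l')\|$ along the way, which is the only thing the softer argument gives up. Both arguments prove the slightly more general claim (conjugation, or indeed sandwiching, by arbitrary signatures $g_{1},g_{2}$ preserves infinite R.O.C.), and the reverse implication follows in both by conjugating back with $g^{-1}$, which is again a signature.
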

\begin{proof}
Let us prove a more general claim. Suppose that $l\in T_{0}((V))$
and $g_{1},g_{2}$ are signatures (of certain rough paths). Then $l$
has infinite R.O.C. if and only if $g_{1}\otimes l\otimes g_{2}$
has infinite R.O.C.

To prove this claim, since $g_{1},g_{2}$ are signatures, one knows
from Lyons' factorial decay estimate (\ref{eq:FacDec}) that 
\begin{equation}
\|\pi_{n}(g_{i})\|\leqslant\frac{C\cdot\omega^{n/p}}{(n/p)!}\ \ \ \forall n\geqslant1,\ i=1,2.\label{eq:FacDecSCPf}
\end{equation}
Here $C,p,\omega$ are suitable constants depending on the underlying
paths defining $g_{1},g_{2}.$ Let us denote $l'\triangleq g_{1}\otimes l\otimes g_{2}$.
Then one has 
\[
\pi_{n}(l')=\sum_{k=1}^{n}\sum_{r=0}^{n-k}\pi_{r}(g_{1})\otimes l_{k}\otimes\pi_{n-k-r}(g_{2}),
\]
where $l_{k}\triangleq\pi_{k}(l)$. Since the tensor norms are admissible,
one finds that 
\[
\|\pi_{n}(l')\|\leqslant C^{2}\sum_{k=1}^{n}\sum_{r=0}^{n-k}\frac{\omega^{r/p}}{(r/p)!}\frac{\omega^{(n-k-r)/p}}{\big((n-k-r)/p\big)!}\|l_{k}\|.
\]
We now apply the following so-called \textit{neo-classical inequality}
(cf. \cite{HH10}):
\begin{equation}\label{eq:Neo}
\sum_{i=0}^{m}\frac{1}{(i/p)!\big((m-i)/p\big)!}\leqslant\frac{p2^{m/p}}{(m/p)!}\ \ \ \forall m\in\mathbb{N},\ p\geqslant1.
\end{equation}
It follows that 
\begin{equation}
\|\pi_{n}(l')\|\leqslant C^{2}p\sum_{k=1}^{n}\frac{(2\omega)^{(n-k)/p}}{\big((n-k)/p\big)!}\|l_{k}\|.\label{eq:SCPf2}
\end{equation}

Let $\rho>0$ be given fixed. Since $l$ has infinite R.O.C., there exists $K>0$
such that $\|l_{k}\|\leqslant\rho^{k}$ for all $k>K$. Therefore,
one has from (\ref{eq:SCPf2}) that 
\begin{align*}
\|\pi_{n}(l')\| & \leqslant C^{2}p\big(\sum_{k=1}^{K}\frac{(2\omega)^{(n-k)/p}}{\big((n-k)/p\big)!}\|l_{k}\|+\sum_{k=K+1}^{n}\frac{(2\omega)^{(n-k)/p}}{\big((n-k)/p\big)!}\rho^{k}\big)\\
 & \leqslant C^{2}p\big(\sum_{k=1}^{K}\frac{(2\omega)^{(n-k)/p}}{\big((n-k)/p\big)!}\|l_{k}\|+\rho^{n}\sum_{k=0}^{\infty}\frac{(2\omega\rho^{-p})^{k/p}}{(k/p)!}\big)
\end{align*}
for all $n>K.$ When $n$ is sufficiently large, one can ensure that
\[
\sum_{k=1}^{K}\frac{(2\omega)^{(n-k)/p}}{\big((n-k)/p\big)!}\|l_{k}\|<\rho^{n}
\]
and thus 
\[
\|\pi_{n}(l')\|\leqslant C^{2}p\big(1+\sum_{k=0}^{\infty}\frac{(2\omega\rho^{-p})^{k/p}}{(k/p)!}\big)\rho^{n}.
\]
This shows that 
\[
\underset{n\rightarrow\infty}{\overline{\lim}}\|\pi_{n}(l')\|^{1/n}\leqslant\rho.
\]
Since $\rho$ is arbitrary, one concludes that $l'$ has infinite
R.O.C.

In the context of the proposition, one takes 
\[
l=\log S({\bf X}),\ g_{1}=S({\bf Y}),\ g_{2}=S(\overleftarrow{{\bf Y}})=g_{1}^{-1}.
\]
The crucial observation is that 
\[
S({\bf Y}\sqcup{\bf X}\sqcup\overleftarrow{{\bf Y}})=g_{1}\otimes e^{l}\otimes g_{1}^{-1}=e^{g_{1}\otimes l\otimes g_{1}^{-1}}
\]
which implies that 
\[
\log S({\bf Y}\sqcup{\bf X}\sqcup\overleftarrow{{\bf Y}})=g_{1}\otimes l\otimes g_{1}^{-1}.
\]
The result follows from the above claim since $g_{1}$ is a signature. 
\end{proof}
Proposition \ref{prop:RevConj} shows that the logarithmic signature
of any path that is conjugate to a line segment must have infinite
R.O.C. Here we say that a path $\beta$ is \textit{conjugate to} $\gamma$
if ${\bf \beta}=\alpha\sqcup\gamma\sqcup\overleftarrow{\alpha}$ for
some path $\alpha$. This result suggests that having finite R.O.C.
is a property of conjugate classes. This naturally leads to the following
modification of the LS conjecture. 

\vspace{2mm}\noindent \textbf{Modified LS Conjecture.} The logarithmic
signature of a continuous, tree-reduced BV path has infinite R.O.C.
if and only if it is conjugate to a line segment. 

\begin{rem}
Since $V$ is finite dimensional, the property of having infinite / finite R.O.C. does not depend on the specific choices of the (admissible) tensor norms. 
\end{rem}

\begin{rem}\label{rem:PosROC}
It was proved by Chevyrev-Lyons \cite{CL16} that the logarithmic signature of any rough path always has positive R.O.C.
\end{rem}

\section{First order integral identities}\label{sec:LineInt}

In this section, we establish our first main result:
\[
\text{Infinite R.O.C. for log signature}\implies\text{A special type of line integral identities.}
\]
Although we could formulate the theorem in a more general form, the result is essentially stated in terms of two-dimensional projections. To make the statement as simple as possible, we will just (and always) assume that $\gamma_{t}=(x_{t},y_{t})_{0\leqslant t\leqslant1}$ is a weakly geometric rough path over $\mathbb{R}^2$. In this section, we only work with \textit{non-closed} paths (i.e. $\gamma_1\neq\gamma_0$). In particular, we impose the following normalisation conditions: 
\begin{equation}\label{eq:PathNorm}
x_{0}=y_{0}=0,\ x_{1}=1.
\end{equation}
Note that $\gamma$ is the first level component of the actual rough path which will not be referred to. The main result of this section is stated as follows.
\begin{thm}
\label{thm:LineInt}Let $\gamma$ be a weakly geometric rough path over $\mathbb{R}^2$ which satisfies the normalisation condition (\ref{eq:PathNorm}). Suppose that $\log S(\gamma)$ has infinite R.O.C. Then one has 
\begin{equation}
\int_{0}^{1}e^{2k\pi i\cdot x_{t}}dy_{t}=0\label{eq:LineInt}
\end{equation}for all nonzero integers $k$.
\end{thm}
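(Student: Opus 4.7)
The plan is to carry out Cartan's path development of $\gamma$ into $\mathfrak{sl}_2(\mathbb{C})$ with the specific assignment $e_1 \mapsto V_1 := aH$ and $e_2 \mapsto V_2 := E$, where $H = \mathrm{diag}(1,-1)$, $E = e_{12}$, and $a \in \mathbb{C}$ is a complex parameter. Solving the triangular linear RDE
\[
dX_t = X_t\bigl(aH\,dx_t + E\,dy_t\bigr),\qquad X_0 = \mathrm{Id}
\]
in closed form, the diagonal entries are the scalar exponentials $e^{\pm a x_t}$ (giving $e^{\pm a}$ at time one by $x_1 = 1$), the lower-left entry vanishes identically, and an integrating-factor computation for the upper-right entry yields
\[
X_1(a) = \begin{pmatrix} e^a & e^{-a}\,I(a) \\ 0 & e^{-a}\end{pmatrix},\qquad I(a) \triangleq \int_0^1 e^{2ax_t}\,dy_t,
\]
where $I$ is entire in $a$ by standard dominated-convergence arguments. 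Moreover, any matrix of the form $M = aH + cE$ satisfies $M^2 = a^2\,\mathrm{Id}$, so a direct series computation gives
\[
e^{aH + cE} = \cosh(a)\,\mathrm{Id} + \frac{\sinh(a)}{a}(aH + cE) = \begin{pmatrix} e^a & c\,\sinh(a)/a \\ 0 & e^{-a}\end{pmatrix}.
\]

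Next, I invoke the Chen--Strichartz formula (\ref{eq:CSForm}). Since $[H,H] = [E,E] = 0$ and $[aH, E] = 2aE$ is still a scalar multiple of $E$, every iterated bracket $V_{[I]}$ lies in $\mathbb{C}H + \mathbb{C}E$, so the substituted log-signature series $\widetilde L(a) \triangleq \sum_I \Lambda_I(\gamma)\,V_{[I]}$ takes values in this two-dimensional subspace of $\mathfrak{sl}_2(\mathbb{C})$. The $H$-coefficient of $\widetilde L(a)$ reduces to $a \cdot \Lambda_{(1)}(\gamma) = a$ by the normalisation $x_1 = 1$, and the $E$-coefficient is some $\beta(a)$. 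The hypothesis that $\log S(\gamma)$ has infinite R.O.C.\ ensures that the substituted series converges absolutely for every $a\in\mathbb{C}$ (its norm is dominated by $\sum_n \|L_n\|\,(K\max(|a|,1))^n$ for suitable constants), so $\beta$ is an entire (in particular, everywhere finite) function of $a$. The Chen--Strichartz identity then gives $e^{aH + \beta(a)E} = X_1(a)$ globally in $a$, and comparing $(1,2)$-entries against the two closed-form expressions above produces the key relation
\[
\beta(a)\,\frac{\sinh(a)}{a} = e^{-a}\,I(a) \qquad \text{for all } a\in\mathbb{C}.
\]

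The conclusion is now immediate: for any $k\in\mathbb{Z}\setminus\{0\}$, evaluating at $a = k\pi i$ makes the left-hand side vanish (since $\sinh(k\pi i) = 0$ and $\beta(k\pi i)$ is finite), so the right-hand side vanishes too, giving $(-1)^k I(k\pi i) = 0$ and hence $I(k\pi i) = 0$, which is precisely the identity \eqref{eq:LineInt}. The most delicate step to justify rigorously is the global validity of the Chen--Strichartz identification $e^{\widetilde L(a)} = X_1(a)$ on all of $\mathbb{C}$, rather than merely in a neighbourhood of the origin where absolute convergence of the BCH series is automatic. This requires promoting absolute convergence of the substituted log-signature series (which one does obtain globally from the infinite R.O.C.\ hypothesis) to actual equality with the path development, which I expect to establish via a uniform truncation or piecewise-linear approximation argument on compact $a$-discs combined with Lyons' factorial-decay estimate for the signature.
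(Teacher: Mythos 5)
Your argument is correct and reaches the conclusion by a genuinely different route from the paper. The paper concatenates $\gamma$ with $\overleftarrow{\mathrm{e}}_1$, writes $L(\gamma)=B(\tilde L(\gamma),\mathrm{e}_1)$ via the Baker--Campbell--Hausdorff formula, develops into the two-dimensional solvable algebra with $[A,D]=D$ so that all Hausdorff terms $H_n$, $n\geqslant 2$, vanish, and obtains $\hat F_\lambda(L(\gamma))=\frac{\lambda}{e^{\lambda}-1}\bigl(\int_0^1 e^{\lambda x_t}\,dy_t\bigr)D$ first for $|\lambda|<2\pi$ (via truncations and the Bernoulli series) and then for all $\lambda$ by entirety, the poles of $\lambda/(e^\lambda-1)$ at $2k\pi i$ forcing the integral to vanish. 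You instead solve the $\mathfrak{sl}_2(\mathbb{C})$ development ODE in closed form, observe that $\hat F_a(\log S(\gamma))=aH+\beta(a)E$ because the image algebra is the Borel subalgebra whose derived algebra is $\mathbb{C}E$ while $x_1=1$ pins the $H$-coefficient, and compare $(1,2)$-entries of the closed-form exponential; with $\lambda=2a$ your relation $\beta(a)\sinh(a)/a=e^{-a}I(a)$ is the same identity as the paper's, but you obtain it pointwise, with no BCH formula, Bernoulli numbers, or analytic continuation. The one step you flag, namely $e^{\hat F_a(\log S(\gamma))}=X_1(a)$ for every $a$, is indeed the only analytic content, but it closes more simply than your sketch suggests: since infinite R.O.C. gives $\sum_n\|\hat F_a(L_n)\|<\infty$ for every $a$ (via the projective-norm bound as in Lemma \ref{lem:ProjNormLinear}), the absolutely convergent double series $\sum_m\frac{1}{m!}\sum_{n_1,\dots,n_m}\hat F_a(L_{n_1})\cdots\hat F_a(L_{n_m})$ may be summed either by word length $m$, giving $e^{\hat F_a(L(\gamma))}$, or by total degree, giving $\hat F_a(S(\gamma))=X_1(a)$; no piecewise-linear approximation or uniform truncation on compact discs is needed. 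What your shortcut trades away is generality: the paper's BCH/Hausdorff machinery is precisely what extends to the higher-order identities of Theorems \ref{thm:DoubInt} and \ref{thm:IterIntCond} via $\mathfrak{sl}_{m+1}(\mathbb{C})$, whereas the closed-form inversion you exploit is special to the two-dimensional Borel picture.
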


\begin{rem}\label{rem:Genx1}
If the condition $x_1=1$ is dropped (still assuming $x_1\neq0$), the conclusion (\ref{eq:LineInt}) should be replaced by $\int_0^1e^{2k\pi i x_t/x_1}dy_t = 0$. For a general path over $V$, the result is formulated in terms of arbitrary projections of $\gamma$ onto two-dimensional subspaces of $V$ where $(x_t,y_t)$ are the coordinates of the projected path with respect to a suitable basis. 
\end{rem}

\begin{rem}
If one assumes additionally that $y_{1}=0$, then (\ref{eq:LineInt})
holds for all $k\in\mathbb{Z}.$ In this case, one has $\int_{0}^{1}f(x_{t})dy_{t}=0$
for all smooth, $1$-periodic functions $f$. This result
is stronger than the one obtained by Lyons-Sidorova's method in \cite{LS06}; the
latter established the integral property (\ref{eq:LineInt}) for $k\in2\mathbb{Z}+1$ which is difficult to be extended to cover even $k$'s using their method.
\end{rem}

A more general (and useful) formulation of Theorem \ref{thm:LineInt} is given as follows. 

\begin{thm}
\label{thm:GenLineInt}Under the assumptions of Theorem \ref{thm:LineInt},
suppose further that $y_{1}=0$. Then one has
\begin{equation}
\int_{0}^{1}\Phi(d\gamma_{t})=0\label{eq:GenLineInt}
\end{equation}
for all smooth one-forms $\Phi(x,y)=f(x,y)dx+g(x,y)dy$ on $\mathbb{R}^{2}$
which satisfy the following two conditions:

\vspace{2mm}\noindent (i)
$\Phi(x+1,y)=\Phi(x,y)$ for all $x,y\in\mathbb{R}$;

\vspace{2mm}\noindent (ii) $\int_{0}^{1}f(x,y)dx=0$ for all $y\in\mathbb{R}$.
\end{thm}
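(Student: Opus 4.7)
My plan is to deduce Theorem \ref{thm:GenLineInt} from Theorem \ref{thm:LineInt} by exploiting that Theorem \ref{thm:LineInt} applies not only to $\gamma$ itself but to any invertible linear image of $\gamma$; combined with analytic continuation and a Fourier-type decomposition, this will supply a sufficiently rich family of identities to handle the general test form $\Phi$.

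\textbf{Stage 1: shearing and analytic continuation.} For each $c\in\mathbb{R}$, the linear shear $A_c:=\begin{pmatrix}1 & c \\ 0 & 1\end{pmatrix}$ transforms $\gamma$ into $\tilde\gamma_t:=(x_t+c y_t,\,y_t)$, which still starts at $(0,0)$, ends at $(1,0)$, and is again a weakly geometric rough path. Functoriality of the signature gives $\log S(\tilde\gamma)=A_c^{\otimes}\log S(\gamma)$, and admissibility of the projective tensor norm yields $\|\pi_n\log S(\tilde\gamma)\|\leq \|A_c\|^n\|\pi_n\log S(\gamma)\|$; hence $\tilde\gamma$ also has infinite R.O.C. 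Applying Theorem \ref{thm:LineInt} to $\tilde\gamma$ then produces the one-parameter family
\[
\int_0^1 e^{2\pi i k (x_t+c y_t)}\,dy_t = 0\quad\text{for every }c\in\mathbb{R},\ k\in\mathbb{Z}\setminus\{0\}.
\]
Fixing $k\neq 0$ and setting $\Lambda(\lambda):=\int_0^1 e^{2\pi i k x_t + \lambda y_t}\,dy_t$, continuity of the rough integral in its integrand makes $\Lambda$ entire on $\mathbb{C}$, while the above identity forces $\Lambda\equiv 0$ on $i\mathbb{R}$; the identity theorem then gives $\Lambda\equiv 0$ throughout $\mathbb{C}$. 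Reading Taylor coefficients at $\lambda=0$ and approximating any smooth $\phi$ uniformly by polynomials on the compact range of $y$ extends the result to
\[
\int_0^1 \phi(y_t)\,e^{2\pi i k x_t}\,dy_t = 0\quad\text{for every smooth }\phi\text{ and every }k\in\mathbb{Z}\setminus\{0\}.
\]

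\textbf{Stage 2: primitive reduction and Fourier expansion.} Define $F(x,y):=\int_0^x f(u,y)\,du$. Condition (ii) gives $F(1,y)=0=F(0,y)$, so $F$ is smooth and $1$-periodic in $x$. The rough-path chain rule applied to $F$ yields $\int_0^1 dF(d\gamma) = F(1,0)-F(0,0)=0$; together with $dF = f\,dx + F_y\,dy$, this reduces the task to showing $\int_0^1 H(x_t,y_t)\,dy_t=0$, where $H := g-F_y$ is smooth and $1$-periodic in $x$. Fourier-expanding $H(x,y)=\sum_{k\in\mathbb{Z}} H_k(y)\,e^{2\pi i k x}$ (with $H_k$ smooth in $y$ and decaying rapidly in $k$), I exchange sum and integral: the $k=0$ term equals $\int_0^1 H_0(y_t)\,dy_t=0$ by the chain rule and $y_1=y_0=0$, while each $k\neq 0$ term vanishes by Stage 1 applied to $\phi=H_k$. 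Hence $\int_0^1 \Phi(d\gamma)=0$.

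\textbf{Main obstacle.} The non-trivial work is analytic: verifying that $\Lambda$ is entire, that polynomial approximation in $y$ preserves the vanishing, and that the Fourier sum can be exchanged with the rough integral. For bounded-variation $\gamma$ all three are immediate from dominated convergence. In the weakly geometric rough-path setting they rest on continuity of the rough integral with respect to its integrand, which follows from Lyons' extension theorem together with admissible tensor-norm estimates.
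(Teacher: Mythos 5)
Your proposal is correct in substance, and its first step is exactly the paper's: you shear $\gamma$ by $\begin{pmatrix}1 & c\\ 0 & 1\end{pmatrix}$ (the paper takes $c=\lambda/(2k\pi)$), use the operator-norm bound of Lemma \ref{lem:ProjNormLinear} to preserve infinite R.O.C., and apply Theorem \ref{thm:LineInt} to get $\int_0^1 e^{2k\pi i x_t+i\mu y_t}\,dy_t=0$ for all $k\neq 0$ and real $\mu$ (here $y_1=0$ is what keeps the sheared endpoint at $(1,0)$). Where you genuinely diverge is in how you convert this family of identities into the statement for a general $\Phi$. The paper works with both components of $\Phi$ separately: it invokes Stone--Weierstrass on $\mathbb{S}^1\times[K_1,K_2]$ to get $\int g(x_s,y_s)\,dy_s=0$ for every continuous $g$ that is $1$-periodic in $x$, and then obtains the $dx$-identities by integrating the exponential identities by parts and running Stone--Weierstrass a second time (the zero Fourier mode in $x$ being excluded, which is exactly condition (ii)). You instead eliminate the $dx$-part at the outset by the exact-form trick $dF=f\,dx+F_y\,dy$ with $F(x,y)=\int_0^x f(u,y)\,du$ (condition (ii) makes $F$ $1$-periodic, and the rough-path fundamental theorem kills the endpoint contribution), reducing everything to $\int H(x_t,y_t)\,dy_t$ with $H=g-F_y$; then you expand $H$ in a Fourier series in $x$ and handle each nonzero mode by your entire-function/identity-theorem extension of the shear identities, the zero mode again by exactness. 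Your route buys a cleaner treatment of the $dx$-component (no second density argument) and makes the role of condition (ii) transparent; the paper's route avoids analytic continuation and term-by-term summation by using a single compactness/density argument in both variables. One caveat: your closing claim that the needed limit interchanges follow from ``continuity of the rough integral with respect to its integrand'' in the uniform norm is not accurate in the genuinely rough case -- rough integration is continuous only under convergence of the one-form together with enough derivatives (Lip-$\gamma$ or $C^{\lceil p\rceil+1}$ on a neighbourhood of the trace). This is harmless for the specific approximations you use (Taylor partial sums, polynomial approximation of a smooth $\phi$, Fourier partial sums of a smooth periodic $H$ all converge with all derivatives on compacts), but you should state the convergence in that stronger topology; note the paper's own proof glosses over the same point when it asserts uniform-norm continuity of $g\mapsto\int g\,dy_s$, so this is a shared refinement rather than a gap specific to your argument.
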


To prove Theorem \ref{thm:GenLineInt}, we first present a basic lemma. 

\begin{lem}
\label{lem:ProjNormLinear}Let $A:\mathbb{R}^{2}\rightarrow\mathbb{R}^{2}$
be a linear map, where $\mathbb{R}^2$ is equipped with the standard Euclidean norm. Let $\hat{A}:\left(\mathbb{R}^{2}\right)^{\otimes n}\rightarrow\left(\mathbb{R}^{2}\right)^{\otimes n}$
be the unique linear map such that 
\[
\hat{A}\left(v_{1}\otimes\cdots\otimes v_{n}\right)=Av_{1}\otimes\cdots\otimes Av_{n}.
\]
Then one has
\[
\Vert\hat{A}\left(w\right)\Vert\leqslant\Vert A\Vert_{\mathrm{op}}^{n}\Vert w\Vert
\]for all $w\in\left(\mathbb{R}^{2}\right)^{\otimes n}.$ Here $\|A\|_{\rm op}$ denotes the operator norm of $A$ and the tensor products are all equipped with the projective tensor norm.
\end{lem}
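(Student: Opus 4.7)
The plan is to exploit directly the definition of the projective tensor norm, which is tailor-made for bounding linear operators of this tensor-product form. The key observation is that $\hat{A}$ is the $n$-fold tensor product $A^{\otimes n}$, and when a tensor $w$ is written as a sum of simple tensors, $\hat{A}$ acts component-wise on each factor. Since the projective norm is defined as an infimum over such decompositions, and the Euclidean operator norm controls each factor, the bound should fall out immediately.

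First I would fix $w \in (\mathbb{R}^2)^{\otimes n}$ and $\varepsilon > 0$, then use the definition of $\|w\|_{n;{\rm proj}}$ to choose a representation
\[
w = \sum_j v_1^j \otimes \cdots \otimes v_n^j, \qquad \sum_j |v_1^j|_{\mathbb{R}^2}\cdots|v_n^j|_{\mathbb{R}^2} \leqslant \|w\| + \varepsilon.
\]
Applying $\hat{A}$ yields the representation $\hat{A}(w) = \sum_j Av_1^j \otimes \cdots \otimes Av_n^j$, so the projective norm of $\hat{A}(w)$ is bounded above by $\sum_j |Av_1^j|\cdots|Av_n^j|$.

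Next I would apply the defining inequality of the operator norm, $|Av_i^j| \leqslant \|A\|_{\mathrm{op}}|v_i^j|$, in each of the $n$ factors to obtain
\[
\|\hat{A}(w)\| \leqslant \|A\|_{\mathrm{op}}^n \sum_j |v_1^j|\cdots|v_n^j| \leqslant \|A\|_{\mathrm{op}}^n (\|w\| + \varepsilon).
\]
Sending $\varepsilon \to 0^+$ yields the desired inequality. There is no real obstacle: the whole point of the projective tensor norm is that it is the largest cross-norm making such tensor-product estimates hold, so the lemma is essentially a definitional consequence. The only mild subtlety to flag is that $\hat{A}$ is well-defined on $(\mathbb{R}^2)^{\otimes n}$ by the universal property of the tensor product applied to the multilinear map $(v_1,\ldots,v_n) \mapsto Av_1 \otimes \cdots \otimes Av_n$, which justifies the step of passing $\hat{A}$ through the chosen decomposition.
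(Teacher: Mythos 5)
Your proposal is correct and follows essentially the same route as the paper: write $w$ as a sum of simple tensors, apply $\hat{A}$ factor-wise, bound each factor by $\|A\|_{\mathrm{op}}$ times the Euclidean norm, and pass to the infimum over representations (your $\varepsilon$-argument is just an explicit way of taking that infimum). No gaps.
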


\begin{proof}
Suppose that $w$ admits a representation given by
\begin{equation}
w=\sum_{j=1}^{N}v_{1}^{j}\otimes\cdots\otimes v_{n}^{j}.\label{eq:TensorDecom}
\end{equation}
Then one has 
\begin{align*}
\Vert\hat{A}\left(w\right)\Vert & =\big\Vert\sum_{j=1}^{N}\hat{A}\big(v_{1}^{j}\otimes\cdots\otimes v_{n}^{j}\big)\big\Vert
 \leqslant\big\Vert\sum_{j=1}^{N}A\left(v_{1}^{j}\right)\otimes\cdots\otimes A\left(v_{n}^{j}\right)\big\Vert\\
 & \leqslant\sum_{j=1}^{N}\big\Vert A\left(v_{1}^{j}\right)\big\Vert\cdots\big\Vert A\left(v_{n}^{j}\right)\big\Vert\\
 & \leqslant\sum_{j=1}^{N}\big\Vert A\big\Vert_{\text{op}}^{n}\big\Vert v_{1}^{j}\big\Vert\cdots\big\Vert v_{n}^{j}\big\Vert.
\end{align*}
Taking infimum over all $\left\{ v_{1}^{j},\ldots,v_{n}^{j}\right\} _{j=1}^{N}$
satisfying (\ref{eq:TensorDecom}) gives the Lemma.
\end{proof}

\begin{proof}[Proof of Theorem \ref{thm:GenLineInt}]
Let $\gamma$ be a path satisfying $\gamma_0 = (0,0)$ and $\gamma_1 = (1,0)$. Suppose that $\log S(\gamma)$ has infinite R.O.C. One knows from Theorem \ref{thm:LineInt} that 
$\int_{0}^{1}e^{2k\pi ix_{s}}dy_{s}=0$  for all $k\in\mathbb{Z}$. Given $k\in\mathbb{Z}\setminus\{0\}$ and $\lambda\in\mathbb{R}$,
we define 
\begin{equation}\label{eq:HatGam}
\hat{\gamma}_{s}=\left(\begin{array}{ll}
1 & \frac{\lambda}{2k\pi}\\
0 & 1
\end{array}\right)\left(\begin{array}{l}
x_{s}\\
y_{s}
\end{array}\right).
\end{equation}
Note that the $n$-th component of $\log S\left(\hat{\gamma}\right)$
is obtained by applying $\hat{A}$ to the $n$-th component of $\log S\left(\gamma\right)$ ($A$ is the transform defined by the matrix in (\ref{eq:HatGam})).
According to Lemma \ref{lem:ProjNormLinear}, $\log S\left(\hat{\gamma}\right)$ has infinite R.O.C. with respect to the projective tensor norm.
The new path $\hat{\gamma}$ also satisfies $\hat{\gamma}_{0}=(0,0)$
and $\hat{\gamma}_{1}=(1,0)$. As a result, one has
\begin{equation}\label{eq:GenLinePf}
\int_{0}^{1}e^{2k\pi ix_{s}+\lambda iy_{s}}dy_{s}=\int_{0}^{1}e^{2k\pi i\hat{x}_{s}}d\hat{y}_{s}=0
\end{equation}
for $k\in\mathbb{Z}\setminus\{0\}$ and $\lambda\in\mathbb{R}$. It is plain to check that (\ref{eq:GenLinePf}) holds when $k=0$ (and thus for all $k\in\mathbb{Z}$ and $\lambda\in\mathbb{R}$).

Now let 
\[
K_{1}\triangleq\inf\left\{ y_{t}:t\in\left[0,1\right]\right\} ,\ K_{2}\triangleq\sup\left\{ y_{t}:t\in\left[0,1\right]\right\} 
\]
and recall $\mathbb{S}^{1}$ is the topological circle obtained by
identifying the points $0$ with $1$ on $\left[0,1\right]$. Note that
the set of functions 
\[
S=\big\{ e^{2k\pi ix+\lambda iy}:k\in\mathbb{Z},\lambda\in\mathbb{R}\big\} 
\]
separate points on the compact space $\mathbb{S}^{1}\times\left[K_{1},K_{2}\right]$ and its linear span is a sub-algebra of continuous functions. 
By the Stone-Weierstrass theorem, one knows that $\mathrm{Span{S}}$ that
is dense in $C\left(\mathbb{S}^{1}\times\left[K_{1},K_{2}\right],\mathbb{C}\right)$. Since $g\rightarrow\int_{0}^{1}g\left(x_{s},y_{s}\right)dy_{s}$
is continuous with respect to the uniform norm, one concludes that $\int_{0}^{1}g(x_{s},y_{s})dy_{s}=0$  for any continuous
function $g(x,y)$ that is $1$-periodic in the $x$-variable. This gives the desired integral condition for the $dy$-integral.

To prove the corresponding relation for the $dx$-integral, let $k\in\mathbb{Z}\setminus\{0\}$ and $\lambda\in\mathbb{R}\setminus\{0\}$.
Using integration by parts, one finds that
\[
\begin{aligned}0= & \int_{0}^{1}e^{2k\pi ix_{s}+\lambda iy_{s}}dy_{s}=\frac{1}{\lambda i}\int_{0}^{1}e^{2k\pi ix_{s}}de^{\lambda iy_{s}}\\
= & \frac{1}{\lambda i}\big(e^{2k\pi ix_{s}+\lambda iy_{s}}|_{0}^{1}-2k\pi i\int_{0}^{1}e^{2k\pi ix_{s}+\lambda iy_{s}}dx_{s}\big)\\
= & -\frac{k}{\lambda}\int_{0}^{1}e^{2k\pi ix_{s}+\lambda iy_{s}}dx_{s}.
\end{aligned}
\]
Since $k\neq0$, one obtains that 
\begin{equation}
\int_{0}^{1}e^{2k\pi ix_{s}+\lambda iy_{s}}dx_{s}=0.\label{eq:ExponInteral}
\end{equation}
The continuity of the
integral as a function of $\lambda$ implies that the relation (\ref{eq:ExponInteral})
 holds for all $k\in\mathbb{Z}\setminus\{0\}$ and  $\lambda\in\mathbb{R}$. By exactly the same Stone-Weierstrass argument, one concludes that $\int_0^1f(x_s,y_s)dx_s=0$ for any continuous function $f(x,y)$ which is $1$-periodic in $x$ and satisfies $\int_0^1f(x,y)dx=0$ for every $y\in\mathbb{R}$ (the latter constraint appears since the relation (\ref{eq:ExponInteral}) excludes the zero Fourier mode). 

 Now the proof of Theorem \ref{thm:GenLineInt} is complete. 

\end{proof}

\subsection{Two immediate applications}

The line integral condition (\ref{eq:LineInt}) (or more generally, (\ref{eq:GenLineInt}))
implicitly leads to rigid geometric constraints on the underlying
path, and in some special situations, confirms the LS conjecture. We
use one class of examples to illustrate this point. A more inspiring application of Theorem \ref{thm:GenLineInt} on path-geometry is given in Section \ref{sec:StrCj} below where a strengthened version of the LS conjecture is proved.
\begin{proposition}
Let $\gamma_{t}=(x_{t},y_{t})$ be a two-dimensional, continuous BV
path satisfying the assumptions in Theorem \ref{thm:GenLineInt}.
Suppose further that $0\leqslant x_{t}\leqslant1$ and $\gamma$ is
non-self-intersecting. Then there exists $\varepsilon\in\mathbb{R}$
such that $\gamma=(\varepsilon{\rm e}_{2})\sqcup{\rm e}_{1}\sqcup(-\varepsilon{\rm e}_{2})$,
where $\{{\rm e}_{1},{\rm e}_{2}\}$ is the canonical basis of $\mathbb{R}^{2}$. 
\end{proposition}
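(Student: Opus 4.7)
The plan is to use Theorem \ref{thm:GenLineInt} to establish that $y_t$ is locally constant whenever $x_t \in (0,1)$, and then to read off the plateau geometry from non-self-intersection. First I would apply Theorem \ref{thm:GenLineInt} to one-forms $\Phi = g(x,y)\,dy$ with $g \in C(\mathbb{S}^1 \times \mathbb{R})$ (which trivially satisfy both hypotheses), obtaining $\int_0^1 g(x_t,y_t)\,dy_t = 0$ for every such $g$. Equivalently, the push-forward $\nu \triangleq \psi_*(dy_t)$ of the signed Lebesgue--Stieltjes measure $dy_t$ under $\psi(t) \triangleq (x_t \bmod 1, y_t)$ vanishes as a signed Radon measure on the cylinder $C \triangleq \mathbb{S}^1 \times \mathbb{R}$.

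The hard step is converting the signed identity $\nu = 0$ into a pointwise rigidity statement for $y$. I would invoke the Hahn decomposition $dy_t = dy_t^+ - dy_t^-$ with $dy_t^{\pm}$ supported on disjoint Borel sets $P, N \subset [0,1]$; then $\nu = 0$ reads $\psi_*(dy^+) = \psi_*(dy^-)$ as equal positive measures on $C$. The crucial point is that non-self-intersection of $\gamma$, combined with $x_t \in [0,1]$, forces the restriction $\psi|_E$ to be injective, where $E \triangleq \{t : x_t \in (0,1)\}$, since on $E$ one has $x_t \bmod 1 = x_t$ and $\psi|_E$ coincides with $\gamma|_E$. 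Consequently $\psi_*(dy^+|_E)$ and $\psi_*(dy^-|_E)$ are supported on the disjoint (analytic, hence universally measurable) sets $\psi(P \cap E)$ and $\psi(N \cap E)$ inside the open interior of $C$; they are thus mutually singular there, and being equal they must both vanish. This gives $|dy|(E) = 0$, so writing $E = \bigsqcup_n (a_n, b_n)$ as its decomposition into disjoint open intervals, the total $y$-variation on each $(a_n, b_n)$ vanishes and $y_t \equiv c_n$ there for some constant $c_n \in \mathbb{R}$.

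Finally I would read off the geometric consequences. On each $(a_n, b_n)$ non-self-intersection forces $x_t$ to be strictly monotone between boundary values $x_{a_n}, x_{b_n} \in \{0,1\}$ at constant height $c_n$, so every component of $E$ is a horizontal excursion across the strip; on $E^c$ each connected component lies on a single vertical line $x = 0$ or $x = 1$ with $y_t$ strictly monotone (again by non-self-intersection). Since $x_0 = 0$ and $x_1 = 1$, the path alternates as $\mathcal{O}_1, I_1, \mathcal{I}_1, I_2, \mathcal{O}_2, \ldots, I_{2k-1}, \mathcal{I}_k$ for some $k \geqslant 1$, where $\mathcal{O}_j$ and $\mathcal{I}_j$ are vertical components on $x = 0$ and $x = 1$ respectively, and the successive $y$-values form a sequence $d_0 = 0, d_1, \ldots, d_{2k-1}, d_{2k} = 0$ of $2k$ distinct values. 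Non-self-intersection on $x = 0$ requires the $k$ intervals $\{[d_{2j}, d_{2j+1}]\}_{j=0}^{k-1}$ to be pairwise disjoint, and likewise $\{[d_{2j+1}, d_{2j+2}]\}_{j=0}^{k-1}$ on $x = 1$. Ordering the $2k$ distinct heights as $p_1 < \cdots < p_{2k}$, the only matching of them into pairwise disjoint closed intervals is the consecutive pairing $\{(p_{2i-1}, p_{2i})\}_{i=1}^k$, so both the $x = 0$- and $x = 1$-matchings would have to equal this pairing; but they are complementary halves of the $2k$-cycle $d_0 d_1 \cdots d_{2k-1}$ and share no edge, which is impossible once $k \geqslant 2$. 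Hence $k = 1$, and $\gamma = (c_1 \mathrm{e}_2) \sqcup \mathrm{e}_1 \sqcup (-c_1 \mathrm{e}_2)$ with $\varepsilon = c_1$.
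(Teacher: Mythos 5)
Your proposal is correct in substance but follows a genuinely different route from the paper's. The paper argues by contradiction: if $\gamma$ is not of the stated form, it selects a point $z$ of the image and a point $w$ off the image at the same height inside the open strip, invokes the construction of \cite{BGL16} to produce a compactly supported one-form $\varphi$ near $z$ with $\int_0^1\varphi(d\gamma_t)\neq0$, subtracts its translate at $w$ to kill the zero Fourier mode in $x$, periodizes, and contradicts Theorem \ref{thm:GenLineInt}. You instead test (\ref{eq:GenLineInt}) against all one-forms $g\,dy$, interpret the outcome as the vanishing of the pushforward of the signed measure $dy_t$ to the cylinder, and use injectivity of $\gamma$ on $E=\{t:x_t\in(0,1)\}$ to make the pushforwards of the positive and negative parts mutually singular off the seam, hence both zero; this yields the stronger structural fact $|dy|(E)=0$, i.e. $y$ is constant on every excursion of $x$ into $(0,1)$, after which the statement reduces to a finite combinatorial analysis of crossings and vertical runs. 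What your route buys: it avoids the external bump-form lemma of \cite{BGL16}, it replaces the paper's unproved dichotomy (``not of the stated form $\Rightarrow$ a pair $(z,w)$ at equal height with $z$ on and $w$ off the image exists'') by an explicit classification, and the intermediate conclusion $|dy|(E)=0$ is of independent use. What the paper's route buys: brevity given the cited lemma, and a localization technique that is reused in Section \ref{sec:StrCj}. Two points in your last step deserve a line each: the number of strip crossings is finite because each contributes at least $1$ to the variation of $x$ (your finite alternation presupposes this), and the asserted distinctness of the $2k$ heights $d_0,\dots,d_{2k-1}$ is not automatic — two crossings at equal heights share the whole segment $(0,1)\times\{c\}$, and a crossing at height $0$ (equivalently a degenerate first or last vertical run) forces $(0,0)$ or $(1,0)$ to be visited twice once $k\geqslant2$; since every such coincidence violates injectivity outright, your interval-matching argument only needs the generic case, but this should be stated.
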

\begin{proof}
Suppose on the contrary that $\gamma$ is not of the form $(\varepsilon{\rm e}_{2})\sqcup{\rm e}_{1}\sqcup(-\varepsilon{\rm e}_{2})$.
Then there exist two points $z=(x_{1},y_{1})$ and $w=(x_{2},y_{2})$
such that $0<x_{1},x_{2}<1,$ $y_{1}=y_{2}$ and $z\in{\rm Im}\gamma$,
$w\notin{\rm Im}\gamma$. One can choose a small $\delta>0,$ such
that by defining 
\[
U_{\delta}\triangleq\{(x,y):|x-x_{1}|\vee|y-y_{1}|<\delta\},\ V_{\delta}\triangleq\{(x,y):|x-x_{2}|\vee|y-y_{2}|<\delta\}
\]
one has $U_{\delta}\cap V_{\delta}=\emptyset$ and $V_{\delta}\cap{\rm Im}\gamma=\emptyset$. 

By using the argument in \cite{BGL16}, one can construct a smooth one-form
$\varphi$ which is compactly supported in $U_{\delta}$ such that
$\int_{0}^{1}\varphi(d\gamma_{t})\neq0.$ We define the one-form $\psi$
by 
\[
\psi(x,y)=\begin{cases}
\varphi(x,y), & (x,y)\in U_{\delta};\\
-\varphi(x-x_{2}+x_{1},y), & (x,y)\in V_{\delta}.
\end{cases}
\]
It is clear that $\psi$ is a smooth one-form supported in $U_{\delta}\cap V_{\delta}$
and satisfies 
\[
\int_{0}^{1}f(x,y)dx=0
\]
for all $y\in\mathbb{R}$, where $f(x,y)$ denotes the $dx$-coefficient
of $\psi.$ Let $\Phi$ denote the $1$-periodic extension of $\psi$
(in the $x$-direction). It follows that $\Phi$ satisfies Properties
(i), (ii) in Theorem \ref{thm:GenLineInt} but
\[
\int_{0}^{1}\Phi(d\gamma_{t})=\int_{0}^{1}\varphi(d\gamma_{t})\neq0.
\]
This leads to a contradiction. Therefore, $\gamma=(\varepsilon{\rm e}_{2})\sqcup{\rm e}_{1}\sqcup(-\varepsilon{\rm e}_{2})$
for some $\varepsilon\in\mathbb{R}$. 
\end{proof}
\begin{rem}
The above argument fails if the image of $\gamma$ is not contained
in the strip $0\leqslant x\leqslant1$, since the geometry of $\gamma$
outside the strip may affect the line integral against the one-form
$\Phi$ constructed before in an unexpected way. It is possible to
replace this geometric assumption by another kind of geometric restriction.
But we will not pursue such discussions here. 
\end{rem}
Another application of Theorem \ref{thm:LineInt} is the Brownian
motion case. This is an important example due to its basic role
in the study of stochastic differential equations.
\begin{proposition}
Let $B_{t}$ be a standard $d$-dimensional Brownian motion over $[0,1].$
Then with probability one, its logarithmic (Stratonovich) signature
has finite R.O.C.
\end{proposition}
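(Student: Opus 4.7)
The plan is to derive a contradiction with Theorem \ref{thm:LineInt}. Suppose, for contradiction, that the event $E \triangleq \{\log S(B)$ has infinite R.O.C.$\}$ has positive probability. I would work with the two-dimensional projection $\gamma \triangleq (B^1,B^2)$ onto the first two coordinates. By Lemma \ref{lem:ProjNormLinear} applied to the projection $\mathbb{R}^d \to \mathbb{R}^2$, infinite R.O.C. of $\log S(B)$ implies infinite R.O.C. of $\log S(\gamma)$. Since $B^1_1 \neq 0$ almost surely, I can further compose with the diagonal rescaling $A=\mathrm{diag}(1/B^1_1,1)$ (applied $\omega$-wise on the event $\{B^1_1\neq 0\}$) to obtain a path $\tilde\gamma$ satisfying the normalisation $\tilde x_0 = \tilde y_0 = 0$, $\tilde x_1 = 1$, and whose log signature still has infinite R.O.C. by another application of Lemma \ref{lem:ProjNormLinear}.

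Next, applying Theorem \ref{thm:LineInt} to $\tilde\gamma$ (equivalently, using the formulation in Remark \ref{rem:Genx1} applied to $\gamma$), I obtain on $E$ the identity
\begin{equation}\label{eq:BMIntIden}
\int_0^1 e^{2\pi i k B^1_t/B^1_1}\,dB^2_t = 0 \quad \text{for every } k\in\mathbb{Z}\setminus\{0\},
\end{equation}
where the integral is a priori Stratonovich (since we work with the Stratonovich lift of $B$). I would then observe that the integrand is a continuous function of $B^1$ alone, which is an independent process from $B^2$. Consequently the cross-variation between $e^{2\pi i k B^1_\cdot/B^1_1}$ and $B^2$ vanishes, so the Stratonovich integral in \eqref{eq:BMIntIden} equals the corresponding It\^o integral.

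Finally, I would condition on $\mathcal{F}_1^{B^1} \triangleq \sigma(B^1_s:0\leqslant s\leqslant 1)$. Conditional on this $\sigma$-algebra, the function $t\mapsto\varphi_k(t) \triangleq e^{2\pi i k B^1_t/B^1_1}$ is deterministic, and $B^2$ remains a standard Brownian motion independent of $B^1$. Hence the It\^o integral $\int_0^1 \varphi_k(t)\,dB^2_t$ is a centred complex Gaussian with conditional variance
\[
\int_0^1 |\varphi_k(t)|^2\,dt = 1.
\]
In particular it is almost surely non-zero, contradicting \eqref{eq:BMIntIden} on the positive-probability event $E \cap \{B^1_1\neq 0\}$. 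This forces $P(E)=0$, which is the desired conclusion.

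The main technical point to verify carefully is the Stratonovich-to-It\^o reduction for the line integral \eqref{eq:BMIntIden}: one must justify that $t\mapsto \varphi_k(t)$ is indeed a valid Stratonovich integrand against $dB^2$ (which follows because $\varphi_k$ is continuous and adapted to the enlarged filtration obtained by appending all of $\mathcal{F}_1^{B^1}$ at time zero, while $B^2$ remains a Brownian motion in this filtration by independence) and that the cross bracket $[\varphi_k(B^1),B^2]$ vanishes. Once this is in place, the conditional-Gaussian argument is straightforward and gives the result.
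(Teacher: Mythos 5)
Your proof is correct, and it shares the paper's skeleton: reduce to the first two coordinates, invoke Theorem \ref{thm:LineInt} via Remark \ref{rem:Genx1}, and then show that the line integral $\int_0^1 e^{2\pi i k X_t/X_1}\,dY_t$ cannot vanish with positive probability. Where you genuinely differ is in the nondegeneracy step. The paper handles it with Malliavin calculus: it shows the relevant integral admits a density (hence is a.s. nonzero) by exhibiting a Cameron--Martin direction along which the Malliavin derivative is a.s. strictly positive. You instead exploit the independence of $X=B^1$ and $Y=B^2$: after reducing the Stratonovich (rough-path) integral to an It\^o integral in the filtration enlarged by $\sigma(X)$ (the correction vanishes because the bracket of $e^{2\pi i k X_\cdot/X_1}$ with $Y$ is zero), you condition on the whole path of $X$ and recognize a Wiener integral of a deterministic unimodular integrand, i.e.\ a centred conditionally Gaussian variable whose real and imaginary parts have variances summing to $1$, hence a.s. nonzero. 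Your route is more elementary and even yields the conditional law explicitly; its cost is exactly the anticipative technicality you flag: the scaling $1/X_1$ forces the filtration enlargement and the identification of the pathwise rough integral with the enlarged-filtration It\^o integral, which should be justified, e.g., by compensated Riemann sums whose Stratonovich-type and L\'evy-area cross terms vanish in conditional $L^2$ given $X$ (also note the unit variance splits between real and imaginary parts, so one should argue that at least one of them is nondegenerate, which is immediate). The paper's Malliavin argument avoids all filtration issues and, more importantly, does not rely on independence of integrand and integrator, so it would survive when the integrand involves both coordinates (as in the generalized conditions of Theorem \ref{thm:GenLineInt}) or when the components are correlated; for the present statement, both arguments are equally conclusive.
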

\begin{proof}
It is obvious that one only needs to prove the claim for the first
two components of $B_{t}$, say $(X_{t},Y_{t})$. According to Theorem
\ref{thm:LineInt} (see also Remark \ref{rem:Genx1}), one only needs
to show that 
\[
\int_{0}^{1}\sin(2\pi X_{t}/X_{1})dY_{t}\neq0\ \text{a.s.}
\]
This can be obtained by proving the stronger claim that the random
variable $Z\triangleq\int_{0}^{1}\cos(2\pi X_{t}/X_{1})dY_{t}$ admits
a density with respect to the Lebesgue measure on $\mathbb{R}$. 

We use Malliavin's calculus to prove such a claim. We assume that
$(X_{t},Y_{t})$ are realised on the canonical path space. To prove
the claim, it suffices to show that the Malliavin derivative $DZ\neq0$
a.s. (cf. \cite{Nua06}). Let $h$ be any Cameron-Martin path (i.e. absolutely
continuous with $L^{2}$-derivative). Explicit calculation together
with integration by parts shows that the Malliavin derivative of $Z$
along the direction $h$ is given by 
\[
D_{h}Z=\int_{0}^{1}\big[\frac{2\pi(U_{1}-U_{t})}{X_{1}}-\frac{2\pi V}{X_{1}^{2}}+\sin\big(\frac{2\pi X_{t}}{X_{1}}\big)\big]dh_{t},
\]
where 
\[
U_{t}\triangleq\int_{0}^{t}\cos\big(\frac{2\pi X_{t}}{X_{1}}\big)dY_{t},\ V\triangleq\int_{0}^{1}X_{t}\cos\big(\frac{2\pi X_{t}}{X_{1}}\big)dY_{t}.
\]
We now choose 
\[
h_{t}\triangleq\int_{0}^{t}\big[\frac{2\pi(U_{1}-U_{s})}{X_{1}}-\frac{2\pi V}{X_{1}^{2}}+\sin\big(\frac{2\pi X_{s}}{X_{1}}\big)\big]ds.
\]
It follows that 
\[
D_{h}Z=\int_{0}^{1}\big[\frac{2\pi(U_{1}-U_{t})}{X_{1}}-\frac{2\pi V}{X_{1}^{2}}+\sin\big(\frac{2\pi X_{t}}{X_{1}}\big)\big]^{2}dt.
\]
The above integral is nonzero with probability one, for otherwise
the integrand would be identically zero which is clearly not the case.
Therefore, $DZ\neq0$ a.s. 
\end{proof}

\subsection{Proof of Theorem \ref{thm:LineInt}}

The fundamental idea behind our proof of Theorem \ref{thm:LineInt}, as well as its higher order extensions (cf. Theorem \ref{thm:DoubInt} and Theorem \ref{thm:IterIntCond} below), is based on the notion of \textit{Cartan's path development}. Stated conceptually, we  ``develop'' the logarithmic signature into a suitably chosen (complex) Lie algebra $\mathfrak{g}$, so that the Lie structure of $\mathfrak{g}$ ``projects'' the logarithmic signature $\log S(\gamma)$ in a very special way to yield products between certain explicit meromorphic functions and line integrals along $\gamma$ (e.g. see the relation (\ref{eq:FormalHaus}) below). If $\log S(\gamma)$ has infinite R.O.C., its development must produce an entire function. As a result, the line integral must have zeros matching the singularities of the meromorphic function. This will naturally lead to the line integral condition (\ref{eq:LineInt}) at the discrete locations $k\in\mathbb{Z}\backslash\{0\}$. 

\subsubsection{The Baker-Campbell-Hausdorff formula}

Let $\{\mathrm{e}_1,\mathrm{e}_2\}$ be the standard basis of $\mathbb{R}^2$. Our analysis relies crucially on the consideration of the path $\tilde{\gamma}\triangleq\gamma\sqcup\overleftarrow{{\rm e}_{1}}$ as a starting point (here $\sqcup$ means concatenation). Clearly, the following relation holds on $T((\mathbb{R}^2))$: 
\begin{equation}\label{eq:LDecomp1}
e^{L(\gamma)}=e^{\tilde{L}(\gamma)}\otimes e^{{\rm e}_{1}},
\end{equation}where $L(\gamma),\tilde{L}(\gamma)$ denote the logarithmic signatures of $\gamma,\tilde{\gamma}$ respectively. Our next step is to apply the Baker-Campbell-Hausdorff
(BCH) formula to the right hand side of (\ref{eq:LDecomp1}), so that one can write 
\begin{equation}\label{eq:LDecomp2}
e^{\tilde{L}(\gamma)}\otimes e^{{\rm e}_{1}}=e^{B(\tilde{L}(\gamma),{\rm e}_{1})},
\end{equation}where the BCH functional
\[
B(\xi,\eta)=\xi+\eta+\frac{1}{2}[\xi,\eta]+\frac{1}{12}[\xi,[\xi,\eta]]-\frac{1}{12}[\eta,[\xi,\eta]]+\cdots
\]can be expressed in terms of commutators in a universal way.

We now recall the precise definition of the BCH functional $B(\cdot,\cdot)$. Let $v,w$ be two letters and let $V$ be the vector space generated by them. The quantity $B(v,w)$ is the Lie series defined by the following formula:
\begin{equation}
B(v,w)=\sum_{n=1}^{\infty}H_{n}(v,w)\in T((V)).\label{eq:HausRep}
\end{equation}Here  
\begin{equation}
H_{1}(v,w)\triangleq\sum_{m=0}^{\infty}\frac{B_{m}}{m!}({\rm ad}_{w})^{m}(v),\label{eq:H1Formula}
\end{equation}
where $\{B_{m}\}$ are the \textit{Bernoulli numbers} defined by the expansion 
\[
\frac{z}{e^{z}-1}=\sum_{m=0}^{\infty}\frac{B_{m}}{m!}z^{m}
\]and $${\rm ad}_{w}(v)\triangleq[w,v]\triangleq w\otimes v-v\otimes w.$$To define the higher order terms $H_{n}$,  we first recall the definition of a derivation. A linear operator $\mathfrak{D}:T((V))\rightarrow T((V))$ is called
a \textit{derivation} if 
\[
\mathfrak{D}(\xi\otimes\eta)=\mathfrak{D}(\xi)\otimes\eta+\xi\otimes\mathfrak{D}(\eta).
\]
Now let $H_{1}(v,w)\partial_w$ denote the derivation
induced by 
\[
(H_{1}(v,w)\partial_w)(v)\triangleq0,\ (H_{1}(v,w)\partial_w)(w)\triangleq H_{1}(v,w).
\]
Then the functional $H_{n}(v,w)$ is defined by 
\begin{equation}\label{eq:HnFormula}
H_{n}(v,w)\triangleq\frac{1}{n!}\big(H_{1}(v,w)\partial_w\big)^{n}(w).
\end{equation}It is clear that the partial degree of $v$ in $H_n(v,w)$ is $n$. The series $H_n$ will be referred to as the \textit{$n$-th Hausdorff series}.

To summarise, one obtains from the relations (\ref{eq:LDecomp1}) and (\ref{eq:LDecomp2}) that
\begin{equation}
L(\gamma)=B(\tilde{L}(\gamma),{\rm e}_{1})\in \mathcal{L}((\mathbb{R}^2)).\label{eq:HausRelationTensor}
\end{equation}Note that here $B(\tilde{L}(\gamma),\mathrm{e}_1)$ is defined through the substitution $(v,w)\leftrightarrow(\tilde{L}(\gamma),\mathrm{e}_1)$ and applying the tensor product structure over $T((\mathbb{R}^2))$. We remark that $B(\tilde{L}(\gamma),{\rm e}_{1})$ is
a well-defined Lie series in ${\cal L}((\mathbb{R}^{2}))$. This is
clear from the observation that the series 
\[
\sum_{n=1}^{\infty}H_{n}(\tilde{L}(\gamma),{\rm e}_{1})
\]
is locally finite, i.e. its projection onto the truncated tensor algebra
only involves finitely many Lie polynomials arising from the above series.

\subsubsection{An adjoint representation of signature}

We now present a basic formula for the signature $S(\gamma)$ on which our analysis is largely based. 

\begin{lem}
\label{lem:AdDecomp}Let $S(\gamma)$ be the signature of $\gamma$.
Then one has
\begin{equation}\label{eq:SigAdRep}
S(\gamma)=\big(\sum_{n=0}^{\infty}\int_{0<t_{1}<\cdots<t_{n}<1}e^{x_{t_{1}}{\rm ad}_{{\rm e}_{1}}}({\rm e}_{2})\otimes\cdots\otimes e^{x_{t_{n}}{\rm ad}_{{\rm e}_{1}}}({\rm e}_{2})dy_{t_{1}}\cdots dy_{t_{n}}\big)\otimes e^{{\rm e}_{1}},
\end{equation}
where 
\[
e^{{\rm ad}_{\xi}}(\eta)\triangleq\sum_{m=0}^{\infty}\frac{({\rm ad}_{\xi})^{m}}{m!}(\eta).
\]
In particular, 
\[
\tilde{L}(\gamma)\triangleq\log\big(\sum_{n=0}^{\infty}\int_{0<t_{1}<\cdots<t_{n}<1}e^{x_{t_{1}}{\rm ad}_{{\rm e}_{1}}}({\rm e}_{2})\otimes\cdots\otimes e^{x_{t_{n}}{\rm ad}_{{\rm e}_{1}}}({\rm e}_{2})dy_{t_{1}}\cdots dy_{t_{n}}\big)
\]
is a Lie series.
\end{lem}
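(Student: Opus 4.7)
The plan is to prove the formula by introducing an auxiliary tensor-valued path $Z_t$ obtained from $S(\gamma)_{0,t}$ by a ``gauge transformation'' that removes the $\mathrm{e}_1$-component, and then showing that $Z_t$ satisfies an explicit linear ODE whose Picard iteration produces the claimed series.

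Concretely, I would define
$$Z_t \triangleq S(\gamma)_{0,t} \otimes e^{-x_t \mathrm{e}_1}, \qquad t\in[0,1],$$
so that $Z_0 = \mathbf{1}$ and $Z_1 = S(\gamma)\otimes e^{-\mathrm{e}_1}$ (using $x_1 = 1$). Using the signature equation $dS(\gamma)_{0,t} = S(\gamma)_{0,t}\otimes d\gamma_t$ and $d\gamma_t = \mathrm{e}_1 dx_t + \mathrm{e}_2 dy_t$, together with $d(e^{-x_t\mathrm{e}_1}) = -e^{-x_t\mathrm{e}_1}\otimes \mathrm{e}_1\, dx_t$, I compute
$$dZ_t = S(\gamma)_{0,t}\otimes\big(\mathrm{e}_1\otimes e^{-x_t\mathrm{e}_1} - e^{-x_t\mathrm{e}_1}\otimes\mathrm{e}_1\big)\,dx_t + S(\gamma)_{0,t}\otimes \mathrm{e}_2\otimes e^{-x_t\mathrm{e}_1}\,dy_t.$$
The key cancellation is that $\mathrm{e}_1$ commutes with every power $\mathrm{e}_1^{\otimes k}$, so the $dx_t$-term vanishes. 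Inserting the identity $e^{x_t\mathrm{e}_1}\otimes e^{-x_t\mathrm{e}_1} = \mathbf{1}$ between $S(\gamma)_{0,t}$ and $\mathrm{e}_2$ and using $e^{A}\otimes B\otimes e^{-A} = e^{\mathrm{ad}_A}(B)$, I would then obtain the linear CDE
$$dZ_t = Z_t \otimes e^{x_t\mathrm{ad}_{\mathrm{e}_1}}(\mathrm{e}_2)\, dy_t, \qquad Z_0 = \mathbf{1}.$$

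Solving this linear equation by Picard iteration (which converges in the natural topology on $T((\mathbb{R}^2))$ since the projection onto each tensor level involves only finitely many nonzero terms) yields
$$Z_1 = \sum_{n=0}^{\infty}\int_{0<t_1<\cdots<t_n<1} e^{x_{t_1}\mathrm{ad}_{\mathrm{e}_1}}(\mathrm{e}_2)\otimes\cdots\otimes e^{x_{t_n}\mathrm{ad}_{\mathrm{e}_1}}(\mathrm{e}_2)\,dy_{t_1}\cdots dy_{t_n}.$$
Multiplying the relation $Z_1 = S(\gamma)\otimes e^{-\mathrm{e}_1}$ on the right by $e^{\mathrm{e}_1}$ gives precisely the stated formula for $S(\gamma)$.

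For the final assertion that $\tilde L(\gamma)$ is a Lie series, I would note that $Z_1$ is exactly the signature of the concatenated path $\tilde\gamma = \gamma\sqcup\overleftarrow{\mathrm{e}_1}$, since $S(\overleftarrow{\mathrm{e}_1}) = e^{-\mathrm{e}_1}$ and Chen's identity gives $S(\tilde\gamma) = S(\gamma)\otimes e^{-\mathrm{e}_1} = Z_1$. Because signatures are group-like, Chen's theorem (stated in the previous section) guarantees that $\log Z_1 \in \mathcal{L}((\mathbb{R}^2))$, i.e. it is a formal Lie series. I do not anticipate any real obstacle here; the only subtle point is the commutator cancellation that wipes out the $dx_t$-term, which is the whole reason the gauge transformation $\xi\mapsto \xi\otimes e^{-x_t\mathrm{e}_1}$ is the right choice. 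Everything else is a routine manipulation of formal tensor series.
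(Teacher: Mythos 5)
Your proposal is correct and follows essentially the same route as the paper: the paper also sets $C_t\triangleq S(\gamma)_{0,t}\otimes e^{-x_t\mathrm{e}_1}$ (your $Z_t$), derives the same linear equation $dC_t=C_t\otimes e^{x_t\mathrm{ad}_{\mathrm{e}_1}}(\mathrm{e}_2)\,dy_t$, and concludes the Lie-series claim by identifying the result with the signature of $\gamma\sqcup\overleftarrow{\mathrm{e}_1}$. Your added details (the explicit $dx_t$-cancellation and the Picard iteration) are just a fuller write-up of the same argument.
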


\begin{proof}
The signature path $t\mapsto S(\gamma)_{0,t}$ satisfies the differential
equation 
\[
dS(\gamma)_{0,t}=S(\gamma)_{0,t}\otimes({\rm e}_{1}dx_{t}+{\rm e}_{2}dy_{t}).
\]
Let $C_{t}\triangleq S(\gamma)_{0,t}\otimes e^{-x_{t}{\rm e}_{1}}.$
Then $C_{t}$ satisfies the differential equation 
\[
dC_{t}=C_{t}\otimes e^{x_{t}{\rm e}_{1}}\otimes{\rm e}_{2}\otimes e^{-x_{t}{\rm e}_{1}}dy_{t}.
\]
The result follows from the relation that 
\[
e^{x_{t}{\rm e}_{1}}\otimes{\rm e}_{2}\otimes e^{-x_{t}{\rm e}_{1}}=e^{x_{t}{\rm ad}_{{\rm e}_{1}}}({\rm e}_{2}).
\]
Since $\tilde{L}(\gamma)$ is the logarithmic signature of $\gamma\sqcup\overleftarrow{{\rm e}_{1}}$,
it is clear that $\tilde{L}(\gamma)$ is a Lie series.
\end{proof}

\subsubsection{Basic notions on Cartan's path development}\label{sec:Cartan}

Based on the representation (\ref{eq:SigAdRep}), a fundamental idea in our analysis is to perform  Cartan's path development in such a way that $\mathrm{e}_2$ is mapped into certain (combinations of) eigenspaces of $\mathrm{e}_1$. This will enable one to evaluate $e^{x_t\mathrm{ad}_{\mathrm{e}_1}}(\mathrm{e}_2)$ in an explicit way (within the new Lie algebra). Before introducing such constructions, we first recall  basic notions on Cartan's path development.

\begin{defn}
A \textit{(Cartan's) path deveopment} over $\mathbb{R}^2$ is a pair $(\mathfrak{g},F)$ where $\mathfrak{g}$ is a finite dimensional complex Lie algebra
and $F:\mathbb{R}^{2}\rightarrow\mathfrak{g}$
is a real linear map.
\end{defn}
We always assume that $\mathfrak{g}$ is embedded in the matrix algebra ${\cal M}={\rm Mat}(N,\mathbb{C})$ for some $N$. The space $\mathbb{R}^{2}$ is complexified
into $\mathbb{C}^{2}$ in the canonical way and so is $F.$ All algebraic
relations are understood over the complex field. Working over $\mathbb{C}$ rather than $\mathbb{R}$ is an important point in our analysis.

The linear map $F$ admits a unique extension
to an algebra homomorphism 
\begin{equation}\label{eq:HatF}
\hat{F}:T(\mathbb{C}^{2})\triangleq\bigoplus_{n=0}^{\infty}(\mathbb{C}^{2})^{\otimes n}\rightarrow{\cal M}.
\end{equation}
The restriction of $\hat{F}$ on Lie polynomials over $\mathbb{C}^{2}$
defines a Lie homomorphism, which is the unique
extension of $F$ to a Lie homomorphism from the free Lie algebra over
$\mathbb{C}^{2}$ to $\mathfrak{g}$. It should be noted that $\hat{F}$ is not always well-defined
on $T((\mathbb{C}^{2}))$ (formal tensor series) or on ${\cal L}((\mathbb{C}^{2}))$
(formal Lie series). The extension of $\hat{F}$ to these spaces requires
analytic consideration. In what follows, we always fix a matrix norm
on ${\cal M}.$
\begin{defn}
Let $\xi\in T((\mathbb{C}^{2}))$ be a given tensor series. We say
that $\hat{F}(\xi)$ \textit{exists} and write $\xi\in{\cal D}(\hat{F})$,
if $\underset{N\rightarrow\infty}{\lim}\hat{F}\big(\pi^{(N)}\xi\big)$
exists in ${\cal M}$.
\end{defn}

Let $\gamma$ be a weakly geometric rough path over $\mathbb{C}^{2}$. Since $S(\gamma)$
has infinite R.O.C., it is obvious that $S(\gamma)\in{\cal D}(\hat{F})$.
In addition, if $L(\gamma)\triangleq\log S(\gamma)$ has infinite
R.O.C., one also has $L(\gamma)\in{\cal D}(\hat{F})$. In general, $\hat{F}(L(\gamma))$
always exists when $F$ is ``sufficiently small''. 

\begin{cor}
\label{cor:ConvTilSig}Let $\gamma$ be a  weakly geometric rough path over $\mathbb{C}^2$. Then
there exists $\delta>0$ such that for any path development $(\mathfrak{g},F)$ with $\|F\|_{\mathbb{C}^{2}\rightarrow{\cal M}}<\delta,$
one has $L(\gamma)\in{\cal D}(\hat{F})$. In particular, under the previous
setting $\tilde{L}(\gamma)\in{\cal D}(\hat{F})$  provided that the operator
norm of $F$ is small enough (recall that $\tilde{L}(\gamma)$ is the logarithmic signature
of $\gamma\sqcup\overleftarrow{{\rm e}_{1}}$).
\end{cor}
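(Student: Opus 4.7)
The plan is to combine the positivity of the R.O.C.\ for the logarithmic signature (Remark \ref{rem:PosROC}, due to Chevyrev-Lyons) with a direct extension of Lemma \ref{lem:ProjNormLinear} to linear maps into matrix algebras. Let $\rho>0$ denote the R.O.C.\ of $L(\gamma)$, and let $\tilde{\rho}>0$ denote the R.O.C.\ of $\tilde{L}(\gamma)$ (which exists for the same reason, since $\tilde{\gamma}\triangleq \gamma\sqcup\overleftarrow{\mathrm{e}_1}$ is itself a weakly geometric rough path). I claim that $\delta\triangleq \tfrac{1}{2}\min(\rho,\tilde{\rho})$ does the job.

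The first step is to verify a general norm estimate for $\hat{F}$: for any linear map $F\colon \mathbb{C}^2\to\mathcal{M}$ and any $\xi_n\in(\mathbb{C}^2)^{\otimes n}$, one has
\[
\|\hat{F}(\xi_n)\|_{\mathcal{M}}\leqslant \|F\|_{\mathrm{op}}^n\, \|\xi_n\|_{n;\mathrm{proj}}.
\]
This is a verbatim adaptation of the argument of Lemma \ref{lem:ProjNormLinear}: given any representation $\xi_n=\sum_j v_1^j\otimes\cdots\otimes v_n^j$, one has $\hat{F}(\xi_n)=\sum_j F(v_1^j)\cdots F(v_n^j)$ by construction (\ref{eq:HatF}); submultiplicativity of the chosen matrix norm on $\mathcal{M}$ then yields $\|\hat{F}(\xi_n)\|\leqslant \sum_j\|F\|_{\mathrm{op}}^n\|v_1^j\|\cdots\|v_n^j\|$, and taking the infimum over all representations produces the bound.

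The second step applies this to $L(\gamma)$. If $\|F\|_{\mathrm{op}}<\delta<\rho$, then by the definition of the radius of convergence one has $\sum_{n=0}^\infty \|F\|_{\mathrm{op}}^n\,\|L(\gamma)_n\|_{n;\mathrm{proj}}<\infty$, and the estimate above shows that the partial sums $\hat{F}(\pi^{(N)}L(\gamma))=\sum_{n=0}^N\hat{F}(L(\gamma)_n)$ form a Cauchy sequence in $\mathcal{M}$. Hence $\hat{F}(L(\gamma))$ exists, i.e.\ $L(\gamma)\in\mathcal{D}(\hat{F})$. The same reasoning applied to $\tilde{L}(\gamma)$ (whose R.O.C.\ is $\tilde{\rho}>\delta$) gives $\tilde{L}(\gamma)\in\mathcal{D}(\hat{F})$.

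Honestly, there is no serious obstacle in this argument once Remark \ref{rem:PosROC} is invoked; the only minor care is to use a submultiplicative matrix norm on $\mathcal{M}$ (which is the standing convention built into the setup of Section \ref{sec:Cartan}) so that the projective-norm estimate of Lemma \ref{lem:ProjNormLinear} transfers from $\mathbb{R}^2$-valued linear maps to $\mathcal{M}$-valued ones.
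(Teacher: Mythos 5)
Your argument is correct and is essentially the route the paper takes: the paper's proof is a one-line appeal to the Chevyrev--Lyons result (choosing the explicit value $\delta=\big(2\|\gamma\|_{p\text{-var}}\big)^{-1}$), which is exactly the positivity of the R.O.C.\ you invoke via Remark \ref{rem:PosROC}, combined with the same submultiplicativity/projective-norm estimate you spell out. Your version merely makes the norm estimate and the Cauchy-sequence step explicit and uses a non-quantitative $\delta$, which suffices for the statement as formulated.
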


\begin{proof}
According to \cite{CL16}, it is enough to choose $\delta=\big(2\|\gamma\|_{p\text{-var}}\big)^{-1}$ where $p$ is the roughness of $\gamma$.
\end{proof}

\begin{rem}
The use of path development in the literature, which also justifies its name, has a geometric nature. Given a pair $(\mathfrak{g},F)$, there is a canonical way of ``lifting'' any Euclidean path $\gamma$ in $\mathbb{R}^2$ to a corresponding path $\Gamma$ taking values in a Lie group whose Lie algebra is $\mathfrak{g}$. It turns out that one can study geometric and signature-related properties of the original path $\gamma$ from the lifted path $\Gamma$. The viewpoint we shall take in this work is however quite different. We work with path developments at the Lie algebra level instead of considering the actual lifted path in the group. Our technique thus has an  algebraic flavour in contrast to the geometric viewpoints taken in earlier works. 
\end{rem}

%

\subsubsection{A two-dimensional development}
 
To prove Theorem \ref{thm:LineInt}, we are going to choose a specific path development. Namely, we take
$\mathfrak{g}$ to be the two-dimensional complex Lie
algebra generated by (two symbols) $A,D$ under the Lie structure $[A,D]=D$. For each $\lambda\in \mathbb{C}$, we define the
development map $F_\lambda:\mathbb{C}^2\rightarrow\mathfrak{g}$ to be the linear map induced by
\[
F_{\lambda}({\rm e}_{1})\triangleq\lambda A,\ F_{\lambda}({\rm e}_{2})=D.
\]

\subsubsection*{A formal calculation}

We first show at a formal level how the choice of such a development enables one to prove Theorem \ref{thm:LineInt}. Recall that $\hat{F}_\lambda$ is the induced algebra homomorphism on tensors and also Lie homomorphism on Lie elements. By applying $\hat{F}_{\lambda}$
on both sides of (\ref{eq:HausRelationTensor}) one finds that 
\begin{equation}\label{eq:FormalLineIntPf}
e^{\hat{F}_{\lambda}(L(\gamma))}=e^{B(\hat{F}_{\lambda}(\tilde{L}(\gamma)),\lambda A)}\stackrel{\text{formally}}{\implies}\hat{F}_{\lambda}(L(\gamma))=B(\hat{F}_{\lambda}(\tilde{L}(\gamma)),\lambda A).
\end{equation}
According to Lemma \ref{lem:AdDecomp} and the construction of $F_\lambda$, one has 
\begin{align*}
e^{\hat{F}_{\lambda}(\tilde{L}(\gamma))}=\hat{F}_{\lambda}\big(e^{\tilde{L}(\gamma)}\big) & =\sum_{n=0}^{\infty}\int_{0<t_{1}<\cdots<t_{n}<1}e^{x_{t_{1}}{\rm ad}_{\lambda A}}(D)\cdots e^{x_{t_{n}}{\rm ad}_{\lambda A}}(D)dy_{t_{1}}\cdots dy_{t_{n}}\\
 & =\sum_{n=0}^{\infty}\int_{0<t_{1}<\cdots<t_{n}<1}e^{\lambda(x_{t_{1}}+\cdots+x_{t_{n}})}D^{n}dy_{t_{1}}\cdots dy_{t_{n}}\\
 & =\sum_{n=0}^{\infty}\frac{D^{n}}{n!}\big(\int_{0}^{1}e^{\lambda x_{t}}dy_{t}\big)^{n}=\exp\big(D\int_{0}^{1}e^{\lambda x_{t}}dy_{t}\big).
\end{align*}
This formally implies that
\begin{equation}
\hat{F}_{\lambda}(\tilde{L}(\gamma))=D\int_{0}^{1}e^{\lambda x_{t}}dy_{t}.\label{eq:FTilLFormula}
\end{equation}
By substituting (\ref{eq:FTilLFormula}) into (\ref{eq:FormalLineIntPf}), one finds that 
\[
\hat{F}_{\lambda}(L(\gamma))=B\big(D\int_{0}^{1}e^{\lambda x_{t}}dy_{t},\lambda A\big).
\]

Next, recall from (\ref{eq:HausRep}) that $B=\sum_{n\geqslant1}H_{n}$. The crucial observation is that 
\[
H_{n}\big(D\int_{0}^{1}e^{\lambda x_{t}}dy_{t},\lambda A\big)=0\ \ \ \forall n\geqslant2
\]
and 
\[
H_{1}\big(D\int_{0}^{1}e^{\lambda x_{t}}dy_{t},\lambda A\big)=\big(\sum_{m=0}^{\infty}\frac{B_{m}}{m!}\lambda^{m}\big)\cdot\big(\int_{0}^{1}e^{\lambda x_{t}}dy_{t}\big)D,
\]
both being consequences of the relation $[A,D]=D.$ As a result, one arrives at the following relation:
\begin{equation}
\hat{F}_{\lambda}(L(\gamma))=\big(\sum_{m=0}^{\infty}\frac{B_{m}}{m!}\lambda^{m}\big)\cdot\big(\int_{0}^{1}e^{\lambda x_{t}}dy_{t}\big)D.\label{eq:FormalHaus}
\end{equation}

Now suppose that $L(\gamma)$ has infinite R.O.C. Then the left hand side of (\ref{eq:FormalHaus})
defines a $\mathfrak{g}$-valued entire function (as a function of $\lambda\in\mathbb{C}$). On the other hand, the power
series $\sum_{m=0}^{\infty}\frac{B_{m}}{m!}\lambda^{m}$ defines the
meromorphic function $\phi(\lambda)=\frac{\lambda}{e^{\lambda}-1}$ on $\mathbb{C}$ that has isolated singularities at $\lambda=2k\pi i$
($k\in\mathbb{Z}\backslash\{0\}$). This forces $\int_{0}^{1}e^{\lambda x_{t}}dy_{t}=0$ at these singularities $\lambda = 2k\pi i$. The conclusion of Theorem \ref{thm:LineInt} thus follows.

\subsubsection*{Proof of Theorem \ref{thm:LineInt}}

The above argument is only formal, since one cannot directly pass $\hat{F}$
into the BCH function $B$ without analytical considerations, and to justify this one needs to apply truncations and
then pass to the limit. This is only a technical matter which we now make precise. 

Let $\mathfrak{g}$ be the Lie algebra of $2\times2$ upper triangular matrices over $\mathbb{C}$ with zero trace. Using the previous notation, one can take 
\[
A=\left(\begin{array}{cc}
1/2 & 0\\
0 & -1/2
\end{array}\right),\ D=\left(\begin{array}{cc}
0 & 1\\
0 & 0
\end{array}\right)
\]so that $\mathfrak{g}=\mathrm{Span}\{A,D\}$ and $[A,D]=D$. We consider the path development defined by
\[
F_{\lambda,\mu}({\rm e}_{1})\triangleq\lambda A,\ F_{\lambda,\mu}({\rm e}_{2})\triangleq\mu D\ \ \ (\lambda,\mu\in\mathbb{C}).
\]
According to Corollary
\ref{cor:ConvTilSig}, there exists $\delta>0$ such that 
\[
\tilde{L}(\gamma)\in{\cal D}(\hat{F}_{\lambda,\mu})\ \ \ \forall\lambda,\mu:|\lambda|\vee|\mu|<\delta.
\]
In other words, the limit
\begin{equation}
\hat{F}_{\lambda,\mu}\big(\tilde{L}(\gamma)\big)=\lim_{N\rightarrow\infty}\hat{F}_{\lambda,\mu}\big(\pi^{(N)}\tilde{L}(\gamma)\big)\label{eq:ExistFTilL}
\end{equation}
exists for all such $\lambda,\mu$.

On the other hand, recall from the normalisation (\ref{eq:PathNorm}) that the $x$-increment of $\tilde{\gamma}\triangleq\gamma\sqcup\overleftarrow{{\rm e}_{1}}$
is zero. In particular, $\tilde{L}(\gamma)$ does not contain the ${\rm e}_{1}$-component.
After applying the development $\hat{F}_{\lambda,\mu},$ one can thus write
\begin{equation}
\hat{F}_{\lambda,\mu}\big(\pi^{(N)}\tilde{L}(\gamma)\big)=C_{N}(\lambda,\mu)D\label{eq:FPiNTilL}
\end{equation}
for some $C_{N}(\lambda,\mu)\in\mathbb{C}.$ The function $(\lambda,\mu)\mapsto C_N(\lambda,\mu)$ is clearly entire on $\mathbb{C}^2$ for every fixed $N$. The relation (\ref{eq:ExistFTilL})
then implies that $C_{N}(\lambda,\mu)$ is convergent as $N\rightarrow\infty$ for small $\lambda,\mu$, 
whose limit with no surprise should coincide with (\ref{eq:FTilLFormula}) if one were able to take $\mu=1$.
\begin{lem}
\label{lem:ComputeFTilL}One has 
\[
\hat{F}_{\lambda,\mu}\big(\tilde{L}(\gamma)\big)=\lim_{N\rightarrow\infty}C_N(\lambda,\mu)D=\mu\big(\int_{0}^{1}e^{\lambda x_{t}}dy_{t}\big)D,\ \ \ |\lambda|\vee|\mu|<\delta.
\]
\end{lem}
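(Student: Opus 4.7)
The plan is to pin down the limit $\lim_N C_N(\lambda,\mu) D$ by computing $\hat{F}_{\lambda,\mu}\bigl(e^{\tilde L(\gamma)}\bigr)$ in two different ways and comparing. The key algebraic facts we shall exploit are $[A,D]=D$ (so that $e^{x\,\mathrm{ad}_A}(D)=e^{x}D$) and $D^2=0$ (so that all products of two or more $D$'s collapse).

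\textbf{Step 1 (computing $\hat F_{\lambda,\mu}(S(\tilde\gamma))$ directly).} From $S(\gamma)=S(\tilde\gamma)\otimes e^{\mathrm{e}_1}$ and Lemma \ref{lem:AdDecomp} one reads off a representation for $S(\tilde\gamma)=e^{\tilde L(\gamma)}$ as an iterated integral of tensor products of $e^{x_t\mathrm{ad}_{\mathrm{e}_1}}(\mathrm{e}_2)$. Applying $\hat F_{\lambda,\mu}$ term-by-term, each factor becomes $\mu e^{\lambda x_t}D$; since $D^n=0$ for $n\ge 2$, only the $n=0$ and $n=1$ terms survive, yielding
\[
\hat F_{\lambda,\mu}\bigl(e^{\tilde L(\gamma)}\bigr)\;=\;I+\mu D\int_{0}^{1}e^{\lambda x_t}\,dy_t,\qquad\lambda,\mu\in\mathbb{C}.
\]
(This is legitimate for all $\lambda,\mu$ because $S(\tilde\gamma)$ has infinite R.O.C.)

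\textbf{Step 2 (exchange of $\hat F_{\lambda,\mu}$ with $\exp$).} For $|\lambda|\vee|\mu|<\delta$, the hypothesis $\tilde L(\gamma)\in\mathcal{D}(\hat F_{\lambda,\mu})$, combined with a geometric bound on $\|\hat F_{\lambda,\mu}(\pi_n\tilde L(\gamma))\|$ coming from admissibility of the tensor norm and Lemma \ref{lem:ProjNormLinear}, gives absolute convergence of $\sum_n\hat F_{\lambda,\mu}(\pi_n\tilde L(\gamma))$ in $M_2(\mathbb{C})$. A standard rearrangement (grouping indices $n_1+\cdots+n_k=n$) then shows, for each $k$,
\[
\hat F_{\lambda,\mu}\bigl(\tilde L(\gamma)^{\otimes k}\bigr)\;=\;\bigl(\hat F_{\lambda,\mu}(\tilde L(\gamma))\bigr)^{k}
\]
with uniform bound $\|\hat F_{\lambda,\mu}(\tilde L(\gamma)^{\otimes k})\|\le C^k$. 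Dominated convergence in $k$ lets us swap $\hat F$ with the exponential series, producing
\[
\hat F_{\lambda,\mu}\bigl(e^{\tilde L(\gamma)}\bigr)\;=\;\exp\bigl(\hat F_{\lambda,\mu}(\tilde L(\gamma))\bigr),\qquad|\lambda|\vee|\mu|<\delta.
\]

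\textbf{Step 3 (identification).} Write $\hat F_{\lambda,\mu}(\tilde L(\gamma))=C(\lambda,\mu)D$ where $C(\lambda,\mu)=\lim_N C_N(\lambda,\mu)$. Using $D^2=0$ once more, $\exp(C(\lambda,\mu)D)=I+C(\lambda,\mu)D$. Comparing with Step 1 forces
\[
C(\lambda,\mu)\;=\;\mu\int_{0}^{1}e^{\lambda x_t}\,dy_t,
\]
which is exactly the claim of the lemma. The main obstacle is Step 2: one must carefully justify that the partial developments $\hat F_{\lambda,\mu}(\pi^{(N)}\tilde L(\gamma))$ and the truncated exponentials interact as expected, which is precisely where the smallness threshold $\delta$ (furnished by Corollary \ref{cor:ConvTilSig}) is essential.
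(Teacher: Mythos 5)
Your proposal is correct and follows the same overall strategy as the paper: compute $\hat{F}_{\lambda,\mu}\big(e^{\tilde{L}(\gamma)}\big)$ explicitly from Lemma \ref{lem:AdDecomp} (no convergence issue thanks to factorial decay), exchange $\hat{F}_{\lambda,\mu}$ with the exponential for $|\lambda|\vee|\mu|<\delta$, and then identify $\hat{F}_{\lambda,\mu}(\tilde{L}(\gamma))$ with $\mu\big(\int_0^1 e^{\lambda x_t}dy_t\big)D$. The one place you diverge is the final identification: you exploit $D^2=0$ so that $\exp$ restricted to $\mathbb{C}D$ is literally $c\mapsto I+cD$, hence injective, and read off $C(\lambda,\mu)$ by direct comparison; the paper instead fixes $\lambda$, notes that $G(\mu)=\hat{F}_{\lambda,\mu}(\tilde{L}(\gamma))$ and $H(\mu)=\mu\big(\int_0^1 e^{\lambda x_t}dy_t\big)D$ are analytic in $\mu$, lie in $\mathrm{Span}\{D\}$, agree at $\mu=0$, and deduces $G=H$ from $e^{G}=e^{H}$ by a power-series argument. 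Your nilpotency argument is marginally more direct and equally valid, since the relation (\ref{eq:FPiNTilL}) guarantees the limit stays in $\mathrm{Span}\{D\}$. One small point of care in your Step 2: the geometric bound on $\|\hat{F}_{\lambda,\mu}(\pi_n\tilde{L}(\gamma))\|$ does not come from admissibility and Lemma \ref{lem:ProjNormLinear} alone — it needs the geometric decay $\|\pi_n\tilde{L}(\gamma)\|\lesssim \rho^{-n}$, i.e. the positive radius of convergence of the logarithmic signature (Remark \ref{rem:PosROC}), which is exactly what underlies the threshold $\delta$ in Corollary \ref{cor:ConvTilSig}; with that in hand your absolute-convergence justification of $\hat{F}_{\lambda,\mu}\big(e^{\tilde{L}(\gamma)}\big)=e^{\hat{F}_{\lambda,\mu}(\tilde{L}(\gamma))}$ is sound, and is in fact more detailed than the paper's assertion of this step.
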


\begin{proof}
By applying $\hat{F}_{\lambda,\mu}$ to the signature of $\tilde{\gamma}$ and using Lemma \ref{lem:AdDecomp}, 
one obtains that
\[
\hat{F}_{\lambda,\mu}\big(e^{\tilde{L}(\gamma)}\big)=\exp\big(\mu\big(\int_{0}^{1}e^{\lambda x_{t}}dy_{t}\big)D\big),\ \ \ \forall\lambda,\mu\in\mathbb{C}.
\]Note that there is no convergence issue at the signature level due to its factorial decay. 
When $\lambda,\mu$ is small, one also has 
\[
\hat{F}_{\lambda,\mu}\big(e^{\tilde{L}(\gamma)}\big)=e^{\hat{F}_{\lambda,\mu}(\tilde{L}(\gamma))}.
\]
For each fixed $\lambda$ with $|\lambda|<\delta$, both of the functions 
\[
G(\mu)\triangleq\hat{F}_{\lambda,\mu}(\tilde{L}(\gamma)),\ H(\mu)\triangleq\mu\big(\int_{0}^{1}e^{\lambda x_{t}}dy_{t}\big)D
\]
are analytic in $\mu$ for $|\mu|<\delta.$ In addition, the relation
(\ref{eq:FPiNTilL}) shows that $G(\mu)\in{\rm Span}\{D\}$ and thus
$G(\mu)$ is commutative for different $\mu$'s. Since $G$ and $H$
coincide when $\mu=0,$ a standard power series argument shows that
\[
e^{G(\mu)}=e^{H(\mu)}\ \text{for }|\mu|<\delta\implies G(\mu)=H(\mu)\ \text{for }|\mu|<\delta.
\]
The result thus follows.
\end{proof}
On the other hand, by using Lemma \ref{lem:ComputeFTilL} one can also compute $C_{N}(\lambda,\mu)$
explicitly.
\begin{lem}
\label{lem:CNFormula}For any $\lambda,\mu\in \mathbb{C}$, one has 
\begin{equation}\label{eq:FPiNIdentity}
C_{N}(\lambda,\mu)=\mu\sum_{j=0}^{N-1}\big(\int_{0}^{1}\frac{x_{t}^{j}}{j!}dy_{t}\big)\lambda^{j}.
\end{equation}
\end{lem}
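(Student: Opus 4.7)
The plan is to combine the explicit matrix realization of $\mathfrak{g}$ with the limit identity from Lemma \ref{lem:ComputeFTilL}. First I would show that $C_N(\lambda,\mu)$ has a very constrained polynomial form — linear in $\mu$ and of degree at most $N-1$ in $\lambda$ — and then read off its coefficients by matching against the Taylor expansion of $\mu\int_0^1 e^{\lambda x_t}dy_t$.

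The key algebraic observation is that, with $A=\mathrm{diag}(1/2,-1/2)$ and $D=E_{12}$, one has $AD=\tfrac{1}{2}D$, $DA=-\tfrac{1}{2}D$ and $D^2=0$. Consequently $DA^kD=2^{-k}D^2=0$ for every $k\geq 0$, and in any associative word in $A,D$ the first two occurrences of $D$ (with only $A$'s between them) already form a $DA^kD$ subword, so every word with at least two $D$'s vanishes in $\mathcal{M}$. Since $\hat F_{\lambda,\mu}$ is the algebra homomorphism on $T(\mathbb{C}^2)$ extending $\mathrm{e}_1\mapsto\lambda A$, $\mathrm{e}_2\mapsto\mu D$, this forces $\hat F_{\lambda,\mu}(\mathrm{e}_{i_1}\otimes\cdots\otimes\mathrm{e}_{i_n})=0$ whenever $(i_1,\ldots,i_n)$ contains two or more $2$'s. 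I would then decompose $\pi^{(N)}\tilde L(\gamma)=\sum_{n=1}^N\tilde L_n$ into Lie-homogeneous components and expand each $\tilde L_n$ as a linear combination of length-$n$ tensor monomials: only those with exactly one $\mathrm{e}_2$ contribute under $\hat F_{\lambda,\mu}$, while those with zero $\mathrm{e}_2$'s and $n\geq 2$ are already zero at the Lie level (being brackets of $\mathrm{e}_1$ with itself), and the degree-one, pure-$\mathrm{e}_1$ part vanishes by the normalisation $x_1=1$. Hence $\hat F_{\lambda,\mu}(\tilde L_n)=c_{n-1}\mu\lambda^{n-1}D$ for some scalar $c_{n-1}$ depending on $\tilde L_n$ alone, and summing gives
\[
C_N(\lambda,\mu)=\mu\sum_{j=0}^{N-1}c_j\lambda^j.
\]

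To identify the coefficients $c_j$, I would invoke Lemma \ref{lem:ComputeFTilL}: for $|\lambda|\vee|\mu|<\delta$,
\[
\mu\sum_{j=0}^{N-1}c_j\lambda^j\xrightarrow[N\to\infty]{}\mu\int_0^1 e^{\lambda x_t}dy_t=\mu\sum_{j=0}^\infty\frac{\lambda^j}{j!}\int_0^1 x_t^j\,dy_t.
\]
Since each $c_j$ is independent of $N$ once $N>j$, matching Taylor coefficients in $\lambda$ yields $c_j=\tfrac{1}{j!}\int_0^1 x_t^j\,dy_t$. Substituting back gives the claimed identity on a neighbourhood of the origin, and since both sides of \eqref{eq:FPiNIdentity} are polynomials in $(\lambda,\mu)$ for each fixed $N$, the equality extends to all of $\mathbb{C}^2$. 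The only non-trivial step is the vanishing claim for tensor monomials carrying two or more $\mathrm{e}_2$'s — everything else is bookkeeping together with a routine power-series identification — and this is precisely where the rigidity of the two-dimensional representation $[A,D]=D$ does the work.
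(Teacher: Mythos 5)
Your proof is correct and follows essentially the same route as the paper's: reduce to small $\lambda,\mu$, use the fact that only the single-$\mathrm{e}_2$ components of $\tilde{L}(\gamma)$ survive the development so that $C_N(\lambda,\mu)=\mu\sum_{j=0}^{N-1}c_j\lambda^j$ with $N$-independent coefficients, identify $c_j=\frac{1}{j!}\int_0^1 x_t^j\,dy_t$ by comparing the $\lambda$-power series with Lemma \ref{lem:ComputeFTilL}, and extend to all of $\mathbb{C}^2$ by analyticity/polynomiality. The only (cosmetic) difference is that you kill the multi-$\mathrm{e}_2$ terms by the explicit matrix computation $DA^kD=0$, while the paper does the same bookkeeping at the Lie-series level via the decomposition $\tilde{L}=\sum_j c_j\,\mathrm{ad}_{\mathrm{e}_1}^j(\mathrm{e}_2)+L'$.
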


\begin{proof}

It suffices to establish the relation for small $\lambda,\mu$, since both sides of the identity define entire functions in $(\lambda,\mu)\in\mathbb{C}^2$. To this end, note that the logarithmic signature $\tilde{L}(\gamma)$ admits a unique decomposition \[
\tilde{L}(\gamma)=\sum_{j=0}^{\infty}c_j{\rm ad}_{{\rm e}_{1}}^{j}({\rm e}_{2})+L',
\]where $L'$ consists of those tensor components with at least two $\mathrm{e}_2$'s. After applying $\hat{F}_{\lambda,\mu}$, one finds that 
\[
C_{N}(\lambda,\mu)=\mu\sum_{j=0}^{N-1}c_{j}\lambda^{j}
\]
for all small $\lambda,\mu.$ According to Lemma \ref{lem:ComputeFTilL},
the $\lambda$-power series of $\hat{F}_{\lambda,\mu}(\tilde{L}(\gamma))$
(for fixed $\mu$) is given by 
\[
\mu D\sum_{j=0}^{\infty}\frac{\lambda^{j}}{j!}\int_{0}^{1}x_{t}^{j}dy_{t}.
\]
By comparing coefficients, one finds that 
\[
c_{j}=\frac{1}{j!}\int_{0}^{1}x_{t}^{j}dy_{t}
\]
and the result thus follows.
\end{proof}
%

\begin{proof}[Proof of Theorem \ref{thm:LineInt}]Suppose that $L(\gamma)$
has infinite R.O.C. Then $L(\gamma)\in{\cal D}(\hat{F}_{\lambda,\mu})$
for all $\lambda,\mu\in\mathbb{C}$. It follows from the relation
(\ref{eq:HausRelationTensor}) that 
\[
\lim_{N\rightarrow\infty}\hat{F}_{\lambda,\mu}\big(\pi^{(N)}B(\tilde{L}(\gamma),{\rm e}_{1})\big)=\hat{F}_{\lambda,\mu}(L(\gamma))\in\mathfrak{g}\subseteq{\cal M}.
\]
According to the definition (\ref{eq:HausRep}) of $B,$ one has 
\[
\pi^{(N)}B(\tilde{L}(\gamma),{\rm e}_{1})=\pi^{(N)}\sum_{n=1}^{\infty}H_{n}(\tilde{L}(\gamma),{\rm e}_{1})=\sum_{n=1}^{N}\pi^{(N)}H_{n}(\pi^{(N)}\tilde{L}(\gamma),{\rm e}_{1}).
\]
Note that the right hand side only contains finitely many terms from the expansion of $H_{n}$.
Due to the relations (\ref{eq:FPiNTilL}) and $[A,D]=D$, it is easily
seen that
\[
H_{n}\big(\hat{F}_{\lambda,\mu}\big(\pi^{(N)}\tilde{L}(\gamma)\big),\lambda A\big)=0\ \ \ \forall n\geqslant2.
\]
As a result, according to the formulas (\ref{eq:H1Formula}) and (\ref{eq:FPiNIdentity})
one finds that
\[
\hat{F}_{\lambda,\mu}\big(\pi^{(N)}B(\tilde{L}(\gamma),{\rm e}_{1})\big)=D\sum_{m=0}^{N-1}\frac{B_{m}}{m!}\lambda^{m}C_{N-m}(\lambda,\mu)=\mu D\sum_{m=0}^{N-1}\frac{B_{m}}{m!}\lambda^{m}\sum_{j=0}^{N-m-1}\int_{0}^{1}\frac{x_{t}^{j}}{j!}dy_{t}.
\]
By taking $N\rightarrow\infty$, one arrives at the identity
\[
\hat{F}_{\lambda,\mu}(L(\gamma))=\mu D\cdot\lim_{N\rightarrow\infty}\sum_{m=0}^{N-1}\frac{B_{m}}{m!}\lambda^{m}\sum_{j=0}^{N-m-1}\frac{\lambda^{j}}{j!}\int_{0}^{1}x_{t}^{j}dy_{t}.
\]
Note that the above limit exists for all $\lambda,\mu\in\mathbb{C}$.

The next observation is that the expression
\[
\sum_{m=0}^{N-1}\frac{B_{m}}{m!}\lambda^{m}\sum_{j=0}^{N-m-1}\frac{\lambda^{j}}{j!}\int_{0}^{1}x_{t}^{j}dy_{t}=\sum_{r=0}^{N-1}\big(\sum_{l=0}^{r}\frac{B_{l}}{l!(m-l)!}\int_{0}^{1}x_{t}^{r-l}dy_{t}\big)\lambda^{r}
\]
is precisely the partial sum of the power series defined by expanding
the product
\[
\big(\sum_{m=0}^{\infty}\frac{B_{m}}{m!}\lambda^{m}\big)\big(\sum_{j=0}^{\infty}\frac{\lambda^{j}}{j!}\int_{0}^{1}x_{t}^{j}dy_{t}\big).
\]
For fixed $\mu\in\mathbb{C}$ and $|\lambda|<2\pi$, both of the above power series are absolutely convergent, yielding that
\[
\hat{F}_{\lambda,\mu}\big(L(\lambda)\big)=\mu D\cdot\big(\sum_{m=0}^{\infty}\frac{B_{m}}{m!}\lambda^{m}\big)\big(\sum_{j=0}^{\infty}\frac{\lambda^{j}}{j!}\int_{0}^{1}x_{t}^{j}dy_{t}\big)=\frac{\lambda\mu}{e^{\lambda}-1}D\times\int_{0}^{1}e^{\lambda x_{t}}dy_{t}.
\]
Therefore, 
\[
(e^{\lambda}-1)\hat{F}_{\lambda,\mu}\big(L(\gamma)\big)=\mu D\cdot\lambda\int_{0}^{1}e^{\lambda x_{t}}dy_{t}
\]
for fixed $\mu\neq0$ and all $\lambda$ with $|\lambda|<2\pi.$ Now observe
that both sides are entire functions in $\lambda$ (since $L(\gamma)$
has infinite R.O.C.). As a result, the same identity holds for all $\lambda\in\mathbb{C}$.
By taking $\lambda=2k\pi i$ with $k\in\mathbb{Z}\backslash\{0\},$
one concludes that 
\[
\int_{0}^{1}e^{2k\pi i\cdot x_{t}}dy_{t}=0,
\]
hence giving the desired integral property (\ref{eq:LineInt}).

\end{proof}

\section{A strengthened version of the LS conjecture}\label{sec:StrCj}

In this section, we prove a strengthened version of the LS conjecture: having infinite R.O.C. for the logarithmic signature \textit{over all sub-intervals of time} implies that the underlying path must live on a straight line.

\begin{thm}\label{thm:StrCj}Let $\gamma:[0,1]\rightarrow\mathbb{R}^d$ be a continuous path with bounded variation. Suppose that $\log S(\gamma)_{s,t}$ has infinite R.O.C. for all $[s,t]\subseteq[0,1]$. Then  $\gamma_{t}=\gamma_0 + f(t)\cdot\vec{v}$ for some real-valued function
$f:[0,1]\to\mathbb{R}$ and some fixed vector $\vec{v}\in\mathbb{R}^{d}$. 
\end{thm}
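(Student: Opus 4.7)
The plan is to reduce to a two-dimensional analysis via linear projection, exploit Theorem \ref{thm:LineInt} on every subinterval to enforce local linearity, and then glue the local linear pieces globally via a ``corner analysis.''

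First I would reduce to the case $d = 2$. If the image of $\gamma$ is not contained in any affine line of $\mathbb{R}^d$, its affine hull has dimension at least $2$, so there exist three affinely independent points on the image. Projecting onto a two-dimensional subspace that preserves this non-collinearity yields a planar BV path $(x_t, y_t)$ that is not on a line. By Lemma \ref{lem:ProjNormLinear}, linear maps preserve the infinite R.O.C.\ property at the logarithmic signature level (and, applied subinterval by subinterval, preserve it on every $[s,t]$), so it is enough to prove the theorem in dimension two.

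Next I would establish local linearity on any subinterval where $x$ is strictly monotonic. Suppose $[s,t]\subseteq [0,1]$ is such an interval with $x_s\ne x_t$. After affine normalisation of $[s,t]$, Theorem \ref{thm:LineInt} gives $\int_s^t e^{2k\pi i (x_u-x_s)/(x_t-x_s)}\, dy_u=0$ for all $k\in\mathbb{Z}\setminus\{0\}$. Parametrising by $x$, write $y_u = g(x_u)$ for a continuous BV function $g$; subtract the affine interpolant from $g$ and push the resulting signed measure forward to the circle $\mathbb{R}/\mathbb{Z}$. The identity, together with the vanishing of the zeroth Fourier coefficient (which comes from $g_0(0)=g_0(1)=0$), says that every Fourier coefficient of this measure vanishes. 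By Fourier uniqueness for finite measures on the circle (legitimate since $g$ is continuous, so the measure has no atoms at the identified endpoints), the measure is zero, forcing $g$ to be affine. Hence $\gamma$ lies on a straight line over every monotonic subinterval.

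The third step is a corner analysis ruling out two monotonic line pieces $L_1\ne L_2$ meeting at an interior point $\gamma(\tau)$. Since monotonicity of $x$ across $\tau$ would already force $L_1 = L_2$ by the previous step, $\tau$ must be a local extremum of $x$ (or, by the symmetric argument applied to $y$, of $y$); assume a local maximum of $x$. For any nearby $s_0<\tau<t_0$, a direct computation of the integral identity over the two linear pieces yields
\[
\int_{s_0}^{t_0} e^{2\pi i (x_u-x_{s_0})/(x_{t_0}-x_{s_0})}\, dy_u = \frac{x_{t_0}-x_{s_0}}{2\pi i}\bigl(e^{2\pi i\bar{x}^*}-1\bigr)(\alpha_1-\alpha_2),
\]
where $\alpha_i$ is the slope of $L_i$ and $\bar{x}^* = (x_\tau-x_{s_0})/(x_{t_0}-x_{s_0}) > 1$. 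Since $L_1\ne L_2$ meet at a single point we have $\alpha_1\ne\alpha_2$, which forces $\bar{x}^*\in\mathbb{Z}$. But $\bar{x}^*$ varies continuously as we perturb $s_0, t_0$ and can be made non-integer, a contradiction.

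The main obstacle is extending this analysis to arbitrary continuous BV paths. For general $x(\cdot)$, the monotonicity structure may be Cantor-like and the strict-monotonicity subintervals need not cover $[0,1]$ in a way that supports the corner argument. To close this gap, I would exploit the symmetric identity obtained by swapping the roles of $x$ and $y$ (applicable over subintervals with $y_s\ne y_t$), combined with a density/continuity argument using the infinitely many subinterval identities, to show that the ``local line'' assigned at each parameter is well-defined and constant across $[0,1]$. This is the step where the ``fine analysis of path-geometry based on winding-number considerations'' alluded to in the introduction should enter, promoting the local linearity established above to global linearity.
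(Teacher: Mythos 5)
There is a genuine gap, and it sits exactly at the heart of the theorem. Your Steps 1--3 are fine as far as they go: the Fourier--Stieltjes uniqueness argument correctly shows that on any subinterval where $x$ is \emph{strictly monotone} the path lies on a line, and the corner computation correctly rules out two distinct linear pieces meeting at a local extremum of $x$. But a continuous BV path need not be strictly monotone in any coordinate on any subinterval whatsoever (continuous BV functions can oscillate on every interval), so the local linearity step may apply to the empty family of intervals, and the corner analysis presupposes a piecewise-linear local structure that you have not established. Your final paragraph, which is supposed to treat precisely this general case, is a plan rather than a proof: ``a density/continuity argument'' and an appeal to winding numbers are named but no construction is given. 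Since all the known special cases (monotone and piecewise linear paths) were already handled by Lyons--Sidorova, the part you leave open is the entire content of Theorem \ref{thm:StrCj}. There is also a structural obstruction to your strategy as stated: over a fixed interval, the first-order identities (and indeed any signature information) cannot distinguish the path from one decorated by cancelling (tree-like) excursions, so outside the strictly monotone regime they can only ever yield linearity ``in the weak sense'' of one-form integrals; some genuinely geometric input is needed to upgrade and to propagate.

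For comparison, the paper's proof never uses monotonicity. It reparametrises at unit speed, works near a point of differentiability (which always exists), and shows via Lemma \ref{lem:Cone} that the normalised piece lies in a thin cone; Theorem \ref{thm:GenLineInt} then forces, through Lemma \ref{lem:WindApp} and Proposition \ref{thm:WindingZero}, that the winding number of the closed-up path vanishes identically, whence by Green's theorem the piece is a segment in the weak sense (Lemma \ref{lem:AnalyticStraightLine}). The passage from this local weak statement to the global pointwise conclusion is done by the two-ball periodic one-form construction of Lemmas \ref{lem:NormalisedDist} and \ref{lem:KeyDimensionTwoLemma}, which crucially exploits the hypothesis of infinite R.O.C.\ on \emph{other} subintervals to derive a contradiction if the later path ever leaves the line; Theorem \ref{R2generalcase} then glues over differentiability points and the $d$-dimensional case follows by projecting onto coordinate pairs. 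None of these ingredients appears in your proposal, so as written it does not constitute a proof of the theorem.
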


Our main strategy of proving Theorem \ref{thm:StrCj} can be summarised as follows. It is based on non-trivial applications of Theorem \ref{thm:GenLineInt} together with winding number considerations. 

\begin{enumerate}
\item We may assume $d=2$ and parametrise the path $\gamma$ at unit speed. Let $s$ be an arbitrary point at which $\gamma$ is differentiable and choose a time $T>s$ that is sufficiently close to $s$. We normalise (i.e. rescale and rotate) the path $\gamma|_{[s,T]}$ to obtain a new path $\beta$ which starts at $(0,0)$ and ends at $(1,0)$. Since $|\gamma'_s|=1$ and $T\approx s$, in the rescaled picture the new path $\beta$  stays inside a cone region $\{(x,y):x\in[0,1],|y|<cx\}$ for some small number $c$. 

\item For each $(x,y)\in\mathbb{R}^2$, let $w(x,y)$ denote the winding number of the path $\tilde{\beta}\triangleq\beta\sqcup\overleftarrow{{\rm e}_{1}}$ around the point $(x,y)$. Since the path $\beta$ satisfies the generalised integral condition (\ref{eq:GenLineInt}), one can show   that the function $(x,y)\mapsto w(x,y)$ depends only on $y$. As a result, $w(x,y)$ has to vanish identically (because for any $(x,y)$ inside the cone there is some $(x',y)$ at the same level which is outside the cone and the winding number of $\tilde{\beta}$ around that point is obviously zero). This further implies that $\beta$ is the line segment ${\rm e}_1$ \textit{in the weak sense} that the line integrals along $\beta$ and ${\rm e}_1$ are identical for any smooth one-form. Transferring back to the original path $\gamma$, one concludes that $\gamma|_{[s,T]}$ is a line segment in the above weak sense. Call this line $L$.

\item The path $\gamma_{[T,1]}$ has to be entirely contained in $L$ and we prove this by contradiction. If this is not true, there exists a time $t>T$ such that $\gamma_t\notin L$. By normalising the path $\gamma|_{[s,t]}$ to a new path $\zeta$ which starts from the origin and ends at $(1,0)$, the line $L$ is transformed to another line $L'$ which is not horizontal. Draw a tiny ball $B_1$ centered at the mid-point of $L'|_{[s,T]}$ and make a copy $B_2$ at the same vertical so that $B_2\cap L'|_{[s,T]}=\emptyset$. This is possible since $L'$ is not horizontal.

\item Choose an arbitrary one-form $\Psi$ supported on $B_1$ whose line integral along $L'|_{[s,T]}$ is non-zero. Modify its values on $B_2$ to obtain a new one-form $\Phi$ which satisfies the two conditions of Theorem \ref{thm:GenLineInt}. The conclusion of Point 2 and the construction of $\Phi$  implies that\[
\int_{s}^{t}\Phi(d\zeta_u)=\int_{s}^{T}\Psi(dL'_{u})\neq0.
\]This contradicts the conclusion of Theorem \ref{thm:GenLineInt} since the logarithmic signature of $\zeta$ has infinite R.O.C. Therefore, $\gamma|_{[T,1]}$ has to be contained in the line $L$. Since the differentiable point $s$ is  arbitrary and $T\approx s$, this implies that the entire path $\gamma$ lives on a single well-defined straight line. 

\end{enumerate}

The rest of this section is devetoed to the proof of Theorem \ref{thm:StrCj}.

\subsection{An application of the integral condition to winding number}

In this subsection, we  derive a simple application of the generalised line integral condition (\ref{eq:GenLineInt}) to the winding number. Such a property will play a key role in our proof of Theorem \ref{thm:StrCj}.

Let $\gamma:\left[0,1\right]\rightarrow\mathbb{C}$ be a continuous BV path such that
$\gamma_{0}=\gamma_{1}$.  Given $\left(x,y\right)\notin\gamma\left[0,1\right]$,
the \textit{winding number} of $\gamma$ around $\left(x,y\right)$ is defined by
\begin{equation}
\eta\left(\gamma,\left(x,y\right)\right)\triangleq\frac{1}{2\pi i}\int\frac{1}{\gamma_{t}-\left(x+yi\right)}d\gamma_{t}.\label{eq:WindingDefinition}
\end{equation}
The winding number of $\gamma=\left(\gamma^{1},\gamma^{2}\right):\left[0,1\right]\rightarrow\mathbb{R}^{2}$
is defined as the winding number of the path $t\mapsto\gamma_{t}^{1}+i\gamma_{t}^{2}$.
We will use the following properties of the winding number. 
\begin{enumerate}
\item If there is a simply-connected set $K$ such that $\gamma\left[0,1\right]\subseteq K$,
then $\eta\left(\gamma,\left(x,y\right)\right)=0$  for all $\left(x,y\right)\notin K$.
This is a consequence of Cauchy's theorem.

\item Let $R:\mathbb{R}^{2}\rightarrow\mathbb{R}^{2}$ be a rotation
 and $\lambda\in\mathbb{R}\backslash\left\{ 0\right\} $. Then
\[
\eta\left(\lambda R\left(\gamma\right),\left(x,y\right)\right)=\eta\left(\gamma,\lambda^{-1}R^{-1}\left(x,y\right)\right).
\]
This follows from the definition (\ref{eq:WindingDefinition}) of winding number. 

\item (Green's theorem for self-intersecting paths)
For any smooth functions $f,g:\mathbb{R}^{2}\rightarrow\mathbb{R}$, one has
\[
\int_{\mathbb{R}^{2}}\left(\partial_{x}f\left(x,y\right)+\partial_{y}g\left(x,y\right)\right)\eta\left(\gamma,\left(x,y\right)\right)dxdy=\int fdy_{s}-gdx_{s}.
\]
In addition, the function
\[
(x,y)\rightarrow \eta\left(\gamma,\left(x,y\right)\right).
\]
is square integrable with respect to the Lebesgue measure on $\mathbb{R}^2$. This fact can be found in \cite[Theorem 15]{BNQ14} and references therein. 
\end{enumerate}

Let $\gamma:[0,1]\rightarrow\mathbb{R}^2$ be a continuous BV path such
that $\gamma_{0}=(0,0)$ and $\gamma_{1}=(1,0)$. 
Recall that $\tilde{\gamma}\triangleq\gamma\sqcup\overleftarrow{{\rm e}_{1}}$ and we parametrise it on $[0,1]$. More precisely, it is defined by
\[
\tilde{\gamma}_{t}\triangleq\begin{cases}
\gamma_{2t}, & t\in [0,1/2]\\
\gamma_{1}-\left(\gamma_{1}-\gamma_{0}\right)(2t-1), & t\in[1/2,1].
\end{cases}
\]
Suppose that the generalised line integral condition (\ref{eq:GenLineInt})
holds for $\gamma$. Then the same condition also holds for $\tilde{\gamma}$. This is due to the facts that (\ref{eq:GenLineInt}) 
holds for both $\gamma$ and $\beta\triangleq[t\mapsto\left(1-t,0\right)]$
and that 
\[
\int \Phi\left(d\tilde{\gamma}\right)=\int \Phi\left(d\gamma\right)+\int \Phi\left(d\beta\right).
\]
We now state the main application of (\ref{eq:GenLineInt}) to the winding number. 
\begin{lem}\label{lem:WindApp}
Let $\gamma$ be a continuous BV path such that $\gamma_{0}=\left(0,0\right)$,
$\gamma_{1}=\left(1,0\right)$ and 
\[
\gamma\left[0,1\right]\subseteq\left[0,1\right]\times\mathbb{R},
\]
with $\gamma_{t}\in\left\{ 1\right\} \times\mathbb{R}$ if and only
if $t=1$. Let $\eta$ be the winding number of $\tilde{\gamma}$
around the point $\left(x,y\right)$. Suppose that $\gamma$ satisfies the generalised line integral condition (\ref{eq:GenLineInt}). Then for $\left(x,y\right)\in\left(0,1\right)\times\mathbb{R}\backslash\tilde{\gamma}\left[0,1\right]$, one has
\[
\eta\left(\tilde{\gamma},\left(x,y\right)\right)=\int_{0}^{1}\eta\left(\tilde{\gamma},\left(v,y\right)\right)dv.
\]
\end{lem}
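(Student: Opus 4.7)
The plan is to translate the line-integral vanishing provided by Theorem~\ref{thm:GenLineInt} (which, by the remark just before the lemma, applies to $\tilde{\gamma}$) into a distributional orthogonality condition on the winding number $\eta(\tilde{\gamma},\cdot)$ via Green's theorem, and then to localize the result at $(x_{0},y_{0})$ using a test function built from two opposing bumps.

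First, for any smooth one-form $\Phi = F\,dx + G\,dy$, Green's formula for self-intersecting paths (property~3) gives
\[
\int\Phi(d\tilde{\gamma}) = \int_{\mathbb{R}^{2}}(\partial_{x}G - \partial_{y}F)(x,y)\,\eta(\tilde{\gamma},(x,y))\,dx\,dy.
\]
Taking $F \equiv 0$ and $G$ any smooth function that is $1$-periodic in $x$ fulfills hypotheses (i)--(ii) of Theorem~\ref{thm:GenLineInt} (condition (ii) is vacuous since the $dx$-coefficient vanishes), so the line integral is zero and hence
\[
\int_{\mathbb{R}^{2}} h(x,y)\,\eta(\tilde{\gamma},(x,y))\,dx\,dy = 0
\]
for every smooth $h$ that is $1$-periodic in $x$ and satisfies $\int_{0}^{1}h(x,y)\,dx = 0$ for every $y$ -- indeed any such $h$ is of the form $\partial_{x}G$ with $G(x,y) \triangleq \int_{0}^{x} h(s,y)\,ds$ automatically $1$-periodic in $x$ because of the zero-mean condition on $h$. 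I also record via property~1 that $\eta(\tilde{\gamma},(x,y)) = 0$ for $x\notin[0,1]$, since the two half-planes $\{x<0\}$ and $\{x>1\}$ are simply connected and disjoint from $\tilde{\gamma}[0,1]$.

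To localize, fix $(x_{0},y_{0})$ as in the lemma and any nearby $(x_{1},y_{0})$ also in $(0,1)\times\mathbb{R}\setminus\tilde{\gamma}[0,1]$ with $x_{1}\neq x_{0}$; choose $\varepsilon,\delta>0$ so small that the two closed boxes $[x_{i}-\varepsilon,x_{i}+\varepsilon]\times[y_{0}-\delta,y_{0}+\delta]$ ($i=0,1$) lie inside $(0,1)\times\mathbb{R}$, are disjoint from each other and from $\tilde{\gamma}[0,1]$, so that $\eta \equiv \eta(\tilde{\gamma},(x_{i},y_{0}))$ on the $i$-th box by local constancy of the integer-valued function $\eta$. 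Letting $\phi_{\varepsilon},\chi_{\delta}$ be smooth bumps of integral $1$ supported in $(-\varepsilon,\varepsilon)$ and $(-\delta,\delta)$ respectively, the test function
\[
h_{\varepsilon,\delta}(x,y) \triangleq \chi_{\delta}(y - y_{0})\bigl[\phi_{\varepsilon}(x - x_{0}) - \phi_{\varepsilon}(x - x_{1})\bigr]
\]
extended $1$-periodically in $x$ lies in the admissible class derived above. Substituting it into the orthogonality relation and using the vanishing of $\eta$ for $x\notin[0,1]$ together with the box-constancy of $\eta$ collapses the integral to $\eta(\tilde{\gamma},(x_{0},y_{0}))-\eta(\tilde{\gamma},(x_{1},y_{0}))$, and hence
\[
\eta(\tilde{\gamma},(x_{0},y_{0})) = \eta(\tilde{\gamma},(x_{1},y_{0})).
\]

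Since every admissible $x_{1}\in(0,1)$ is allowed, the function $v\mapsto\eta(\tilde{\gamma},(v,y_{0}))$ takes the constant value $\eta(\tilde{\gamma},(x_{0},y_{0}))$ on the open set $\{v\in(0,1):(v,y_{0})\notin\tilde{\gamma}[0,1]\}$, and integrating delivers $\int_{0}^{1}\eta(\tilde{\gamma},(v,y_{0}))\,dv = \eta(\tilde{\gamma},(x_{0},y_{0}))$, which is the desired identity. The only mildly delicate point, and the one I expect to be the main technical obstacle, is a measure-theoretic check: the conclusion requires the ``bad'' slice $\{v\in(0,1):(v,y_{0})\in\tilde{\gamma}[0,1]\}$ to have one-dimensional Lebesgue measure zero, which can fail only for the (at most countable) set of heights $y_{0}$ at which $\tilde{\gamma}$ traces a horizontal segment. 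Ruling out this pathological case (either by the ambient hypothesis or by a standard perturbative argument using the BV property of $\tilde{\gamma}$) completes the proof.
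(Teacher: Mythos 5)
Up to the localization step your argument is essentially the paper's: you combine Theorem \ref{thm:GenLineInt} (applied to $\tilde{\gamma}$, as the discussion preceding the lemma permits) with Green's theorem for self-intersecting paths to obtain the weak orthogonality $\int_{\mathbb{R}^{2}}h\,\eta\,dx\,dy=0$ for smooth $h$ that are $1$-periodic and of zero mean in $x$; that you reach it through the $dy$-component (one-forms $G\,dy$, so condition (ii) is vacuous) rather than the paper's $dx$-component (one-forms $f\,dx$ with $f(x,y)=g(x,y)-\int_{0}^{1}g(t,y)\,dt$) is an equivalent and perfectly fine variation.

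The genuine gap is in the final step. Your two-bump test function only yields that $v\mapsto\eta(\tilde{\gamma},(v,y_{0}))$ is constant on $\{v\in(0,1):(v,y_{0})\notin\tilde{\gamma}[0,1]\}$; to convert this into $\eta(\tilde{\gamma},(x_{0},y_{0}))=\int_{0}^{1}\eta(\tilde{\gamma},(v,y_{0}))\,dv$ you need the horizontal slice of $\tilde{\gamma}[0,1]$ at height $y_{0}$ to be Lebesgue-null, and neither of your proposed escapes closes this. There is no ``ambient hypothesis'' excluding such heights (indeed in the intended application $\tilde{\gamma}$ contains the whole horizontal segment $\overleftarrow{\mathrm{e}_{1}}$), and your characterization of the bad set is inaccurate: a BV path can meet a horizontal line in a set of positive measure without tracing any horizontal segment (e.g.\ the graph of the distance function to a fat Cantor set), and at such a height there may well exist off-path points $(x_{0},y_{0})$ for which the lemma must still be proved; no ``standard perturbative argument'' is supplied. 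The paper avoids this issue altogether by a different localization: it tests the already zero-mean function $\eta(\tilde{\gamma},(x,y))-\int_{0}^{1}\eta(\tilde{\gamma},(t,y))\,dt$ against a single mollifier centred at $(x_{0},y_{0})$ (periodized in $x$) and lets the mollification parameter tend to zero, so the $x$-average appears directly in the tested identity instead of being reconstructed from a constancy statement. You can repair your proof by the same device within your framework: replace the second bump by the $x$-mean of the first, i.e.\ take $h_{\varepsilon,\delta}(x,y)=\chi_{\delta}(y-y_{0})\bigl[\phi_{\varepsilon}(x-x_{0})-1\bigr]$ on the strip, extended $1$-periodically; this is admissible, and passing to the limit gives the asserted identity using only local constancy of $\eta$ near $(x_{0},y_{0})$ together with the convergence of the $y$-mollification of $y\mapsto\int_{0}^{1}\eta(\tilde{\gamma},(t,y))\,dt$, which is exactly the limit the paper itself takes.
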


\begin{proof}
Let $g$ be a smooth function satisfying $g\left(x+1,y\right)=g\left(x,y\right)$ and define $f\left(x,y\right)\triangleq g\left(x,y\right)-\int_{0}^{1}g\left(t,y\right)dt$. Write $\gamma_t = (x_t,y_t)$.
According to the integral condition (\ref{eq:GenLineInt}), one has 
\[
\int_{0}^{1}g\left(x_{s},y_{s}\right)dx_{s}-\int_{0}^{1}\big(\int_{0}^{1}g(t,y_{s})dt\big)dx_{s}=0.
\]
It follows from Green's theorem for self-intersecting paths that 
\begin{equation}\label{eq:WindingPf1}
\int_{\mathbb{R}^{2}}\frac{\partial g}{\partial y}\left(x,y\right)\eta\left(\tilde{\gamma},\left(x,y\right)\right)dxdy=\int_{\mathbb{R}^{2}}\big(\int_{0}^{1}\frac{\partial g}{\partial y}\left(t,y\right)\mathrm{d}t\big)\eta\left(\tilde{\gamma},\left(x,y\right)\right)dxdy.
\end{equation}
Since $\tilde{\gamma}\left[0,1\right]\subseteq\left[0,1\right]\times\mathbb{R}$ and $\eta\left(\tilde{\gamma},\left(x,y\right)\right)=0$ for all $\left(x,y\right)\notin\left[0,1\right]\times\mathbb{R}$, the relation (\ref{eq:WindingPf1}) can be rewritten as 
\begin{align*}
\int_{\mathbb{R}}\int_{0}^{1}\frac{\partial g}{\partial y}\left(x,y\right)\eta\left(\tilde{\gamma},\left(x,y\right)\right)dxdy = \int_{\mathbb{R}}\int_{0}^{1}\frac{\partial g}{\partial y}\left(t,y\right)\big(\int_{0}^{1}\eta\left(\tilde{\gamma},\left(x,y\right)\right)\mathrm{d}x\big)dtdy,
\end{align*}
or equivalently,  
\begin{equation}\label{eq:WindingPf2}
\int_{\mathbb{R}}\int_{0}^{1}\frac{\partial g}{\partial y}\left(x,y\right)\big[\eta\left(\tilde{\gamma},\left(x,y\right)\right)-\int_{0}^{1}\eta\left(\tilde{\gamma},\left(t,y\right)\right)dt\big]dxdy=0.
\end{equation}
Let $h$ be a smooth function such that $h\left(x+1,y\right)= h\left(x,y\right).$
By applying the relation (\ref{eq:WindingPf2}) to the function $g\left(x,y\right)\triangleq\int_{0}^{y}h\left(x,t\right)dt$, one finds that 
\[
\int_{\mathbb{R}}\int_{0}^{1}h\left(x,y\right)\big[\eta\left(\tilde{\gamma},\left(x,y\right)\right)-\int_{0}^{1}\eta\left(\tilde{\gamma},\left(t,y\right)\right)dt\big]dxdy=0.
\]

Now fix any $\left(x_{0},y_{0}\right)\in\left(0,1\right)\times\mathbb{R}\backslash\tilde{\gamma}\left[0,1\right]$
and let $\left(h_{\varepsilon}\right)_{\varepsilon>0}$ be a standard mollifier. Define \[
\tilde{h}_{\varepsilon}(x,y)\triangleq h_{\varepsilon}((x,y)-(x_{0},y_{0}))
\]and modify it to be $1$-periodic in $x.$ Then one has 
\[
\int_{\mathbb{R}}\int_{0}^{1}h_{\varepsilon}\left(\left(x,y\right)-\left(x_{0},y_{0}\right)\right)\left(\eta\left(\tilde{\gamma},\left(x,y\right)\right)-\int_{0}^{1}\eta\left(\tilde{\gamma},\left(t,y\right)\right)dt\right)dxdy=0.
\]
By taking $\varepsilon\rightarrow0$, it follows that 
\[
\eta\left(\tilde{\gamma},\left(x_{0},y_{0}\right)\right)-\int_{0}^{1}\eta\left(\tilde{\gamma},\left(t,y_{0}\right)\right)dt=0.
\]The result follows since $(x_0,y_0)$ is arbitrary.
\end{proof}

\subsection{A zero winding lemma }

Due to the conditions in Theorem \ref{thm:GenLineInt}, it is
necessary to rescale and rotate a path $\gamma$ so that it satisfies $\gamma_{0}=\left(0,0\right)$
and $\gamma_{1}=\left(1,0\right)$. We first introduce such a normalising operation. 
\begin{defn}
\label{ofn} Let $\gamma:[0,T]\to\mathbb{R}^{2}$ be a BV path and let $s<t$ be two fixed times such that $\gamma_{s}\neq\gamma_{t}$. We define
the associated
\textit{normalisation operator} $\mathcal{A}_{s,t}$ by 
\[
\mathcal{A}_{s,t}(x)\triangleq\frac{1}{|\gamma_{t}-\gamma_{s}|}R_{s,t}\left(x-\gamma_{s}\right),\ \ \ x\in\mathbb{R}^{2}.
\]
Here $R_{s,t}:\mathbb{R}^{2}\rightarrow\mathbb{R}^{2}$ denotes the rotation
that maps $\gamma_{t}-\gamma_{s}$ to $\left(\left|\gamma_{t}-\gamma_{s}\right|,0\right)$. We use $\mathcal{A}_{s,t}\gamma$ to denote the path $[s,t]\ni u\mapsto{\cal A}_{s,t}(\gamma_{u})$. 

\end{defn}

In this subsection, we will prove the following key result. As usual, we use $\tilde{{\cal A}}_{s,t}\gamma$ to denote the path ${\cal A}_{s,t}\gamma\sqcup\overleftarrow{{\rm e_{1}}}$ (we also assume that it is parametrised on $[s,t]$). Recall that $\eta(\tilde{{\cal A}}_{s,t}\gamma,(x,y))$ is the winding number of $\tilde{{\cal A}}_{s,t}\gamma$ around the point $(x,y)$.
\begin{proposition}
\label{thm:WindingZero}Let $\gamma:\left[0,K\right]\rightarrow\mathbb{R}^{2}$
be a continuous BV path which is parametrised at unit speed. Suppose that $\log S(\gamma)_{u,v}$ has infinite R.O.C. on all time intervals $[u,v]\subseteq[0,K]$. 
Let $s$ be a given fixed time at which $\gamma$ is differentiable and let  $\varepsilon\in(0,1/6]$. Let $T\in(s,K]$ be chosen such that 
\begin{align}
\left|\gamma_{t}-\gamma_{s}-\gamma_{s}^{\prime}\left(t-s\right)\right| & \leqslant\varepsilon\left(t-s\right)\label{eq:Differentiable},\\
\Big|\Big|\frac{\gamma_{t}-\gamma_{s}}{t-s}\Big|-1\Big| & \leqslant\varepsilon\nonumber 
\end{align}for all $t\in (s,T]$ 
and that
$\left|\gamma_{t}-\gamma_{s}\right|<\left|\gamma_{T}-\gamma_{s}\right|$ for all $s\in(t,T)$.
Then one has
\[
\eta\big(\tilde{{\cal A}}_{s,T}\gamma,(x,y)\big)=0
\]
for all $(x,y)\in(0,1)\times\mathbb{R}\backslash(\tilde{{\cal A}}_{s,T}\gamma)[s,T].$ 

\end{proposition}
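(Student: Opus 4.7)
The plan is to apply Lemma \ref{lem:WindApp} to the normalised path $\beta \triangleq \mathcal{A}_{s,T}\gamma$ (reparametrised on $[0,1]$ if convenient) and then, for each relevant height $y$, locate a point at that height in the unbounded component of $\mathbb{R}^2 \setminus \tilde{\beta}[s,T]$, which forces the winding number to vanish there.

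First I verify the preconditions of Lemma \ref{lem:WindApp}. By construction $\beta$ starts at $(0,0)$ and ends at $(1,0)$. Since $\mathcal{A}_{s,T}$ is the composition of a rotation and a rescaling, Lemma \ref{lem:ProjNormLinear} transfers the infinite R.O.C. of $\log S(\gamma)_{s,T}$ to $\log S(\beta)$, so $\beta$ satisfies the generalised line integral condition of Theorem \ref{thm:GenLineInt}, which is the analytic hypothesis of Lemma \ref{lem:WindApp}. For the geometric hypothesis, the first-return condition immediately yields $|\beta_t| < 1$ strictly on the interior. Writing $\gamma_t - \gamma_s = (t-s)\gamma'_s + r(t)$ with $|r(t)| \leq \varepsilon(t-s)$ and noting that $R_{s,T}$ sends $\gamma_T - \gamma_s$ to the positive horizontal axis, a short calculation gives $R_{s,T}(\gamma'_s) = (a,b)$ with $a \geq 1 - 2\varepsilon$ and $|b| = \sqrt{1-a^2} \leq 2\sqrt{\varepsilon}$. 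Combining with $|\gamma_T - \gamma_s| \in [(1-\varepsilon)(T-s), (1+\varepsilon)(T-s)]$, and writing $\beta_t = (\beta_t^1, \beta_t^2)$, one obtains the cone estimate
\[
\beta_t^1 \in [0,1], \qquad |\beta_t^2| \leq c\, \beta_t^1,
\]
where $c = c(\varepsilon) < \infty$; the threshold $\varepsilon \leq 1/6$ ensures $1 - 3\varepsilon > 0$, so $\beta_t^1$ is strictly positive on $(s,T]$ and equals $1$ only at $t = T$. This meets the hypotheses of Lemma \ref{lem:WindApp}.

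Applying that lemma, $w(y) \triangleq \eta(\tilde\beta, (x,y))$ depends only on $y$ on $(0,1)\times\mathbb{R} \setminus \tilde{\beta}[s,T]$. I show $w(y) = 0$ by cases. If $|y| \geq c$, the entire horizontal line at height $y$ misses $\tilde{\beta}[s,T]$ (the forward part lies in $\{|y| \leq cx, x \leq 1\}$ and the backward segment lies on $\{y = 0\}$), so the line is contained in the unbounded component of the complement and $\eta \equiv 0$ on it. If $0 < |y| < c$, choose any $x_0 \in (0, |y|/c)$: then $(x_0, y)$ lies strictly outside the cone and off the horizontal segment, and the whole ray $\{(x, y) : x \leq x_0\}$ sits in the complement of $\tilde{\beta}[s,T]$, placing $(x_0, y)$ in the unbounded component. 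Hence $w(y) = \eta(\tilde\beta, (x_0, y)) = 0$. The case $y = 0$ is vacuous, since every $(x, 0)$ with $x \in (0,1)$ already lies on the backward segment and thus on the image of $\tilde{\beta}$.

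The technical heart of the argument is the cone estimate in the second paragraph. The inequalities $a \geq 1 - 2\varepsilon$ and $|b| \leq 2\sqrt{\varepsilon}$ follow respectively from the construction of $R_{s,T}$ and the unit-speed normalisation $|\gamma'_s| = 1$; the specific bound $\varepsilon \leq 1/6$ in the hypothesis is dictated by the need for $a - \varepsilon > 0$, which keeps both the first coordinate of $\beta$ strictly positive on $(s, T]$ and the cone aperture finite. Once these geometric bounds are in place, the winding-number conclusion is purely topological: Lemma \ref{lem:WindApp} reduces $\eta$ to a function of $y$ alone, and at every nonzero height within the cone one can escape to infinity along a leftward ray inside the complement.
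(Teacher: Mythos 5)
Your argument is correct and follows essentially the same route as the paper: you establish the cone containment of $\mathcal{A}_{s,T}\gamma$ (the paper's Lemma \ref{lem:Cone}, rederived with a slightly larger but still finite aperture), verify the hypotheses of Lemma \ref{lem:WindApp} via Lemma \ref{lem:ProjNormLinear} and Theorem \ref{thm:GenLineInt}, and conclude by comparing each point with one at the same height outside the cone, where the winding number vanishes. The only cosmetic difference is that your cone constant is of order $\sqrt{\varepsilon}$ rather than the paper's $2\varepsilon/(1-3\varepsilon)$, which is immaterial here since only finiteness of the aperture is used.
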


The proof of Prposition \ref{thm:WindingZero} relies on the following key observation which asserts that the path $\mathcal{A}_{s,T}\gamma$
lies inside a cone region.

\begin{lem}
\label{lem:Cone}Let $\gamma,s,T,\varepsilon$ be as in Proposition \ref{thm:WindingZero}.
Then
\[
\left(\mathcal{A}_{s,T}\gamma\right)\left[s,T\right]\subseteq\Big\{ \left(x,y\right):\left|y\right|<\frac{2\varepsilon}{1-3\varepsilon}x\Big\} .
\]
\end{lem}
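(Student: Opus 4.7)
The plan is to exploit the differentiability condition to show that every tangent vector $(\gamma_t-\gamma_s)/(t-s)$ is close (uniformly in $t$) to the unit vector $\gamma'_s$, and then track how this closeness is preserved under the rotation $R_{s,T}$ which aligns $\gamma_T-\gamma_s$ with the positive $x$-axis.

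The starting point is the decomposition $\gamma_t-\gamma_s = (t-s)\gamma'_s + E_t$ with $|E_t|\leqslant\varepsilon(t-s)$, which is just a rewriting of the differentiability assumption (\ref{eq:Differentiable}). Applying $R_{s,T}$ and writing $\vec{v}\triangleq R_{s,T}(\gamma'_s)=(v_1,v_2)$, the case $t=T$ yields
\[
(|\gamma_T-\gamma_s|,0)=(T-s)(v_1,v_2)+R_{s,T}(E_T),
\]
from which I would read off that $|v_2|\leqslant\varepsilon$ (comparing second coordinates and dividing by $T-s$), and combining with $|\gamma_T-\gamma_s|\geqslant(1-\varepsilon)(T-s)$ from the second smallness assumption and $|R_{s,T}(E_T)|\leqslant\varepsilon(T-s)$, that $v_1\geqslant 1-2\varepsilon>0$ for $\varepsilon\leqslant 1/6$. (Note $|\vec v|=1$ since $|\gamma'_s|=1$ and $R_{s,T}$ is a rotation.)

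Now for arbitrary $t\in(s,T]$, applying $R_{s,T}$ to the decomposition and dividing by $|\gamma_T-\gamma_s|$ gives the coordinates of $(\mathcal{A}_{s,T}\gamma)_t=(x_t,y_t)$ as
\[
x_t=\frac{(t-s)v_1+f_1}{|\gamma_T-\gamma_s|},\qquad y_t=\frac{(t-s)v_2+f_2}{|\gamma_T-\gamma_s|},\qquad|f_i|\leqslant\varepsilon(t-s).
\]
The lower bound $x_t\geqslant(1-3\varepsilon)(t-s)/|\gamma_T-\gamma_s|>0$ and the upper bound $|y_t|\leqslant 2\varepsilon(t-s)/|\gamma_T-\gamma_s|$ then combine to give $|y_t|/x_t\leqslant 2\varepsilon/(1-3\varepsilon)$ for every $t\in(s,T]$, which is the desired cone containment (the apex $t=s$ maps to the origin and is interpreted as sitting on the cone's tip).

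There is no real obstacle here since the estimates are entirely elementary; the only thing to be careful about is verifying $v_1>0$ (so that $x_t>0$ and the cone inequality makes sense), which is why the hypothesis $\varepsilon\leqslant 1/6$ comfortably suffices through the combined margin $1-2\varepsilon-\varepsilon=1-3\varepsilon>0$.
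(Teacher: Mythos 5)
Your proposal is correct and follows essentially the same route as the paper's proof: both rest on the differentiability estimate (\ref{eq:Differentiable}), its $t=T$ instance combined with $|\gamma_T-\gamma_s|\geqslant(1-\varepsilon)(T-s)$ to pin the rescaled direction to $(1,0)$, and then elementary coordinate bounds producing the same constant $\frac{2\varepsilon}{1-3\varepsilon}$. The only cosmetic difference is that you bound the components of $R_{s,T}\gamma'_s$ directly, whereas the paper compares $(\mathcal{A}_{s,T}\gamma)_t$ to $(t-s)\big(\tfrac{1}{T-s},0\big)$ via a triangle inequality; like the paper's own argument, yours yields the non-strict version of the cone inequality, so this is not a gap relative to the paper.
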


\begin{proof}
By using (\ref{eq:Differentiable}), one has 
\[
\left|\frac{1}{\left|\gamma_{T}-\gamma_{s}\right|}R_{s,T}\left(\gamma_{t}-\gamma_{s}\right)-\frac{R_{s,T}\gamma_{s}^{\prime}}{\left|\gamma_{T}-\gamma_{s}\right|}\left(t-s\right)\right|\leqslant\varepsilon\frac{\left(t-s\right)}{\left|\gamma_{T}-\gamma_{s}\right|}
\]
and hence 
\[
\left|\mathcal{A}_{s,T}\left(\gamma_{t}\right)-\mathcal{A}_{s,T}(\gamma_{s}^{\prime})\left(t-s\right)\right|\leqslant\varepsilon\frac{\left(t-s\right)}{\left|\gamma_{T}-\gamma_{s}\right|}.
\]
In particular, by taking $t=T$ and dividing the equation by $T-s$, one obtains that
\[
\left|\frac{1}{T-s}\left(1,0\right)-\mathcal{A}_{s,T}(\gamma_{s}^{\prime})\right|\leqslant\varepsilon\frac{1}{\left|\gamma_{T}-\gamma_{s}\right|}.
\]
Therefore, 
\begin{align*}
 & \Big|\left(\mathcal{A}_{s,T}\gamma\right)_{t}-\left(t-s\right)\Big(\frac{1}{T-s},0\Big)\Big|\\
 & \leqslant\left|\left(\mathcal{A}_{s,T}\gamma\right)_{t}-\mathcal{A}_{s,T}(\gamma_{s}^{\prime})\left(t-s\right)\right|+\Big|\Big(\frac{1}{T-s}\left(1,0\right)-\left(\mathcal{A}_{s,T}\gamma\right)_{s}^{\prime}\Big)\left(t-s\right)\Big|\\
 & \leqslant\frac{2\varepsilon\left(t-s\right)}{\left(1-\varepsilon\right)\left(T-s\right)}.
\end{align*}
Let us use $x^{1}$ to denote the first coordinate of $x\in\mathbb{R}^{2}$
and similarly for $x^{2}$. Then one has
\begin{align*}
\Big|\left(\mathcal{A}_{s,T}\gamma\right)_{t}^{1}-\frac{t-s}{T-s}\Big| & \leqslant\frac{2\varepsilon\left(t-s\right)}{\left(1-\varepsilon\right)\left(T-s\right)},\\
\big|\left(\mathcal{A}_{s,T}\gamma\right)_{t}^{2}\big| & \leqslant\frac{2\varepsilon\left(t-s\right)}{\left(1-\varepsilon\right)\left(T-s\right)}.
\end{align*}
Note that for $\varepsilon\in(0,1/6]$, the first inequality
implies that
\[
t-s\leqslant\frac{\left(\mathcal{A}_{s,T}\gamma\right)_{t}^{1}}{1-\frac{2\varepsilon}{1-\varepsilon}}(T-s)
\]
and therefore 
\[
\left|\left(\mathcal{A}_{s,T}\gamma\right)_{t}^{2}\right|\leqslant\frac{2\varepsilon}{1-3\varepsilon}\left(\mathcal{A}_{s,T}\gamma\right)_{t}^{1}.
\]This shows that $\mathcal{A}_{s,T}\gamma$ lies in the desired cone region.
\end{proof}
\begin{proof}[Proof of Proposition \ref{thm:WindingZero}] By Lemma \ref{lem:Cone}, one has
\[
\big(\tilde{\mathcal{A}}_{s,T}\gamma\big)[s,T]\subseteq\Big\{ \left(x,y\right):|y|<\frac{2\varepsilon}{1-3\varepsilon}x\Big\} =: \mathcal{C} .
\]
As a consequence of Property 1 for the winding number,  
\begin{equation}\label{eq:ZeroWind}
\eta\big(\tilde{{\cal A}}_{s,T}\gamma,(x,y)\big)=0
\end{equation}for all $(x,y)\notin\mathcal{C}$. Now suppose that \[
(x,y)\in{\cal C}\bigcap\big((0,1)\times\mathbb{R}\backslash(\tilde{{\cal A}}_{s,T}\gamma)[s,T]\big).
\]There exists some $x'\in(0,x)$ such that $(x',y)\notin\mathcal{C}$. According to Lemma \ref{lem:WindApp}, one concludes that \[
\eta\big(\tilde{{\cal A}}_{s,T}\gamma,(x,y)\big)=\eta\big(\tilde{{\cal A}}_{s,T}\gamma,(x',y)\big)=0.
\]
The result of the proposition thus follows. 
\end{proof}

The following result is an immediate application of Proposition \ref{thm:WindingZero}. It shows that infinite R.O.C. for the logarithmic signature implies that the path contains a line segment in a weak sense.
\begin{lem}
\label{lem:AnalyticStraightLine}Let $\gamma:\left[0,K\right]\rightarrow\mathbb{R}^{2}$
be a continuous BV path with unit speed parametrisation. Let $s$ be a point at which
$\gamma$ is differentiable. Let $T\in(s,K]$ be chosen such that \[
\big|\gamma_{t}-\gamma_{s}-\gamma'_{s}(t-s)\big|\leqslant\frac{1}{6}(t-s),\ \Big|\Big|\frac{\gamma_{t}-\gamma_{s}}{t-s}\Big|-1\Big|\leqslant\frac{1}{6}
\]for all $t\in(s,T]$ and
\[
\left|\gamma_{t}-\gamma_{s}\right|<\left|\gamma_{T}-\gamma_{s}\right|
\]for all $t\in(s,T)$. Suppose that $\log S\left(\gamma\right)$ has infinite R.O.C. on
the interval $\left[s,T\right]$. Then the following two statements hold true.

\vspace{2mm}\noindent (i) One has
\[
\eta\left(\tilde{\gamma},\left(x,y\right)\right)=0
\]for all $\left(x,y\right)\in\left(0,1\right)\times\mathbb{R}\backslash\tilde{\gamma}\left[s,T\right]$. Here $\tilde{\gamma}\triangleq\gamma|_{[s,T]}\sqcup(\gamma_{s}-\gamma_{T}).$

\vspace{2mm}\noindent (ii) Let $L$ be the line segment joining $\gamma_{s}$ to $\gamma_{T}$ (also parametrised on $[s,T]$).
Then for any smooth one-form $\Phi(x,y)=f(x,y)dx+g(x,y)dy$, one has 
\[
\int_{s}^{T}\Phi\left(d\gamma_{v}\right)=\int_{s}^{T}\Phi\left(dL_{v}\right).
\]
\end{lem}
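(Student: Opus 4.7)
The plan is to derive both (i) and (ii) from Proposition \ref{thm:WindingZero} combined with Green's theorem for self-intersecting paths (Property 3 of the winding number). The hypotheses on $\gamma$, $s$, $T$ match those of Proposition \ref{thm:WindingZero} with $\varepsilon = 1/6$, and the infinite R.O.C. of $\log S(\gamma)_{s,T}$ is preserved under the linear normalization $\mathcal{A}_{s,T}$ by Lemma \ref{lem:ProjNormLinear}. Statement (i) is then an immediate restatement of Proposition \ref{thm:WindingZero} in the normalized frame; the invariance of the winding number under rotations and rescalings (Property 2) transfers the conclusion between $\tilde{\gamma}$ and $\tilde{\mathcal{A}}_{s,T}\gamma$.

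For (ii), I would first strengthen (i) to zero winding almost everywhere on $\mathbb{R}^2$. By Lemma \ref{lem:Cone}, the normalized path $(\mathcal{A}_{s,T}\gamma)[s,T]$ lies in the cone $\{|y| < (2/3)x\}$, while the extremality condition $|\gamma_t - \gamma_s| < |\gamma_T - \gamma_s|$ for $t \in (s,T)$ translates to $|(\mathcal{A}_{s,T}\gamma)_t| < 1$. Consequently the closed loop $\tilde{\mathcal{A}}_{s,T}\gamma$ is contained in the bounded set $[0,1] \times [-2/3, 2/3]$, and any point $(x,y)$ with $x < 0$, $x > 1$, or $|y| > 2/3$ lies in the unbounded (hence simply connected) component of the complement of the loop, so has vanishing winding number by Property 1. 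Combined with (i), this yields $\eta(\tilde{\mathcal{A}}_{s,T}\gamma, \cdot) = 0$ for almost every point of $\mathbb{R}^2$.

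Applying Green's theorem for self-intersecting paths (Property 3) to $\tilde{\mathcal{A}}_{s,T}\gamma$ with any smooth $f, g : \mathbb{R}^2 \to \mathbb{R}$, the right-hand side becomes zero, so $\int (f\, dy - g\, dx) = 0$ along $\tilde{\mathcal{A}}_{s,T}\gamma$. Since every smooth one-form on $\mathbb{R}^2$ admits a representation $f\, dy - g\, dx$ (take $f = Q$, $g = -P$ for $\Phi = P\, dx + Q\, dy$), this gives $\int \Phi(d\tilde{\mathcal{A}}_{s,T}\gamma) = 0$ for every smooth $\Phi$. Pulling back through the affine isomorphism $\mathcal{A}_{s,T}$ (which induces a bijection on smooth one-forms) upgrades this to $\int \Psi(d\tilde{\gamma}) = 0$ for every smooth $\Psi$. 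Since $\tilde{\gamma}$ is the concatenation of $\gamma|_{[s,T]}$ with the reverse of $L$, the identity rearranges to $\int_s^T \Psi(d\gamma_v) = \int_s^T \Psi(dL_v)$, which is (ii). The main obstacle in this argument is the global zero-winding upgrade, which depends crucially on the cone control from Lemma \ref{lem:Cone} and on the extremality condition on $T$; once zero winding is established almost everywhere, Green's theorem delivers the conclusion in a purely mechanical manner.
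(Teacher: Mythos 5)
Your proposal is correct and follows essentially the same route as the paper: part (i) is obtained from Proposition \ref{thm:WindingZero} together with the behaviour of the winding number under the rotation-and-scaling map $\mathcal{A}_{s,T}$ (Property 2), and part (ii) from Green's theorem for self-intersecting paths once the winding number of the closed loop vanishes almost everywhere. The only difference is that you spell out explicitly (via Lemma \ref{lem:Cone}, the extremality of $T$ and Property 1) why the winding number also vanishes outside the strip $[0,1]\times\mathbb{R}$, a point the paper leaves implicit, so your write-up is if anything slightly more careful than the original.
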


\begin{proof}
(i) This follows from Proposition \ref{thm:WindingZero}, $\tilde{\mathcal{A}}_{s,T}\gamma=\mathcal{A}_{s,T}\tilde{\gamma}$ (up to reparametrisation)
, that $\mathcal{A}_{s,T}$ is a composition of rotation and scaling,
as well as Property 2 for the winding number which describes how the winding number
behaves under rotation and scaling. 

\vspace{2mm}\noindent (ii) According to Green's Theorem for self-intersecting paths,
\[
\int \Phi\left(d\tilde{\gamma}_{v}\right)=\int_{\mathbb{R}^{2}}\big(\frac{\partial f}{\partial x}+\frac{\partial g}{\partial y}\big)\eta\left(\tilde{\gamma},\left(x,y\right)\right)dxdy.
\]
It follows from Part (i) that 
$\int \Phi\left(d\tilde{\gamma}_{v}\right)=0$, or equivalently, 
\[
\int_s^T \Phi\left(d\gamma_{v}\right)=\int_s^T \Phi\left(dL_{v}\right).
\]This proves the desired claim.
\end{proof}

\subsection{Proof of Theorem \ref{thm:StrCj}}

In this subsection, we develop the proof of Theorem \ref{thm:StrCj}. We first state a technical lemma. 
\begin{lem}
\label{lem:NormalisedDist}Let
$\gamma:\left[\tau_{1},\tau_{2}\right]\rightarrow\mathbb{R}^{2}$
be a continuous BV path. There exist  $\delta_1,\delta_2,r\in(0,1)$ such that the following properties hold true. Let $a<b<d$ be elements
of $\left[\tau_{1},\tau_{2}\right]$ such that $\gamma_{a}\neq\gamma_{b}$ 
\[
\left|\gamma_{b}-\gamma_{c}\right|<\delta_{1}\left|\gamma_{b}-\gamma_{a}\right|.
\]for all $c\in[b,d]$.

\vspace{2mm}\noindent (i) One has
\begin{equation}\label{eq:TechLem1}
\left|\mathcal{A}_{a,d}(\gamma_{b})-\mathcal{A}_{a,d}(\gamma_{c})\right|<\frac{\delta_{1}}{1-\delta_{1}},\ \left|\mathcal{A}_{a,d}(\gamma_{c})-\left(1,0\right)\right|<\frac{2\delta_{1}}{1-\delta_{1}}
\end{equation}for all $c\in[b,d]$;

\vspace{2mm}\noindent (ii) One has 
\begin{equation}\label{eq:TechLem21}
\mathcal{A}_{a,d}(m_{a,b})\subseteq\Big(\frac{1}{2}-\frac{\delta_{1}/2}{1-\delta_{1}},\frac{1}{2}+\frac{\delta_{1}/2}{1-\delta_{1}}\Big)\times\mathbb{R}
\end{equation}and 
\begin{equation}\label{eq:TechLem22}
\overline{B\big({\cal A}_{a,d}(m_{a,b}),r\big)}\bigcup\overline{B\big({\cal A}_{a,d}(m_{a,b})-(\delta_{2},0),r\big)}\subseteq(0,1)\times\mathbb{R},
\end{equation}where $m_{a,b}\triangleq(\gamma_a+\gamma_b)/2$ and $B(x,r)$ denotes the ball centered at $x$ with radius $r$;

\vspace{2mm}\noindent (iii) The four sets \[
{\cal A}_{a,d}\gamma|_{[b,d]},\ {\cal A}_{a,d}\gamma|_{[b,d]}-(1,0),\ \overline{B\big({\cal A}_{a,d}(m_{a,b}),r\big)},\ \overline{B\big({\cal A}_{a,d}(m_{a,b})-(\delta_{2},0),r\big)}
\]are disjoint.

\vspace{2mm}\noindent (iv) Let $L^{a,b}$ denote the straight line joining $\mathcal{A}_{a,d}(\gamma_a)$ and $\mathcal{A}_{a,d}(\gamma_b)$. Suppose that $L^{a,b}$ is not horizontal. Then
\begin{equation}\label{eq:TechLem4}
\overline{B\big({\cal A}_{a,d}(m_{a,b})-(\delta_{2},0),r\big)}\bigcap L^{a,b}=\emptyset.
\end{equation}
\end{lem}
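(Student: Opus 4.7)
The plan exploits the fact that $\mathcal{A}_{a,d}$ is an affine isomorphism---a rotation composed with scaling by $1/|\gamma_d-\gamma_a|$ and a translation sending $\gamma_a\mapsto(0,0)$ and $\gamma_d\mapsto(1,0)$. For (i), the triangle inequality applied to the hypothesis with $c=d$ yields $|\gamma_d-\gamma_a|>(1-\delta_1)|\gamma_b-\gamma_a|$; dividing $|\gamma_b-\gamma_c|<\delta_1|\gamma_b-\gamma_a|$ by this scale gives the first bound in (\ref{eq:TechLem1}), and $|\gamma_d-\gamma_c|\leqslant|\gamma_d-\gamma_b|+|\gamma_b-\gamma_c|<2\delta_1|\gamma_b-\gamma_a|$ gives the second. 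For (ii), the affine property yields
\[
\mathcal{A}_{a,d}(m_{a,b})=\tfrac{1}{2}\bigl(\mathcal{A}_{a,d}(\gamma_a)+\mathcal{A}_{a,d}(\gamma_b)\bigr)=\tfrac{1}{2}\mathcal{A}_{a,d}(\gamma_b),
\]
so (\ref{eq:TechLem21}) follows by halving the $c=b$ case of (i). Once $\delta_1$ is fixed small enough (say $\delta_1\leqslant 1/4$), the $x$-coordinate of $\mathcal{A}_{a,d}(m_{a,b})$ is forced into a small interval around $1/2$, and any $r$ and $\delta_2$ with $r+\delta_2$ sufficiently small keeps both balls inside $(0,1)\times\mathbb{R}$, giving (\ref{eq:TechLem22}).

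For (iii), part (i) shows that $\mathcal{A}_{a,d}\gamma|_{[b,d]}\subseteq\overline{B((1,0),2\delta_1/(1-\delta_1))}$ and $\mathcal{A}_{a,d}\gamma|_{[b,d]}-(1,0)\subseteq\overline{B((0,0),2\delta_1/(1-\delta_1))}$, while the other two balls have $x$-projections of length $2r$ centred near $1/2$ and $1/2-\delta_2$. Imposing $\delta_2>2r$ (so the two small balls themselves are disjoint) and requiring $r+2\delta_1/(1-\delta_1)$ to be strictly smaller than the $x$-gaps between the four centres separates all four $x$-projections, and therefore the four sets themselves.

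Property (iv) is the crux. Since $\mathcal{A}_{a,d}(m_{a,b})=\tfrac{1}{2}\mathcal{A}_{a,d}(\gamma_b)$ lies on the segment from $(0,0)$ to $\mathcal{A}_{a,d}(\gamma_b)$, the line $L^{a,b}$ passes through $\mathcal{A}_{a,d}(m_{a,b})$. Letting $\theta$ denote the angle of $L^{a,b}$ with the $x$-axis, a direct computation with the unit normal to $L^{a,b}$ shows that the perpendicular distance from $\mathcal{A}_{a,d}(m_{a,b})-(\delta_2,0)$ to $L^{a,b}$ equals $\delta_2|\sin\theta|$, which is strictly positive precisely when $L^{a,b}$ is non-horizontal. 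Selecting $r<\delta_2|\sin\theta|$ then delivers (\ref{eq:TechLem4}).

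The principal obstacle is that $|\sin\theta|$ depends on the triple $(a,b,d)$ and can be arbitrarily small: the hypothesis forces $\mathcal{A}_{a,d}(\gamma_b)$ close to $(1,0)$, which drives $|\sin\theta|\to 0$ as $\delta_1\to 0$. Consequently the constant $r$ in (iv) cannot be taken uniform over all admissible triples. The correct reading of the lemma is that $\delta_1$ is chosen once and for all (small enough that (i)--(iii) succeed), while $\delta_2$ and $r$ are permitted to depend on the specific triple through the angle of $L^{a,b}$---which is exactly the regime in which the lemma is invoked in the proof of Theorem \ref{thm:StrCj}.
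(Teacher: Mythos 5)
Your argument is correct and essentially identical to the paper's: (i) via the triangle inequality and the scale bound $|\gamma_d-\gamma_a|\geqslant(1-\delta_1)|\gamma_b-\gamma_a|$, (ii)--(iii) by placing $\mathcal{A}_{a,d}(m_{a,b})$ in a narrow vertical strip around $x=1/2$ and the normalised path in a small ball around $(1,0)$ and then shrinking $\delta_1,\delta_2,r$, and (iv) by noting that the shifted centre misses $L^{a,b}$ and reducing $r$ further---the paper's own proof of (iv) likewise says ``by further reducing $r$ if necessary,'' so your observation that $r$ in (iv) cannot be uniform over triples and is only fixed after the triple (exactly how the lemma is invoked in the proof of Theorem \ref{thm:StrCj}) matches the intended use. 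One small slip: halving the $c=b$ case of (i) yields only $|\mathcal{A}_{a,d}(m_{a,b})-(1/2,0)|<\frac{\delta_1}{1-\delta_1}$, not the stated $\frac{\delta_1/2}{1-\delta_1}$; to recover the constant in (\ref{eq:TechLem21}) one should instead halve the first inequality of (i) with $c=d$, i.e. $|\mathcal{A}_{a,d}(\gamma_b)-(1,0)|<\frac{\delta_1}{1-\delta_1}$, as the paper does---a harmless factor-of-two discrepancy with no downstream effect.
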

\begin{figure}[H]   
\begin{center}   
\includegraphics[scale=0.21]{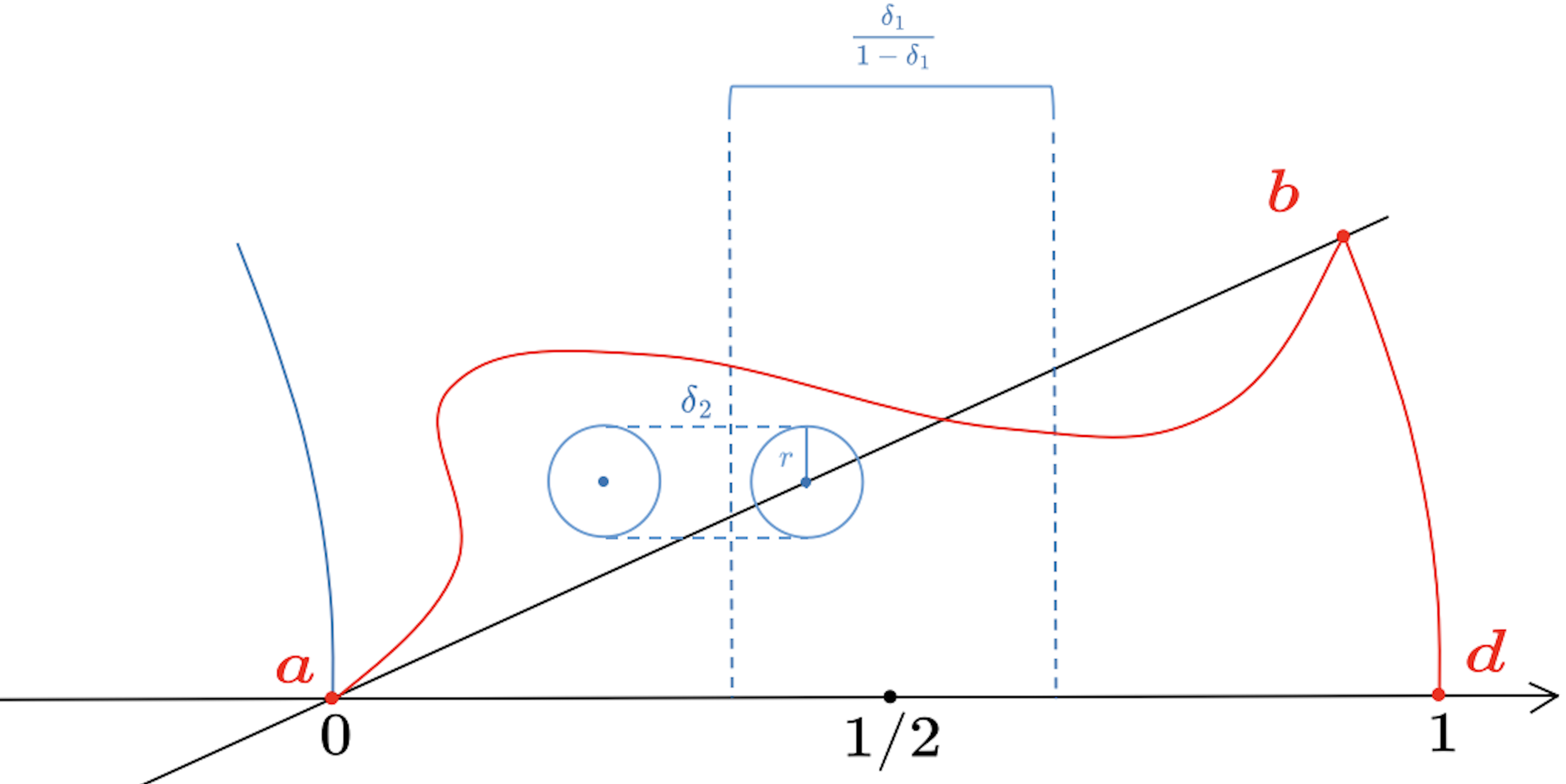}
\end{center} 
\end{figure}
\begin{proof}
(i) Note that $\mathcal{A}_{a,d}\gamma$ is well-defined since \[
|\gamma_{d}-\gamma_{a}|\geqslant|\gamma_{b}-\gamma_{a}|-|\gamma_{b}-\gamma_{d}|\geqslant(1-\delta_{1})|\gamma_{b}-\gamma_{a}|>0.
\]Now for all $c\in[b,d]$, one has
\begin{align*}
\left|\mathcal{A}_{a,d}(\gamma_{b})-\mathcal{A}_{a,d}(\gamma_{c})\right|  =\frac{\left|\gamma_{b}-\gamma_{c}\right|}{\left|\gamma_{d}-\gamma_{a}\right|}
  <\frac{\delta_{1}\left|\gamma_{b}-\gamma_{a}\right|}{\left|\gamma_{b}-\gamma_{a}\right|-\left|\gamma_{d}-\gamma_{b}\right|}
  \leqslant\frac{\delta_{1}}{1-\delta_{1}}
\end{align*}
and
\begin{align*}
\left|\mathcal{A}_{a,d}(\gamma_{c})-\left(1,0\right)\right| & =\left|\mathcal{A}_{a,d}(\gamma_{c})-\mathcal{A}_{a,d}(\gamma_{d})\right|\\
 & \leqslant\left|\mathcal{A}_{a,d}(\gamma_{b})-\mathcal{A}_{a,d}(\gamma_{c})\right|+\left|\mathcal{A}_{a,d}(\gamma_{b})-\mathcal{A}_{a,d}(\gamma_{d})\right|
  <\frac{2\delta_{1}}{1-\delta_{1}}.
\end{align*} 
The desired estimate (\ref{eq:TechLem1}) thus follows.

\vspace{2mm}\noindent (ii) The relation (\ref{eq:TechLem21}) follows since 
\begin{align*}
\mathcal{A}_{a,d}(m_{a,b})= & \frac{1}{2}\big(\mathcal{A}_{a,d}(\gamma_{a})+\mathcal{A}_{a,d}(\gamma_{b})\big)\\
= & \frac{1}{2}\big(\mathcal{A}_{a,d}(\gamma_{b})-\mathcal{A}_{a,d}(\gamma_{d})\big)+\frac{1}{2}\mathcal{A}_{a,d}(\gamma_{d})\\
= & \frac{1}{2}\big(\mathcal{A}_{a,d}(\gamma_{b})-\mathcal{A}_{a,d}(\gamma_{d})\big)+\frac{1}{2}\left(1,0\right).
\end{align*}
and one also knows from (\ref{eq:TechLem1}) that 
\[
\big|\mathcal{A}_{a,d}(\gamma_{b})-\mathcal{A}_{a,d}(\gamma_{d})\big|\leqslant\frac{\delta_{1}}{1-\delta_{1}}.
\]
As a result of (\ref{eq:TechLem21}), one can obviously make (\ref{eq:TechLem22}) valid by choosing $\delta_1,\delta_2,r$ to be small enough.

\vspace{2mm}\noindent (iii) According to (i),  
\begin{align*}
\mathcal{A}_{a,d}\gamma|_{\left[b,d\right]} & \subseteq B\Big(\left(1,0\right),\frac{{\color{black}2}\delta_{1}}{1-\delta_{1}}\Big),\\
\mathcal{A}_{a,d}\gamma|_{\left[b,d\right]}-\left(1,0\right) & \subseteq B\Big(\left(0,0\right),\frac{{\color{black}2}\delta_{1}}{1-\delta_{1}}\Big),
\end{align*}
and by (ii),
\[
\mathcal{A}_{a,d}(m_{a,b})\subseteq\Big(\frac{1}{2}-\frac{\delta_{1}/2}{1-\delta_{1}},\frac{1}{2}+\frac{\delta_{1}/2}{1-\delta_{1}}\Big)\times\mathbb{R}.
\]
The  claim thus follows if one  takes $\delta_{1},\delta_{2},r$ to be sufficiently small so that the three sets
\[
\Big(\frac{1}{2}-\frac{\delta_{1}/2}{1-\delta_{1}},\frac{1}{2}+\frac{\delta_{1}/2}{1-\delta_{1}}\Big)\times\mathbb{R},\ B\Big(\left(1,0\right),\frac{{\color{black}2}\delta_{1}}{1-\delta_{1}}\Big),\ B\Big(\left(0,0\right),\frac{{\color{black}2}\delta_{1}}{1-\delta_{1}}\Big)
\]
are disjoint.

\vspace{2mm}\noindent (iv) Since $L^{a,b}$ is not horizontal and $\mathcal{A}_{a,d}(m_{a,b})\in L^{a,b}$, one knows that
\[
\mathcal{A}_{a,d}(m_{a,b})-\left(\delta_{2},0\right)\notin L^{a,b}.
\]
By further reducing $r$ if necessary, one can  ensure that the relation (\ref{eq:TechLem4}) holds.
\end{proof}

Next, by using Lemma \ref{lem:NormalisedDist} we prove a key lemma that is needed for the later proof of Theorem \ref{thm:StrCj}. 

\begin{lem}
\label{lem:KeyDimensionTwoLemma}Let $\gamma:\left[0,K\right]\rightarrow\mathbb{R}^{2}$
be a continuous BV path with unit speed parametrisation. Suppose that $\log S\left(\gamma\right)$ has infinite R.O.C. on all
sub-intervals $\left[u,v\right]\subseteq[0,K]$. Let $s,T$ be chosen fixed as in Lemma \ref{lem:AnalyticStraightLine}.
 Then one has
\[
\gamma\left[T,K\right]\subseteq\left\{ \gamma_{s}+\lambda(\gamma_{T}-\gamma_{s}):\lambda\in\mathbb{R}\right\} .
\]
\end{lem}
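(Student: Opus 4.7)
The plan is to argue by contradiction: assume $\gamma[T,K] \not\subseteq L$, where $L$ is the line through $\gamma_s$ and $\gamma_T$, and produce a time $d > T$ together with a smooth one-form $\Phi$ satisfying the hypotheses of Theorem \ref{thm:GenLineInt} such that $\int_s^d \Phi(d\zeta) \neq 0$, where $\zeta \triangleq \mathcal{A}_{s,d}\gamma|_{[s,d]}$. Since $\log S(\zeta)$ inherits infinite R.O.C. from $\log S(\gamma|_{[s,d]})$ via Lemma \ref{lem:ProjNormLinear}, this would contradict Theorem \ref{thm:GenLineInt}.

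First I would choose $d > T$ so that (a) $\gamma_d \notin L$ and (b) $|\gamma_T - \gamma_c| < \delta_1 |\gamma_T - \gamma_s|$ for every $c \in [T,d]$, where $\delta_1$ is the constant from Lemma \ref{lem:NormalisedDist}. If $\gamma$ leaves $L$ arbitrarily close to $T$, the unit-speed parametrisation makes (b) automatic for $d$ sufficiently close to $T$. Otherwise, let $T^\ast \triangleq \inf\{t > T : \gamma_t \notin L\} > T$; since $\gamma|_{[T,T^\ast]} \subseteq L$, Lemma \ref{lem:AnalyticStraightLine}(ii) extends from $[s,T]$ to $[s,T^\ast]$ by adding the tautological line integral of any smooth $\Phi$ along the portion of $\gamma$ that already lies on $L$, and the argument then proceeds with $T$ replaced by $T^\ast$.

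Next I would apply Lemma \ref{lem:NormalisedDist} with $a = s$ and $b = T$ to produce balls $B_1$ centred at $p \triangleq \zeta_T/2$ and $B_2$ centred at $p - (\delta_2,0)$, both of radius $r$, satisfying the disjointness conditions (ii)--(iv). The assumption $\gamma_d \notin L$ together with $\zeta_d = (1,0)$ forces $\zeta_T$ to lie off the $x$-axis, so the line segment $L^{s,T}$ from $(0,0)$ to $\zeta_T$ is not horizontal. Choose a smooth one-form $\Psi = \psi(x,y)\,dy$ with $\psi$ a bump function compactly supported in $B_1$ such that $\int \Psi(dL^{s,T}) \neq 0$; this is possible because $L^{s,T}$ passes transversely through the centre of $B_1$. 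Define $\Phi_0(x,y) \triangleq \Psi(x,y) - \Psi(x + \delta_2, y)$, which is supported in $B_1 \cup B_2$, and let $\Phi$ be the $1$-periodic extension of $\Phi_0$ in the $x$-variable. Since $B_1 \cup B_2 \subseteq (0,1)\times\mathbb{R}$ by Lemma \ref{lem:NormalisedDist}(ii), the periodic copies do not overlap; moreover $\int_\mathbb{R}[\psi(x,y) - \psi(x+\delta_2,y)]\,dx = 0$ ensures that $\Phi$ satisfies both hypotheses of Theorem \ref{thm:GenLineInt}.

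Finally, Theorem \ref{thm:GenLineInt} forces $\int_s^d \Phi(d\zeta) = 0$, and I would decompose this as $\int_s^T \Phi(d\zeta) + \int_T^d \Phi(d\zeta)$. The second piece vanishes because by Lemma \ref{lem:NormalisedDist}(i) and (iii), $\zeta|_{[T,d]}$ lies in $B((1,0), 2\delta_1/(1-\delta_1))$, which is disjoint from $B_1, B_2$ and from their integer $x$-translates once $\delta_1$ is small enough. For the first piece, the invariance of Lemma \ref{lem:AnalyticStraightLine}(ii) under the linear map $\mathcal{A}_{s,d}$ gives $\int_s^T \Phi(d\zeta) = \int_s^T \Phi(dL^{s,T})$; Lemma \ref{lem:NormalisedDist}(iv) yields $L^{s,T} \cap B_2 = \emptyset$, and $L^{s,T}$ lies in a thin strip connecting $(0,0)$ to a neighbourhood of $\zeta_T$ that avoids all other integer translates of $B_1$ and $B_2$. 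Therefore $\int_s^T \Phi(dL^{s,T}) = \int_s^T \Psi(dL^{s,T}) \neq 0$, contradicting $\int_s^d \Phi(d\zeta) = 0$. The hard part will be verifying the existence of $d$ in the first step cleanly and checking the disjointness conditions needed to localise each integral contribution.
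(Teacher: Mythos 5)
Your construction is essentially the paper's argument in its main case: introduce the first exit time $T^{*}$ from the line $l=\{\gamma_{s}+\lambda(\gamma_{T}-\gamma_{s})\}$ after $T$, pick $d>T^{*}$ close to $T^{*}$ with $\gamma_{d}\notin l$, normalise by $\mathcal{A}_{s,d}$, place the balls of Lemma \ref{lem:NormalisedDist}, build a periodised bump one-form, and use Lemma \ref{lem:AnalyticStraightLine}(ii) (together with the fact that an integral of a one-form along a path confined to a line depends only on endpoints) to evaluate the contribution over $[s,T^{*}]$, contradicting Theorem \ref{thm:GenLineInt}. Those steps, including the transfer of infinite R.O.C. under $\mathcal{A}_{s,d}$ via Lemma \ref{lem:ProjNormLinear} and the disjointness bookkeeping, are fine (and taking $\Psi$ to be a pure $dy$-bump even makes condition (ii) of Theorem \ref{thm:GenLineInt} automatic).

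The genuine gap is in the reduction ``proceed with $T$ replaced by $T^{*}$'': nothing prevents $\gamma_{T^{*}}=\gamma_{s}$, i.e.\ the path may run out along $l$ and retrace itself back to $\gamma_{s}$ before leaving the line. In that case your scheme with $a=s$, $b=T^{*}$ collapses: Lemma \ref{lem:NormalisedDist} requires $\gamma_{a}\neq\gamma_{b}$, your condition (b) becomes $|\gamma_{T^{*}}-\gamma_{c}|<\delta_{1}\,|\gamma_{T^{*}}-\gamma_{s}|=0$, which is impossible; after normalisation $\zeta_{T^{*}}=(0,0)$ lies on the $x$-axis, so the segment $L^{s,T^{*}}$ degenerates to a point, $\int\Psi(dL^{s,T^{*}})=0$, and no contradiction can be extracted; moreover $|\gamma_{d}-\gamma_{s}|$ may be arbitrarily small, so the rescaled path $\zeta|_{[T^{*},d]}$ is not confined near $(1,0)$ and the disjointness argument also fails. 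The paper treats this as a separate case by normalising over $[T,t]$ instead, i.e.\ taking $a=T$, $b=T^{*}$, $d=t$ in Lemma \ref{lem:NormalisedDist} (legitimate since $\gamma_{T}\neq\gamma_{s}=\gamma_{T^{*}}$), observing that the $[T,T^{*}]$ contribution is still an endpoint-determined line integral along $l$, and applying Theorem \ref{thm:GenLineInt} to the path on $[T,t]$ — which is precisely where the hypothesis that $\log S(\gamma)$ has infinite R.O.C.\ on \emph{every} subinterval, not only on intervals starting at $s$, is needed. Until you add this case (or an equivalent device), the proof is incomplete.
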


\begin{proof}
Let us denote the straight line $\{\gamma_s+\lambda(\gamma_T-\gamma_s):\lambda\in\mathbb{R}\}$ by $l$. Define
\[
T^{*}=\sup\left\{ t\in\left[T,K\right]:\gamma\left[T,t\right]\subseteq l \right\} .
\]
Suppose on the contrary that $T^*<K$. We will reach a contradiction by considering the following two cases.

\vspace{2mm}\noindent \underline{\textbf{Case 1:} $\gamma_{T^{*}}\neq\gamma_{s}$.}

\vspace{2mm}There exists $t>T^{*}$ such that $\gamma_{t}\notin l$
and $t$ is chosen to be close enough to $T^{*}$ so that
\[
\left|\gamma_{u}-\gamma_{T^{*}}\right|<\delta_{1}\left|\gamma_{T^{*}}-\gamma_{s}\right|
\]for all $u\in[T^*,t]$ (with $\delta_{1}$ to be specified later). 
By choosing $a=s,b=T^{*},d=t$ in Lemma \ref{lem:NormalisedDist},
one can find $\delta_{1},\delta_{2},r>0$ such that the four closed
sets
\[
{\cal A}_{s,t}\gamma|_{[T^{*},t]},\ \ {\cal A}_{s,t}\gamma|_{[T^{*},t]}-(1,0),\ \ \overline{B\big({\cal A}_{s,t}(m_{s,T^{*}}),r\big)},\ \ \overline{B\big({\cal A}_{s,t}(m_{s,T^{*}})-(\delta_{2},0),r\big)}
\]
are all disjoint, where $m_{s,T^*}\triangleq (\gamma_s+\gamma_{T^*})/2$. The same lemma shows that 
\[
\overline{B\big({\cal A}_{s,t}(m_{s,T^{*}}),r\big)}\bigcup\overline{B\big({\cal A}_{s,t}(m_{s,T^{*}})-(\delta_{2},0),r\big)}\subseteq(0,1)\times\mathbb{R},
\]
and since $\gamma_{t}\notin l$, $L^{s,T^{*}}$ cannot be horizontal so one also knows that 
\begin{equation}
\overline{B\big({\cal A}_{s,t}(m_{s,T^{*}})-(\delta_{2},0),r\big)}\bigcap L^{s,T^{*}}=\emptyset.
\end{equation}
Here $L^{s,T^*}$ denotes the line segment joining the origin to $\mathcal{A}_{s,t}(\gamma_{T^*})$. In what follows, we assume that $L^{s,T^*}$ is parametrised on $[s,T^*]$.

Let $\Psi$ be a smooth one-form supported
on $B(\mathcal{A}_{s,t}(m_{s,T^*}),r)$ 
such that 
\begin{equation}\label{eq:IntPsi}
\int_{s}^{T^{*}}\Psi\big(dL_{v}^{s,T^{*}}\big)=1.
\end{equation}Such a $\Psi$ clearly exists. 
Let us define
\[
\Phi\left(x,y\right)=\begin{cases}
\Psi\left(x,y\right), & \left(x,y\right)\in B\left(\mathcal{A}_{s,t}(m_{s,T^*}),r\right):=B_{1}\\
-\Psi\left(x+\delta_{2},y\right), & \left(x,y\right)\in B\big(\mathcal{A}_{s,t}(m_{s,T^*})-\left(\delta_{2},0\right),r\big):=B_{2},\\
0, & (x,y)\in\left(0,1\right)\times\mathbb{R}\backslash\left(B_{1}\cup B_{2}\right)\\
\Phi\left(x\  \mathrm{mod} 1,y\right), & \left(x,y\right)\notin\left(0,1\right)\times\mathbb{R}\backslash\left(B_{1}\cup B_{2}\right).
\end{cases}
\]Note that the smooth one-form $\Phi$ satisfies the two conditions in Theorem \ref{thm:GenLineInt}. 
One can now write
\begin{align*}
\int_{s}^{t}\Phi\left(d\left[\mathcal{A}_{s,t}\gamma\right]_v\right) & =\int_{s}^{T}\Phi\left(d\left[\mathcal{A}_{s,t}\gamma\right]_v\right)+\int_{T}^{T^{*}}\Phi\left(d\left[\mathcal{A}_{s,t}\gamma\right]_v\right)+\int_{T^{*}}^{t}\Phi\left(d\left[\mathcal{A}_{s,t}\gamma\right]_v\right).
\end{align*}
According to  Lemma \ref{lem:AnalyticStraightLine}, one has
\[
\int_{s}^{T}\Phi\left(d[\mathcal{A}_{s,t}\gamma]_{v}\right)=\int_{s}^{T}\Phi\big(dL_{v}^{s,T^*}\big).
\]
Since $\Phi=0$  on $\left(0,1\right)\times\mathbb{R}\backslash\left(B_{1}\cup B_{2}\right)$
and  $\mathcal{A}_{s,t}\gamma|_{\left[T^{*},t\right]}$ is disjoint
from $B_{1}\cup B_{2}$, one sees that 
\[
\int_{T^{*}}^{t}\Phi\left(d\left[\mathcal{A}_{s,t}\gamma\right]_v\right)=0.
\]
In addition, since $\gamma\left[T,T^{*}\right]\subseteq l$ one also knows that 
\[
\int_{T}^{T^{*}}\Phi\left(d\left[\mathcal{A}_{s,t}\gamma\right]_v\right)=\int \Phi\big(dL_{v}^{s,T^{*}}\big).
\]
Therefore, one concludes that 
\[
\int_{s}^{t}\Phi\left(d[\mathcal{A}_{s,t}\gamma]_{v}\right)=\int_s^{T^*} \Phi\big(dL_{v}^{s,T^{*}}\big).
\]
Note that $L^{s,T^{*}}$ is disjoint from $B_{2}$. By the construction of $\Phi$, the last integral is thus also equal to the same integral along $\Psi$ whose value is $1\neq0$ (cf. \ref{eq:IntPsi}).  
This leads to a contradiction to the generalised line integral condition (\ref{eq:GenLineInt}) in Theorem \ref{thm:GenLineInt}.
\begin{figure}[H]   
\begin{center}   
\includegraphics[scale=0.21]{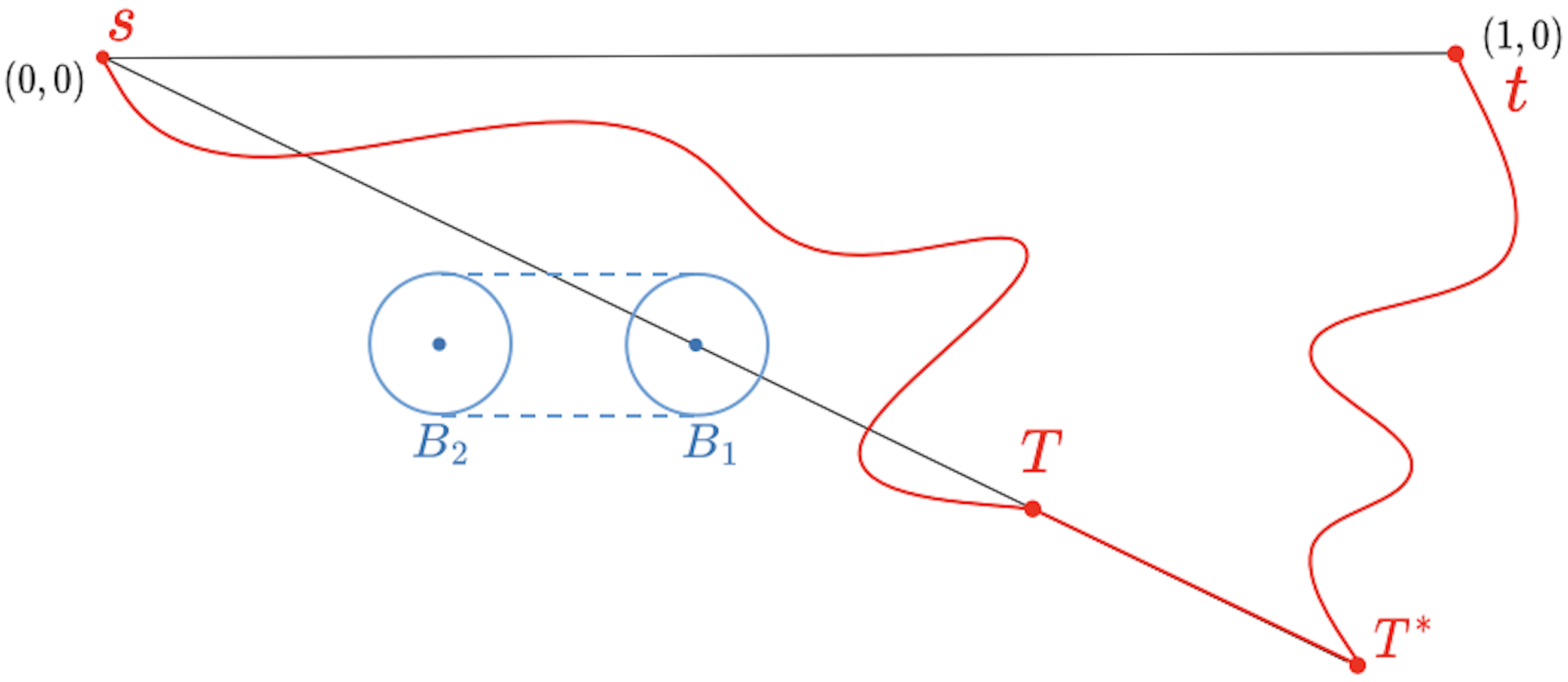}
\end{center} 
\end{figure}
\vspace{2mm}\noindent \underline{\textbf{Case 2: }$\gamma_{T^{*}}=\gamma_{s}$.}

\vspace{2mm} In this case, $\gamma_{T}\neq\gamma_{T^{*}}$ because $\gamma_{T}\neq\gamma_{s}$.
Then there exists $t>T^{*}$ such that $\gamma_{t}\notin l$
and $t$ is close enough to $T^{*}$ so that
\[
\left|\gamma_{u}-\gamma_{T^{*}}\right|<\delta_{1}\left|\gamma_{T^{*}}-\gamma_{T}\right|
\]for all $u\in[T^*,t]$ (with $\delta_{1}$ to be specified later). By choosing $a=T,b=T^{*},d=t$ in Lemma \ref{lem:NormalisedDist}, one can now proceed in exactly the same way as in Case 1 to construct a one-form $\Phi$ which satisfies the two conditions in Theorem \ref{thm:GenLineInt} but 
\[
\int_{T}^{t}\Phi(d[{\cal A}_{T,t}\gamma]_{v})=1\neq0.
\]This again contradicts the generalised integral condition (\ref{eq:GenLineInt}). 
\end{proof}

The proof of Theorem \ref{thm:StrCj} can be easily reduced to the two-dimensional situation which we first handle. 

\begin{thm}
\label{R2generalcase} Let $\gamma:[0,1]\to\mathbb{R}^{2}$
be a continuous BV path. Suppose that $\log S(\gamma)_{s,t}$ has infinite R.O.C. for all $s<t\in[0,1]$. Then  $\gamma_{t}=f(t)\cdot\vec{v}$ for some real-valued function
$f:[0,1]\to\mathbb{R}$ and some fixed vector $\vec{v}\in\mathbb{R}^{2}$. 
\end{thm}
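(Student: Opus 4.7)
The plan is to reduce the theorem to proving that $\gamma[0,1]$ is contained in a single affine line in $\mathbb{R}^2$; the representation $\gamma_t=f(t)\vec v$ then follows by taking $\vec v$ to be a direction vector of the line and absorbing the base point into $f$ (or by first translating so that $\gamma_0=0$, which is permissible because signatures are translation-invariant). First I would dispose of the constant case (trivial, take $f\equiv 0$) and then reparametrize $\gamma$ by arc length: let $v(t)$ be the total variation of $\gamma$ on $[0,t]$, $K\triangleq v(1)>0$, and let $\tilde\gamma\colon[0,K]\to\mathbb{R}^2$ be the unique continuous function with $\tilde\gamma\circ v=\gamma$. Then $\tilde\gamma$ is $1$-Lipschitz with $|\tilde\gamma'|=1$ a.e. Because signatures are invariant under reparametrization and $v$ is continuous with image $[0,K]$, the hypothesis transfers: $\log S(\tilde\gamma)_{s',t'}$ has infinite R.O.C. on every sub-interval $[s',t']\subseteq[0,K]$. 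It therefore suffices to show $\tilde\gamma[0,K]$ lies on a single line.

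The first real step is to upgrade Lemma \ref{lem:KeyDimensionTwoLemma} from the conclusion ``$\tilde\gamma[T,K]\subseteq\ell$'' to ``$\tilde\gamma[s_0,K]\subseteq\ell(s_0)$'' at every differentiability point $s_0\in(0,K)$. The three conditions on $T$ imposed by Lemma \ref{lem:AnalyticStraightLine} can be met along a sequence $T_n\downarrow s_0$: the two first-order conditions hold automatically for all $T$ sufficiently close to $s_0$ by differentiability (with $|\tilde\gamma'(s_0)|=1$), while the strict-maximum condition $|\tilde\gamma_t-\tilde\gamma_{s_0}|<|\tilde\gamma_T-\tilde\gamma_{s_0}|$ on $(s_0,T)$ is ensured by choosing $T=T_n$ to be the smallest point of $[s_0,u_n]$ attaining the maximum of $\phi(t)\triangleq|\tilde\gamma_t-\tilde\gamma_{s_0}|$, for any sequence $u_n\downarrow s_0$ (the smallest such point works because $\phi(s_0)=0<\phi$ nearby and continuity gives $\phi(t)<\phi(T)$ for $t\in(s_0,T)$). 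For each $T_n$, Lemma \ref{lem:KeyDimensionTwoLemma} produces a line $\ell_n$ through the two distinct points $\tilde\gamma(s_0)$ and $\tilde\gamma(T_n)$ with $\tilde\gamma[T_n,K]\subseteq\ell_n$. For $n<m$ both $\tilde\gamma(s_0)$ and $\tilde\gamma(T_m)$ lie in $\ell_n$, and they are distinct by condition (iii), which forces $\ell_n=\ell_m$. Calling this common line $\ell(s_0)$ and using $\bigcup_n[T_n,K]=(s_0,K]$ together with continuity at $s_0$, one obtains $\tilde\gamma[s_0,K]\subseteq\ell(s_0)$.

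The second step propagates the conclusion globally. Pick a sequence $s_n\downarrow 0$ of differentiability points (a full-measure set). For every $n\geqslant 2$ one has $[s_2,K]\subseteq[s_n,K]$, so $\tilde\gamma[s_2,K]\subseteq\ell(s_n)\cap\ell(s_2)$, and because $\tilde\gamma$ is unit-speed on the interval $[s_2,K]$ of positive length, $\tilde\gamma[s_2,K]$ contains infinitely many distinct points. Hence $\ell(s_n)=\ell(s_2)\triangleq\ell^*$ for all $n\geqslant 2$, giving $\tilde\gamma((0,K])=\bigcup_{n\geqslant 2}\tilde\gamma[s_n,K]\subseteq\ell^*$. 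Continuity of $\tilde\gamma$ at $0$ forces $\tilde\gamma(0)\in\ell^*$ as well, so $\gamma[0,1]=\tilde\gamma[0,K]\subseteq\ell^*$, which is the desired representation.

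The main obstacle is the bootstrap in the first step: Lemma \ref{lem:KeyDimensionTwoLemma} only controls $[T,K]$, not $[s_0,K]$, so one must revisit its hypotheses along a sequence $T_n\downarrow s_0$ and then argue that the resulting lines necessarily coincide. The strict-supremum condition (iii), which may look technical, turns out to be exactly the mechanism that supplies two distinct reference points $\tilde\gamma(s_0)\neq\tilde\gamma(T_n)$, reducing the coincidence of the $\ell_n$'s to an elementary ``two points determine a line'' argument. The unit-speed parametrization plays the dual role of producing a.e.\ differentiability with $|\tilde\gamma'|=1$ and guaranteeing that every non-trivial time interval yields a non-trivial image, which in turn identifies $\ell(s_n)$ uniquely and lets the global propagation go through essentially formally.
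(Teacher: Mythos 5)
Your argument is correct and essentially reproduces the paper's proof: pass to the unit-speed reparametrisation, apply Lemma \ref{lem:KeyDimensionTwoLemma} at differentiability points with an auxiliary time chosen so that the hypotheses of Lemma \ref{lem:AnalyticStraightLine} hold (your choice of $T_n$ as the first maximiser of $t\mapsto|\tilde{\gamma}_t-\tilde{\gamma}_{s_0}|$ verifies the strict-maximum condition even more transparently than the paper's first-hitting-time choice), and then glue the resulting lines via a two-distinct-points-determine-a-line consistency argument and a limiting step. The only slip is in that coincidence step: $\tilde{\gamma}(T_m)\in\ell_n$ need not hold when $T_m<T_n$, so one should instead note that whichever of $T_n,T_m$ is larger, say $T_n\geqslant T_m$, satisfies $\tilde{\gamma}(T_n)\in\tilde{\gamma}[T_m,K]\subseteq\ell_m$, whence $\ell_n$, being the line through the distinct points $\tilde{\gamma}(s_0),\tilde{\gamma}(T_n)\in\ell_m$, coincides with $\ell_m$ and the rest of your proof is unaffected.
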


\begin{proof}
By Appendix B in the arXiv version of \cite{BG23}, there exist a $K\in[0,\infty)$, a
non-decreasing function $L:\left[0,1\right]\rightarrow\left[0,K\right]$
and $\hat{\gamma}:\left[0,K\right]\rightarrow\mathbb{R}^{2}$ such
that $\hat{\gamma}_{L(\cdot)}=\gamma_\cdot$ and $\hat{\gamma}$ is Lipschitz continuous with unit speed. Since the signature is invariant under reparametrisation, one knows that
$\log S\left(\hat{\gamma}\right)_{s,t}$
 has infinite R.O.C. for all $s<t\in[0,K]$.
Let 
\[
\tau=\sup\left\{ t:\hat{\gamma}_{u}^{\prime}=0\;\text{for almost all }u\in\left[0,t\right]\right\} .
\]
Given $\varepsilon>0$, there exist a differentiable point $s\in[\tau,\tau+\varepsilon/2)$
and a sequence $\left(t_{n}\right)_{n=1}^{\infty}$ such that $t_{n}\downarrow s$
and $\hat{\gamma}_{s}\neq\hat{\gamma}_{t_{n}}$
for all $n$.
Let $\delta\in\left(0,\varepsilon/2\right)$ be chosen such
that \[
\big|\hat{\gamma}_{u}-\hat{\gamma}_{s}-\hat{\gamma}_{s}'(u-s)\big|\leqslant\frac{1}{6}(u-s),\ \Big|\Big|\frac{\hat{\gamma}_{u}-\hat{\gamma}_{s}}{u-s}\Big|-1\Big|\leqslant\frac{1}{6}
\]whenever $|u-s|<\delta$.
Take $N$ to be such that $\left|t_{N}-s\right|<\delta$ and let 
\[
T=\inf\left\{ u\geqslant s:\hat{\gamma}_{u}=\hat{\gamma}_{t_{N}}\right\} .
\]Clearly, the numbers $s,T$ satisfy the assumptions of Lemma \ref{lem:KeyDimensionTwoLemma}.
According to Lemma \ref{lem:KeyDimensionTwoLemma}, one knows that
\[
\hat{\gamma}\left[T,K\right]\subseteq\left\{ \hat{\gamma}_{s}+\lambda(\hat{\gamma}_{T}-\hat{\gamma}_{s}):\lambda\in\mathbb{R}\right\}=:l_\varepsilon 
\]
and in particular, $\hat{\gamma}[\tau+\varepsilon,K]\subseteq l_\varepsilon$. Since this is true for all $\varepsilon>0$, the straight lines $\{l_\varepsilon\}_{\varepsilon>0}$ are clearly consistent and thus all identical. Let us call this line $l$. It follows that $\hat{\gamma}[\tau,K]\subseteq l$.
Since $\hat{\gamma}_{t}^{\prime}=0$ for almost all $t\in\left[0,\tau\right]$,
one has $\hat{\gamma}_{t}=\hat{\gamma}_{\tau}$ for all $t\in\left[0,\tau\right]$. As a result, $\hat{\gamma}[0,K]\subseteq l$. This provides that $\hat{\gamma}$ (and thus $\gamma$) lives on a straight line.

\end{proof}

\begin{proof}[Proof of Theorem \ref{thm:StrCj}]
Suppose that $\gamma_{t}=(x_{t}^{1},x_{t}^{2},\cdots,x_{t}^{d})$ and assume that $\gamma_0 = \mathbf{0}$.
The conclusion is trivial if the image of $\gamma$ is a single point. Otherwise, let us  assume that $x_{t}^{1}$ is not identically
zero. For each $i\neq 1$, the logarithmic signature of the path  
$\gamma_{t}^{i}\triangleq(x_{t}^{1},x_{t}^{i})$
has infinite R.O.C. on $[s,t]$ for all $s<t\in(0,1)$. By Theorem \ref{R2generalcase}, one has $\gamma^i_t = f_i(t)(a_i,b_i)$ for some real-valued function $f_i(t)$ and some vector $(a_i,b_i)$ with $a\neq0$. It is clear that $x_t^1 = f_i(t)a_i$ and thus $\gamma_t^i = x_t^1 \cdot (1,c_i)$ where $c_i \triangleq b_i/a_i$. It follows that $\gamma_t = x_t^1 \cdot (1,c_2,\cdots,c_d)$. This completes the proof of the theorem. 
\end{proof}

\section{Second order integral identities}\label{sec:IterInt}

We continue to assume that $\gamma:[0,1]\rightarrow\mathbb{R}^2$ is a weakly geometric rough path satisfying the normalisation condition (\ref{eq:PathNorm}). In Section \ref{sec:LineInt}, we showed that if the logarithmic signature of $\gamma$ has infinite R.O.C., the path $\gamma$ must satisfy the line integral condition (\ref{eq:LineInt}). In this and the next sections, by using the method of Cartan's path development we will show that $\gamma$ has to satisfy \textit{an infinite system of iterated integral identities} where the relation (\ref{eq:LineInt}) appears to be the first level of them. 

To better illustrate the essential idea, it is helpful to first discuss the derivation of  \textit{second order} iterated integral identities. This is our main goal in the current section and the main result is stated as follows. 

\begin{thm}
\label{thm:DoubInt}Suppose that $\log S(\gamma)$  has infinite
R.O.C. Then the following identity
\begin{align}
 & (1-\cosh b)\int_{0<s<t<1}\big(e^{2k\pi i\cdot x_{s}+b(x_{t}-x_{s})}-e^{2k\pi i\cdot x_{t}+b(x_{s}-x_{t})}\big)dy_{s}dy_{t}\nonumber \\
 & \ \ \ +(\sinh b)\big(\int_{0}^{1}e^{bx_{s}}dy_{s}\big)\big(\int_{0}^{1}e^{(2k\pi i-b)x_{s}}dy_{s}\big)=0\label{eq:DoubInt}
\end{align}
holds true for all $k\in\mathbb{Z}\backslash\{0\}$ and $b\in\mathbb{C}$.
\end{thm}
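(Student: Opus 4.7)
The approach I would take is a direct extension of the Cartan development method from Section \ref{sec:LineInt}. Whereas Theorem \ref{thm:LineInt} was proved by developing into a two-dimensional Lie algebra in which $\mathrm{e}_{2}$ is sent to an eigenvector of $\mathrm{ad}_{F(\mathrm{e}_{1})}$, the second-order integral identity should emerge by developing into $\mathfrak{sl}_{3}(\mathbb{C})$ and sending $\mathrm{e}_{2}$ to the sum of two simple root vectors with distinct eigenvalues. Concretely, let $A=\mathrm{diag}(a_{1},a_{2},a_{3})$ with $a_{1}+a_{2}+a_{3}=0$ and define
\[
F(\mathrm{e}_{1})=A,\qquad F(\mathrm{e}_{2})=E_{12}+E_{23},
\]
so that $\mathrm{ad}_{A}(E_{12})=\mu_{1}E_{12}$, $\mathrm{ad}_{A}(E_{23})=\mu_{2}E_{23}$ with $\mu_{i}=a_{i}-a_{i+1}$, while $E_{13}=[E_{12},E_{23}]$ is the highest root vector. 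Crucially, in the $3\times 3$ matrix realisation one has the nilpotency relations $E_{12}^{2}=E_{23}^{2}=E_{23}E_{12}=E_{12}E_{13}=E_{13}E_{12}=0$.

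These relations, combined with Lemma \ref{lem:AdDecomp}, collapse the infinite iterated integral series defining $\hat{F}(e^{\tilde{L}(\gamma)})$ to a strictly upper-triangular unipotent matrix
\[
M = I+A_{1}E_{12}+A_{2}E_{23}+A_{12}E_{13},
\]
with $A_{i}=\int_{0}^{1}e^{\mu_{i}x_{t}}dy_{t}$ and $A_{12}=\int_{0<s<t<1}e^{\mu_{1}x_{s}+\mu_{2}x_{t}}dy_{s}dy_{t}$. Consequently $\hat{F}(e^{L(\gamma)})=Me^{A}$ is upper triangular with diagonal $(e^{a_{1}},e^{a_{2}},e^{a_{3}})$. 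I would then parametrise $X=\hat{F}(L(\gamma))=A+x_{1}E_{12}+x_{2}E_{23}+x_{3}E_{13}$ and solve $e^{X}=Me^{A}$ by direct entry-by-entry computation of $e^{X}$ (the standard divided-difference expansion for upper-triangular exponentials). Matching yields the closed forms
\[
x_{1}=\frac{A_{1}\mu_{1}}{e^{\mu_{1}}-1},\qquad x_{2}=\frac{A_{2}\mu_{2}}{e^{\mu_{2}}-1},
\]
together with an explicit but more involved rational expression for $x_{3}$ in the quantities $A_{1},A_{2},A_{12}$ and the differences $a_{i}-a_{j}$, $e^{a_{i}}-e^{a_{j}}$.

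The assumption that $\log S(\gamma)$ has infinite R.O.C.\ forces $X$, and hence each $x_{j}$, to extend to an entire function on $\{(a_{1},a_{2},a_{3})\in\mathbb{C}^{3}:\sum a_{i}=0\}$. Entirety of $x_{1},x_{2}$ at the poles $\mu_{1},\mu_{2}\in 2\pi i\mathbb{Z}\setminus\{0\}$ recovers Theorem \ref{thm:LineInt}. The genuinely new condition comes from demanding entirety of $x_{3}$ at the mixed pole $a_{1}-a_{3}=2k\pi i$ (with $k\neq 0$). Parameterising $\mu_{1}=b$ and $\mu_{2}=2k\pi i-b$ with $b\in\mathbb{C}$ free, setting $e^{a_{3}}=e^{a_{1}}$ and extracting the residue boils down, after simplification, to
\[
A_{12}=\frac{A_{1}A_{2}e^{\mu_{2}}}{e^{\mu_{2}}-1}=\frac{A_{1}A_{2}}{1-e^{b}}.
\]
Combining this with the shuffle identity $A_{1}A_{2}=A_{12}+A_{21}$, where $A_{21}=\int_{0<s<t<1}e^{\mu_{2}x_{s}+\mu_{1}x_{t}}dy_{s}dy_{t}$, yields the compact relation $A_{21}+e^{b}A_{12}=0$; rewriting via $e^{\pm b}=\cosh b\pm\sinh b$ recovers exactly (\ref{eq:DoubInt}).

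The main technical hurdle is the clean execution of the matrix-logarithm computation. The rational formula for $x_{3}$ carries two qualitatively different kinds of singularities: ``trivial'' ones at $a_{i}=a_{j}$ arising from divided-difference denominators, which must be verified to be removable, and ``genuine'' exponential-period ones at $e^{a_{i}}=e^{a_{j}}$ with $a_{i}\neq a_{j}$, whose residues carry the actual content. A second, more subtle point is that the expression for $x_{3}$ contains a bilinear cross term in $x_{1}x_{2}$; to avoid spurious singularities when extracting the residue at $a_{1}-a_{3}=2k\pi i$, one should first invoke Theorem \ref{thm:LineInt} so that $x_{1}$ and $x_{2}$ are already known to be entire before probing the $E_{13}$-direction. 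Finally, as in the proof of Theorem \ref{thm:LineInt}, the manipulations above are formal; making them rigorous requires introducing an auxiliary scaling parameter, working with truncated tensors, and using the uniform convergence of the truncated developments for small parameters before extending the final identity to all of $\mathbb{C}$ by analytic continuation.
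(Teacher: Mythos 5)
Your proposal is correct, and although it rests on the same $\mathfrak{sl}_{3}(\mathbb{C})$-development as the paper (with $\mathrm{e}_{1}\mapsto A\in\mathfrak{h}$ and $\mathrm{e}_{2}\mapsto E_{12}+E_{23}$), the mechanism is genuinely different. The paper works at the Lie-algebra level: it projects the BCH identity $L=B(\tilde{L},\mathrm{e}_{1})$ with the operators $D_{1},D_{2}$ (Lemma \ref{lem:D12Form}), develops the Hausdorff terms $H_{1}(D_{2}\tilde{L},\mathrm{e}_{1})$ and $H_{2}(D_{1}\tilde{L},\mathrm{e}_{1})$ into Bernoulli-number series, and reads (\ref{eq:DoubInt}) off from the singularity of $\phi(a+b)$ at $a+b=2k\pi i$. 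You instead work at the group level: nilpotency of the Borel collapses $\hat{F}(e^{\tilde{L}})$ to the unipotent matrix $I+A_{1}E_{12}+A_{2}E_{23}+A_{12}E_{13}$, and the divided-difference (Opitz) formula for upper-triangular exponentials gives $x_{1}f[a_{1},a_{2}]=A_{1}e^{a_{2}}$, $x_{2}f[a_{2},a_{3}]=A_{2}e^{a_{3}}$ and $x_{3}f[a_{1},a_{3}]+x_{1}x_{2}f[a_{1},a_{2},a_{3}]=A_{12}e^{a_{3}}$ (with $f=\exp$); since $f[a_{1},a_{3}]$ vanishes on $a_{1}-a_{3}=2k\pi i$ while $x_{3}$ stays finite (infinite R.O.C.), the third relation forces $A_{12}=A_{1}A_{2}/(1-e^{b})$ there, and your reduction to $A_{21}+e^{b}A_{12}=0$ and back to (\ref{eq:DoubInt}) via the shuffle relation $A_{1}A_{2}=A_{12}+A_{21}$ checks out — that shuffle step is legitimate for weakly geometric rough paths and is exactly the identity invoked in Remark \ref{rem:DoubInt3}. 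Your route avoids the Hausdorff-series bookkeeping altogether, a real simplification at order two; what the paper's $D_{m}$-projection/Hausdorff formulation buys is that it scales to the higher-order identities of Section \ref{sec:HighInt}, where an explicit matrix-logarithm computation in $\mathfrak{sl}_{m+1}(\mathbb{C})$ would be unmanageable. Two small remarks: since $\log S(\gamma)$ is assumed to have infinite R.O.C., the identity $e^{\hat{F}(L)}=\hat{F}(e^{L})=Me^{A}$ actually holds for all parameter values by absolute convergence and rearrangement, so the truncation-plus-small-parameter step you flag is even less delicate here than in Theorem \ref{thm:LineInt}; and the removable singularities at $a_{i}=a_{j}$ are most cleanly dispatched by treating the entry-wise relations as identities of entire functions, specialising at generic $b$, and extending to all $b\in\mathbb{C}$ by continuity of $A_{12},A_{21}$ in $b$.
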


\subsection{A second order extension of the line integral condition}

In contrast to the line integral condition (\ref{eq:LineInt}), Theorem
\ref{thm:DoubInt} yields a \textit{continuum} family of integral
identities from which one can extract various information. The following
result is a consequence of Theorem \ref{thm:DoubInt} which can be
viewed as a second order extension of (\ref{eq:LineInt}).
\begin{cor}\label{cor:DoubIntCond}
Under the assumption of Theorem \ref{thm:DoubInt}, one has
\begin{equation}
\int_{0<s<t<1}\big(e^{2\pi i(px_{s}+qx_{t})}-e^{2\pi i(px_{t}+qx_{s})}\big)dy_{s}dy_{t}=0\label{eq:DoubInt2}
\end{equation}
for all $p,q\in\mathbb{Z}\backslash\{0\}$ with $p+q\neq 0.$
\end{cor}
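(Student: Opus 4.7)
Fix $p,q\in\mathbb{Z}\setminus\{0\}$ with $p+q\neq 0$ and set $k\triangleq p+q$, which also lies in $\mathbb{Z}\setminus\{0\}$. The plan is to specialise the continuous family of identities in Theorem \ref{thm:DoubInt} by viewing $b$ as a complex parameter and expanding around the distinguished point $b_{0}\triangleq 2\pi i q$. Writing $b=b_{0}+\varepsilon$, a direct rewriting shows
\[
e^{2k\pi i x_{s}+b(x_{t}-x_{s})}=e^{2\pi i(px_{s}+qx_{t})}e^{\varepsilon(x_{t}-x_{s})},\quad e^{2k\pi i x_{t}+b(x_{s}-x_{t})}=e^{2\pi i(px_{t}+qx_{s})}e^{\varepsilon(x_{s}-x_{t})},
\]
so that if $A(\varepsilon)$ denotes the double integral appearing in (\ref{eq:DoubInt}), then $A(0)$ is precisely the expression whose vanishing I want to establish in (\ref{eq:DoubInt2}).

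Next I would analyse the two scalar coefficients in (\ref{eq:DoubInt}). Using $\cosh(2\pi iq)=1$ and $\sinh(2\pi iq)=0$ together with the addition formulas, one sees $1-\cosh b=-\varepsilon^{2}/2+O(\varepsilon^{4})$ and $\sinh b=\varepsilon+O(\varepsilon^{3})$ as $\varepsilon\to 0$. The coefficient of $A(\varepsilon)$ therefore has a second order zero at $\varepsilon=0$, while the coefficient of the product term $B(\varepsilon)\triangleq\big(\int_{0}^{1}e^{bx_{t}}dy_{t}\big)\big(\int_{0}^{1}e^{(2k\pi i-b)x_{t}}dy_{t}\big)$ has only a simple zero.

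The crucial input now is Theorem \ref{thm:LineInt}, which asserts $\int_{0}^{1}e^{2\pi i m x_{t}}dy_{t}=0$ for every $m\in\mathbb{Z}\setminus\{0\}$. Applying this to $m=q$ and $m=p$ (both nonzero by hypothesis) shows that \emph{both} factors of $B(\varepsilon)$ vanish at $\varepsilon=0$, so a first-order Taylor expansion in $\varepsilon$ gives $B(\varepsilon)=O(\varepsilon^{2})$, hence $(\sinh b)\,B(\varepsilon)=O(\varepsilon^{3})$. Substituting all three expansions into (\ref{eq:DoubInt}) yields
\[
-\tfrac{1}{2}A(0)\varepsilon^{2}+O(\varepsilon^{3})=0,
\]
and dividing by $\varepsilon^{2}$ before letting $\varepsilon\to 0$ gives $A(0)=0$, which is precisely (\ref{eq:DoubInt2}).

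The proof is almost mechanical once the correct specialisation $(k,b_{0})=(p+q,2\pi iq)$ is identified; no serious obstacle arises. The only conceptually meaningful point is the matching of orders of vanishing: the BCH-type coefficient $(1-\cosh b)$ has a double zero at $b_{0}$, while Theorem \ref{thm:LineInt} guarantees that $B(\varepsilon)$ already has a double zero there, so the product $\sinh(b)B(\varepsilon)$ is of strictly higher order than $(1-\cosh b)A(\varepsilon)$. This is exactly the ``alignment of meromorphic singularities with zeros of the line-integral functionals'' that the introduction describes as the algebraic mechanism behind the rigid integral identities.
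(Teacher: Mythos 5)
Your proof is correct and is essentially the paper's own argument in Taylor-expansion form: the paper differentiates the identity (\ref{eq:DoubInt}) twice in $b$ and evaluates at $b=2l\pi i$ (with $p=k-l$, $q=l$), using Theorem \ref{thm:LineInt} to kill the $\psi'$-term, which is exactly your order-matching at $b_{0}=2\pi i q$ showing $(1-\cosh b)A(\varepsilon)$ contributes at order $\varepsilon^{2}$ while $(\sinh b)B(\varepsilon)$ is $O(\varepsilon^{3})$. No gap; the two write-ups differ only in presentation.
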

\begin{proof}
Let us denote 
\[
\varphi(b)\triangleq\int_{0<s<t<1}\big(e^{2k\pi i\cdot x_{s}+b(x_{t}-x_{s})}-e^{2k\pi i\cdot x_{t}+b(x_{s}-x_{t})}\big)dy_{s}dy_{t}
\]
and 
\[
\psi(b)\triangleq\big(\int_{0}^{1}e^{bx_{s}}dy_{s}\big)\big(\int_{0}^{1}e^{(2k\pi i-b)x_{s}}dy_{s}\big).
\]
Differentiating (\ref{eq:DoubInt}) with respect to $b$ gives that
\begin{equation}
-\sinh b\cdot\varphi(b)+(1-\cosh b)\varphi'(b)+\cosh b\cdot\psi(b)+\sinh b\cdot\psi'(b)=0.\label{eq:DoubInt1Der}
\end{equation}
By further differentiating (\ref{eq:DoubInt1Der}) and taking $b=2l\pi i$,
one obtains that
\[
-\varphi(2l\pi i)+2\psi'(2l\pi i)=0.
\]
Now suppose that $l\neq k$ and $l\neq0$. The line integral condition
(\ref{eq:LineInt}) yields that
\begin{align*}
\psi'(2l\pi i) & =\big(\int_{0}^{1}x_{s}e^{2l\pi i\cdot x_{s}}dy_{s}\big)\big(\int_{0}^{1}e^{2(k-l)\pi i\cdot x_{s}}dy_{s}\big)\\
 & \ \ \ -\big(\int_{0}^{1}e^{2l\pi i\cdot x_{s}}dy_{s}\big)\big(\int_{0}^{1}x_{s}e^{2(k-l)\pi i\cdot x_{s}}dy_{s}\big)=0.
\end{align*}
As a result, one concludes that 
\[
\varphi(2l\pi i)=\int_{0<s<t<1}\big(e^{2(k-l)\pi i\cdot x_{s}+2l\pi i\cdot x_{t}}-e^{2(k-l)\pi i\cdot x_{t}+2l\pi i\cdot x_{s}}\big)dy_{s}dy_{t}=0.
\]
This gives the desired identity (\ref{eq:DoubInt2}) with $p=k-l$
and $q=l$.
\end{proof}
\begin{rem}
The line integral identity (\ref{eq:LineInt}) is also a consequence
of Theorem \ref{thm:DoubInt}. Indeed, let $k$ be a nonzero even
integer in (\ref{eq:DoubInt}). By taking $b=k\pi i$ in the relation
(\ref{eq:DoubInt1Der}), one finds that
\[
0=\psi(k\pi i)=\big(\int_{0}^{1}e^{k\pi ix_{s}}dy_{s}\big)^{2}=0\iff\int_{0}^{1}e^{k\pi ix_{s}}dy_{s}=0.
\]
This is exactly the previous line integral identity (\ref{eq:LineInt})
since $k$ is even. 
\end{rem}
\begin{rem}\label{rem:DoubInt3}
According to (\ref{eq:LineInt}), one has 
\begin{align*}
\int_{0<s<t<1}\big(e^{2\pi i(px_{s}+qx_{t})}+e^{2\pi i(px_{t}+qx_{s})}\big)dy_{s}dy_{t} & =\big(\int_{0}^{1}e^{2\pi ipx_{s}}dy_{s}\big)\big(\int_{0}^{1}e^{2\pi iqx_{t}}dy_{t}\big)=0.
\end{align*}
In particular, the relation (\ref{eq:DoubInt2}) can also be rewritten
as 
\[
\int_{0<s<t<1}e^{2\pi i(px_{s}+qx_{t})}dy_{s}dy_{t}=0
\]
for all $p,q\in\mathbb{Z}\backslash\{0\}$ with $p+q\neq0$.

\end{rem}

\begin{rem}
The information encoded by the condition (\ref{eq:DoubInt}) is  larger than the one encoded in (\ref{eq:DoubInt2}). For instance, by further differentiating (\ref{eq:DoubInt1Der}) and
setting $b=0$ one obtains that 
\begin{equation}
-\varphi(0)+\psi(0)+\psi'(0)=0.\label{eq:NewIntPf}
\end{equation}
Note that $\psi(0)=0$ as a consequence of (\ref{eq:LineInt}). Therefore,
the relation (\ref{eq:NewIntPf}) becomes 
\[
-\int_{0<s<t<1}\big(e^{2k\pi i\cdot x_{s}}-e^{2k\pi i\cdot x_{t}}\big)dy_{s}dy_{t}-y_{1}\int_{0}^{1}x_{s}e^{2k\pi i\cdot x_{s}}dy_{s}=0.
\]
After simplification, this can be rewritten as the following identity:
\[
\int_{0}^{1}y_{t}e^{2k\pi i\cdot x_{t}}dy_{t}-y_{1}\int_{0}^{1}x_{t}e^{2k\pi i\cdot x_{t}}dy_{t}=0.
\]
This new identity cannot be implied by (\ref{eq:LineInt}) or (\ref{eq:DoubInt2}). One can also show by a similar type of calculation that \[
\int_{0<s<t<1}e^{\pi i(px_{s}+qx_{t})}dy_{s}dy_{t}=0
\]
for all odd integers $p,q$ with $p+q\neq0,$ as a consequence of Theorem \ref{thm:DoubInt}.
\end{rem}

\begin{example}\label{exam:Eight}
We use one example to illustrate the usefulness of the second order integral identity (\ref{eq:DoubInt2}). Consider the following piecewise linear path defined by a ``Figure Eight'' trajectory:
\begin{center}
\includegraphics[width=8cm]{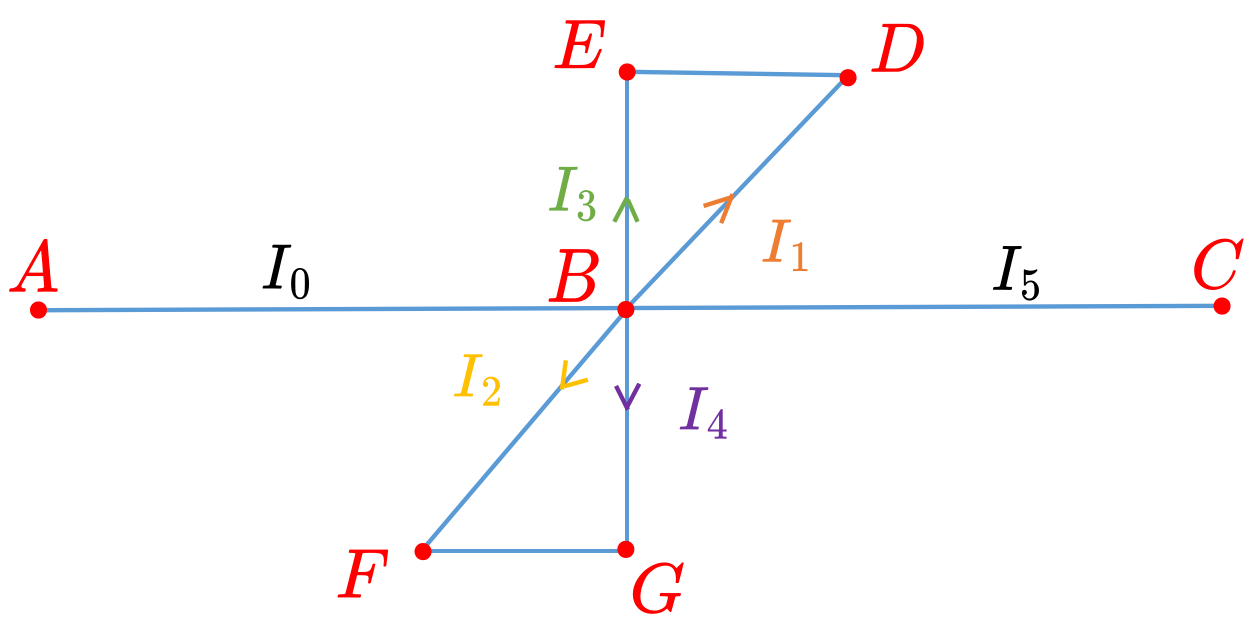}
\end{center}
Here $A$ is the origin, $B=(1/2,0)$, $C=(1,0)$ and

\begin{equation*}
\begin{aligned}
\overset{\longrightarrow}{BD}=\big(\frac{1}{4},\frac{1}{4}\big),\quad\overset{\longrightarrow}{BE}=\big(0,\frac{1}{4}\big),
\quad\overset{\longrightarrow}{BG}=\big(0,-\frac{1}{4}\big),\quad\overset{\longrightarrow}{BF}=\big(-\frac{1}{4},-\frac{1}{4}\big).
\end{aligned} 
\end{equation*}
The path $\gamma$ is defined by
\[
I_{0}\rightarrow I_{1}\rightarrow I_{2}\rightarrow I_{3}\rightarrow I_{4}\rightarrow I_{5},
\]where $I_1$ (respectively, $I_3$) denotes the loop $BDEB$ (respectively, $BEDB$) and $I_2$ (respectively, $I_4$) denotes the loop $BFGB$ (respectively, $BGFB$). Explicit calculation shows that \[
\int_{0<s<t<1}e^{2\pi ix_{s}+4\pi ix_{t}}dy_{s}dy_{t}\approx-0.05-0.08i\neq0.
\]Therefore, one concludes from Corollary \ref{cor:DoubIntCond} and Remark \ref{rem:DoubInt3} that $\log S(\gamma)$ has finite R.O.C. Note that this example \textit{cannot} be handled by Theorem \ref{thm:GenLineInt}; it is easily seen that $$\int_0^1 \Phi(d\gamma_t)=\int_{I_0\cup I_5}\Phi(d\gamma_t)=0$$ for any $\Phi$ satisfying the conditions in that theorem.

\end{example}

In the following subsections, we develop the proof of Theorem \ref{thm:DoubInt}. The key insight is to make use of path developments into \textit{complex semisimple} Lie algebras.

\subsection{Homogeneous projection formulae}

Our strategy relies on certain tensor projections as a starting point.
We first define these operations. Let $\{{\rm e}_{1},{\rm e}_{2}\}$
be the standard basis of $\mathbb{R}^{2}.$ 
\begin{defn}\label{defn:HomProj}
For each $m\geqslant1$, we define $D_{m}:T((\mathbb{R}^{2}))\rightarrow T((\mathbb{R}^{2}))$
to be linear operator induced by 
\[
D_{m}({\rm e}_{j_{1}}\otimes\cdots\otimes{\rm e}_{j_{n}})\triangleq\begin{cases}
{\rm e}_{j_{1}}\otimes\cdots\otimes{\rm e}_{j_{n}}, & \#\{l:j_{l}=2\}=m;\\
0, & \text{otherwise}.
\end{cases}
\]
In other words, $D_{m}$ projects a tensor series onto the sub-series
whose components have precisely $m$ of ${\rm e}_{2}$'s. 
\end{defn}
Suppose that $\gamma$ is a two-dimensional weakly geometric rough path whose
logarithmic signature $L\triangleq\log S(\gamma)$ has infinite R.O.C.
It is easy to see that $D_{m}L$ has infinite R.O.C. for all $m\geqslant1$.
As we will show, the main philosophy is that 
\[
D_{m}L\text{ has infinite R.O.C.}\implies\text{suitable }m\text{-th order iterated integral identities}.
\]
When $m=1,$ this gives back the line integral condition (\ref{eq:LineInt}).
Theorem \ref{thm:DoubInt} follows from the consideration of $D_{2}L$.
The higher order iterated integral identities we will derive in Section
\ref{sec:HighInt} are based on the consideration of general $D_{m}L$
($m\geqslant3$). 

In order to prove Theorem \ref{thm:DoubInt}, we first derive the
expressions of $D_{1}L$ and $D_{2}L$, leaving the more general situation
to Section \ref{sec:HighInt}. This is contained in the lemma below.
Recall that $\{B_{m}\}$ are the Bernoulli numbers arising from the
first Hausdorff series $H_{1}$ (cf. (\ref{eq:H1Formula})).
\begin{lem}\label{lem:D12Form}
The following formulae for $D_{1}L$ and $D_{2}L$ hold true:
\begin{equation}
D_{1}L=\sum_{m=0}^{\infty}\frac{B_{m}}{m!}{\rm ad}_{{\rm e}_{1}}^{m}\big(\int_{0}^{1}e^{x_{t}{\rm ad}_{{\rm e}_{1}}}({\rm e}_{2})dy_{t}\big)\label{eq:D1Form}
\end{equation}
and
\begin{align}
D_{2}L & =\frac{1}{2}\sum_{m=0}^{\infty}\frac{B_{m}}{m!}{\rm ad}_{{\rm e}_{1}}^{m}\big(\int_{0<s<t<1}\big[e^{x_{s}{\rm ad}_{{\rm e}_{1}}}({\rm e}_{2}),e^{x_{t}{\rm ad}_{{\rm e}_{1}}}({\rm e}_{2})\big]dy_{s}dy_{t}\big)\nonumber \\
 & \ \ \ +\frac{1}{2}\sum_{m,l\geqslant0}\frac{B_{m}B_{l}}{m!l!}\sum_{k=1}^{m}{\rm ad}_{{\rm e}_{1}}^{k-1}\big(\big[{\rm ad}_{{\rm e}_{1}}^{l}(\int_{0}^{1}e^{x_{s}{\rm ad}_{{\rm e}_{1}}}({\rm e}_{2})dy_{s}),{\rm ad}_{{\rm e}_{1}}^{m-k}(\int_{0}^{1}e^{x_{t}{\rm ad}_{{\rm e}_{1}}}({\rm e}_{2})dy_{t})\big]\big).\label{eq:D2Form}
\end{align}
\end{lem}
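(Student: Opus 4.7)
The plan is to exploit the $\mathrm{e}_2$-grading of the tensor algebra. Writing $\tilde{L}(\gamma)=\sum_{j\geqslant 1}\tilde{L}_{j}$ with $\tilde{L}_{j}\triangleq D_{j}\tilde{L}(\gamma)$, we observe that the operator $D_{m}$ commutes with all Lie polynomial operations and that $H_{n}(v,w)$ is homogeneous of degree $n$ in $v$. Substituting $v=\sum_{j}s^{j}\tilde{L}_{j}$ and tracking the coefficient of $s^{m}$ in $B(v,\mathrm{e}_{1})=\sum_{n\geqslant 1}H_{n}(v,\mathrm{e}_{1})$, only those terms survive that are built from factors $\tilde{L}_{a_{1}},\ldots,\tilde{L}_{a_{n}}$ with $a_{1}+\cdots+a_{n}=m$. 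For $m=1$ this yields $D_{1}L=H_{1}(\tilde{L}_{1},\mathrm{e}_{1})$, and for $m=2$ it yields $D_{2}L=H_{1}(\tilde{L}_{2},\mathrm{e}_{1})+H_{2}(\tilde{L}_{1},\mathrm{e}_{1})$.

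The next step is to compute $\tilde{L}_{1}$ and $\tilde{L}_{2}$ explicitly. The crucial remark (which uses the normalisation $x_{1}=1$ so that $\tilde{\gamma}$ has zero $x$-increment) is that $\xi\triangleq e^{\tilde{L}(\gamma)}-\mathbf{1}$ satisfies $D_{0}\xi=0$, because every iterated integral $\int dx_{t_{1}}\cdots dx_{t_{n}}$ over $\tilde{\gamma}$ vanishes. Hence in the expansion $\tilde{L}(\gamma)=\sum_{k\geqslant 1}\frac{(-1)^{k-1}}{k}\xi^{\otimes k}$, only the terms with $k\leqslant m$ contribute to $D_{m}\tilde{L}(\gamma)$. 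Lemma~\ref{lem:AdDecomp} then gives $\tilde{L}_{1}=D_{1}\xi=\int_{0}^{1}e^{x_{t}\mathrm{ad}_{\mathrm{e}_{1}}}(\mathrm{e}_{2})dy_{t}$ and, after a short symmetrisation argument,
\[
\tilde{L}_{2}=D_{2}\xi-\tfrac{1}{2}(D_{1}\xi)^{\otimes 2}=\tfrac{1}{2}\int_{0<s<t<1}\big[e^{x_{s}\mathrm{ad}_{\mathrm{e}_{1}}}(\mathrm{e}_{2}),e^{x_{t}\mathrm{ad}_{\mathrm{e}_{1}}}(\mathrm{e}_{2})\big]dy_{s}dy_{t}.
\]
Feeding $\tilde{L}_{1}$ and $\tilde{L}_{2}$ into $H_{1}$ (via the Bernoulli formula (\ref{eq:H1Formula})) immediately produces the first line of (\ref{eq:D2Form}) as well as the formula (\ref{eq:D1Form}).

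The remaining computation is the explicit evaluation of $H_{2}(\tilde{L}_{1},\mathrm{e}_{1})$. Starting from the definition $H_{2}(v,w)=\tfrac{1}{2}(H_{1}(v,w)\partial_{w})^{2}(w)=\tfrac{1}{2}(H_{1}(v,w)\partial_{w})H_{1}(v,w)$, I will derive the Leibniz-type identity
\[
(H_{1}(v,w)\partial_{w})\,\mathrm{ad}_{w}^{m}(v)=\sum_{k=1}^{m}\mathrm{ad}_{w}^{k-1}\big[H_{1}(v,w),\mathrm{ad}_{w}^{m-k}(v)\big]
\]
by induction on $m$ (using the derivation property together with $(H_{1}\partial_{w})(v)=0$ and $(H_{1}\partial_{w})(w)=H_{1}(v,w)$). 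Expanding $H_{1}(v,w)=\sum_{l}\frac{B_{l}}{l!}\mathrm{ad}_{w}^{l}(v)$ on the outside, substituting $v=\tilde{L}_{1}$, $w=\mathrm{e}_{1}$, and interchanging summations yields exactly the second line of (\ref{eq:D2Form}).

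The only genuine obstacle is the combinatorial bookkeeping in the last step: one has to verify that the repeated use of the Jacobi identity inside the derivation $(H_{1}\partial_{w})$ truly produces the form displayed in (\ref{eq:D2Form}), with the inner bracket evaluated between two independent iterated integrals parametrised by $s$ and $t$. This is essentially a formal manipulation in the free Lie algebra and poses no analytic issue; local finiteness of each sum (for fixed projection level) is automatic because every term involves $\geqslant 1$ $\mathrm{e}_{2}$ per factor and the projection $D_{2}$ has exactly two.
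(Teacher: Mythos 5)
Your proposal is correct and follows essentially the same route as the paper: project the BCH relation $L=\sum_n H_n(\tilde L,\mathrm{e}_1)$ using the $\mathrm{e}_2$-grading (with $D_0\tilde L=0$ from the zero $x$-increment) to get $D_1L=H_1(D_1\tilde L,\mathrm{e}_1)$ and $D_2L=H_1(D_2\tilde L,\mathrm{e}_1)+H_2(D_1\tilde L,\mathrm{e}_1)$, compute $D_1\tilde L$, $D_2\tilde L$ from Lemma \ref{lem:AdDecomp}, and evaluate $H_2$ via the derivation identity $(H_1\partial_{\mathrm{e}_1})\,\mathrm{ad}_{\mathrm{e}_1}^{m}(v)=\sum_{k=1}^{m}\mathrm{ad}_{\mathrm{e}_1}^{k-1}\big[H_1,\mathrm{ad}_{\mathrm{e}_1}^{m-k}(v)\big]$. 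The only differences (using the $\log$ expansion of $\xi=e^{\tilde L}-\mathbf{1}$ instead of the exponential expansion, and the formal parameter $s$ for the grading) are cosmetic.
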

\begin{proof}
Let $\tilde{L}$ denote the logarithmic signature of $\tilde{\gamma}\triangleq\gamma\sqcup\overleftarrow{{\rm e}_{1}}$.
Recall from (\ref{eq:HausRelationTensor}) that
\[
L=B(\tilde{L},{\rm e}_{1})=\sum_{n=1}^{\infty}H_{n}(\tilde{L},{\rm e}_{1}).
\]
By applying projections $D_{1}$ and $D_{2}$ to both sides, one finds
that 
\begin{equation}
D_{1}L=H_{1}(D_{1}\tilde{L},{\rm e}_{1}),\ D_{2}L=H_{1}(D_{2}\tilde{L},{\rm e}_{1})+H_{2}(D_{1}\tilde{L},{\rm e}_{1}).\label{eq:D1D2}
\end{equation}
Here we used the fact that the first level of $\tilde{L}$ does not
contain the ${\rm e}_{1}$-component (due to the normalisation (\ref{eq:PathNorm})). 

We first compute $D_{1}\tilde{L}$ and $D_{2}\tilde{L}.$ Recall from
Lemma \ref{lem:AdDecomp} that 
\begin{align*}
e^{\tilde{L}} & =\sum_{n=0}^{\infty}\int_{0<t_{1}<\cdots<t_{n}<1}e^{x_{t_{1}}{\rm ad}_{{\rm e}_{1}}}({\rm e}_{2})\otimes\cdots\otimes e^{x_{t_{n}}{\rm ad}_{{\rm e}_{1}}}({\rm e}_{2})dy_{t_{1}}\cdots dy_{t_{n}}\\
 & =1+\tilde{L}+\frac{1}{2}\tilde{L}^{\otimes2}+\cdots=1+(D_{1}\tilde{L}+D_{2}\tilde{L}+\cdots)+\frac{1}{2}(D_{1}\tilde{L}+D_{2}\tilde{L}+\cdots)^{\otimes2}+\cdots.
\end{align*}
By applying $D_{i}$ ($i=1,2$) to both sides, one finds that 
\begin{equation}
D_{1}\tilde{L}=\int_{0}^{1}e^{x_{t}{\rm ad}_{{\rm e}_{1}}}({\rm e}_{2})dy_{t}\label{eq:TildD1Form}
\end{equation}
and 
\begin{align}
D_{2}\tilde{L} & =\int_{0<s<t<1}e^{x_{s}{\rm ad}_{{\rm e}_{1}}}({\rm e}_{2})\otimes e^{x_{t}{\rm ad}_{{\rm e}_{1}}}({\rm e}_{2})dy_{s}dy_{t}-\frac{1}{2}D_{1}\tilde{L}\otimes D_{1}\tilde{L}\nonumber \\
 & =\frac{1}{2}\int_{0<s<t<1}\big[e^{x_{s}{\rm ad}_{{\rm e}_{1}}}({\rm e}_{2}),e^{x_{t}{\rm ad}_{{\rm e}_{1}}}({\rm e}_{2})\big]dy_{s}dy_{t}.\label{eq:TildD2Form}
\end{align}
Next, we compute the Hausdorff series $H_{1}$ and $H_{2}.$ According
to the definition (\ref{eq:H1Formula}) of $H_{1}$, one has 
\begin{equation}
H_{1}(D_{i}\tilde{L},{\rm e}_{1})=\sum_{m=0}^{\infty}\frac{B_{m}}{m!}{\rm ad}_{{\rm e}_{1}}^{m}(D_{m}\tilde{L})\ \ \ (i=1,2).\label{eq:H1Di}
\end{equation}
Respectively, the formula (\ref{eq:HnFormula}) for $H_{n}$ (with
$n=2$) yields that 
\begin{align}
H_{2}(D_{1}\tilde{L},{\rm e}_{1}) & =\frac{1}{2}\big(H_{1}\frac{\partial}{\partial{\rm e}_{1}}\big)H_{1}(D_{1}\tilde{L},{\rm e}_{1})\nonumber \\
 & =\frac{1}{2}\sum_{m=0}^{\infty}\frac{B_{m}}{m!}\big(H_{1}\frac{\partial}{\partial{\rm e}_{1}}\big)\big({\rm ad}_{{\rm e}_{1}}^{m}(D_{1}\tilde{L})\big)\nonumber \\
 & =\frac{1}{2}\sum_{m=0}^{\infty}\frac{B_{m}}{m!}\sum_{k=1}^{m}{\rm ad}_{{\rm e}_{1}}^{k-1}\circ{\rm ad}_{H_{1}(D_{1}\tilde{L},{\rm e}_{1})}\circ{\rm ad}_{{\rm e}_{1}}^{m-k}(D_{1}\tilde{L})\nonumber \\
 & =\frac{1}{2}\sum_{m,l\geqslant0}\frac{B_{m}B_{l}}{m!l!}\sum_{k=1}^{m}{\rm ad}_{{\rm e}_{1}}^{k-1}\big(\big[{\rm ad}_{{\rm e}_{1}}^{l}(D_{1}\tilde{L}),{\rm ad}_{{\rm e}_{1}}^{m-k}(D_{1}\tilde{L})\big]\big).\label{eq:H2D1}
\end{align}
After substituting the expressions (\ref{eq:TildD1Form}) and (\ref{eq:TildD2Form})
into (\ref{eq:H1Di}) and (\ref{eq:H2D1}), the lemma follows from
the relation (\ref{eq:D1D2}).
\end{proof}

\subsubsection*{Recapturing Theorem \ref{thm:LineInt} through the $D_{1}$-projection}

The formula (\ref{eq:D1Form}) easily recovers Theorem \ref{thm:LineInt}.
Indeed, by taking the development
\begin{equation}
F_{\lambda}:{\rm e}_{1}\mapsto A,{\rm e}_{2}\mapsto D\ \text{with Lie structure }[A,D]=\lambda D.\label{eq:DevD1}
\end{equation}
The relation (\ref{eq:D1Form}) yields
\[
\hat{F}_{\lambda}\big(D_{1}L\big)=\sum_{m=0}^{\infty}\frac{B_{m}}{m!}\lambda^{m}\big(\int_{0}^{1}e^{\lambda x_{t}}dy_{t}D\big)=\frac{\lambda}{e^{\lambda}-1}\times\big(\int_{0}^{1}e^{\lambda x_{t}}dy_{t}\big)D.
\]
Since the left hand side defines an entire function in $\lambda\in\mathbb{C}$,
one must have
\begin{equation}
\int_{0}^{1}e^{2k\pi i\cdot x_{t}}dy_{t}=0\ \forall k\in\mathbb{Z}\backslash\{0\}.\label{eq:LineIntCond}
\end{equation}
Of course, this argument is essentially the same as our earlier proof
of Theorem \ref{thm:LineInt}.

An interesting point is that in the $D_{1}L$ case, one can achieve
more by showing that the line integral condition (\ref{eq:LineInt})
is also \textit{sufficient} for $D_{1}L$ to have infinite R.O.C. Indeed, one
can rewrite (\ref{eq:D1Form}) as 
\[
D_{1}L=\sum_{m,p=0}^{\infty}\frac{B_{m}}{m!p!}{\rm ad}_{{\rm e}_{1}}^{m+p}({\rm e}_{2})\int_{0}^{1}x_{t}^{p}dy_{t}=\sum_{N=0}^{\infty}\big(\sum_{m=0}^{N}\frac{B_{m}}{m!(N-m)!}\int_{0}^{1}x_{t}^{N-m}dy_{t}\big){\rm ad}_{{\rm e}_{1}}^{N}({\rm e}_{2}).
\]
Since the Lie polynomials ${\rm ad}_{{\rm e}_{1}}^{N}({\rm e}_{2})$
($N\geqslant0$) all have different degrees and
\[
C_{1}\|{\rm ad}_{{\rm e}_{1}}^{N}({\rm e}_{2})\|\leqslant2^{N},
\]
one has for every $\lambda>0$ that
\begin{align}
F(\lambda) & \triangleq\sum_{N=0}^{\infty}\|\pi_{N+1}(D_{1}L)\|\lambda^{N}\nonumber \\
 & \leqslant\sum_{N=0}^{\infty}\big|\sum_{m=0}^{N}\frac{B_{m}}{m!(N-m)!}\int_{0}^{1}x_{t}^{N-m}dy_{t}\big|\lambda^{N}2^{N}:=\sum_{N=0}^{\infty}|c_{N}|(2\lambda)^{N}.\label{eq:D1Series}
\end{align}
Observe that 
\[
\sum_{N=0}^{\infty}c_{N}(2\lambda)^{N}=\big(\sum_{m}\frac{B_{m}}{m!}(2\lambda)^{m}\big)\big(\sum_{p}\frac{1}{p!}\int_{0}^{1}x_{t}^{p}dy_{t}\big)=\frac{2\lambda}{e^{2\lambda-1}}\times\int_{0}^{1}e^{2\lambda x_{t}}dy_{t}.
\]
Now suppose that the line integral condition (\ref{eq:LineInt}) holds.
Then the function 
\[
z\mapsto\frac{z}{e^{z}-1}\times\int_{0}^{1}e^{zx_{t}}dy_{t}
\]
is entire. As a result, the power series $\underset{N\geqslant0}{\sum}c_{N}z^{N}$
has infinite R.O.C. and so does $\underset{N\geqslant0}{\sum}|c_{N}|z^{N}$.
In view of (\ref{eq:D1Series}), this implies that $D_{1}L$ has infinite
R.O.C. 

To summarise, one has obtained the following neat result. 
\begin{proposition}
Let $\gamma:[0,1]\rightarrow\mathbb{R}^{2}$ be a weakly
geometric rough path satisfying the normalisation condition (\ref{eq:PathNorm}). Then
$D_{1}\log S(\gamma)$ has infinite R.O.C. if and only if $\int_{0}^{1}e^{2k\pi ix_{t}}dy_{t}=0$
for all nonzero integers $k$. 
\end{proposition}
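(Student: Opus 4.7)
The proposition asserts an ``if and only if'' characterisation and both directions can be derived from material already developed in the paper, with the formula (\ref{eq:D1Form}) from Lemma \ref{lem:D12Form} as the pivotal input.

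For the forward direction, I would specialise the proof of Theorem \ref{thm:LineInt} to $D_1 L$. Concretely, take the two-dimensional path development $F_\lambda : \mathrm{e}_1 \mapsto A, \ \mathrm{e}_2 \mapsto D$ with $[A,D] = \lambda D$ (realised in the Lie algebra of $2\times 2$ upper-triangular traceless complex matrices, together with an auxiliary scaling $\mu$ on $\mathrm{e}_2$ if needed to justify analytic extensions, exactly as in Section \ref{sec:Cartan}). Applying $\hat{F}_\lambda$ to (\ref{eq:D1Form}) yields
\[
\hat{F}_\lambda(D_1 L) \;=\; \frac{\lambda}{e^\lambda - 1}\Big(\int_0^1 e^{\lambda x_t}\,dy_t\Big) D,
\]
valid on $|\lambda| < 2\pi$ by absolute convergence, and extended to all $\lambda\in\mathbb{C}$ via the truncation-and-limit procedure used in the proof of Theorem \ref{thm:LineInt}. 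Since $D_1 L$ has infinite R.O.C., $\hat{F}_\lambda(D_1 L)$ defines an entire $\mathfrak{g}$-valued function of $\lambda$. At the simple poles $\lambda = 2k\pi i$ ($k\in\mathbb{Z}\setminus\{0\}$) of the meromorphic prefactor, the entire line-integral factor must therefore vanish, giving $\int_0^1 e^{2k\pi i\,x_t}\,dy_t = 0$.

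For the converse, suppose the line-integral condition holds. Starting from (\ref{eq:D1Form}) and expanding $e^{x_t \mathrm{ad}_{\mathrm{e}_1}}(\mathrm{e}_2) = \sum_p \frac{x_t^p}{p!}\mathrm{ad}_{\mathrm{e}_1}^p(\mathrm{e}_2)$, one collects the basis-vector $\mathrm{ad}_{\mathrm{e}_1}^N(\mathrm{e}_2)$ to write
\[
D_1 L \;=\; \sum_{N=0}^{\infty} c_N\,\mathrm{ad}_{\mathrm{e}_1}^N(\mathrm{e}_2), \qquad c_N \;=\; \sum_{m=0}^{N} \frac{B_m}{m!\,(N-m)!}\int_0^1 x_t^{N-m}\,dy_t.
\]
Using the projective-norm estimate $\|\mathrm{ad}_{\mathrm{e}_1}^N(\mathrm{e}_2)\| \leqslant 2^N$, it suffices to show that $\sum_N |c_N|(2z)^N$ has infinite R.O.C. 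The key observation is that the scalar generating function is the Cauchy product
\[
\sum_{N=0}^{\infty} c_N w^N \;=\; \Big(\sum_{m=0}^{\infty}\frac{B_m}{m!}w^m\Big)\Big(\sum_{p=0}^{\infty}\frac{w^p}{p!}\int_0^1 x_t^p\,dy_t\Big) \;=\; \frac{w}{e^w-1}\cdot\int_0^1 e^{w x_t}\,dy_t,
\]
valid for $|w|<2\pi$. The prefactor $w/(e^w-1)$ has only simple poles, located at $w = 2k\pi i$ with $k\in\mathbb{Z}\setminus\{0\}$; the hypothesis says the entire second factor vanishes at each such point, so the pole is removable and the product extends to an entire function of $w$. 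Hence $\sum_N c_N w^N$ has infinite R.O.C., and by Cauchy--Hadamard the same holds for $\sum_N |c_N| w^N$, which gives infinite R.O.C.\ for $D_1 L$.

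The only non-routine part is the pole/zero matching in the converse: one needs the simplicity of the poles of $w/(e^w-1)$ to match the (at least) simple zeros supplied at $w = 2k\pi i$ by the hypothesis. Since both are simple, the cancellation is exact and the product is entire; no delicate multiplicity analysis is required. The remaining steps -- the basis decomposition of $D_1 L$, the norm bound on $\mathrm{ad}_{\mathrm{e}_1}^N(\mathrm{e}_2)$, and the invariance of the radius of convergence under taking absolute values of coefficients -- are all standard, so once the generating-function identity above is in place, the proof essentially writes itself.
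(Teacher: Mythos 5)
Your proposal is correct and follows essentially the same route as the paper: the forward direction applies the $[A,D]=\lambda D$ development to the formula (\ref{eq:D1Form}) and matches zeros of the line integral against the poles of $\lambda/(e^{\lambda}-1)$, and the converse uses exactly the paper's expansion $D_{1}L=\sum_{N}c_{N}\,\mathrm{ad}_{\mathrm{e}_1}^{N}(\mathrm{e}_2)$ with the bound $\|\mathrm{ad}_{\mathrm{e}_1}^{N}(\mathrm{e}_2)\|\leqslant 2^{N}$ and the generating-function identity $\sum_N c_N w^N=\frac{w}{e^{w}-1}\int_0^1 e^{wx_t}\,dy_t$, whose singularities become removable under the hypothesis. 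The only cosmetic remark is that the converse needs merely that the poles of $w/(e^{w}-1)$ are simple and the hypothesised zeros are of order at least one, which is exactly what you use, so no gap remains.
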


\subsection{Complex semisimple Lie algebras}

In order to prove Theorem \ref{thm:DoubInt}, one needs to investigate
the $D_{2}$-projection. Our method relies on path developments into
a particular type of Lie algebras: \textit{complex semisimple Lie algebras}.
We first provide a quick review
on relevant concepts. 
\begin{defn}
A (finite dimensional) complex Lie algebra $\mathfrak{g}$ is said
to be \textit{semisimple} if it can be decomposed into a direct sum
$\mathfrak{g}\cong\mathfrak{g}_{1}\oplus\cdots\oplus\mathfrak{g}_{r}$,
where each $\mathfrak{g}_{i}$ is a \textit{simple} Lie subalgebra
in the sense that it does not contain non-trivial proper ideals.
\end{defn}
A key concept in semisimple Lie theory is the notion of Cartan subalgebras.
Let $\mathfrak{g}$ be a complex semisimple Lie algebra.
\begin{defn}
A \textit{Cartan subalgebra} $\mathfrak{h}$ of $\mathfrak{g}$ is
a subspace which satisfies the following two properties:

\vspace{2mm}\noindent (i) $\mathfrak{h}$ is a maximal abelian subalgebra;

\vspace{2mm}\noindent (ii) ${\rm ad}_{H}\in{\rm End}(\mathfrak{g})$ is diagonalisable (over
$\mathbb{C}$) for each $H\in\mathfrak{h}$.
\end{defn}
It is well known that a Cartan subalgebra always exists and is unique
up to conjugation in $\mathfrak{g}$. Let $\mathfrak{h}$ be a Cartan
subalgebra of $\mathfrak{g}$. Given any representation $\rho:\mathfrak{g}\rightarrow{\rm End}(W)$
over a finite dimensional complex vector space $W,$ all elements
of $\mathfrak{h}$ are simultaneously diagonalisable when they are
viewed as linear transformations over $W.$ As a result, $W$ admits
a decomposition into common $\mathfrak{h}$-eigenspaces. To
be precise, a complex linear functional $\mu\in\mathfrak{h}^{*}$
is said to be a \textit{weight} for $\rho$ if the subspace 
\begin{equation}
W^{\mu}\triangleq\{w\in W:\rho(H)(w)=\mu(H)w\ \forall h\in\mathfrak{h}\}\label{eq:WeightSpace}
\end{equation}
is non-trivial. There are at most finitely many weights for $\rho$
due to finite dimensionality and their collection is denoted as $\Pi(\rho).$
The space $W$ admits a decomposition (simultaneous diagonalisation)
\[
W=\bigoplus_{\mu\in\Pi(\rho)}W^{\mu},
\]
where for each $H\in\mathfrak{h}$, $W^{\mu}$ is an eigenspace of
$\rho(H)$ with eigenvalue $\mu(H)$ ($\mu\in\Pi(\rho)$).

The above general consideration, being applied to the adjoint representation
${\rm ad}:\mathfrak{g}\rightarrow{\rm End}(\mathfrak{g}),$ gives
the so-called \textit{root space decomposition }of $\mathfrak{g}.$
Let $\alpha\in\mathfrak{h}^{*}.$ Similar to (\ref{eq:WeightSpace})
we define the subspace 
\[
\mathfrak{g}^{\alpha}\triangleq\{X\in\mathfrak{g}:{\rm ad}_{H}(X)=\alpha(H)X\ \forall H\in\mathfrak{h}\}.
\]
It is readily checked that $\mathfrak{g}^{0}=\mathfrak{h}$ and $[\mathfrak{g}^{\alpha},\mathfrak{g}^{\beta}]\subseteq\mathfrak{g}^{\alpha+\beta}$
for all $\alpha,\beta\in\mathfrak{h}^{*}.$ A complex linear functional
$\alpha\in\mathfrak{h}^{*}$ is called a \textit{root} of $\mathfrak{g}$
with respect to $\mathfrak{h}$ if it is a weight for the adjoint
representation (i.e. if $\mathfrak{g}^{\alpha}\neq\{0\}$). In this
case, $\mathfrak{g}^{\alpha}$ is called the \textit{root space} associated
with $\alpha.$ It is clear that there are at most finitely many roots
of $\mathfrak{g}$.

\begin{thm}[Root Space Decomposition] Let $\Delta$ be the set of
all nonzero roots with respect to a given Cartan subalgebra $\mathfrak{h}.$
Then $\mathfrak{g}$ can be written as the direct sum 
\begin{equation}
\mathfrak{g}=\mathfrak{h}+\sum_{\alpha\in\Delta}\mathfrak{g}^{\alpha}.\label{eq:RootDecomp}
\end{equation}
In this decomposition, one has $\dim\mathfrak{g}^{\alpha}=1$ for
each $\alpha\in\Delta$. Moreover, if $\alpha,\beta,\alpha+\beta\in\Delta$
then $[\mathfrak{g}^{\alpha},\mathfrak{g}^{\beta}]=\mathfrak{g}^{\alpha+\beta}.$

\end{thm}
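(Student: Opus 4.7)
The plan is to establish the three assertions in the order they are stated: the direct sum decomposition, the one-dimensionality of each nonzero root space, and the surjectivity of the bracket $[\mathfrak{g}^\alpha,\mathfrak{g}^\beta]$ onto $\mathfrak{g}^{\alpha+\beta}$. Throughout, the argument will lean crucially on two classical ingredients: the Killing form $\kappa(X,Y)\triangleq\mathrm{tr}(\mathrm{ad}_X\mathrm{ad}_Y)$ (which is non-degenerate on $\mathfrak{g}$ by Cartan's criterion for semisimplicity) and the representation theory of $\mathfrak{sl}_2(\mathbb{C})$.

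For the decomposition itself, the family $\{\mathrm{ad}_H:H\in\mathfrak{h}\}$ is a commuting (since $\mathfrak{h}$ is abelian) collection of diagonalisable endomorphisms of $\mathfrak{g}$. Simultaneous diagonalisation immediately yields $\mathfrak{g}=\bigoplus_{\alpha\in\mathfrak{h}^*}\mathfrak{g}^\alpha$, with only finitely many summands nonzero. The inclusion $\mathfrak{h}\subseteq\mathfrak{g}^0$ is tautological; the reverse inclusion, namely that the centraliser of $\mathfrak{h}$ in $\mathfrak{g}$ equals $\mathfrak{h}$, is the first nontrivial point. I would prove it by observing that $\mathfrak{g}^0$ inherits the property that each $\mathrm{ad}_H$, $H\in\mathfrak{h}$, is nilpotent on $\mathfrak{g}^0/\mathfrak{h}$ (weights vanish), then using the non-degeneracy of $\kappa|_{\mathfrak{h}\times\mathfrak{h}}$ together with the Jordan decomposition to reduce to the statement that $\mathfrak{g}^0$ is a nilpotent subalgebra whose semisimple part lies in $\mathfrak{h}$; maximality of $\mathfrak{h}$ closes the argument.

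The one-dimensionality of $\mathfrak{g}^\alpha$ will be obtained by exhibiting, for each $\alpha\in\Delta$, an $\mathfrak{sl}_2$-triple sitting inside $\mathfrak{g}$. The intermediate facts to be verified in sequence are: (i) $\kappa(\mathfrak{g}^\alpha,\mathfrak{g}^\beta)=0$ whenever $\alpha+\beta\neq 0$, which together with non-degeneracy of $\kappa$ forces $-\alpha\in\Delta$ and provides a perfect pairing between $\mathfrak{g}^\alpha$ and $\mathfrak{g}^{-\alpha}$; (ii) for $X\in\mathfrak{g}^\alpha$, $Y\in\mathfrak{g}^{-\alpha}$ one has $[X,Y]=\kappa(X,Y)T_\alpha$, where $T_\alpha\in\mathfrak{h}$ is the element $\kappa$-dual to $\alpha$; (iii) $\alpha(T_\alpha)\neq 0$, otherwise a small direct computation would produce a nonzero element of $[\mathfrak{g}^\alpha,\mathfrak{g}^{-\alpha}]$ acting nilpotently on $\mathfrak{g}$, contradicting (i). A suitable rescaling then yields $E_\alpha\in\mathfrak{g}^\alpha$, $F_\alpha\in\mathfrak{g}^{-\alpha}$, $H_\alpha\in\mathfrak{h}$ satisfying the standard $\mathfrak{sl}_2$ relations. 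Now $\mathbb{C}H_\alpha\oplus\bigoplus_{k\in\mathbb{Z}\setminus\{0\}}\mathfrak{g}^{k\alpha}$ is an $\mathfrak{sl}_2$-subrepresentation of $(\mathfrak{g},\mathrm{ad})$, and inspecting the $\mathrm{ad}_{H_\alpha}$-eigenvalues together with the classification of finite-dimensional irreducibles of $\mathfrak{sl}_2(\mathbb{C})$ forces $\dim\mathfrak{g}^\alpha=1$ and rules out $k\alpha\in\Delta$ for $|k|\geqslant 2$.

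The inclusion $[\mathfrak{g}^\alpha,\mathfrak{g}^\beta]\subseteq\mathfrak{g}^{\alpha+\beta}$ is a one-line consequence of the Jacobi identity. For surjectivity when $\alpha+\beta\in\Delta$, I would use the $\alpha$-string through $\beta$: the subspace $M\triangleq\bigoplus_{k\in\mathbb{Z}}\mathfrak{g}^{\beta+k\alpha}$ is an $\mathfrak{sl}_2$-subrepresentation with one-dimensional weight spaces, hence irreducible once one shows the root string is unbroken; standard $\mathfrak{sl}_2$-theory then asserts that $\mathrm{ad}_{E_\alpha}$ is injective on every weight space except the top of the string, and $\beta$ is not the top since $\beta+\alpha\in\Delta$ by hypothesis. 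The main obstacles will be establishing the centraliser identity $\mathfrak{g}^0=\mathfrak{h}$ in the first step and the $\mathfrak{sl}_2$-triple construction in the second, particularly the non-vanishing of $\alpha(T_\alpha)$; both require delicate manipulations with the Killing form and are the places where semisimplicity enters in an essential way.
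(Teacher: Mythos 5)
The paper does not prove this theorem: it is stated purely as background material in the review of complex semisimple Lie algebras, with the reader referred implicitly to the standard literature. Your proposal is the classical textbook argument (simultaneous diagonalisation, the centraliser identity $\mathfrak{g}^{0}=\mathfrak{h}$, Killing-form duality, $\mathfrak{sl}_{2}$-triples, and root strings), and it is correct in outline; the only point worth tightening is the one-dimensionality step, where the module $\mathbb{C}H_{\alpha}\oplus\bigoplus_{k\in\mathbb{Z}\setminus\{0\}}\mathfrak{g}^{k\alpha}$ does decompose as you claim, but the clean way to conclude is to note that all its $H_{\alpha}$-weights are even, so any irreducible summand other than the triple $\mathrm{Span}\{E_{\alpha},H_{\alpha},F_{\alpha}\}$ would have to contain a zero-weight vector outside $\mathbb{C}H_{\alpha}$, which is impossible. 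Since the paper offers nothing to compare against, no further reconciliation is needed.
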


The following basic example will play an essential role in the proof
of Theorem \ref{thm:DoubInt} (and more significantly, of Theorem \ref{thm:IterIntCond} below).
\begin{example}
\label{exa:SlnC}Let $\mathfrak{g}=\mathfrak{sl}_{n}(\mathbb{C})$
be the Lie algebra of $n\times n$ matrices over $\mathbb{C}$ with
zero trace. A Cartan subalgebra $\mathfrak{h}$ can be taken as the
subspace of diagonal matrices in $\mathfrak{g}.$ One has $\dim\mathfrak{h}=n-1$.
For each $1\leqslant i\leqslant n$, let $\mu_{i}\in\mathfrak{h}^{*}$
be the linear functional defined by taking the $i$-th diagonal entry
of $H\in\mathfrak{h}$. Then the set of nonzero roots is given by
\[
\Delta=\{\lambda_{ij}\triangleq\mu_{i}-\mu_{j}:1\leqslant i\neq j\leqslant n\}.
\]
Respectively, the root spaces are given by
\[
\mathfrak{g}^{\lambda_{ij}}=\mathbb{C}\cdot E_{ij},\ \ \ (i\neq j)
\]
where $E_{ij}$ denotes the matrix whose $(i,j)$-entry is one and
all other entries are zero.
\end{example}

\subsection{Proof of Theorem \ref{thm:DoubInt} through $\mathfrak{sl}_{3}(\mathbb{C})$-development}

A nice feature about the root space decomposition is that the adjoint
actions by elements of the Cartan subalgebra are simultaneously diagonalised
into root spaces (eigenspaces). In particular, if one applies a path
development into a complex semisimple Lie algebra with ${\rm e}_{1}$
being mapped into a Cartan element, the formulae (\ref{eq:D1Form},
\ref{eq:D2Form}) for $D_{2}L$ could potentially be projected along
suitable root spaces to yield \textit{scalar} equations similar to
the one obtained in (\ref{eq:FormalHaus}). This will allow one to
treat $D_{2}L$ as a scalar entire function and to obtain suitable
integral conditions in a similar fashion as in the proof of Theorem
\ref{thm:LineInt}.

To make this idea precise, let $\mathfrak{g}$ be a complex semisimple
Lie algebra with root space decomposition (\ref{eq:RootDecomp}).
We consider a path development $F:\mathbb{C}^{2}\rightarrow\mathfrak{g}$
induced by 
\[
{\rm e}_{1}\mapsto A,\ {\rm e}_{2}\mapsto D
\]
 where $A,D$ are chosen to be such that
\[
A\in\mathfrak{h},\ D=\sum_{\alpha\in\Delta}c_{\alpha}E_{\alpha}\in\sum_{\alpha\in\Delta}\mathfrak{g}^{\alpha}.
\]
Here $E_{\alpha}$ is a generator of $\mathfrak{g}^{\alpha}$ (recall
that $\dim\mathfrak{g}^{\alpha}=1$) and $c_{\alpha}\in\mathbb{C}$.
Under such a development, one has 
\[
\hat{F}\big(\big[e^{x_{s}{\rm ad}_{{\rm e}_{1}}}({\rm e}_{2}),e^{x_{t}{\rm ad}_{{\rm e}_{1}}}({\rm e}_{2})\big]\big)=\sum_{\alpha,\beta\in\Delta}c_{\alpha}c_{\beta}e^{\alpha(A)x_{s}+\beta(A)x_{t}}[E_{\alpha},E_{\beta}].
\]
A crucial point is that $[E_{\alpha},E_{\beta}]\neq0$ if $\alpha+\beta\in\Delta$
and one can also make $\alpha(A),\beta(A)$ arbitrary by varying $A$. 

\begin{proof}[Proof of Theorem \ref{thm:DoubInt}]

As the simplest non-trivial example, in what follows we choose $\mathfrak{g}=\mathfrak{sl}_{3}(\mathbb{C})$.
According to Example \ref{exa:SlnC}, the Cartan subalgebra $\mathfrak{h}$
consists of diagonal matrices in $\mathfrak{g}$ and the roots are
given by 
\[
\lambda_{ij}(H)=H_{ii}-H_{jj},\ \ \ H\in\mathfrak{h}.\ \ \ (1\leqslant i\neq j\leqslant3)
\]
Let $a,b$ be two given complex numbers. We construct a path development
$F_{a,b}:\mathbb{C}^{2}\rightarrow\mathfrak{g}$ by specifying 
\[
{\rm e}_{1}\mapsto A\triangleq\left(\begin{array}{ccc}
\frac{2a+b}{3} & 0 & 0\\
0 & \frac{b-a}{3} & 0\\
0 & 0 & -\frac{a+2b}{3}
\end{array}\right),\ {\rm e}_{2}\mapsto D\triangleq E_{12}+E_{23}=\left(\begin{array}{ccc}
0 & 1 & 0\\
0 & 0 & 1\\
0 & 0 & 0
\end{array}\right).
\]
In particular, one has 
\[
D\in\mathfrak{g}^{\lambda_{12}}+\mathfrak{g}^{\lambda_{2,3}};\ \lambda_{12}(A)=a,\ \lambda_{23}(A)=b;\ [E_{12},E_{23}]=E_{13}\in\mathfrak{g}^{\lambda_{13}}.
\]
It follows that
\begin{align*}
\hat{F}_{a,b}\big(\big[e^{x_{s}{\rm ad}_{{\rm e}_{1}}}({\rm e}_{2}),e^{x_{t}{\rm ad}_{{\rm e}_{1}}}({\rm e}_{2})\big]\big) & =\big[e^{ax_{s}}E_{12}+e^{bx_{s}}E_{23},e^{ax_{t}}E_{12}+e^{bx_{t}}E_{23}\big]\\
 & =\big(e^{ax_{s}+bx_{t}}-e^{ax_{t}+bx_{s}}\big)E_{13}.
\end{align*}
According to the formula (\ref{eq:H1Di}) for $H_{1}(D_{2}\tilde{L},{\rm e}_{1})$,
one finds that
\begin{align*}
\hat{F}_{a,b}\big(H_{1}(D_{2}\tilde{L},{\rm e}_{1})\big) & =\frac{1}{2}\sum_{m=0}^{\infty}\frac{B_{m}}{m!}{\rm ad}_{A}^{m}\big(\int_{0<s<t<1}\big(e^{ax_{s}+bx_{t}}-e^{ax_{t}+bx_{s}}\big)dy_{s}dy_{t}\big)E_{13}\\
 & =\frac{1}{2}\sum_{m=0}^{\infty}\frac{B_{m}}{m!}(a+b)^{m}\int_{0<s<t<1}\big(e^{ax_{s}+bx_{t}}-e^{ax_{t}+bx_{s}}\big)dy_{s}dy_{t}\cdot E_{13}\\
 & =\frac{1}{2}\cdot\frac{a+b}{e^{a+b}-1}\cdot\int_{0<s<t<1}\big(e^{ax_{s}+bx_{t}}-e^{ax_{t}+bx_{s}}\big)dy_{s}dy_{t}\cdot E_{13}.
\end{align*}
Similarly, by using the formula (\ref{eq:H2D1}) for $H_{2}(D_{1}\tilde{L},{\rm e}_{1})$
one has 
\begin{align*}
\hat{F}_{a,b}\big(H_{2}(D_{1}\tilde{L},{\rm e}_{1})\big)= & \frac{1}{2}\int_{0}^{1}\int_{0}^{1}dy_{s}dy_{t}\times\sum_{m,l\geqslant0}\frac{B_{m}B_{l}}{m!l!}\cdot\\
 & \ \ \ \sum_{k=1}^{m}{\rm ad}_{A}^{k-1}\big[a^{l}e^{ax_{s}}E_{12}+b^{l}e^{bx_{s}}E_{23},a^{m-k}e^{ax_{t}}E_{12}+b^{m-k}e^{bx_{t}}E_{23}\big]\\
= & \frac{1}{2}\int_{0}^{1}\int_{0}^{1}dy_{s}dy_{t}\times\sum_{m,l\geqslant0}\frac{B_{m}B_{l}}{m!l!}\\
 & \ \ \ \times\sum_{k=1}^{m}\big(a^{l}b^{m-k}e^{ax_{s}+bx_{t}}-a^{m-k}b^{l}e^{ax_{t}+bx_{s}}\big)(a+b)^{k-1}\cdot E_{13}.
\end{align*}
After evaluating the summations explicitly, one finds that 
\begin{align*}
\hat{F}_{a,b}\big(H_{2}(D_{1}\tilde{L},{\rm e}_{1})\big) & =\frac{1}{2}\big(\frac{(a+b)(e^{b}-e^{a})}{(e^{a+b}-1)(e^{a}-1)(e^{b}-1)}-\frac{b-a}{(e^{a}-1)(e^{b}-1)}\big)\\
 & \ \ \ \times\big(\int_{0}^{1}e^{ax_{t}}dy_{t}\big)\big(\int_{0}^{1}e^{bx_{t}}dy_{t}\big)E_{1,3}.
\end{align*}
Consequently, 
\begin{align*}
\hat{F}_{a,b}(D_{2}L) & =\hat{F}_{a,b}(H_{1}(D_{2}\tilde{L},{\rm e}_{1}))+\hat{F}_{a,b}(H_{2}(D_{1}\tilde{L},{\rm e}_{1}))\\
 & =\frac{1}{2}E_{1,3}\times\big(\frac{a+b}{e^{a+b}-1}\cdot\int_{0<s<t<1}\big(e^{ax_{s}+bx_{t}}-e^{ax_{t}+bx_{s}}\big)dy_{s}dy_{t}\\
 & \ \ \ +\big(\frac{(a+b)(e^{b}-e^{a})}{(e^{a+b}-1)(e^{a}-1)(e^{b}-1)}-\frac{b-a}{(e^{a}-1)(e^{b}-1)}\big(\int_{0}^{1}e^{ax_{t}}dy_{t}\big)\big(\int_{0}^{1}e^{bx_{t}}dy_{t}\big)\big).
\end{align*}
By multiplying $(e^{a}-1)(e^{b}-1)(e^{a+b}-1)$ on both sides, one
obtains that 
\begin{align}
 & (e^{a}-1)(e^{b}-1)(e^{a+b}-1)\hat{F}_{a,b}(D_{2}L)\nonumber \\
 & =\frac{1}{2}E_{13}\times\big((a+b)(e^{a}-1)(e^{b}-1)\int_{0<s<t<1}\big(e^{ax_{s}+bx_{t}}-e^{ax_{t}+bx_{s}}\big)dy_{s}dy_{t}\nonumber \\
 & \ \ \ +\big((a+b)(e^{b}-e^{a})-(b-a)(e^{a+b}-1)\big)\big(\int_{0}^{1}e^{ax_{t}}dy_{t}\big)\big(\int_{0}^{1}e^{bx_{t}}dy_{t}\big)\big).\label{eq:abId}
\end{align}

Under the assumption that $D_{2}L$ has infinite R.O.C., the left
hand side of (\ref{eq:abId}) defines an entire function in $(a,b)\in\mathbb{C}^{2}$.
Let $k\in\mathbb{Z}\backslash\{0\}$ be given fixed and let $a,b\in\mathbb{C}$
satisfy $a+b=2k\pi i$. Then (\ref{eq:abId}) becomes
\begin{align*}
0 & =2k\pi iE_{13}\times\big((1-\cosh b)\int_{0<s<t<1}\big(e^{2k\pi i\cdot x_{s}+b(x_{t}-x_{s})}-e^{2k\pi i\cdot x_{t}+b(x_{s}-x_{t})}\big)dy_{s}dy_{t}\\
 & \ \ \ +(\sinh b)\big(\int_{0}^{1}e^{(2k\pi i-b)x_{t}}dy_{t}\big)\big(\int_{0}^{1}e^{bx_{t}}dy_{t}\big)\big).
\end{align*}
Since $k\neq0,$ one concludes the desired integral identity (\ref{eq:DoubInt}).
This completes the proof of Theorem \ref{thm:DoubInt}.
\end{proof}

\begin{rem}
It is certainly possible to use other types of complex semisimple Lie algebras in the above analysis. We are not sure if the use of other root structures would lead to new integral identities that are not covered by (\ref{eq:DoubInt}). 
\end{rem}

\section{Higher order iterated integral identities}\label{sec:HighInt}

In this section, we establish the higher order counterpart of Corollary
\ref{cor:DoubIntCond}. To state our main theorem, we first introduce
the following definitions. 
\begin{defn}
A finite sequence of numbers $(a_{1},\cdots,a_{m})$ is said to be
\textit{non-degenerate }if it does not contain zero consecutive sums,
i.e. if
\[
a_{j}+a_{j+1}+\cdots+a_{k}\neq0
\]
for all $1\leqslant j\leqslant k\leqslant m$. 
\end{defn}
\begin{defn}
\label{def:Sm}Let $\gamma:[0,1]\rightarrow\mathbb{R}^{2}$ be a weakly geometric
rough path. For each $m\geqslant1$, we define an analytic function
$S_{m}^{\gamma}:\mathbb{C}^{m}\rightarrow\mathbb{C}$ in the following
way. Given $a_{1},\cdots,a_{m}\in\mathbb{C}$, let $B:[0,1]\rightarrow\mathbb{C}^{m}$
be the path defined by 
\[
B_{t}\triangleq\sum_{j=1}^{m}\big(\int_{0}^{t}e^{a_{j}x_{s}}dy_{s}\big){\rm e}_{j}
\]
where $\{{\rm e}_{1},\cdots,{\rm e}_{m}\}$ denotes the standard basis
of $\mathbb{C}^{m}$. We define $S_{m}^{\gamma}(a_{1},\cdots,a_{m})$
to be the coefficient of the logarithmic signature of $B$ with respect
to the tensor ${\rm e}_{1}\otimes\cdots\otimes{\rm e}_{m}$, i.e.
\[
S_{m}^{\gamma}(a_{1},\cdots,a_{m})\triangleq\big(\log S(B)\big)^{(1,\cdots,m)}.
\]
\end{defn}
\begin{example}
By explicit calculation, one finds that 
\begin{align*}
&S_{1}^{\gamma}(a)=\int_{0}^{1}e^{ax_{s}}dy_{s},\ S_{2}^{\gamma}(a,b)=\frac{1}{2}\int_{0<s<t<1}\big(e^{ax_{s}+bx_{t}}-e^{bx_{s}+ax_{t}}\big)dy_{s}dy_{t}
\end{align*}
and
\begin{align*}
S_{3}^{\gamma}(a,b,c) & =\int_{0<s<t<r<1}\big(\frac{1}{3}e^{ax_{s}+bx_{t}+cx_{r}}-\frac{1}{6}e^{ax_{s}+cx_{t}+bx_{r}}-\frac{1}{6}e^{bx_{s}+ax_{t}+cx_{r}}\\
 & \ \ \ -\frac{1}{6}e^{bx_{s}+cx_{t}+ax_{r}}-\frac{1}{6}e^{cx_{s}+ax_{t}+bx_{r}}+\frac{1}{3}e^{cx_{s}+bx_{t}+ax_{r}}\big)dy_{s}dy_{t}dy_{r}.
\end{align*}
\end{example}
\begin{rem}
One can consider a general $d$-dimensional path 
\[
B_{t}\triangleq\sum_{i=1}^{d}\big(\int_{0}^{t}e^{p_{i}x_{s}}dy_{s}\big){\rm e}_{i}
\]
It is not difficult to see that the logarithmic signature coefficient
of $B$ with respect to ${\rm e}_{i_{1}}\otimes\cdots\otimes{\rm e}_{i_{m}}$
is given by $S_{m}^{\gamma}(p_{i_{1}},\cdots,p_{i_{m}})$.
\end{rem}
Our main theorem of this section is stated as follows. As before,
we assume that $\gamma:[0,1]\rightarrow\mathbb{R}^{2}$ is a weakly geometric
rough path satisfying the normalisation condition (\ref{eq:PathNorm}). 
\begin{thm}
\label{thm:IterIntCond}Suppose that $\log S(\gamma)$ has infinite
R.O.C. Then one has 
\[
S_{m}^{\gamma}(a_{1},\cdots,a_{m})=0
\]
for all $m\geqslant1$ and all non-degenerate sequences $(a_{1},\cdots,a_{m})$
satisfying $a_{j}\in2\pi i\mathbb{Z}$ for each $j$. 
\end{thm}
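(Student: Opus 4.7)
The plan is to upgrade the Cartan development argument of Theorem \ref{thm:DoubInt} from $\mathfrak{sl}_3(\mathbb{C})$ to $\mathfrak{sl}_{m+1}(\mathbb{C})$ for arbitrary $m\geqslant 1$. Given parameters $a_1,\ldots,a_m \in \mathbb{C}$, choose $d_1,\ldots,d_{m+1}$ with $d_j - d_{j+1} = a_j$ and $\sum_{j} d_j = 0$, and define the development $F = F_{a_1,\ldots,a_m} : \mathbb{C}^2 \to \mathfrak{sl}_{m+1}(\mathbb{C})$ by ${\rm e}_1 \mapsto A = {\rm diag}(d_1,\ldots,d_{m+1})$ and ${\rm e}_2 \mapsto D = E_{12} + E_{23} + \cdots + E_{m,m+1}$. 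Each simple generator $E_{j,j+1}$ lies in the root space $\mathfrak{g}^{\lambda_{j,j+1}}$ with $\lambda_{j,j+1}(A) = a_j$, and the matrix product $E_{12}E_{23}\cdots E_{m,m+1} = E_{1,m+1}$ is the unique way to produce the highest root vector from the simple generators appearing in $D$. Combining this with Lemma \ref{lem:AdDecomp} and the identity $e^{x\,{\rm ad}_A}(D) = \sum_j e^{a_j x} E_{j,j+1}$, one computes that $\hat{F}(e^{\tilde{L}})$ is upper unitriangular with
\begin{equation*}
(\hat{F}(e^{\tilde{L}}))_{i,j} = \int_{0 < t_1 < \cdots < t_{j-i} < 1} \exp\bigl(\textstyle\sum_{l=1}^{j-i} a_{i+l-1} x_{t_l}\bigr)\, dy_{t_1} \cdots dy_{t_{j-i}}, \qquad i \leqslant j,
\end{equation*}
which is precisely the signature coefficient of the auxiliary path from Definition \ref{def:Sm} built from the consecutive sub-sequence $(a_i, a_{i+1}, \ldots, a_{j-1})$. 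Consequently $T \triangleq \hat{F}(e^L) = \hat{F}(e^{\tilde{L}}) \cdot e^A$ is upper triangular with diagonal $(e^{d_1},\ldots,e^{d_{m+1}})$.

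The heart of the argument is to compute $(\hat{F}(L))_{1,m+1} = (\log T)_{1,m+1}$ via the divided-difference formula for analytic functions of upper triangular matrices with distinct eigenvalues:
\begin{equation*}
(\log T)_{1,m+1} = \sum_{r=1}^{m}\,\sum_{1 = k_0 < k_1 < \cdots < k_r = m+1} T_{k_0,k_1} T_{k_1,k_2} \cdots T_{k_{r-1},k_r}\, \log\bigl[e^{d_{k_0}}, \ldots, e^{d_{k_r}}\bigr],
\end{equation*}
where the branch of $\log$ is chosen so that $\log(e^{d_j}) = d_j$. The divided difference $\log[e^{d_{k_0}}, \ldots, e^{d_{k_r}}]$ is a rational function in $(e^{d_{k_l}})$ whose denominators $e^{d_{k_l}} - e^{d_{k_{l'}}}$ vanish precisely when $d_{k_l} - d_{k_{l'}} = a_{k_l} + \cdots + a_{k_{l'}-1} \in 2\pi i \mathbb{Z}$, producing a genuine pole whenever the corresponding consecutive sub-sum is a nonzero integer multiple of $2\pi i$. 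Each chain with $r \geqslant 2$ factors as a product of signature coefficients indexed by consecutive sub-sequences of $(a_1,\ldots,a_m)$ weighted by such rational functions; after rewriting via the shuffle relations that tie signatures to logarithmic signatures inside the finite-dimensional nilpotent algebra of strict upper triangular matrices, the full expansion takes the shape
\begin{equation*}
(\hat{F}(L))_{1,m+1} = \Phi(a_1,\ldots,a_m) \cdot S_m^{\gamma}(a_1,\ldots,a_m) + \text{corrections},
\end{equation*}
in which every correction factors through some $S_k^{\gamma}$ applied to a proper consecutive sub-sequence of $(a_1,\ldots,a_m)$, and $\Phi$ has a genuine pole at each non-degenerate target point $(a_1,\ldots,a_m) \in (2\pi i \mathbb{Z})^m$.

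To close the argument, note that if $\log S(\gamma)$ has infinite R.O.C. then $\hat{F}(L)$ is entire on $\mathbb{C}^m$ by the same reasoning invoked in the proof of Theorem \ref{thm:LineInt} (see Corollary \ref{cor:ConvTilSig}), so in particular $(\hat{F}(L))_{1,m+1}$ extends holomorphically across the non-degenerate $(2\pi i \mathbb{Z})^m$-lattice. Proceed by induction on $m$, the base case $m = 1$ being Theorem \ref{thm:LineInt}. At any non-degenerate target, every correction term vanishes by the inductive hypothesis applied to its (automatically non-degenerate, since non-degeneracy is inherited by consecutive sub-sequences) sub-sequence, so the pole of $\Phi$ must be cancelled by $S_m^{\gamma}(a_1,\ldots,a_m) = 0$. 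The principal obstacle I anticipate lies in the second paragraph: the combinatorial bookkeeping needed to organize the divided-difference expansion into the clean identity above, to verify that every correction visibly factors through a non-degenerate proper consecutive sub-sequence, and to confirm that $\Phi$ does not accidentally vanish at any relevant target point. The recursive structure of divided differences under chain-refinement, combined with a careful use of the shuffle product formula, should make this tractable but it is the computational crux of the proof.
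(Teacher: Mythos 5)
Your route is genuinely different from the paper's. The paper never computes the matrix logarithm of the developed signature directly: it decomposes $L=\sum_{n}H_{n}(\tilde{L},\mathrm{e}_{1})$ via the BCH formula, projects by the homogeneous operators $D_{m}$, identifies $\hat{F}(D_{k}\tilde{L})=\sum_{j}S_{k}(a_{j},\cdots,a_{j+k-1})E_{j,j+k}$ through a Chen--Strichartz/shuffle analysis (Propositions \ref{prop:DmTilLForm}, \ref{prop:DmTilLDevForm}, Corollary \ref{cor:FDkSk}), and then spends all of Section \ref{subsec:KeyLem} proving the analyticity Lemma \ref{lem:AnaLem} for the higher Hausdorff terms, which requires a recursive formula for $H_{n}$, a symmetrisation argument, and the combinatorial identity of Lemma \ref{lem:ComLem}. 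You instead develop $S(\gamma)$ itself into the Borel subalgebra of $\mathfrak{sl}_{m+1}(\mathbb{C})$, read off the entries of $\hat{F}(e^{\tilde{L}})$ (your formula for them is correct and is the signature-level counterpart of Corollary \ref{cor:FDkSk}), compute $(\hat{F}(L))_{1,m+1}$ by the Opitz/Parlett divided-difference formula, and convert signature coefficients of consecutive blocks into $S_{k}$'s via the block-composition (exponential) relation -- no shuffle identity is actually needed there. Two points you gloss over are fixable but must be said: the identification $\hat{F}(L)=\log T$ with the prescribed branch needs an argument (e.g.\ $\hat{F}(L)$ is upper triangular with diagonal $(d_{1},\cdots,d_{m+1})$, commutes with $T$, and for pairwise distinct $e^{d_{j}}$ the commutant of $T$ consists of polynomials in $T$, forcing $\hat{F}(L)$ to be the interpolating polynomial), and the exchange $e^{\hat{F}(L)}=\hat{F}(e^{L})$ requires the truncation argument already used in the proof of Theorem \ref{thm:LineInt}.

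The genuine gap is in your closing inference: ``every correction term vanishes by the inductive hypothesis, so the pole of $\Phi$ must be cancelled by $S_{m}=0$.'' At a non-degenerate lattice point \emph{all} consecutive sums $d_{i}-d_{j}$ lie in $2\pi i\mathbb{Z}\setminus\{0\}$, so \emph{every} divided difference in your expansion is singular there, not only $\Phi$; each correction is a product of block factors that vanish (by induction) with a coefficient that blows up on exactly the same locus, and such a product need not vanish -- generically it has a finite nonzero limit. What actually closes the argument is a quantitative comparison along the slice $a_{j}=a_{j}^{*}+w$: a chain with $r$ blocks carries a divided difference of pole order at most $r$ at $w=0$ (each summand has $r$ difference factors, each of exact order one), while the product of the $r$ block signature coefficients vanishes to order at least $r$, since each block is a proper, automatically non-degenerate consecutive subsequence and its signature coefficient is a polynomial in $S_{k}$'s of sub-blocks, all vanishing at $w=0$ by induction. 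Hence the corrections are merely bounded, the $r=1$ chain contributes $S_{m}(a^{*}+w\mathbf{1})\cdot c/w+O(w)$ with $c\neq0$ (non-degeneracy of the full sum), and entirety of $(\hat{F}(L))_{1,m+1}$ then forces $S_{m}(a^{*})=0$. This pole-order versus zero-order bookkeeping is precisely the role played in the paper by Lemma \ref{lem:AnaLem}; without it your proposal asserts a vanishing that is false, even though the conclusion survives once the counting is supplied.
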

\begin{rem}
When $m=1,2$, Theorem \ref{thm:IterIntCond} reduces to Theorem \ref{thm:LineInt}
and Corollary \ref{cor:DoubIntCond} respectively. It is possible
to establish higher order versions of the stronger Theorem \ref{thm:DoubInt}.
However, since the general formulae become significantly more involved
we decide not to pursue this generality. 
\end{rem}
Inspired by the second order case, our strategy for proving Theorem
\ref{thm:IterIntCond} will be based on path developments into $\mathfrak{sl}_{m+1}(\mathbb{C})$.
The general spirit is not-so-different from the second order argument.
However, the underlying algebraic structure becomes much subtler and
the argument involves several non-trivial combinatorial considerations. 

In the following subsections, we develop the proof of Theorem \ref{thm:IterIntCond}
in a precise mathematical way. We will continue to use the notation
introduced in Section \ref{sec:IterInt}.

\subsection{A Chen-Strichartz type formula for $D_{m}\tilde{L}$ }

Recall that $\tilde{L}$ is the logarithmic signature of the path
$\tilde{\gamma}\triangleq\gamma\sqcup\overleftarrow{{\rm e}}_{1}$
and $D_{m}$ is the projection operator defined in Definition \ref{defn:HomProj}. As a starting
point, we first derive an explicit formula for $D_{m}\tilde{L}$ which
generalises (\ref{eq:TildD1Form}, \ref{eq:TildD2Form}). Throughout the rest, we use $\Delta_{m}$ to denote the standard simplex  $0<t_{1}<\cdots<t_{m}<1$. 
\begin{proposition}
\label{prop:DmTilLForm}For any $m\geqslant1,$ one has
\begin{align}
D_{m}\tilde{L}= & \sum_{\sigma\in\textcolor{black}{{\cal S}_{m}}}\frac{(-1)^{e(\sigma)}}{m^{2}{m-1 \choose e(\sigma)}}\int_{\Delta_{m}}\big[e^{x_{t_{\sigma(1)}}{\rm ad}_{{\rm e}_{1}}}({\rm e}_{2}),\nonumber \\
 & \ \ \ \big[\cdots\big[e^{x_{t_{\sigma(m-1)}}{\rm ad}_{{\rm e}_{1}}}({\rm e}_{2}),e^{x_{t_{\sigma(m)}}{\rm ad}_{{\rm e}_{1}}}({\rm e}_{2})\big]\cdots\big]\big]dy_{t_{1}}\cdots dy_{t_{m}}.\label{eq:DmTilLForm}
\end{align}
Here we define
\[
e(\sigma)\triangleq\#\{j=1,\cdots,m-1:\sigma(j)>\sigma(j+1)\}
\]
for each permutation $\sigma\in{\cal S}_{m}.$
\end{proposition}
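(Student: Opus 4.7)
My plan is to derive \eqref{eq:DmTilLForm} by combining the conjugated signature expression from Lemma \ref{lem:AdDecomp} with the classical Chen--Strichartz formula applied to a suitable Lie-algebra-valued driver. The conjugation trick lets every term in the Chen--Strichartz expansion carry the exact number of $\mathrm{e}_2$ letters needed, so the projection $D_m$ will pick up a single homogeneous slice.

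First I would abbreviate $v_t \triangleq e^{x_t\mathrm{ad}_{\mathrm{e}_1}}(\mathrm{e}_2)$ and recall from Lemma \ref{lem:AdDecomp} (applied to $\tilde\gamma$, noting that the normalisation $x_0=0$, $x_1=1$ forces $\tilde x_1 = 0$) that the conjugated path $C_t \triangleq S(\tilde\gamma)_{0,t}\otimes e^{-x_t\mathrm{e}_1}$ satisfies $dC_t = C_t\otimes v_t\,dy_t$ and $C_1 = S(\tilde\gamma)$. Consequently
\[
\tilde L \;=\; \log C_1 \;=\; \log\Big(\sum_{n\geqslant 0}\int_{\Delta_n} v_{t_1}\otimes\cdots\otimes v_{t_n}\,dy_{t_1}\cdots dy_{t_n}\Big).
\]
The driving element $v_t$ lies in the completed free Lie algebra $\mathcal{L}((\mathbb{R}^2))\subseteq T((\mathbb{R}^2))$, so the above is the logarithm of a time-ordered exponential driven by a Lie-algebra-valued one-form.

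Next I would invoke the Chen--Strichartz formula for such a driven ODE, which expresses
\[
\log C_1 \;=\; \sum_{n\geqslant 1}\sum_{\sigma\in\mathcal{S}_n}\frac{(-1)^{e(\sigma)}}{n^2\binom{n-1}{e(\sigma)}}\int_{\Delta_n}\big[v_{t_{\sigma(1)}},\big[\cdots\big[v_{t_{\sigma(n-1)}},v_{t_{\sigma(n)}}\big]\cdots\big]\big]dy_{t_1}\cdots dy_{t_n}.
\]
The key algebraic observation is that the expansion $v_t = \sum_{k\geqslant 0}\frac{x_t^k}{k!}\mathrm{ad}_{\mathrm{e}_1}^k(\mathrm{e}_2)$ consists entirely of Lie monomials with precisely one $\mathrm{e}_2$ letter, and since the Lie bracket is additive in the count of $\mathrm{e}_2$'s, the $n$-fold nested bracket of $v_{t_{\sigma(1)}},\ldots,v_{t_{\sigma(n)}}$ automatically lands in the image of $D_n$. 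Applying $D_m$ to the Chen--Strichartz expansion therefore annihilates every slice except the $n = m$ one, which is exactly \eqref{eq:DmTilLForm}.

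The one technical point I expect to require care is the rigorous application of the Chen--Strichartz formula in an infinite-dimensional setting, since $\mathcal{L}((\mathbb{R}^2))$ is not a finite-dimensional Lie algebra. My plan is to dispose of this by truncation: for every fixed tensor level $N$, projecting via $\pi^{(N)}$ reduces both sides of the desired identity to finite sums (the grading of $T((\mathbb{R}^2))$ ensures that only finitely many Lie monomials from the $v_t$-expansions contribute up to degree $N$), and on the finite-dimensional algebra $T^{(N)}(\mathbb{R}^2)$ the identity becomes a standard instance of Chen--Strichartz for a bounded-variation path driven by $v_t\,dy_t$. Passing to the inverse limit as $N\to\infty$ then yields \eqref{eq:DmTilLForm} as a locally finite identity in $T((\mathbb{R}^2))$, with no analytic subtleties to worry about.
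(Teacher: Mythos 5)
Your proposal is correct and follows essentially the same route as the paper: express $e^{\tilde L}$ via Lemma \ref{lem:AdDecomp} as the time-ordered exponential driven by $v_t\,dy_t$ with $v_t=e^{x_t\mathrm{ad}_{\mathrm{e}_1}}(\mathrm{e}_2)$, apply the Chen--Strichartz formula to this driver, and use the fact that each $v_t$ carries exactly one $\mathrm{e}_2$ so that $D_m$ isolates the degree-$m$ slice. The only (inessential) difference is bookkeeping: the paper justifies the universal formula by working in $T((W))$ with an auxiliary product $\boxtimes$ and then applying the homomorphism $\boxtimes\mapsto\otimes$, whereas you justify it by truncation to $T^{(N)}(\mathbb{R}^2)$, which amounts to the same thing.
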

\begin{proof}
Consider the path 
\[
\Gamma_{t}\triangleq\int_{0}^{t}e^{x_{s}{\rm ad}_{{\rm e}_{1}}}({\rm e}_{2})dy_{s}\in W\triangleq T((\mathbb{C}^{2})).
\]
Its signature is given by 
\[
S(\Gamma)={\bf 1}+\sum_{m=1}^{\infty}\int_{\Delta_{m}}d\Gamma_{t_{1}}\boxtimes\cdots\boxtimes d\Gamma_{t_{m}}\in T((W)),
\]
where $\boxtimes$ denotes another tensor product that is independent
of $\otimes.$ According to the Chen-Strichartz formula (cf. \cite[Theorem 3.27]{Gen21}),
one has 
\begin{align*}
\log S(\Gamma)= & \sum_{m=1}^{\infty}\sum_{\sigma\in{\cal S}_{m}}\frac{(-1)^{e(\sigma)}}{m^{2}{m-1 \choose e(\sigma)}}\int_{\Delta_{m}}\llbracket e^{x_{t_{\sigma(1)}}{\rm ad}_{{\rm e}_{1}}}({\rm e}_{2}),\llbracket\cdots\\
 & \ \ \ \llbracket e^{x_{t_{\sigma(m-1)}}{\rm ad}_{{\rm e}_{1}}}({\rm e}_{2}),e^{x_{t_{\sigma(m)}}{\rm ad}_{{\rm e}_{1}}}({\rm e}_{2})\rrbracket\cdots\rrbracket\rrbracket dy_{t_{1}}\cdots dy_{t_{m}}\in{\cal L}((W)),
\end{align*}
where $\llbracket\cdot,\cdot\rrbracket$ denotes the commutator for
$\boxtimes$. Now recall from Lemma \ref{lem:AdDecomp} that the signature of $\tilde{\gamma}$ is
given by the formula \ref{eq:SigAdRep} (without the last $e^{\rm e_1}$-term). By applying the algebra homomorphism $\boxtimes\mapsto\otimes$
as well as the projection $D_{m}$ to the above relation, the left
hand side becomes $D_{m}\tilde{L}$ and the right hand side is precisely
(\ref{eq:DmTilLForm}).
\end{proof}

\subsection{$\mathfrak{sl}_{m+1}(\mathbb{C})$-development of the logarithmic
signature}

Our next step is to show that the logarithmic signature coefficients
$S_{m}^{\gamma}(a_{1},\cdots,a_{m})$ (cf. Definition \ref{def:Sm})
can be realised through suitable path developments. Let us formulate
this fact in a slightly more general setting. Suppose that $p_{1},\cdots,p_{d}\in\mathbb{C}$
are given fixed numbers. We define the path 
\begin{equation}
B_{t}\triangleq\sum_{j=1}^{d}\big(\int_{0}^{t}e^{p_{j}x_{s}}dy_{s}\big){\rm e}_{j}\in\mathbb{C}^{d}.\label{eq:BPath}
\end{equation}
Let $m\geqslant1$ and $I=(i_{1},\cdots,i_{m})\in\{1,\cdots,d\}^{m}$
be a given fixed word. We consider the path development $F:\mathbb{C}^{2}\rightarrow\mathfrak{sl}_{m+1}(\mathbb{C})$
(cf. Example \ref{exa:SlnC} for the relevant notation) induced by 
\begin{equation}
F_{I}({\rm e}_{1})\triangleq A\in\mathfrak{h},\ F_{I}({\rm e}_{2})\triangleq E_{12}+E_{23}+\cdots+E_{m,m+1},\label{eq:FI}
\end{equation}
where the Cartan element $A$ is chosen to satisfy
\begin{equation}
[A,E_{k,k+1}]=p_{i_{k}}E_{k,k+1},\ \ \ k=1,\cdots,m.\label{eq:DevGen}
\end{equation}
It is a simple linear algebra exercise to see that such an $F_{I}$
exists. Recall from Section \ref{sec:Cartan} that the induced homomorphisms at the tensor
and Lie series levels are both denoted as $\hat{F}_{I}.$ The main
result for this part stated as follows.
\begin{proposition}
\label{prop:DmTilLDevForm}One has 
\begin{equation}
\hat{F}_{I}(D_{m}\tilde{L})=(\log S(B))^{I}E_{1,m+1},\label{eq:DmTilLDevForm}
\end{equation}
where $(\cdot)^{I}$ denotes the coefficient of a tensor series over
$\mathbb{C}^{d}$ with respect to the monomial ${\rm e}_{i_{1}}\otimes\cdots\otimes{\rm e}_{i_{m}}.$
\end{proposition}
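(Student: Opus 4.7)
The plan is to bridge the path-development side $\hat{F}_I(D_m\tilde{L})$ with the logarithmic-signature coefficient $(\log S(B))^I$ by introducing an auxiliary algebra morphism applied directly to $B$. Define the linear map $\Psi:\mathbb{C}^{d}\rightarrow\mathfrak{sl}_{m+1}(\mathbb{C})$ by
$$\Psi(\mathrm{e}_{j})\triangleq\sum_{k:\,i_{k}=j}E_{k,k+1},\qquad j=1,\ldots,d,$$
and extend it to an algebra morphism $\hat{\Psi}:T((\mathbb{C}^d))\rightarrow\mathrm{Mat}(m+1,\mathbb{C})$. Since $\Psi(\mathbb{C}^d)$ lies in the strictly upper triangular matrices, any product of more than $m$ such factors vanishes. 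Consequently $\hat{\Psi}$ kills tensors of degree greater than $m$ and is automatically well-defined on every tensor series in $T((\mathbb{C}^d))$---no convergence consideration arises on this side.

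The first main step is to establish the signature-level matrix identity
$$\hat{F}_{I}(e^{\tilde{L}})=\hat{\Psi}(S(B))\qquad\text{in }\mathrm{Mat}(m+1,\mathbb{C}).$$
Combining Lemma \ref{lem:AdDecomp} with the eigenvalue relations (\ref{eq:DevGen}) gives
$$\hat{F}_{I}(e^{x_{t}\mathrm{ad}_{\mathrm{e}_{1}}}(\mathrm{e}_{2}))=\sum_{k=1}^{m}e^{p_{i_{k}}x_{t}}E_{k,k+1}=\sum_{j=1}^{d}e^{p_{j}x_{t}}\Psi(\mathrm{e}_{j}),$$
which is precisely the integrand whose iterated integrals build up $\hat{\Psi}(S(B))$. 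Hence $\hat{F}_{I}(e^{\tilde{L}})$ and $\hat{\Psi}(S(B))$ agree entry by entry (with factorial decay of the signature guaranteeing absolute convergence in both expansions). Since both matrices are of the form $I+N$ with $N$ nilpotent upper triangular, their logarithms are well-defined polynomials in $N$, and we obtain
$$\log\hat{F}_{I}(e^{\tilde{L}})=\log\hat{\Psi}(S(B))=\hat{\Psi}(\log S(B)),$$
the last equality using that $\hat{\Psi}$ is an algebra morphism whose defining $\log$-series collapses to a finite sum by the nilpotency observation above.

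Next, extract the $E_{1,m+1}$-component. Expanding $\Psi(\mathrm{e}_{j_{1}})\cdots\Psi(\mathrm{e}_{j_{n}})=\sum_{l_{1},\ldots,l_{n}:\,i_{l_{k}}=j_{k}}\prod_{k}E_{l_{k},l_{k}+1}$, the product contributes to $E_{1,m+1}$ only when $(l_{1},\ldots,l_{n})=(1,2,\ldots,m)$, which forces $n=m$ and $J=I$. Hence $\bigl[\hat{\Psi}(\log S(B))\bigr]_{E_{1,m+1}}=(\log S(B))^{I}$. On the $\hat{F}_{I}$-side, equip $\mathfrak{sl}_{m+1}(\mathbb{C})$ with the grading $\mathfrak{n}=\mathfrak{n}_{1}\oplus\cdots\oplus\mathfrak{n}_{m}$ of strictly upper triangular matrices, where $\mathfrak{n}_{k}$ consists of matrices supported on the $k$-th super-diagonal. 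Since $\mathrm{ad}_{A}$ preserves each $\mathfrak{n}_{k}$, every Lie monomial containing exactly $k$ copies of $\mathrm{e}_{2}$ is mapped by $\hat{F}_{I}$ into $\mathfrak{n}_{k}$. In particular $\hat{F}_{I}(D_{m}\tilde{L})\in\mathfrak{n}_{m}=\mathbb{C}\cdot E_{1,m+1}$ is the unique graded contribution to the $E_{1,m+1}$-component of $\hat{F}_{I}(\tilde{L})=\log\hat{F}_{I}(e^{\tilde{L}})$, and combining the two sides yields the proposition.

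The principal technical point requiring care is the identity $\hat{F}_{I}(\tilde{L})=\log\hat{F}_{I}(e^{\tilde{L}})$, i.e. that $\hat{F}_{I}$ commutes with the $\log$ transform. This is handled by the scaling-and-analyticity device already employed in the proof of Theorem \ref{thm:LineInt}: for $|\lambda|$ sufficiently small, Corollary \ref{cor:ConvTilSig} places $\tilde{L}$ in the domain of the morphism induced by $\lambda F_{I}$, so that the standard algebra-morphism identity $\hat{F}(e^{X})=e^{\hat{F}(X)}$ is valid; the desired relation is polynomial in $\lambda$ after projection to the finite-dimensional space $\mathfrak{n}_{m}$, and hence extends from small $|\lambda|$ to all $\lambda\in\mathbb{C}$ by analyticity.
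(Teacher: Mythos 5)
Your route is genuinely different from the paper's: the paper first derives an explicit Chen--Strichartz-type formula for $D_m\tilde{L}$ (Proposition \ref{prop:DmTilLForm}) and then matches two shuffle-combinatorial expansions, one for the development of iterated brackets (Lemma \ref{lem:DevShuf}) and one for the coefficient $(\log S(B))^{I}$ (Lemma \ref{lem:PiIShuf}); you instead transport everything through the auxiliary morphism $\hat{\Psi}$ and use the superdiagonal grading of $\mathfrak{sl}_{m+1}(\mathbb{C})$ to peel off the $D_m$-part of the logarithm. The algebraic core of your argument is sound: $\hat{F}_{I}(e^{\tilde{L}})=\hat{\Psi}(S(B))$ does hold (both are the same absolutely convergent iterated-integral series with integrand $\sum_k e^{p_{i_k}x_t}E_{k,k+1}$), $\hat{\Psi}$ is indeed multiplicative on formal series because degree-$k$ tensors land on the $k$-th superdiagonal, the matrix logarithms are finite polynomials by unipotency, and the extraction of the $(1,m+1)$-entry on both sides (only the word $J=I$ on the $\hat{\Psi}$-side; only the $\mathfrak{n}_m$-graded piece, i.e. $\hat{F}_I(D_m\tilde{L})$, on the $\hat{F}_I$-side) is correct.

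The gap is in your last paragraph, which is exactly where the analytic weight sits. After replacing $F_I$ by $\lambda F_I$, the $\mathfrak{n}_m$-projection of the desired relation is \emph{not} polynomial in $\lambda$: the degree-$n$ component of $D_m\tilde{L}$ picks up a factor $\lambda^{n}$ with $n$ unbounded (the $\mathrm{e}_1$-degree is arbitrary), so $\widehat{\lambda F_I}(D_m\tilde{L})$ is an infinite power series in $\lambda$, and the other side contains entire, non-polynomial functions of $\lambda$ such as $\int_0^1 e^{\lambda p_{i_k}x_t}\,dy_t$. Hence "extends from small $|\lambda|$ to all $\lambda$ by analyticity" does not go through as stated: analytic continuation from a small disc (where Corollary \ref{cor:ConvTilSig} applies) only identifies the continuation of $\lambda\mapsto\widehat{\lambda F_I}(D_m\tilde{L})$ with the right-hand side, whereas the proposition concerns the value of the defining series at $\lambda=1$, i.e. $\hat{F}_I(D_m\tilde{L})=\lim_N\hat{F}_I\big(\pi^{(N)}D_m\tilde{L}\big)$. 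Since $\tilde{L}$ generically has only geometric decay, you must separately prove that this series converges at $\lambda=1$ (equivalently, that $D_m\tilde{L}$, unlike $\tilde{L}$ itself, has factorial-type decay, so its development is entire in $\lambda$). This is true and not hard: because the $x$-increment of $\tilde{\gamma}$ vanishes, the pure-$\mathrm{e}_1$ part of $S(\tilde{\gamma})-\mathbf{1}$ is zero, so $D_m\tilde{L}$ equals the finite sum over $k\leqslant m$ of $\frac{(-1)^{k-1}}{k}\sum_{m_1+\cdots+m_k=m,\ m_i\geqslant 1}D_{m_1}(S(\tilde{\gamma})-\mathbf{1})\otimes\cdots\otimes D_{m_k}(S(\tilde{\gamma})-\mathbf{1})$, and factorial decay then follows from (\ref{eq:FacDec}) together with the neo-classical inequality, exactly as in the proof of Proposition \ref{prop:RevConj}; alternatively one can argue through truncations with explicitly identified limits, as the paper does in the $m=1$ case in Lemma \ref{lem:CNFormula}. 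With that supplementary decay lemma your argument closes, and it yields a cleaner, essentially combinatorics-free proof; without it, the final step as written establishes the identity only for scaled-down developments.
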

Our proof of Proposition \ref{prop:DmTilLDevForm}, which has a combinatorial
nature, reveals a surprising connection between two very different
Lie structures ($\mathfrak{sl}_{m+1}(\mathbb{C})$ and free Lie algebra).
As we will see, they are connected through a shuffle product relation.
We first introduce some notation that is needed for the proof and
then establish two key lemmas connecting the two sides with a specific
shuffle product relation. The identity (\ref{eq:DmTilLDevForm}) will
thus follow easily. 

\subsubsection*{Some notation}

(i) Suppose that $(w_{1},\cdots,w_{m})$ and $(w_{m+1},\cdots,w_{m+n})$
are two given words. We define their \textit{shuffle product} by
\begin{align*}
(w_{1},\cdots,w_{m})\shuffle(w_{m+1},\cdots,w_{m+n}) & \triangleq\sum_{\sigma\in{\cal P}(m,n)}(w_{\sigma^{-1}(1)},\cdots,w_{\sigma^{-1}(m+n)}).
\end{align*}
By definition, it is obvious
that 
\begin{equation}
a{\bf w}\shuffle b{\bf u}=a({\bf w}\shuffle b{\bf u})+b(a{\bf w}\shuffle{\bf u})\label{eq:ShufRel}
\end{equation}
for all letters $a,b$ and words ${\bf w},{\bf u}$. 

\vspace{2mm}\noindent (ii) Let $a_{j}\in\mathbb{C}$ be given numbers.
The notation $(e^{a_{p}},\cdots,e^{a_{q}})$ simply represents a word
and for given times $s_{j}$ we define 
\[
(e^{a_{p}},\cdots,e^{a_{q}})(s_{p},\cdots,s_{q})\triangleq\exp\big(a_{p}x_{s_{p}}+\cdots+a_{q}x_{s_{q}}\big).
\]
The tensor product between two words is simply defined by concatenation.
The Lie bracket $[e^{a},e^{b}]$ is defined by 
\[
[e^{a},e^{b}]\triangleq(e^{a},e^{b})-(e^{b},e^{a}).
\]
When acting on a pair of times $(s,t)$, one has 
\[
[e^{a},e^{b}](s,t)\triangleq(e^{a},e^{b})(s,t)-(e^{b},e^{a})(s,t)=e^{ax_{s}+bx_{t}}-e^{bx_{s}+ax_{t}}.
\]

\vspace{2mm}\noindent (iii) Let $a_{1},\cdots,a_{m}\in\mathbb{C}$
be given fixed. We introduce the word notation
\[
(e^{a_{p}},\stackrel{\nearrow}{\cdots},e^{a_{q}})\triangleq\begin{cases}
(e^{a_{p}},e^{a_{p+1}},\cdots,e^{a_{q}}), & \text{if }p\leqslant q;\\
{\bf 1}, & \text{if }p=q+1;\\
{\bf 0}, & \text{if }p>q,
\end{cases}
\]
and 
\[
(e^{a_{p}},\stackrel{\searrow}{\cdots},e^{a_{q}})\triangleq\begin{cases}
(e^{a_{p}},e^{a_{p-1}},\cdots,e^{a_{q}}), & \text{if }p\geqslant q;\\
{\bf 1}, & \text{if }p=q-1;\\
{\bf 0}, & \text{if }p<q.
\end{cases}
\]
Here ${\bf 1}$ means the multiplicative unit (for both concatenation
and shuffle product) and ${\bf 0}$ means the empty word whose (tensor
or shuffle) product with any other word is still ${\bf 0}.$ In other
words, conventionally one has 
\[
{\bf 1}\shuffle{\bf w}={\bf w}\shuffle\textcolor{black}{\mathbf{1}}={\bf w},\ {\bf 0}\shuffle{\bf w}={\bf w}\shuffle{\bf 0}={\bf 0}.
\]

\subsubsection*{Expression of $\hat{F}_{I}(D_{m}\tilde{L})$ in terms of a shuffle
identity}

The first key lemma is related to computing the development of a generic
term in the formula (\ref{eq:DmTilLForm}) for $D_{m}\tilde{L}$.
Recall that the development $F_{I}$ is defined by (\ref{eq:FI})
where $A$ is chosen according to (\ref{eq:DevGen}) and $a_{j}=p_{i_{j}}$. 
\begin{lem}
\label{lem:DevShuf}For any $k=1,\cdots,m-1$, one has 
\begin{align}
 & \big[e^{x_{t_{\sigma(m-k)}}{\rm ad}_{A}}(F_{I}({\rm e}_{2})),\big[\cdots\big[e^{x_{t_{\sigma(m-1)}}{\rm ad}_{A}}(F_{I}({\rm e}_{2})),e^{x_{t_{\sigma(m)}}{\rm ad}_{A}}(F_{I}({\rm e}_{2}))\big]\big]\big]\nonumber \\
 & =\sum_{j}\sum_{i=1}^{m-1}(-1)^{j+k-i-1}\big(((e^{a_{j}},\stackrel{\nearrow}{\cdots},e^{a_{i-1}})\shuffle(e^{a_{j+k}},\stackrel{\searrow}{\cdots},e^{a_{i+2}}))\nonumber \\
 & \ \ \ \ \ \ \otimes[e^{a_{i}},e^{a_{i+1}}]\big)(t_{\sigma(m-k)},\cdots,t_{\sigma(m)})E_{j,j+k+1}.\label{eq:DevShuf}
\end{align}
Here the Lie bracket is taken in $\mathfrak{sl}_{m+1}(\mathbb{C})$
and we adopt the convention that $E_{p,q}=0$ if $p,q\leqslant0$
or $\geqslant m+2$. 
\end{lem}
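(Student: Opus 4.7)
The plan is to prove the formula by induction on $k$, using the fact that under the development $F_I$ the operator $\mathrm{ad}_A$ acts diagonally on each $E_{l,l+1}$ with eigenvalue $a_l = p_{i_l}$. Consequently, $e^{x_s \mathrm{ad}_A}(F_I({\rm e}_2)) = \sum_{l=1}^m e^{a_l x_s} E_{l,l+1}$, and the left hand side of (\ref{eq:DevShuf}) becomes a sum over $(k+1)$-tuples $(l_1,\ldots,l_{k+1})$ of iterated commutators $[E_{l_1,l_1+1},[E_{l_2,l_2+1},[\cdots,[E_{l_k,l_k+1},E_{l_{k+1},l_{k+1}+1}]\cdots]]]$ weighted by exponential coefficients. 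The key algebraic input is the simple rule $[E_{l,l+1},E_{l',l'+1}] = \delta_{l+1,l'} E_{l,l'+1} - \delta_{l,l'+1} E_{l',l+1}$, which implies inductively that any such $(k+1)$-fold commutator lies in $\mathbb{C}\cdot E_{j,j+k+1}$, and is non-zero only when $\{l_1,\ldots,l_{k+1}\}$ is the set of consecutive integers $\{j,\ldots,j+k\}$.

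The base case $k=1$ is a direct computation: the only surviving pairs are $(l,l+1)$ and $(l+1,l)$, and one obtains $\sum_j(e^{a_j x_{s_1}+a_{j+1} x_{s_2}} - e^{a_{j+1} x_{s_1}+a_j x_{s_2}}) E_{j,j+2}$, which matches (\ref{eq:DevShuf}) because in that case only $i=j$ contributes, with both shuffle factors reduced to the empty word $\mathbf{1}$.

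For the induction step, write $R_k \triangleq [X_1, R_{k-1}]$ where $X_r \triangleq e^{x_{s_r}\mathrm{ad}_A}(F_I({\rm e}_2))$ and by hypothesis $R_{k-1} = \sum_{j'} W_{k-1,j'}(s_2,\ldots,s_{k+1}) E_{j',j'+k}$, with $W_{k,j}$ denoting the shuffle expression appearing in (\ref{eq:DevShuf}). Applying the bracket rule to $[E_{l,l+1},E_{j',j'+k}]$ shows that only $l = j'-1$ (giving $+E_{j'-1,j'+k}$) and $l = j'+k$ (giving $-E_{j',j'+k+1}$) survive; after reindexing one obtains
\[
R_k = \sum_j \bigl( e^{a_j x_{s_1}} W_{k-1,j+1} - e^{a_{j+k} x_{s_1}} W_{k-1,j} \bigr) E_{j,j+k+1}.
\]
The remaining step is to identify this with $W_{k,j}(s_1,\ldots,s_{k+1})$. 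Writing out $W_{k,j}$ and splitting the shuffle product according to the recursion $a\mathbf{w}\shuffle b\mathbf{u} = a(\mathbf{w}\shuffle b\mathbf{u}) + b(a\mathbf{w}\shuffle\mathbf{u})$ from (\ref{eq:ShufRel}), the leading-letter-$e^{a_j}$ piece will coincide with $e^{a_j}\otimes W_{k-1,j+1}$, while the leading-letter-$e^{a_{j+k}}$ piece will coincide with $-e^{a_{j+k}}\otimes W_{k-1,j}$. A short sign check confirms compatibility: the sign attached to $W_{k-1,j+1}$ in the expansion is $(-1)^{(j+1)+(k-1)-i-1} = (-1)^{j+k-i-1}$, while the one attached to $-W_{k-1,j}$ is $-(-1)^{j+(k-1)-i-1} = (-1)^{j+k-i-1}$, both matching the target sign in $W_{k,j}$.

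The main obstacle I anticipate is the combinatorial bookkeeping of the shuffle splitting together with the boundary cases, particularly when the increasing or decreasing factor reduces to $\mathbf{1}$ or $\mathbf{0}$ at the edges (e.g.\ $j = i$ or $j+k = i+1$). These are handled cleanly by the stated conventions on $(e^{a_p},\stackrel{\nearrow}{\cdots},e^{a_q})$ and $(e^{a_p},\stackrel{\searrow}{\cdots},e^{a_q})$, and by the convention $E_{p,q}=0$ outside the valid range of indices, which together ensure that all degenerate contributions either cancel or vanish automatically and only the admissible range $1 \leqslant j \leqslant m-k$ contributes.
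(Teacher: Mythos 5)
Your proposal is correct and takes essentially the same route as the paper's proof: induction on $k$, with the base case computed from the $\mathfrak{sl}_{m+1}(\mathbb{C})$ commutator rule $[E_{ab},E_{cd}]=\delta_{bc}E_{ad}-\delta_{ad}E_{cb}$, and the induction step identifying the two surviving contributions ($l=j'-1$ and $l=j'+k$) with the target expression via the shuffle recursion (\ref{eq:ShufRel}). Your sign verification and the treatment of degenerate boundary words coincide with the paper's reindexing argument, so no gap remains.
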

\begin{proof}
We prove the claim by induction on $k.$ Let us denote the right hand
side of (\ref{eq:DevShuf}) by $G(k)$. For the base step $k=1$,
by the definition of $F_{I}$ one has 
\begin{align}
 & \big[e^{x_{t_{\sigma(m-1)}}{\rm ad}_{A}}(F_{I}({\rm e}_{2})),e^{x_{t_{\sigma(m)}}{\rm ad}_{A}}(F_{I}({\rm e}_{2}))\big]\nonumber \\
 & =\big[e^{x_{t_{\sigma(m-1)}}{\rm ad}_{A}}(E_{12}+\cdots+E_{m,m+1}),e^{x_{t_{\sigma(m)}}{\rm ad}_{A}}(E_{12}+\cdots+E_{m,m+1})\big]\nonumber \\
 & =\big[\sum_{j=1}^{m}e^{a_{j}x_{t_{\sigma(m-1)}}}E_{j,j+1},\sum_{k=1}^{m}e^{a_{k}x_{t_{\sigma(m)}}}E_{k,k+1}\big].\label{eq:G1}
\end{align}
By using the explicit commutator relation
\begin{equation}
[E_{ij},E_{kl}]=\delta_{jk}E_{il}-\delta_{il}E_{kj},\label{eq:SLComm}
\end{equation}
one easily finds that the right hand side of (\ref{eq:G1}) equals
\begin{align*}
 & \sum_{i=1}^{m-1}\big(e^{a_{i}x_{t_{\sigma(m-1)}}}e^{a_{i+1}x_{t_{\sigma(m)}}}-e^{a_{i+1}x_{t_{\sigma(m-1)}}}e^{a_{i}x_{t_{\sigma(m)}}}\big)E_{i,i+2}\\
 & =\sum_{i=1}^{m-1}[e^{a_{i}},e^{a_{i+1}}](t_{\sigma(m-1)},t_{\sigma(m)})E_{i,i+2}=G(1).
\end{align*}
This concludes the base step.

Now suppose that the claim is true for $k\leqslant n.$ By using the
induction hypothesis, one can write 
\begin{align*}
 & \big[e^{x_{t_{\sigma(m-n-1)}{\rm ad}_{A}}}F_{I}({\rm e}_{2}),\big[e^{x_{t_{\sigma(m-n)}}{\rm ad}_{A}}(F_{I}({\rm e}_{2}))\\
 & \ \ \ \ \ \ \ \ \ \cdots\big[e^{x_{t_{\sigma(m-1)}}{\rm ad}_{A}}(F_{I}({\rm e}_{2})),e^{x_{t_{\sigma(m)}}{\rm ad}_{A}}(F_{I}({\rm e}_{2}))\big]\big]\big]\\
 & =\sum_{j}\sum_{i=1}^{m-1}(-1)^{j+n-i-1}\big[e^{a_{1}x_{t_{\sigma(m-n-1)}}}E_{12}+\cdots+e^{a_{m}x_{t_{\sigma(m-n-1)}}}E_{m,m+1},\\
 & \ \ \ \ \ \ \ \ \ \big(((e^{a_{j}},\stackrel{\nearrow}{\cdots},e^{a_{i-1}})\shuffle(e^{a_{j+n}},\stackrel{\searrow}{\cdots},e^{a_{i+2}}))\otimes[e^{a_{i}},e^{a_{i+1}}]\big)\\
 & \ \ \ \ \ \ \ \ \ (t_{\sigma(m-n)},\cdots,t_{\sigma(m)})E_{j,j+n+1}\big].
\end{align*}
Again by applying the commutator relation (\ref{eq:SLComm}) and adjusting
the $j$-index, the above expression is equal to 
\begin{align*}
 & \sum_{j}\sum_{i=1}^{m-1}(-1)^{j+n-i}\big(e^{a_{j}}\otimes((e^{a_{j+1}},\stackrel{\nearrow}{\cdots},e^{a_{i-1}})\shuffle(e^{a_{j+n+1}},\stackrel{\searrow}{\cdots},e^{a_{i+2}}))\otimes[e^{a_{i}},e^{a_{i+1}}]\big)\\
 & \ \ \ \ \ \ +\big(e^{a_{j+n+1}}\otimes((e^{a_{j}},\stackrel{\nearrow}{\cdots},e^{a_{i-1}})\shuffle(e^{a_{j+n}},\stackrel{\searrow}{\cdots},e^{a_{i+2}}))\otimes[e^{a_{i}},e^{a_{i+1}}]\big)\\
 & \ \ \ \ \ \ (t_{\sigma(m-n-1)},\cdots,t_{\sigma(m)})E_{j,j+n+2}\\
 & =\sum_{j}\sum_{i=1}^{m-1}(-1)^{j+n-i}\big(((e^{a_{j}},\stackrel{\nearrow}{\cdots},e^{a_{i-1}})\shuffle(e^{a_{j+n+1}},\stackrel{\searrow}{\cdots},e^{a_{i+2}}))\otimes[e^{a_{i}},e^{a_{i+1}}]\big)\\
 & \ \ \ \ \ \ (t_{\sigma(m-n-1)},\cdots,t_{\sigma(m)})E_{j,j+n+2}=G(n+1),
\end{align*}
where the first equality follows from the relation (\ref{eq:ShufRel}).
This completes the induction step. 
\end{proof}

\subsubsection*{Computation of $(\log S(B))^{I}$ }

Before stating the second key lemma, we derive a general formula for
basic Lie elements which may  be of independent interest. We consider
the tensor algebra $T((\mathbb{C}^{d}))$. Let $J=(j_{1},\cdots,j_{m})$
be a given word. We define 
\[
{\rm e}_{[J]}\triangleq[{\rm e}_{j_{1}},[{\rm e}_{j_{2}},\cdots[{\rm e}_{j_{m-1}},{\rm e}_{j_{m}}]]],\ \text{e}_{J}\triangleq{\rm e}_{j_{1}}\otimes\cdots\otimes{\rm e}_{j_{m}}
\]
respectively. Given a word $K=(k_{1},\cdots,k_{r})\subseteq(1,\cdots,m),$
we denote 
\[
J_{K}\triangleq(j_{k_{1}},\cdots,j_{k_{r}}),\ \overleftarrow{J_{K}}\triangleq(j_{k_{r}},\cdots,j_{k_{1}}).
\]
As a convention, we also set ${\rm e}_{\emptyset}\triangleq{\bf 1}$
and $\overleftarrow{\emptyset}\triangleq\emptyset$.
\begin{lem}
\label{lem:LieMon}For any word $J=(j_{1},\cdots,j_{m}),$ one has
\begin{equation}
{\rm e}_{[J]}=\sum_{K\subseteq(1,\cdots,m-2)}(-1)^{|K|}{\rm e}_{J\backslash(J_{K},j_{m-1},j_{m})}\otimes[{\rm e}_{j_{m-1}},{\rm e}_{j_{m}}]\otimes{\rm e}_{\overleftarrow{J_{K}}}.\label{eq:LieMon}
\end{equation}
\end{lem}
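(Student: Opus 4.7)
The plan is to prove Lemma \ref{lem:LieMon} by induction on the word length $m$, using the fundamental recursive structure of the left-bracketed Lie monomial, namely
\[
{\rm e}_{[J]}=[{\rm e}_{j_{1}},{\rm e}_{[J']}]\quad\text{where }J'\triangleq (j_2,\ldots,j_m).
\]
The base case $m=2$ is immediate: the only subset $K\subseteq(1,\ldots,0)=\emptyset$ is $K=\emptyset$, which contributes the single term $[{\rm e}_{j_1},{\rm e}_{j_2}]$ and matches ${\rm e}_{[J]}$. It is also useful to check $m=3$ by hand, where $K$ ranges over $\{\emptyset,\{1\}\}$ and reproduces the bracket $[{\rm e}_{j_1},[{\rm e}_{j_2},{\rm e}_{j_3}]]={\rm e}_{j_1}\otimes[{\rm e}_{j_2},{\rm e}_{j_3}]-[{\rm e}_{j_2},{\rm e}_{j_3}]\otimes {\rm e}_{j_1}$.

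For the induction step, apply the hypothesis to $J'$ (of length $m-1$), reindexing its subsets $K'\subseteq(1,\ldots,m-3)$ as subsets $L\subseteq(2,\ldots,m-2)$ of the original index set via $L=\{k+1:k\in K'\}$. This yields
\[
{\rm e}_{[J']}=\sum_{L\subseteq(2,\ldots,m-2)}(-1)^{|L|}{\rm e}_{J\setminus(j_1,J_L,j_{m-1},j_m)}\otimes[{\rm e}_{j_{m-1}},{\rm e}_{j_m}]\otimes {\rm e}_{\overleftarrow{J_L}}.
\]
Now expand $[{\rm e}_{j_1},{\rm e}_{[J']}]={\rm e}_{j_1}\otimes {\rm e}_{[J']}-{\rm e}_{[J']}\otimes {\rm e}_{j_1}$. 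For the first summand, observe that prepending ${\rm e}_{j_1}$ to ${\rm e}_{J\setminus(j_1,J_L,j_{m-1},j_m)}$ reproduces ${\rm e}_{J\setminus(J_L,j_{m-1},j_m)}$, and since $1\notin L$ we can view $L$ as a subset $K$ of $(1,\ldots,m-2)$ not containing $1$. This captures exactly the terms of the target sum with $1\notin K$. For the second summand, appending ${\rm e}_{j_1}$ to ${\rm e}_{\overleftarrow{J_L}}$ produces ${\rm e}_{\overleftarrow{J_{L\cup\{1\}}}}$, and removing positions $\{1\}\cup L\cup\{m-1,m\}$ is the same as removing positions $(L\cup\{1\})\cup\{m-1,m\}$; setting $K=L\cup\{1\}$ (so $|K|=|L|+1$) absorbs the extra minus sign and captures exactly the terms of the target sum with $1\in K$. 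Combining the two cases partitions the index set $\{K\subseteq(1,\ldots,m-2)\}$ and completes the induction.

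This argument is almost entirely bookkeeping; the only subtle point is carefully identifying the reindexing between subsets $K'\subseteq(1,\ldots,m-3)$ appearing in the inductive hypothesis for $J'$ and subsets $L\subseteq(2,\ldots,m-2)$ (equivalently, subsets of $(1,\ldots,m-2)$ disjoint from $\{1\}$) in the target formula for $J$. The main thing to verify carefully is that $J\setminus(j_1,J_L,j_{m-1},j_m)={\rm e}_{J\setminus(J_{L\cup\{1\}},j_{m-1},j_m)}$ as ordered words, which follows from the convention that $J\setminus(\cdots)$ removes the specified positions while preserving the order of the remaining letters. I expect no further obstacles: unlike the earlier proofs in this section, no analytic input (path development, Hausdorff series, or entire function arguments) is needed — the result is a purely combinatorial identity in the tensor algebra over $\mathbb{C}^d$.
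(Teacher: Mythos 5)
Your proposal is correct and follows essentially the same route as the paper: induction on $m$ via ${\rm e}_{[J]}=[{\rm e}_{j_1},{\rm e}_{[J']}]$, applying the hypothesis to $J'=J\backslash\{j_1\}$, and identifying the two terms of the expanded commutator with the cases $1\notin K$ (prepending ${\rm e}_{j_1}$) and $1\in K$ (appending ${\rm e}_{j_1}$ to the reversed tail, with the sign absorbed by $|K|=|L|+1$). No gaps.
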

\begin{proof}
We prove the claim by induction. The case when $m=2$ is obvious.
Suppose that the claim is true for any word with length $\leqslant m-1$
and let $J=(j_{1},\cdots,j_{m})$. By the induction hypothesis, one
has
\begin{align*}
{\rm e}_{[J]}= & [{\rm e}_{j_{1}},{\rm e}_{[J']}]\ \ \ \ \ \ (J'\triangleq J\backslash\{j_{1}\})\\
= & \sum_{K\subseteq(2,\cdots,m-2)}(-1)^{|K|}{\rm e}_{(j_{1},J')\backslash(J'_{K},j_{m-1},j_{m})}\otimes[{\rm e}_{j_{m-1}},{\rm e}_{j_{m}}]\otimes{\rm e}_{\overleftarrow{J'_{K}}}\\
 & \ \ \ -\sum_{K\subseteq(2,\cdots,m-2)}(-1)^{|K|}{\rm e}_{J'\backslash(J'_{K},j_{m-1},j_{m})}\otimes[{\rm e}_{j_{m-1}},{\rm e}_{j_{m}}]\otimes{\rm e}_{\overleftarrow{(j_{1},J'_{K})}}\\
= & \sum_{L\subseteq(1,\cdots,m-2)}(-1)^{|L|}{\rm e}_{J\backslash(J_{L},j_{m-1},j_{m})}\otimes[{\rm e}_{j_{m-1}},{\rm e}_{j_{m}}]\otimes{\rm e}_{\overleftarrow{J_{L}}}
\end{align*}
The last equality follows from the observation that the two sums in
the second last equality correspond to the cases $1\notin L$ and
$1\in L$ respectively. 
\end{proof}
We are now able to establish the second key lemma for the proof of
Proposition \ref{prop:DmTilLDevForm}. This connects the logarithmic
signature coefficient $(\log S(B))^{I}$ with the same kind of shuffle
product identity appearing in Lemma \ref{lem:DevShuf}. Recall that
$B_{t}$ is the path in $\mathbb{C}^{d}$ defined by (\ref{eq:BPath})
with given $p_{1},\cdots,p_{d}\in\mathbb{C}$. To ease notation, we
write 
\[
dB_{{\bf t}_{\sigma}}^{J}\triangleq dB_{t_{\sigma(1)}}^{j_{1}}\cdots dB_{t_{\sigma(m)}}^{j_{m}}.
\]
We denote $\pi_{I}:T((\mathbb{C}^{d}))\rightarrow\mathbb{C}$ as the
projection map which extracts the coefficient of a tensor with respect
to ${\rm e}_{I}$.
\begin{lem}
\label{lem:PiIShuf}For any word $I=(i_{1},\cdots,i_{m})$, one has
\begin{align}
 & \pi_{I}\big(\sum_{J:|J|=m}\int_{\Delta_{m}}dB_{{\bf t}_{\sigma}}^{J}{\rm e}_{[J]}\big)\nonumber \\
 & =\sum_{i=1}^{m-1}(-1)^{m-1-i}\int_{\Delta_{m}}\big(((e^{a_{1}},\stackrel{\nearrow}{\cdots},e^{a_{i-1}})\shuffle(e^{a_{m}},\stackrel{\searrow}{\cdots},e^{a_{i+2}}))\nonumber \\
 & \ \ \ \ \ \ \ \ \ \otimes[e^{a_{i}},e^{a_{i+1}}]\big)(t_{\sigma(1)},\cdots,t_{\sigma(m)})dy_{t_{1}}\cdots dy_{t_{m}},\label{eq:PiIShuf}
\end{align}
where $a_{j}\triangleq p_{i_{j}}$ ($j=1,\cdots,m$).
\end{lem}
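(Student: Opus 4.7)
The plan is to establish the identity by expanding $\mathrm{e}_{[J]}$ via Lemma \ref{lem:LieMon}, applying the projection $\pi_I$, and recognising the resulting sum over subsets $K$ as a shuffle product. Substituting
$$\mathrm{e}_{[J]}=\sum_{K\subseteq(1,\ldots,m-2)}(-1)^{|K|}\mathrm{e}_{J\setminus(J_K,j_{m-1},j_m)}\otimes[\mathrm{e}_{j_{m-1}},\mathrm{e}_{j_m}]\otimes\mathrm{e}_{\overleftarrow{J_K}}$$
into the left-hand side, the bracket $[\mathrm{e}_{j_{m-1}},\mathrm{e}_{j_m}]$ occupies the tensor positions of indices $m-1-|K|$ and $m-|K|$. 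For the projection $\pi_I$ to be nonzero, this bracket must therefore land at positions $i,i+1$ of $\mathrm{e}_I$, and I would reindex the outer sum by $i\triangleq m-1-|K|$, so that $i$ ranges over $\{1,\ldots,m-1\}$ with sign $(-1)^{|K|}=(-1)^{m-1-i}$ already matching the right-hand side.

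For fixed $i$, the projection $\pi_I$ pins down $J$ completely in terms of $K$ and the orientation of the bracket. Reading $\mathrm{e}_{\overleftarrow{J_K}}=\mathrm{e}_{j_{k_r}}\otimes\cdots\otimes\mathrm{e}_{j_{k_1}}$ against positions $i+2,\ldots,m$ of $\mathrm{e}_I$ forces $j_{k_s}=i_{m+1-s}$; similarly, the entries of $J$ at positions $\{1,\ldots,m-2\}\setminus K$ (in increasing order) must agree with $(i_1,\ldots,i_{i-1})$. As $K$ ranges over all subsets of $\{1,\ldots,m-2\}$ of size $m-1-i$, the first $m-2$ entries of $J$ therefore run through every interleaving of the ascending word $(i_1,\ldots,i_{i-1})$ with the descending word $(i_m,i_{m-1},\ldots,i_{i+2})$. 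This is exactly the combinatorial content of the shuffle product, and translating through $a_j=p_{i_j}$ together with $dB_{t}^{j}=e^{p_j x_t}dy_t$ converts the summed integrand at the first $m-2$ time variables into the evaluation of $(e^{a_1},\stackrel{\nearrow}{\cdots},e^{a_{i-1}})\shuffle(e^{a_m},\stackrel{\searrow}{\cdots},e^{a_{i+2}})$ at $(t_{\sigma(1)},\ldots,t_{\sigma(m-2)})$.

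The last two tensor positions produce $[e^{a_i},e^{a_{i+1}}](t_{\sigma(m-1)},t_{\sigma(m)})$ exactly: the orientation $(j_{m-1},j_m)=(i_i,i_{i+1})$ contributes $+e^{a_ix_{t_{\sigma(m-1)}}+a_{i+1}x_{t_{\sigma(m)}}}$, while $(j_{m-1},j_m)=(i_{i+1},i_i)$ contributes the opposite. Multiplying the shuffle factor by this bracket factor and observing that $dy_{t_{\sigma(1)}}\cdots dy_{t_{\sigma(m)}}=dy_{t_1}\cdots dy_{t_m}$ on $\Delta_m$ reproduces the right-hand side of (\ref{eq:PiIShuf}). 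The main obstacle is purely combinatorial bookkeeping: verifying that the orientation convention induced by $\overleftarrow{J_K}$ matches the descending word $(e^{a_m},\stackrel{\searrow}{\cdots},e^{a_{i+2}})$, confirming that no extra terms survive $\pi_I$, and tracking the signs $(-1)^{|K|}$ cleanly through the reindexing. A cross-check against the base case $m=2$ (where only $i=1$, $K=\emptyset$ contributes, recovering $\int_{\Delta_2}[e^{a_1},e^{a_2}](t_{\sigma(1)},t_{\sigma(2)})dy_{t_1}dy_{t_2}$) and against the explicit $S_3^{\gamma}$ formula given earlier will serve as a useful sanity check.
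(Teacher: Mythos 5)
Your proposal is correct and follows essentially the same route as the paper's proof: expand $\mathrm{e}_{[J]}$ via Lemma \ref{lem:LieMon}, observe that for each fixed subset $K$ exactly two words $J$ (one per orientation of the bracket, with opposite signs) survive the projection $\pi_I$, recognise the interleavings over all $K$ of fixed size as the shuffle product, and reindex $i=m-1-|K|$. The only cosmetic difference is that you perform the reindexing at the outset, whereas the paper names the two surviving words $J_1(K),J_2(K)$ and changes indices at the very end.
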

\begin{proof}
According to Lemma \ref{lem:LieMon}, one has 
\begin{equation}\label{eq:PIEJ}
\pi_{I}({\rm e}_{[J]}){\rm e}_{I}=\sum_{\textcolor{black}{K\subseteq(1,\cdots,m-2)}}(-1)^{|K|}\pi_{I}\big({\rm e}_{J\backslash(J_{K},j_{m-1},j_{m})}\otimes[{\rm e}_{j_{m-1}},{\rm e}_{j_{m}}]\otimes{\rm e}_{\overleftarrow{J_{K}}}\big).
\end{equation}
Let \textcolor{black}{$K=(k_{1},\cdots,k_{r})\subseteq(1,\cdots,m-2)$} be given fixed.
For any word $J$ with length $m$, one has \[\pi_{I}\big({\rm e}_{J\backslash(J_{K},j_{m-1},j_{m})}\otimes[{\rm e}_{j_{m-1}},{\rm e}_{j_{m}}]\otimes{\rm e}_{\overleftarrow{J_{K}}}\big)\neq 0\]
if and only if 
\[
(J\backslash(J_{K},j_{m-1},j_{m}),j_{m-1},j_{m},\overleftarrow{J_{K}})=I
\]
or
\[
(J\backslash(J_{K},j_{m-1},j_{m}),j_{m},j_{m-1},\overleftarrow{J_{K}})=I.
\]
The first (respectively, second) case comes with a plus (respectively,
minus) sign arising from the Lie bracket. As a result, the word $J$
in the first case is uniquely determined by 
\begin{equation}
\begin{array}{cccccccccccc}
J= & ( & \cdots & i_{m} & \cdots & i_{m-1} & \cdots & i_{m-r+1} & \cdots & i_{m-r-1} & i_{m-r} & )\\
 &  & \cdots & k_{1} & \cdots & k_{2} & \cdots & k_{r} & \cdots & m-1 & m
\end{array}\label{eq:JWord}
\end{equation}
where the ``$\cdots$'' positions are filled by $(i_{1},\cdots,i_{m-r-2})$
in its natural order. Respectively, the word $J$ in the second case
is determined by swapping $i_{m-r-1},i_{m-r}$ in the last two positions
in (\ref{eq:JWord}). We denote these two uniquely determined words
by $J_{1}(K)$ and $J_{2}(K)$ respectively. 

It follows from the above computation that 
\begin{align*}
\sum_{J:|J|=m}\int_{\Delta_{m}}dB_{{\bf t}_{\sigma}}^{J}\pi_{I}({\rm e}_{[J]}) & =\sum_{K\subseteq(1,\cdots,m-2)}(-1)^{|K|}\int_{\Delta_{m}}\big(dB_{{\bf t}_{\sigma}}^{J_{1}(K)}-dB_{{\bf t}_{\sigma}}^{J_{2}(K)}\big).
\end{align*}
From the explicit shapes of $J_{1}(K),J_{2}(K)$, it is not hard to
see that the above expression is precisely equal to 
\begin{align*}
\sum_{r=0}^{m-2}(-1)^{r}\int_{\Delta_{m}}\big(((e^{a_{1}}, & \stackrel{\nearrow}{\cdots},e^{a_{m-r-2}})\shuffle(e^{a_{m}},\stackrel{\searrow}{\cdots},e^{a_{m-r+1}}))\\
 & \otimes[e^{a_{m-r-1}},e^{a_{m-r}}]\big)(t_{\sigma(1)},\cdots,t_{\sigma(m)})dy_{t_{1}}\cdots dy_{t_{m}}.
\end{align*}
The desired relation follows from the change of indices $i\triangleq m-1-r$.
\end{proof}
We are now in a position to finish the proof of Proposition \ref{prop:DmTilLDevForm}.

\begin{proof}[Proof of Proposition \ref{prop:DmTilLDevForm}]

According to Proposition \ref{prop:DmTilLForm} and Lemma \ref{lem:DevShuf}
with $k=m-1$, one has 
\begin{align}
\hat{F}_{I}(D_{m}\tilde{L})= & \sum_{\sigma\in{\cal S}_{m}}\frac{(-1)^{e(\sigma)}}{m^{2}{m-1 \choose e(\sigma)}}\sum_{i=1}^{m-1}(-1)^{m-i-1}\nonumber \\
 & \ \ \ \int_{\Delta_{m}}\big(((e^{a_{1}},\stackrel{\nearrow}{\cdots},e^{a_{i-1}})\shuffle(e^{a_{m}},\stackrel{\searrow}{\cdots},e^{a_{i+2}}))\nonumber \\
 & \ \ \ \otimes[e^{a_{i}},e^{a_{i+1}}]\big)(t_{\sigma(1)},\cdots,t_{\sigma(m)})dy_{t_{1}}\cdots dy_{t_{m}}E_{1,m+1}.\label{eq:FDmPf}
\end{align}
On the other hand, the Chen-Strichartz formula gives that 
\begin{equation}
\log S(B)=\sum_{m=1}^{\infty}\sum_{\sigma\in{\cal S}_{m}}\frac{(-1)^{e(\sigma)}}{m^{2}{m-1 \choose e(\sigma)}}\sum_{J:|J|=m}\int_{\Delta_{m}}dB_{{\bf t}_{\sigma}}^{J}{\rm e}_{[J]}.\label{eq:LogSB}
\end{equation}
It follows from (\ref{eq:LogSB}) and Lemma \ref{lem:PiIShuf} that
the right hand side of (\ref{eq:FDmPf}) is precisely $(\log S(B))^{I}E_{1,m+1}$.
This completes the proof of the proposition.

\end{proof}

We conclude this part by presenting a general formula for the development
of $D_{k}\tilde{L}$, which will be important to us later on. Recall
that $S_{m}^{\gamma}$ is the analytic function defined by Definition
\ref{def:Sm}. Throughout the rest, we will omit the superscript $\gamma$
for simplicity. 
\begin{cor}
\label{cor:FDkSk}Let $a_{1},\cdots,a_{m}\in\mathbb{C}$ be given
fixed numbers. Consider the $\mathfrak{sl}_{m+1}(\mathbb{C})$-development
defined by (\ref{eq:FI}) where $A\in\mathfrak{h}$ is chosen to be
such that $[A,E_{k,k+1}]=a_{k}E_{k,k+1}$ for all $k=1,\cdots,m.$
Then one has 
\begin{equation}
\hat{F}(D_{k}\tilde{L})=\sum_{j=1}^{m-k+1}S_{k}(a_{j},\cdots,a_{j+k-1})E_{j,j+k}\label{eq:FDkSk}
\end{equation}
for all $k=1,\cdots,m.$
\end{cor}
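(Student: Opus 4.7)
The plan is to prove Corollary \ref{cor:FDkSk} as a direct extension of Proposition \ref{prop:DmTilLDevForm}. The latter already handles the deepest case $k=m$: indeed, taking $d=m$, $p_{\ell}=a_{\ell}$ and $I=(1,2,\ldots,m)$ in the setup of Proposition \ref{prop:DmTilLDevForm} exactly recovers the development in the corollary, and its formula reads $\hat{F}(D_{m}\tilde{L})=S_{m}(a_{1},\ldots,a_{m})E_{1,m+1}$, which is the $k=m$ instance of (\ref{eq:FDkSk}) (only $j=1$ is admissible). The remaining cases $k<m$ differ only in that the $D_{k}$-projection has fewer nested brackets, so additional off-diagonal blocks $E_{j,j+k}$ for $j=1,\ldots,m-k+1$ can appear under $\hat{F}$, rather than just the top corner $E_{1,m+1}$.

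The first step is to substitute the explicit Chen-Strichartz formula (\ref{eq:DmTilLForm}) for $D_{k}\tilde{L}$ into $\hat{F}$ and apply Lemma \ref{lem:DevShuf} with its parameter $k$ replaced by $k-1$ (since the nested bracket in $D_{k}\tilde{L}$ has $k$ elements, hence $k-1$ iterated brackets). This rewrites $\hat{F}(D_{k}\tilde{L})$ as a triple sum over $\sigma\in\mathcal{S}_{k}$, over $j\in\{1,\ldots,m-k+1\}$, and over an internal index $i$, with every term proportional to $E_{j,j+k}$. I would then collect the coefficient of each $E_{j,j+k}$. A direct inspection of the shuffle expression in Lemma \ref{lem:DevShuf} shows that all $a_{\ell}$'s appearing in that coefficient have indices in the window $\{j,j+1,\ldots,j+k-1\}$; the remaining parameters $a_{\ell}$ with $\ell\notin[j,j+k-1]$ play no role. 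This windowing phenomenon is the essential combinatorial content.

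The second step is to identify, for each fixed $j$, the coefficient of $E_{j,j+k}$ with $S_{k}(a_{j},\ldots,a_{j+k-1})$. Introducing the shifted parameters $a'_{\ell}\triangleq a_{j+\ell-1}$ ($\ell=1,\ldots,k$) and performing the re-indexing $i\mapsto i-j+1$, the coefficient matches, after a short sign-check, the expression produced by Lemma \ref{lem:PiIShuf} with $m$ replaced by $k$ and $a_{\ell}$ replaced by $a'_{\ell}$. Combining this with the Chen-Strichartz formula (\ref{eq:LogSB}) applied to the windowed path $B'_{j,t}\triangleq\sum_{\ell=1}^{k}\bigl(\int_{0}^{t}e^{a'_{\ell}x_{s}}dy_{s}\bigr)\mathrm{e}_{\ell}\in\mathbb{C}^{k}$ yields precisely $(\log S(B'_{j}))^{(1,\ldots,k)}$, which by Definition \ref{def:Sm} equals $S_{k}(a_{j},\ldots,a_{j+k-1})$.

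The main obstacle will be purely the bookkeeping of indices and signs: one must carefully track the parameter $n=k-1$ in Lemma \ref{lem:DevShuf}, verify that $(-1)^{j+(k-1)-i-1}$ becomes $(-1)^{k-1-i'}$ after the shift (so as to line up with the sign $(-1)^{m-1-i}$ in Lemma \ref{lem:PiIShuf} with $m=k$), and confirm rigorously that no stray terms involving $a_{\ell}$ with $\ell\notin[j,j+k-1]$ leak into the coefficient of $E_{j,j+k}$. Once these combinatorial details are checked, Corollary \ref{cor:FDkSk} follows by assembling the pieces exactly in parallel to the proof of Proposition \ref{prop:DmTilLDevForm}.
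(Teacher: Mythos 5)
Your proposal is correct and follows essentially the same route as the paper: the $k=m$ case via Proposition \ref{prop:DmTilLDevForm}, and for general $k$ the substitution of (\ref{eq:DmTilLForm}) combined with the Lemma \ref{lem:DevShuf} computation (with its parameter set to $k-1$), followed by the re-indexing $i\mapsto i-j+1$ and identification of each $E_{j,j+k}$-coefficient with $S_{k}(a_{j},\cdots,a_{j+k-1})$ through Lemma \ref{lem:PiIShuf} and Definition \ref{def:Sm}. The sign and windowing checks you flag are exactly the bookkeeping the paper's proof performs implicitly, so no gap remains.
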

\begin{proof}
The case when $k=m$ is just Proposition \ref{prop:DmTilLForm} applied
to the path 
\[
B_{t}\triangleq\big(\int_{0}^{t}e^{a_{1}x_{s}}dy_{s},\cdots,\int_{0}^{t}e^{a_{m}x_{s}}dy_{s}\big)\in\mathbb{C}^{m}
\]
with the word $I=(1,\cdots,m)$. For a general $k$, the same argument
as the proof of Lemma \ref{lem:DevShuf} gives that 
\begin{align}
\hat{F}(D_{k}\tilde{L})= & \sum_{\sigma\in{\cal S}_{k}}\frac{(-1)^{e(\sigma)}}{k^{2}{k-1 \choose e(\sigma)}}\sum_{j=1}^{m-k+1}\sum_{i=1}^{m-1}(-1)^{j+k-i}\nonumber \\
 & \ \ \ \int_{\Delta_{k}}\big(((e^{a_{j}},\stackrel{\nearrow}{\cdots},e^{a_{i-1}})\shuffle(e^{a_{j+k-1}},\stackrel{\searrow}{\cdots},e^{a_{i+2}}))\nonumber \\
 & \ \ \ \otimes[e^{a_{i}},e^{a_{i+1}}]\big)(t_{\sigma(1)},\cdots,t_{\sigma(k)})dy_{t_{1}}\cdots dy_{t_{k}}E_{j,j+k}.\label{eq:FDkPf}
\end{align}
In view of Lemma \ref{lem:PiIShuf}, after a change of indices $l\triangleq i-j+1$
the right hand side of (\ref{eq:FDkPf}) is precisely 
\[
\sum_{j=1}^{m-k+1}S_{k}(a_{j},\cdots,a_{j+k-1})E_{j,j+k}.
\]
The relation (\ref{eq:FDkSk}) thus follows. 
\end{proof}

\subsection{Proof of Theorem \ref{thm:IterIntCond}}

Our proof of Theorem \ref{thm:IterIntCond} is based on a key lemma
regarding the analyticity of path developments for the Hausdorff series.
Since its proof has a rather involved combinatorial nature, we decide to only state this analyticity lemma here and then use it to complete the proof
of Theorem \ref{thm:IterIntCond}. In Section \ref{subsec:KeyLem}
below, we will give the proof of this key lemma. 

Recall that the $n$-th Hausdorff series $H_{n}$ is defined by (\ref{eq:HnFormula}).
In our context, we need to consider a more specific series which is
defined in terms of the homogeneous projections of $\tilde{L}$.
\begin{defn}
Let $n\geqslant1$ and $k_{1},\cdots,k_{n}\geqslant1$ be given fixed.
We define 
\[
H_{n}(k_{1},\cdots,k_{n})\triangleq\frac{1}{n!}\big(H_{1}(k_{1})\partial_{{\rm e}_{1}}\big)\circ\cdots\circ\big(H_{1}(k_{n})\partial_{{\rm e}_{1}}\big)({\rm e}_{1}),
\]
where we used the shorthand notation $H_{1}(k_{i})\triangleq H_{1}(D_{k_{i}}\tilde{L},{\rm e}_{1})$.
It is clear that $H_{n}(k_{1},\cdots,k_{n})$ depends only on the
vector $K\triangleq(k_{1},\cdots,k_{n})$ and we will thus also
use the alternative shorthand notation $H_{K}$.
\end{defn}
\begin{rem}
In the above definition, we view $D_{k_{i}}\tilde{L}$ as a fixed
symbol and the derivation $H_{1}(k_{j})\partial_{{\rm e}_{1}}$ does
not act on it. As a result, $H_{n}(k_{1},\cdots,k_{n})$ is expressed
as a formal Lie series over the $n+1$ independent symbols $\{{\rm e}_{1},D_{k_{1}}\tilde{L},\cdots,D_{k_{n}}\tilde{L}\}$.
It is then regarded as a Lie series over $\mathbb{R}^{2}$ through
the substitution $D_{k_{i}}\tilde{L}\in{\cal L}((\langle{\rm e}_{1},{\rm e}_{2}\rangle))$.
\end{rem}
\begin{lem}
\label{lem:AnaLem}Let $N\geqslant2$ be a given fixed integer. Suppose
that $S_{n}(a_{1},\cdots,a_{n})=0$ for all $n\leqslant N-1$ and
all non-degenerate sequences $(a_{1},\cdots,a_{n})$ with $a_{j}\in2\pi i\mathbb{Z}$.
Let $(b_{1}^{*},\cdots,b_{N}^{*})$ be a fixed non-degenerate sequence
with $b_{j}\in2\pi i\mathbb{Z}$. Given $w\in\mathbb{C}$, we define
the path development $F_{w}:\mathbb{C}^{2}\rightarrow\mathfrak{sl}_{N+1}(\mathbb{C})$
by 
\begin{equation}
F_{w}({\rm e}_{1})\triangleq A_{w},\ F_{w}({\rm e}_{2})\triangleq E_{12}+\cdots+E_{N,N+1},\label{eq:Fw}
\end{equation}
where $A_{w}\in\mathfrak{h}$ is chosen to satisfy 
\begin{equation}
[A_{w},E_{k,k+1}]=(b_{k}^{*}+w)E_{k,k+1}\ \ \ (k=1,\cdots,N).\label{eq:FwLieRel}
\end{equation}
Then for any vector $K=(k_{1},\cdots,k_{n})$ satisfying $|K|=N,n\geqslant2$ or $|K|\leqslant N-1,n\geqslant1$ and any $j=1,\cdots,N,$ there exists a meromorphic function $\Psi_{K,j}:\mathbb{C}^{|K|}\rightarrow\mathbb{C}$
(i.e. the quotient of two holomorphic functions) in $|K|$ complex variables,
such that 
\[
\hat{F}_{w}(H_{K})=\sum_{j}\Psi_{K,j}(b_{j}^{*}+w,\cdots,b_{j+|K|-1}^{*}+w)E_{j,j+|K|}
\]
and the function $w\mapsto\Psi_{K,j}(b_{j}^{*}+w,\cdots,b_{j+|K|-1}^{*}+w)$
is analytic in a neighbourhood of $w=0$ for all $(K,j)$. Here $\hat{F}_{w}$
is the induced Lie homomorphism on ${\cal L}((\mathbb{C}^{2}))$ (cf.
(\ref{eq:HatF})).
\end{lem}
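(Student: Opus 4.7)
The strategy is to exploit the diagonal action of $A_w$ on the root vectors of $\mathfrak{sl}_{N+1}(\mathbb{C})$ together with the induction hypothesis, which supplies zeros of $S_k$ at non-degenerate lattice points. First I would record the basic inputs: by Corollary \ref{cor:FDkSk}, $\hat{F}_w(D_k\tilde{L})=\sum_{j=1}^{N-k+1} S_k(b_j^*+w,\ldots,b_{j+k-1}^*+w)\,E_{j,j+k}$, and since $A_w$ lies in the Cartan subalgebra, $\mathrm{ad}_{A_w}(E_{p,q})=\Sigma_{p,q}(w)\,E_{p,q}$ with $\Sigma_{p,q}(w)\triangleq b_p^*+\cdots+b_{q-1}^*+(q-p)w$. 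The crucial observation is that, because $(b_1^*,\ldots,b_N^*)$ is non-degenerate and lies in $2\pi i\mathbb{Z}$, every consecutive partial sum $\Sigma_{p,q}^*\triangleq\Sigma_{p,q}(0)$ belongs to $2\pi i\mathbb{Z}\setminus\{0\}$ and every consecutive subsequence of length at most $N-1$ is itself non-degenerate with entries in $2\pi i\mathbb{Z}$; the induction hypothesis thus provides $S_k(b_p^*,\ldots,b_{p+k-1}^*)=0$ for every $k\leqslant N-1$ and every valid starting index $p$.

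Next I would establish root-space localisation. Since $H_K$ is a Lie polynomial in $\{\mathrm{e}_1,D_{k_1}\tilde{L},\ldots,D_{k_n}\tilde{L}\}$ in which each $D_{k_i}\tilde{L}$ appears with multi-degree one, and since $\hat{F}_w(\mathrm{e}_1)=A_w$ has weight $0$ while $\hat{F}_w(D_{k_i}\tilde{L})$ lives in the weight-$k_i$ root space, additivity of weights under commutators forces $\hat{F}_w(H_K)\in\mathrm{Span}\{E_{j,j+|K|}:1\leqslant j\leqslant N+1-|K|\}$. Expanding $H_K$ through its Hausdorff-derivation definition, substituting the formulas above, and performing the commutator arithmetic in $\mathfrak{sl}_{N+1}(\mathbb{C})$ via the structure rules $[A_w,E_{p,q}]=\Sigma_{p,q}(w)E_{p,q}$ and $[E_{p,q},E_{q,r}]=E_{p,r}$, the coefficient of $E_{j,j+|K|}$ collects into a finite sum over admissible matching chains $j=j_0<j_1<\cdots<j_n=j+|K|$ (whose successive gaps form some permutation $\sigma$ of $k_1,\ldots,k_n$), of products of Bernoulli-series factors $\phi(\Sigma)\triangleq\Sigma/(e^\Sigma-1)$ attached to intermediate $\mathrm{ad}_{A_w}$-actions, together with $n$ factors $S_{k_{\sigma(i)}}(b_{j_{i-1}}^*+w,\ldots,b_{j_i-1}^*+w)$. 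Because every $\Sigma$ involved uses only the consecutive block $(b_p^*,\ldots,b_{q-1}^*)$ with $j\leqslant p<q\leqslant j+|K|$, the collected coefficient defines a meromorphic function $\Psi_{K,j}(z_1,\ldots,z_{|K|})$ of $|K|$ complex variables, with the $w$-dependence entering only through the specialisation $z_\ell=b_{j+\ell-1}^*+w$.

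The analyticity at $w=0$ is the most delicate point. The potential singularities along the evaluation line are simple poles produced by denominators $e^{\Sigma_{p,q}(w)}-1$; each such denominator vanishes to first order at $w=0$ because $\Sigma_{p,q}^*\in 2\pi i\mathbb{Z}\setminus\{0\}$ automatically, and its gradient is transverse to the line direction $(1,\ldots,1)$ with inner product $q-p\neq 0$. Two cancellation mechanisms then combine to kill these poles: (i) each of the $n\geqslant 1$ factors $S_{k_i}(\cdots)$ supplies a simple zero at $w=0$ because the admissibility assumption $(|K|,n)\neq(N,1)$ forces $k_i\leqslant N-1$ for every $i$, so the induction hypothesis applies; (ii) polynomial numerator factors arising in the Hausdorff expansion, typically of the form $e^{z_p+\cdots+z_{q-1}}-e^{z_r+\cdots+z_{s-1}}$, vanish identically along the line $z_\ell=b_{j+\ell-1}^*+w$ whenever the corresponding integer shift lies in $2\pi i\mathbb{Z}$. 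One verifies, by matching the combinatorics of the Hausdorff expansion against the lattice geometry supplied by the $b^*$'s, that in every admissible term the total zero order at $w=0$ dominates the total pole order, producing an analytic germ.

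The central obstacle is the combinatorial bookkeeping in the third paragraph, namely proving that effects (i) and (ii) jointly suffice to cancel every $\phi$-pole in every term of the expansion of $\hat{F}_w(H_K)$. The cleanest route is induction on $n$. The base case $n=1$ with $K=(k)$, $k\leqslant N-1$, follows directly from $\hat{F}_w(H_1(k))=\sum_j\phi(\Sigma_{j,j+k}(w))\,S_k(b_j^*+w,\ldots,b_{j+k-1}^*+w)\,E_{j,j+k}$, where the single weight-$k$ pole is cancelled by the companion $S_k$-zero. The inductive step writes $H_K=\tfrac{1}{n}(H_1(k_1)\partial_{\mathrm{e}_1})H_{(k_2,\ldots,k_n)}$, applies $\hat{F}_w$ to this presentation via the Leibniz rule, and combines the inductively established form of $\hat{F}_w(H_{(k_2,\ldots,k_n)})$ with the newly inserted $\phi_{k_1}\cdot S_{k_1}$ blocks from the outer derivation; the key point is that any additional large-weight pole produced by merging adjoint actions is paired either with the corresponding product of $S_{k_i}$-zeros or with one of the identically-vanishing polynomial factors described in (ii). The excluded case $K=(N)$ is precisely the one for which only a single $S_N$-factor is available and no auxiliary cancellation exists, which is why Theorem \ref{thm:IterIntCond} must treat it by the subsequent iterative argument rather than deriving it from this lemma.
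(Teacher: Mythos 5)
There is a genuine gap, and it sits exactly at the point you call ``the most delicate point''. You assert that all potential singularities along the evaluation line $z_\ell=b_{j+\ell-1}^*+w$ come from Bernoulli-type denominators $e^{\Sigma_{p,q}(w)}-1$ whose arguments are \emph{consecutive} block sums. This is false for the individual terms of the expansion. When one actually sums the powers of $\mathrm{ad}_{A_w}$ interleaved with the inserted blocks (via identities of the type (\ref{eq:SumProd})), one produces \emph{rational} denominators which are eigenvalue differences, and in nested insertions these are sums of \emph{two disjoint} consecutive blocks, as in (\ref{eq:bI}). Non-degeneracy of $(b_1^*,\ldots,b_N^*)$ controls only single consecutive sums; a sum of two disjoint blocks can vanish at $w=0$ (e.g.\ $b^*=(2\pi i,4\pi i,-2\pi i)$ is non-degenerate yet $b_1^*+b_3^*=0$), so individual terms genuinely blow up at $w=0$ and neither of your cancellation mechanisms (i)--(ii) addresses this. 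In the paper this is precisely the crux: the recursion for $H_n$ is organised through \emph{symmetrised} insertions (Proposition \ref{prop:HnRecur}), and the offending non-consecutive denominators are cancelled only after symmetrising over shuffles, where the combinatorial identity of Lemma \ref{lem:ComLem} collapses the symmetrised sum of reciprocals into a product of reciprocals of consecutive sums, which are nonzero by non-degeneracy. Without an argument of this kind (or some substitute), the claim that ``the total zero order dominates the total pole order in every admissible term'' is not justified — it is not even true termwise.

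A second, related problem is that your induction on $n$ via $H_K=\tfrac1n\,(H_1(k_1)\partial_{\mathrm{e}_1})H_{(k_2,\ldots,k_n)}$ is under-loaded. The inductive hypothesis you carry is only the \emph{value} and analyticity of $\hat F_w(H_{(k_2,\ldots,k_n)})$, but to apply the outer derivation you must know \emph{where} the $\mathrm{e}_1$-slots sit inside $H_{(k_2,\ldots,k_n)}$ and how the resulting insertions of $\hat F_w(H_1(k_1))$ interact with the rational denominators; the development of a derivation is not determined by the development of its argument. This is why the paper strengthens the induction to the detailed structural claim $\mathbf{P}(r)$ in the proof of Lemma \ref{lem:MainSeriesKeyLem} (keeping track of the $\Theta$- and $\Psi$-factors, and of the fact that only the \emph{symmetrisation} of the $\Theta$'s is analytic and vanishes at $w=0$, which is what cancels the single $\phi$-pole). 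Your base case $n=1$, the root-space localisation, and the idea that the $S_{k_i}$-zeros from Assumption $(\mathbf{A}_{N-1})$ kill the $\phi$-poles are all correct and agree with the paper's endgame, but as written the proposal does not contain the symmetrisation/combinatorial mechanism that the lemma actually requires, so the proof does not go through.
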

%

Now we prove Theorem \ref{thm:IterIntCond} presuming
the correctness of Lemma \ref{lem:AnaLem}.

\begin{proof}[Proof of Theorem \ref{thm:IterIntCond}]

We argue by induction on the length $m$ of the sequence $(a_{1},\cdots,a_{m})$.
The cases when $m=1,2$ are just Theorem \ref{thm:LineInt} and Corollary
\ref{cor:DoubIntCond} respectively. Suppose that the claim is true
for all non-degenerate sequences with length $\leqslant N-1.$ Let
$(b_{1}^{*},\cdots,b_{N}^{*})$ be a non-degenerate sequence with
$b_{j}^{*}\in2\pi i\mathbb{Z}$ for all $j$. Our goal is to show
that $S_{N}(b_{1}^{*},\cdots,b_{N}^{*})=0$. 

First of all, by using the degree $N$ version of (\ref{eq:D1D2})
one finds that 
\begin{equation}
D_{N}L=D_{N}\big(\sum_{n=1}^{\infty}H_{n}(\tilde{L},{\rm e}_{1})\big)=H_{1}(N)+\sum_{n=2}^{N}\sum_{k_{1}+\cdots+k_{n}=N}H_{n}(k_{1},\cdots,k_{n}).\label{eq:DN}
\end{equation}
Given $w\in\mathbb{C},$ let $F_{w}:\mathbb{C}^{2}\rightarrow\mathfrak{sl}_{N+1}(\mathbb{C})$
denote the path development defined by (\ref{eq:Fw}) under the condition
(\ref{eq:FwLieRel}). By applying $\hat{F}_{w}$ to both sides of
(\ref{eq:DN}), one finds that 
\begin{align*}
\hat{F}_{w}(D_{N}L)= & H_{1}(\hat{F}_{w}(D_{N}\tilde{L}),A)+\sum_{n=2}^{N}\sum_{k_{1}+\cdots+k_{n}=N}\hat{F}_{w}(H_{n}(k_{1},\cdots,k_{n}))\\
= & S_{N}(b_{1}^{*}+w,\cdots,b_{N}^{*}+w)\phi(b_{1}^{*}+\cdots+b_{N}^{*}+Nw)E_{1,N+1}\\
 & \ \ \ +\sum_{n=2}^{N}\sum_{k_{1}+\cdots+k_{n}=N}\hat{F}_{w}(H_{n}(k_{1},\cdots,k_{n})).
\end{align*}
The second equality follows from Corollary \ref{cor:FDkSk} and the
definition (\ref{eq:H1Formula}) of $H_{1}$, where $\phi(z)\triangleq\frac{z}{e^{z}-1}$.
Since the induction hypothesis holds for all non-degenerate sequences
with length $<N$, one can apply Lemma \ref{lem:AnaLem} to conclude
that the function 
\[
w\mapsto\hat{F}_{w}(H_{n}(k_{1},\cdots,k_{n}))
\]
is analytic near $w=0$. Note that $w\mapsto\hat{F}_{w}(D_{N}L)$
is also an analytic function (on the whole space $\mathbb{C}$ due
to the infinite R.O.C. for $D_{N}L$). It follows that the function
\[
w\mapsto S_{N}(b_{1}^{*}+w,\cdots,b_{N}^{*}+w)\phi(b_{1}^{*}+\cdots+b_{N}^{*}+Nw)
\]
must be analytic near $w=0$. Recall that $b^{*}\triangleq b_{1}^{*}+\cdots+b_{N}^{*}$
is a nonzero integer multiple of $2\pi i$ and is thus a pole of $\phi$.
As a consequence, the function $w\mapsto S_{N}(b_{1}^{*}+w,\cdots,b_{N}^{*}+w)$
must vanish at $w=0$. In other words, one has $S_{N}(b_{1}^{*},\cdots,b_{N}^{*})=0$,
which completes the induction step. 

\end{proof}

\subsection{\label{subsec:KeyLem}The analyticity lemma }

It remains to prove the analyticity lemma which will be the main task of this subsection. 

\subsubsection{A recursive formula for the Hausdorff series}

Our strategy relies on induction on the total degree $|K|$ of the
vector $K=(k_{1},\cdots,k_{n}).$ For this purpose, we shall derive
a recursive formula for computing $H_{n}(k_{1},\cdots,k_{n})$ in
terms of ``symmetrised products'' of $H_{m}$'s ($m<n$). We first define
such type of products. 
\begin{defn}
\label{def:SymProd}Let $A_{1},\cdots,A_{r},{\rm e}_{1},v$ be given
symbols. We define the linear operator 
\[
A_{1}\partial_{{\rm e}_{1}}\hat{\otimes}_{{\rm s}}\cdots\hat{\otimes}_{{\rm s}}A_{r}\partial_{{\rm e}_{1}}:T((\langle{\rm e}_{1},v\rangle))\rightarrow T((\langle{\rm e}_{1},v,A_{1},\cdots,A_{r}\rangle))
\]
in the following way. Let $\mathfrak{f}_{1}\otimes\cdots\otimes\mathfrak{f}_{N}$
be a given monomial where $\mathfrak{f}_{i}={\rm e}_{1}$ or $v$.
We first define
\begin{align*}
 & A_{1}\partial_{{\rm e}_{1}}\hat{\otimes}_{{\rm }}\cdots\hat{\otimes}A_{r}\partial_{{\rm e}_{1}}(\mathfrak{f}_{1}\otimes\cdots\otimes\mathfrak{f}_{N})\\
 & \triangleq\sum\mathfrak{f}_{1}\otimes\cdots\otimes\mathfrak{f}_{i_{1}-1}\otimes A_{1}\otimes\mathfrak{f}_{i_{1}+1}\otimes\cdots\otimes\mathfrak{f}_{i_{r}-1}\otimes A_{r}\otimes\mathfrak{f}_{i_{r}+1}\otimes\cdots\otimes\mathfrak{f}_{N},
\end{align*}
where the summation is taken over all $r$-subsets $\{i_{1},\cdots,i_{r}\}$
with $\mathfrak{f}_{i_{j}}={\rm e}_{1}$. In other words, the action
is given by replacing $r$ ${\rm e}_{1}$'s by $(A_{1},\cdots,A_{r})$
and summing over all such possibilities. The definition is extended
to the tensor algebra $T((\langle{\rm e}_{1},v\rangle))$ by linearity.
We then set 
\[
A_{1}\partial_{{\rm e}_{1}}\hat{\otimes}_{{\rm s}}\cdots\hat{\otimes}_{{\rm s}}A_{r}\partial_{{\rm e}_{1}}\triangleq\sum_{\sigma\in{\cal S}_{r}}A_{\sigma(1)}\partial_{{\rm e}_{1}}\hat{\otimes}_{{\rm }}\cdots\hat{\otimes}A_{\sigma(r)}\partial_{{\rm e}_{1}}.
\] 
\end{defn}
The following simple property of the operator $A_{1}\partial_{{\rm e}_{1}}\hat{\otimes}_{{\rm s}}\cdots\hat{\otimes}_{{\rm s}}A_{r}\partial_{{\rm e}_{1}}$
will be useful to us. 
\begin{lem}
\label{lem:SymAd}One has 
\begin{align*}
 & A_{1}\partial_{{\rm e}_{1}}\hat{\otimes}_{{\rm s}}\cdots\hat{\otimes}_{{\rm s}}A_{r}\partial_{{\rm e}_{1}}({\rm ad}_{{\rm e}_{1}}^{l}(v))\\
 & =\sum_{\sigma\in{\cal S}_{r}}\sum_{\substack{\substack{\xi_{1}+\cdots+\xi_{r+1}}
=l-r}
}{\rm ad}_{{\rm e}_{1}}^{\xi_{1}}\circ{\rm ad}_{A_{\sigma(1)}}\circ{\rm ad}_{{\rm e}_{1}}^{\xi_{2}}\circ\cdots\circ{\rm ad}_{{\rm e}_{1}}^{\xi_{r}}\circ{\rm ad}_{A_{\sigma(r)}}\circ{\rm ad}_{{\rm e}_{1}}^{\xi_{r+1}}(v)
\end{align*}
for any $l\geqslant r\geqslant1$.
\end{lem}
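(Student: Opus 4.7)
The plan is to reduce the symmetrised tensor operator to the composition of a single commuting family of derivations, and then to iterate the Leibniz rule. Concretely, I set $\mathfrak{D}_A \triangleq A\partial_{\mathrm{e}_1}$, viewed as a derivation of the tensor algebra over $\{\mathrm{e}_1, v, A_1, \ldots, A_r\}$ that sends $\mathrm{e}_1\mapsto A$ and kills all other generators. First I would establish the operator identity
\[
A_1\partial_{\mathrm{e}_1}\hat{\otimes}_{\mathrm{s}}\cdots\hat{\otimes}_{\mathrm{s}}A_r\partial_{\mathrm{e}_1} \;=\; \mathfrak{D}_{A_1}\circ\cdots\circ\mathfrak{D}_{A_r}.
\]
This reduces to a simple combinatorial check on monomials: both sides, when applied to $\mathfrak{f}_1\otimes\cdots\otimes\mathfrak{f}_N$, sum over the $r!\binom{m}{r}$ ways of choosing a bijection from $\{A_1,\ldots,A_r\}$ to an $r$-element subset of the $\mathrm{e}_1$-positions, where $m$ is the number of $\mathrm{e}_1$'s in the monomial. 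A consequence is that the operators $\mathfrak{D}_{A_j}$ pairwise commute.

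Since $\mathfrak{D}_A$ is a derivation, it commutes with the Lie bracket in the sense $\mathfrak{D}_A([X,Y]) = [\mathfrak{D}_A X, Y] + [X, \mathfrak{D}_A Y]$. Combined with $\mathfrak{D}_A(v)=0$, a straightforward induction on $l$ gives the base identity
\[
\mathfrak{D}_A({\rm ad}_{\mathrm{e}_1}^{l}(v)) = \sum_{\xi_1+\xi_2=l-1}{\rm ad}_{\mathrm{e}_1}^{\xi_1}\circ{\rm ad}_{A}\circ{\rm ad}_{\mathrm{e}_1}^{\xi_2}(v),
\]
which is precisely the $r=1$ case of the lemma.

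The general statement then follows by induction on $r$. Assuming the formula at level $r-1$, I apply $\mathfrak{D}_{A_r}$ term-by-term. Since $\mathfrak{D}_{A_r}(A_j) = 0$ for $j<r$ and $\mathfrak{D}_{A_r}(v) = 0$, the Leibniz rule forces $\mathfrak{D}_{A_r}$ to convert exactly one of the ${\rm ad}_{\mathrm{e}_1}$-factors into an ${\rm ad}_{A_r}$, summed over positions; concretely, each block ${\rm ad}_{\mathrm{e}_1}^{\xi_j}$ splits into $\sum_\alpha {\rm ad}_{\mathrm{e}_1}^{\alpha}\circ{\rm ad}_{A_r}\circ{\rm ad}_{\mathrm{e}_1}^{\xi_j-1-\alpha}$. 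Collecting the contributions across all gap blocks and all permutations $\sigma\in{\cal S}_{r-1}$, then re-indexing in terms of a new composition of $l-r$ into $r+1$ parts together with the $r$ possible slots in which $A_r$ is inserted relative to $A_{\sigma(1)},\ldots,A_{\sigma(r-1)}$, recovers the claimed sum over ${\cal S}_r$.

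The main technical point is the combinatorial bookkeeping in this final step: one must check that the map sending $(\sigma,\xi)$, with $\sigma\in{\cal S}_r$ and $\xi=(\xi_1,\ldots,\xi_{r+1})$ a composition of $l-r$, to the associated nested-bracket monomial is a bijection onto the terms produced by the iterated Leibniz expansion, with each monomial appearing exactly once. This is most cleanly seen by re-parametrising both descriptions through a common combinatorial datum: an $r$-element subset $S\subseteq\{1,\ldots,l\}$ of slot positions in the chain ${\rm ad}_{\mathrm{e}_1}^{l}(v)$ together with a bijection $S\to\{A_1,\ldots,A_r\}$, which is the same parametrisation used to establish the operator identity in the first step.
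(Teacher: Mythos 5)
Your proof is correct. It differs in organisation from the paper's argument, which is a bare double induction straight from Definition \ref{def:SymProd}: the paper peels off the outermost letter in ${\rm ad}_{{\rm e}_1}^{l}(v)=[{\rm e}_1,{\rm ad}_{{\rm e}_1}^{l-1}(v)]$, so that the symmetrised substitution either hits that outer ${\rm e}_1$ (reducing to the case $(l-1,r-1)$ with one fewer $A_j$) or acts entirely inside (reducing to $(l-1,r)$), and first settles $l=r$ before the general $l\geqslant r$. You instead begin with the structural identity $A_{1}\partial_{{\rm e}_{1}}\hat{\otimes}_{{\rm s}}\cdots\hat{\otimes}_{{\rm s}}A_{r}\partial_{{\rm e}_{1}}=\mathfrak{D}_{A_1}\circ\cdots\circ\mathfrak{D}_{A_r}$ with commuting single-substitution derivations, prove the $r=1$ case by induction on $l$ via the Leibniz rule on brackets, and then induct on $r$ by applying one more derivation $\mathfrak{D}_{A_r}$, with the bijection $(\sigma,\xi)\leftrightarrow$ (parent term, insertion slot) verified correctly — the position of $A_r$ in a level-$r$ term determines its unique parent and split, so every term appears with coefficient one. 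What your route buys is a cleaner conceptual mechanism (a single Leibniz computation drives both inductions, and the induction on $r$ no longer touches $l$), at the cost of the extra combinatorial check identifying the symmetrised operator with the composition of derivations; the paper's route avoids introducing that identity but buries the same Leibniz-type bookkeeping inside a two-parameter induction that it leaves to the reader. Both arguments rest on the same underlying fact that substitution of ${\rm e}_1$ by a new letter acts as a derivation on nested brackets, so the difference is one of packaging rather than substance, but your factorisation is a genuine and worthwhile simplification of the induction structure.
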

\begin{proof}
This follows by induction on $r$ based on Definition \ref{def:SymProd}
(one first treats the case when $l=r$ by induction and then the case
when $l\geqslant r$ follows by another step of induction from cases
$(l-1,r)$ and $(l-1,r-1)$). 
\end{proof}
To get some feeling about the shape of $H_{n}(k_{1},\cdots,k_{n})$,
we first look at the small-$n$ cases.
\begin{example}
We have seen the computation for $H_{2}$ in (\ref{eq:H2D1}). Let
us consider the case when $n=3$. By definition, one has 
\begin{align*}
 & H_{3}(k_{1},k_{2},k_{3})\\
 & =\frac{1}{3!}\big(H_{1}(k_{1})\partial_{{\rm e}_{1}}\big)\big(\sum_{l}\frac{B_{l}}{l!}(H_{1}(k_{2})\partial_{{\rm e}_{1}})({\rm ad}_{{\rm e}_{1}}^{l}(D_{k_{3}}\tilde{L}))\big)\\
 & =\frac{1}{3!}\big(H_{1}(k_{1})\partial_{{\rm e}_{1}}\big)\big(\sum_{l}\frac{B_{l}}{l!}\sum_{\xi_{1}+\xi_{2}=l-1}{\rm ad}_{{\rm e}_{1}}^{\xi_{1}}\circ{\rm ad}_{H_{1}(k_{2})}\circ{\rm ad}_{{\rm e}_{1}}^{\xi_{2}}(D_{k_{3}}\tilde{L})\big)\\
 & =\frac{1}{3!}\sum_{\tau\in{\cal S}_{2}}\sum_{l}\frac{B_{l}}{l!}\sum_{\xi_{1}+\xi_{2}+\xi_{3}=l-2}{\rm ad}_{{\rm e}_{1}}^{\xi_{1}}\circ{\rm ad}_{H_{1}(k_{\tau(1)})\circ}{\rm ad}_{{\rm e}_{1}}^{\xi_{2}}\circ{\rm ad}_{H_{1}(k_{\tau(2)})}\circ{\rm ad}_{{\rm e}_{1}}^{\xi_{3}}(D_{k_{3}}\tilde{L})\\
 & \ \ \ +\frac{2!}{3!}\sum_{l}\frac{B_{l}}{l!}\sum_{\xi_{1}+\xi_{2}=l-1}{\rm ad}_{{\rm e}_{1}}^{\xi_{1}}\circ{\rm ad}_{H_{2}(k_{1},k_{2})}\circ{\rm ad}_{{\rm e}_{1}}^{\xi_{2}}(D_{k_{3}}\tilde{L}).
\end{align*}
A crucial observation is that the above expression can be rewritten
in terms of the symmetrised tensor product in Definition \ref{def:SymProd}.
In fact, according to Lemma \ref{lem:SymAd} one has 
\begin{align*}
H_{3}(k_{1},k_{2},k_{3})=\frac{1}{3!} & \big((H_{1}(k_{1})\partial_{{\rm e}_{1}})\hat{\otimes}_{{\rm s}}(H_{1}(k_{2})\partial_{{\rm e}_{1}})\big)\big(H_{1}(k_{3})\big)\\
 & \ \ \ +\frac{2!}{3!}\big(H_{2}(k_{1},k_{2})\partial_{{\rm e}_{1}}\big)(H_{1}(k_{3})).
\end{align*}
Our aim is to generalise the above formula to arbitrary $H_{n}(k_{1},\cdots,k_{n})$.
\end{example}
To state the recursive formula for general $H_{n}(k_{1},\cdots,k_{n})$,
we need to introduce one more definition to ease notation.
\begin{defn}
\label{def:HPDef}Let $P=\{I_{1},\cdots,I_{r}\}$ be a given (unordered)
partition of $\{1,\cdots,n\}$ (i.e. $I_{p}\neq\emptyset,$ $I_{p}\cap I_{q}=\emptyset$
for $p\neq q$ and $\cup I_{p}=\{1,\cdots,n\}$). We define 
\[
\hat{H}_{P}(k_{1},\cdots,k_{n})\triangleq H_{K_{1}}\partial_{{\rm e}_{1}}\hat{\otimes}_{{\rm s}}\cdots\hat{\otimes}_{{\rm s}}H_{K_{r}}\partial_{{\rm e}_{1}},
\]
where $I_{p}=(i_{1}^{p}<\cdots<i_{l_{p}}^{p})$ and $K_{p}\triangleq(k_{i_{1}^{p}},\cdots,k_{i_{l_{p}}^{p}})$
($p=1,\cdots,r$). 
\end{defn}
The main recursive formula for $H_{n}(k_{1},\cdots,k_{n})$  is stated
as follows. 
\begin{proposition}
\label{prop:HnRecur}For any $n\geqslant1$ and $k_{1},\cdots,k_{n}\geqslant1$,
one has 
\[
H_{n}(k_{1},\cdots,k_{n})=\sum_{P=\{I_{1},\cdots,I_{r}\}}\frac{l_{1}!\cdots l_{r}!}{n!}\hat{H}_{P}(k_{1},\cdots,k_{n-1})(H_{1}(k_{n})),
\]
where the summation is taken over all (unordered) partitions $\{I_{1},\cdots,I_{r}\}$
of the set $\{1,\cdots,n-1\}$ and $l_{s}$ denotes the cardinality
of $I_{s}$ ($s=1,\cdots,r$). 
\end{proposition}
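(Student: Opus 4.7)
The plan is to argue by induction on $n$. The base case $n=1$ is handled by convention: the only partition of $\emptyset$ is the empty collection (with $r=0$), and the empty symmetric substitution operator is the identity, so the right-hand side collapses to $H_1(k_1)$, which matches. The inductive step rests on the following elementary ``pull-one-out'' identity, which I would verify first directly from the definition of $H_n$:
\begin{equation}\label{eq:PullOne}
H_n(k_1,\ldots,k_n)=\tfrac{1}{n}\bigl(H_1(k_1)\partial_{{\rm e}_{1}}\bigr)\bigl(H_{n-1}(k_2,\ldots,k_n)\bigr).
\end{equation}
Iterating \eqref{eq:PullOne} in the final variable also yields the auxiliary identity
\begin{equation}\label{eq:AppHm}
\bigl(H_1(k_1)\partial_{{\rm e}_{1}}\bigr)\bigl(H_{m}(k_{j_1},\ldots,k_{j_m})\bigr)=(m+1)\,H_{m+1}(k_1,k_{j_1},\ldots,k_{j_m}),
\end{equation}
which will play a key role in the Leibniz-rule analysis below.

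The main step is to apply \eqref{eq:PullOne} after invoking the induction hypothesis on $H_{n-1}(k_2,\ldots,k_n)$. This gives
\[
n!\,H_n(k_1,\ldots,k_n)=\bigl(H_1(k_1)\partial_{{\rm e}_{1}}\bigr)\sum_{Q\vdash\{2,\ldots,n-1\}}m_1!\cdots m_s!\,\hat{H}_Q(k_2,\ldots,k_{n-1})\bigl(H_1(k_n)\bigr),
\]
where $Q=\{J_1,\ldots,J_s\}$ and $m_p=|J_p|$. The operator $H_1(k_1)\partial_{{\rm e}_{1}}$ acts as a derivation on the resulting polynomial in the symbols $\{{\rm e}_1,D_{k_2}\tilde{L},\ldots,D_{k_n}\tilde{L}\}$, replacing each occurrence of ${\rm e}_1$ by $H_1(k_1)$. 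The ${\rm e}_1$'s that get hit fall into two disjoint types: (a) those ${\rm e}_1$'s of $H_1(k_n)$ that were \emph{not} substituted by one of the $H_{K_p}$'s in the action of $\hat{H}_Q$; and (b) those ${\rm e}_1$'s sitting \emph{inside} one of the substituted factors $H_{K_p}$.

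For contribution (a), inserting one more ${\rm e}_1$-substitution into $H_1(k_n)$ (on an equal footing with the existing $s$ substitutions) turns the symmetric $s$-fold tensor $\hat{H}_Q$ into the symmetric $(s+1)$-fold tensor $\hat{H}_{Q\cup\{\{1\}\}}$. A careful position count shows no extra combinatorial factor is needed. For contribution (b), by \eqref{eq:AppHm} the inner substitution of ${\rm e}_1$ by $H_1(k_1)$ inside $H_{K_p}$ replaces the factor $H_{K_p}\partial_{{\rm e}_{1}}$ by $(l_p+1)\,H_{(k_1,K_p)}\partial_{{\rm e}_{1}}$, which corresponds to augmenting block $J_p$ of $Q$ to $\{1\}\cup J_p$. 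Collecting these two types, each partition $P$ of $\{1,\ldots,n-1\}$ is produced exactly once: if the block containing $1$ in $P$ is $\{1\}$ then it comes from case~(a), and otherwise from case~(b) with a specific $Q$ and $j$. The combinatorial identities
\[
m_1!\cdots m_s!\cdot 1!=\prod_p l'_p!\quad\text{(case (a))},\qquad m_1!\cdots m_s!\cdot(m_j+1)=\prod_p l'_p!\quad\text{(case (b))},
\]
where the $l'_p$'s are the block sizes of $P$, show that the coefficients in the two expressions agree, completing the induction.

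I expect the main obstacle to be the careful position-counting in contribution~(a): one must check that summing over all ways of picking an unused ${\rm e}_1$ in $H_1(k_n)$ and replacing it by $H_1(k_1)$, combined with the $s!$ orderings already present in $\hat{H}_Q$, reconstructs precisely the $(s+1)!$ orderings built into $\hat{H}_{Q\cup\{\{1\}\}}$, without any stray multiplicity. Once this bookkeeping is isolated (most cleanly by fixing a single tensor monomial of $H_1(k_n)$ and tracking which ${\rm e}_1$-positions get occupied by which substituent), the rest of the argument is a direct combinatorial collation.
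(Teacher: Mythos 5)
Your proposal is correct and follows essentially the same route as the paper: induction on $n$ via the relation $n!\,H_{n}(k_{1},\cdots,k_{n})=(H_{1}(k_{1})\partial_{{\rm e}_{1}})\big((n-1)!\,H_{n-1}(k_{2},\cdots,k_{n})\big)$, a Leibniz-rule split of the outer derivation into hits on the unreplaced ${\rm e}_{1}$'s (creating the new singleton block $\{1\}$) versus hits inside a substituted factor $H_{K_{p}}$ (augmenting that block), and the observation that every partition of $\{1,\cdots,n-1\}$ arises exactly once. The only cosmetic difference is that you carry the factorial coefficients explicitly and verify the identities $m_{1}!\cdots m_{s}!\cdot 1!=\prod_{p}l'_{p}!$ and $m_{1}!\cdots m_{s}!\,(m_{j}+1)=\prod_{p}l'_{p}!$ by hand, whereas the paper absorbs them via the normalisation $\bar{H}_{n}\triangleq n!H_{n}$, $\hat{\bar{H}}_{P}\triangleq l_{1}!\cdots l_{r}!\hat{H}_{P}$ and checks the case-(a) position count through Lemma \ref{lem:SymAd}.
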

\begin{proof}
We prove the claim by induction on $n$. To simplify notation, we
get rid of the factorials by setting $\bar{H}_{n}\triangleq n!H_{n}$
and $\hat{\bar{H}}_{P}\triangleq l_{1}!\cdots l_{r}!\hat{H}_{P}.$
The base case $n=1$ is clear. Suppose that the claim is true for
$\bar{H}_{n-1}$. By the definition of $H_{n}(k_{1},\cdots,k_{n})$
and the induction hypothesis, one has 
\begin{align}
\bar{H}_{n}(k_{1},\cdots,k_{n}) & =(H_{1}(k_{1})\partial_{{\rm e}_{1}})\big(\bar{H}_{n-1}(k_{2},\cdots,k_{n})\big)\nonumber \\
 & =(H_{1}(k_{1})\partial_{{\rm e}_{1}})\big[\sum_{P}\big(\hat{\bar{H}}_{P}(k_{2},\cdots,k_{n-1})(H_{1}(k_{n}))\big)\big],\label{eq:HnRecFormPf1}
\end{align}
where the summation is taken over all partitions of $\{2,\cdots,n-1\}$. 

Given any such partition $P=\{I_{1},\cdots,I_{r}\}$, by using Lemma
\ref{lem:SymAd} (and the notation in Definition \ref{def:HPDef})
one can write
\begin{align*}
 & \hat{\bar{H}}_{P}(k_{2},\cdots,k_{n-1})(H_{1}(k_{n}))\\
 & =\sum_{\tau\in{\cal S}_{r}}\sum_{l}\frac{B_{l}}{l!}\sum_{\xi_{1}+\cdots+\xi_{r+1}=l-r}{\rm ad}_{{\rm e}_{1}}^{\xi_{1}}\circ{\rm ad}_{\bar{H}_{K_{\tau(1)}}}\circ{\rm ad}_{{\rm e}_{1}}^{\xi_{2}}\circ\\
 & \ \ \ \ \ \ \ \ \ \cdots\circ{\rm ad}_{\bar{H}_{K_{\tau(r)}}}\circ{\rm ad}_{{\rm e}_{1}}^{\xi_{r+1}}(D_{k_{n}}\tilde{L}).
\end{align*}
As a derivation, the outer $H_{1}(k_{1})\partial_{{\rm e}_{1}}$ action
in (\ref{eq:HnRecFormPf1}) will either apply to the ${\rm ad}_{{\rm e}_{1}}$'s
or to the $\bar{H}_{K_{\tau(i)}}$'s. The former case yields 
\[
\hat{\bar{H}}_{\{1\}\oplus P}(k_{1},\cdots,k_{n-1})(H_{1}(k_{n}))
\]
where $\{1\}\oplus P$ is the partition $\{\{1\},I_{1},\cdots,I_{r}\}$
of $\{1,\cdots,n-1\}$, while the latter case yields 
\[
\sum_{s=1}^{r}\hat{\bar{H}}_{\{I_{1},\cdots,\{1\}\cup I_{s},\cdots,I_{r}\}}(k_{1},\cdots,k_{n-1})(H_{1}(k_{n}))
\]
where $\{I_{1},\cdots,\{1\}\cup I_{s},\cdots,I_{r}\}$ is the partition
of $\{1,\cdots,n-1\}$ obtained by adding the element ``$1$'' into
$I_{s}$. As a consequence, one finds that 
\begin{align*}
\bar{H}_{n}(k_{1},\cdots,k_{n})= & \sum_{P=\{I_{1},\cdots,I_{r}\}}\big[\hat{\bar{H}}_{\{1\}\oplus P}(k_{1},\cdots,k_{n-1})(H_{1}(k_{n}))\\
 & \ \ \ +\sum_{s=1}^{r}\hat{\bar{H}}_{\{I_{1},\cdots,\{1\}\cup I_{s},\cdots,I_{r}\}}(k_{1},\cdots,k_{n-1})(H_{1}(k_{n}))\big].
\end{align*}
The right hand side is exactly the sum of $\hat{\bar{H}}_{Q}(k_{1},\cdots,k_{n-1})(H_{1}(k_{n}))$
over all possible partitions of $\{1,\cdots,n-1\}$. Indeed, any such
partition $Q$ corresponds to a partition $P$ of $\{2,\cdots,n-1\}$
together with a specific way of adding the element ``$1$'' (either
outside $P$ or into one of the members of $P$) and vice versa. This
completes the induction step. 
\end{proof}

\subsubsection{A combinatorial identity }

The following combinatorial identity plays a key role in the proof.
It allows one to express certain quotients defined by non-consecutive sums of a sequence in terms of consecutive sums. 
\begin{lem}
\label{lem:ComLem}Let $0\leqslant s\leqslant R$ be given fixed and
let $\{c_{k}\}_{1\leqslant k\leqslant R+1}$ be a given sequence of
numbers. Then one has 
\begin{equation}
\sum_{\eta\in{\cal T}(s,R-s)}\frac{1}{c_{\eta^{-1}(2)}^{\eta^{-1}(2)}c_{\eta^{-1}(2)}^{\eta^{-1}(3)}\cdots c_{\eta^{-1}(2)}^{\eta^{-1}(R+1)}}=\big(\prod_{k=1}^{s}c_{k}^{s}\big)^{-1}\big(\prod_{k=s+2}^{R+1}c_{s+2}^{k}\big)^{-1}.\label{eq:ComLem}
\end{equation}
Here ${\cal T}(s,R-s)$ denotes the collection of permutations $\eta\in{\cal S}_{R+1}$
such that 
\begin{equation}
\eta(s+1)=1;\ \eta(s)<\cdots<\eta(1);\ \eta(s+2)<\cdots<\eta(R+1).\label{eq:T}
\end{equation}
We also denote 
\begin{equation}
c_{\eta^{-1}(2)}^{\eta^{-1}(k)}\triangleq c_{\eta^{-1}(2)}+c_{\eta^{-1}(3)}+\cdots+c_{\eta^{-1}(k)},\ c_{p}^{q}\triangleq c_{p}+c_{p+1}+\cdots+c_{q},\label{eq:CSum}
\end{equation}
and conventionally we set $c_{1}^{0}=c_{R+2}^{R+1}=1$.
\end{lem}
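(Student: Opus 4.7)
The plan is to prove the identity via a Laplace-transform-style integral representation, converting the combinatorial sum into a single multivariate integral which then factorizes. First I would recognize that, for $\mathrm{Re}(c_k)>0$, the generic summand on the left admits the representation
\[
\frac{1}{c_{\eta^{-1}(2)}^{\eta^{-1}(2)}\cdots c_{\eta^{-1}(2)}^{\eta^{-1}(R+1)}}=\int_{t_{2}\geqslant t_{3}\geqslant\cdots\geqslant t_{R+1}\geqslant0}\exp\Big(-\sum_{k=2}^{R+1}c_{\eta^{-1}(k)}t_{k}\Big)\,dt_{2}\cdots dt_{R+1}.
\]
After the change of variables $u_{\eta^{-1}(k)}=t_{k}$ (which only relabels coordinates by the bijection $\eta^{-1}:\{2,\dots,R+1\}\to\{1,\dots,R+1\}\setminus\{s+1\}$), this becomes an integral over the region $R_{\eta}\triangleq\{u_{\eta^{-1}(2)}\geqslant u_{\eta^{-1}(3)}\geqslant\cdots\geqslant u_{\eta^{-1}(R+1)}\geqslant0\}$ of the fixed integrand $\exp(-\sum_{j\neq s+1}c_{j}u_{j})$.

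Next I would establish the key combinatorial fact: as $\eta$ ranges over $\mathcal{T}(s,R-s)$, the sequence $(\eta^{-1}(2),\dots,\eta^{-1}(R+1))$ ranges precisely over all $(s,R-s)$-shuffles of the decreasing word $(s,s-1,\dots,1)$ and the increasing word $(s+2,s+3,\dots,R+1)$. Indeed, writing $A=\{\eta(1),\dots,\eta(s)\}$ and $B=\{\eta(s+2),\dots,\eta(R+1)\}$ for the partition of $\{2,\dots,R+1\}$ determined by $\eta$, the decreasing order constraint on $A$ forces the sub-sequence of positions of the $A$-values (read in increasing value order) to be $(s,s-1,\dots,1)$, and similarly the sub-sequence of $B$-positions is $(s+2,\dots,R+1)$; every partition-and-ordering corresponds to exactly one shuffle. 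Consequently, the family $\{R_{\eta}\}_{\eta\in\mathcal{T}(s,R-s)}$ tiles (modulo measure-zero boundaries) the Cartesian product region
\[
\Omega\triangleq\bigl\{u_{j}\geqslant0:\,u_{1}\leqslant u_{2}\leqslant\cdots\leqslant u_{s},\ u_{s+2}\geqslant u_{s+3}\geqslant\cdots\geqslant u_{R+1}\bigr\}.
\]

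Summing the integral representations over $\eta\in\mathcal{T}(s,R-s)$ therefore yields a single integral of $\exp(-\sum_{j\neq s+1}c_{j}u_{j})$ over $\Omega$, which factorises as the product of two independent iterated integrals. Each factor is standard: using the linear change of variables $v_{i}=u_{i}-u_{i-1}$ (resp.\ $w_{k}=u_{k}-u_{k+1}$) one computes
\[
\int_{0\leqslant u_{1}\leqslant\cdots\leqslant u_{s}}e^{-\sum_{j=1}^{s}c_{j}u_{j}}du=\prod_{k=1}^{s}\frac{1}{c_{k}^{s}},\qquad\int_{u_{s+2}\geqslant\cdots\geqslant u_{R+1}\geqslant0}e^{-\sum_{j=s+2}^{R+1}c_{j}u_{j}}du=\prod_{k=s+2}^{R+1}\frac{1}{c_{s+2}^{k}},
\]
which is exactly the right hand side of (\ref{eq:ComLem}). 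Finally, since both sides are rational functions of $(c_{1},\dots,c_{R+1})$, the identity proved for positive real $c_k$'s extends to all complex values by analytic continuation.

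The main obstacle is the combinatorial bijection in the second step: one must carefully check both that every $\eta\in\mathcal{T}(s,R-s)$ produces a distinct shuffle of the two monotone words (so that the $R_\eta$ are essentially disjoint) and that every such shuffle is realised (so that their union exhausts $\Omega$). Once this bookkeeping is nailed down, the analytic computation is routine.
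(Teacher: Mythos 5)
Your proposal is correct, and it proves the identity by a genuinely different route than the paper. The paper's own argument is a short induction on $R$: since $\eta(1)$ is the largest entry of the decreasing block and $\eta(R+1)$ the largest of the increasing block, the maximal value $R+1$ sits either at position $1$ or at position $R+1$, so the sum over ${\cal T}(s,R-s)$ splits into two sub-sums, each of which is evaluated by the inductive hypothesis for $R-1$, and the two resulting fractions combine into the right-hand side. You instead represent each summand as the integral of $\exp(-\sum_{k}c_{\eta^{-1}(k)}t_k)$ over an ordered simplex, observe (correctly) that $\eta\mapsto(\eta^{-1}(2),\dots,\eta^{-1}(R+1))$ is a bijection from ${\cal T}(s,R-s)$ onto the $(s,R-s)$-shuffles of the decreasing word $(s,\dots,1)$ with the increasing word $(s+2,\dots,R+1)$ -- indeed $\eta^{-1}(a_i)=s+1-i$ on the decreasing block and $\eta^{-1}(b_j)=s+1+j$ on the increasing block, so the two subwords are forced -- and then use the standard simplex-tiling argument: the regions $R_\eta$ have disjoint interiors, are contained in $\Omega$, and exhaust it up to measure zero, because a generic point of $\Omega$ determines a unique merging of the two monotone chains. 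The factorisation of the integral over the product region and the two elementary evaluations give exactly the right-hand side, and the final passage from positive real $c_k$ to arbitrary values is legitimate since both sides are rational functions agreeing on a nonempty open set. In essence your proof is the shuffle-product identity for iterated integrals in disguise, which is conceptually appealing and fits the themes of the paper; what the paper's induction buys in exchange is a purely algebraic argument with no convergence, positivity, or continuation step, valid verbatim whenever the displayed denominators are nonzero.
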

\begin{proof}
The case when $R=2,3$ can be checked explicitly. Suppose that the
claim is true for $R-1$. Let $0\leqslant s\leqslant R$ and $\{c_{k}\}_{1\leqslant k\leqslant R+1}$
be given . By the definition of $\eta\in{\cal T}(s,R-s),$ either
$\eta(1)=R+1$ or $\eta(R+1)=R+1.$ One can thus write
\[
\sum_{\eta\in{\cal T}(s,R-s)}\frac{1}{c_{\eta^{-1}(2)}^{\eta^{-1}(2)}c_{\eta^{-1}(2)}^{\eta^{-1}(3)}\cdots c_{\eta^{-1}(2)}^{\eta^{-1}(R+1)}}=\sum_{\eta:\eta(1)=R+1}+\sum_{\eta:\eta(R+1)=R+1}.
\]
According to the induction hypothesis,
\[
\sum_{\eta:\eta(1)=R+1}\frac{1}{c_{\eta^{-1}(2)}^{\eta^{-1}(2)}c_{\eta^{-1}(2)}^{\eta^{-1}(3)}\cdots c_{\eta^{-1}(2)}^{\eta^{-1}(R+1)}}=\frac{1}{(c_{1}^{s}+c_{s+2}^{R+1})}\times\big(\prod_{k=2}^{s}c_{k}^{s}\big)^{-1}\big(\prod_{k=s+2}^{R+1}c_{s+2}^{k}\big)^{-1}
\]
and similarly
\[
\sum_{\eta:\eta(1)=R+1}\frac{1}{c_{\eta^{-1}(2)}^{\eta^{-1}(2)}c_{\eta^{-1}(2)}^{\eta^{-1}(3)}\cdots c_{\eta^{-1}(2)}^{\eta^{-1}(R+1)}}=\frac{1}{(c_{1}^{s}+c_{s+2}^{R+1})}\times\big(\prod_{k=1}^{s}c_{k}^{s}\big)^{-1}\big(\prod_{k=s+2}^{R}c_{s+2}^{k}\big)^{-1}.
\]
It is easily seen that the sum of the above two expressions is equal
to the right hand side of (\ref{eq:ComLem}).
\end{proof}

\subsubsection{Proof of Lemma \ref{lem:AnaLem}}

We are now in a position to develop the precise proof of Lemma \ref{lem:AnaLem}.
We first recall the standing assumptions of the lemma which will be
imposed throughout the rest of this subsection. 

\vspace{2mm}\noindent \textbf{Assumption (${\bf A}_{N-1}$)}. Let
$N\geqslant2$ be given fixed. We assume that $S_{m}(a_{1},\cdots,a_{m})=0$
for all $m\leqslant N-1$ and all non-degenerate sequences $(a_{1},\cdots,a_{m})$
with $a_{j}\in2\pi i\mathbb{Z}$. 

\vspace{2mm}\noindent Now let $(b_{1}^{*},\cdots,b_{N}^{*})$ be
a given fixed non-degenerate sequence with $b_{j}^{*}\in2\pi i\mathbb{Z}$.
For each $w\in\mathbb{C}$, we consider the path development $F_{w}$
into $\mathfrak{sl}_{N+1}(\mathbb{C})$ defined by (\ref{eq:Fw},
\ref{eq:FwLieRel}). Recall that $\hat{F}_{w}$ is the induced Lie
homomorphism $\hat{F}_{w}$ on ${\cal L}((\mathbb{C}^{2}))$.

We are going to prove Lemma \ref{lem:AnaLem} by induction on the
degree $|K|=k_{1}+\cdots+k_{n}$ of the vector $K=(k_{1},\cdots,k_{n}).$
To be precise, we assume as the induction hypothesis that for any
vector $K$ with degree $|K|<k\leqslant N$ and $j=1,\cdots,N$, there
exists a meromorphic function $\Psi_{K,j}$ in $|K|$ variables such
that 

\begin{equation}
\hat{F}_{w}(H_{K})=\sum_{j}\Psi_{K,j}(b_{j}^{*}+w,\cdots,b_{j+|K|-1}^{*}+w)E_{j,j+|K|}.\label{eq:IndHypo}
\end{equation}
In addition, the function 
\[
w\mapsto\Psi_{K,j}(b_{j}^{*}+w,\cdots,b_{j+|K|-1}^{*}+w)
\]
is analytic near $w=0$. We want to prove the same assertion for any vector
$K=(k_1,\cdots,k_n)$ of degree $|K|=k$. Here we should emphasise that $n\geqslant 2$ if $k=N$ and $n\geqslant 1$ if $k<N$. 

We first consider the easier case when $n=1$ and $k<N$. According to Corollary \ref{cor:FDkSk} and the $H_1$-formula (\ref{eq:H1Formula}), one has 
\begin{align*}
\hat{F}_{w}(H_{1}(k)) & =\sum_{l}\frac{B_{l}}{l!}{\rm ad}_{A}^{l}\big(\hat{F}_{w}(D_{k}\tilde{L})\big)\\
 & =\sum_{j}S_{k}(b_{j}^{*}+w,\cdots,b_{j+k-1}^{*}+w)\sum_{l}\frac{B_{l}}{l!}(b_{j}^{*}+\cdots+b_{j+k-1}^{*}+kw)^{l}E_{j,j+k}\\
 & =\sum_{j}S_{k}(b_{j}^{*}+w,\cdots,b_{j+k-1}^{*}+w)\phi(b_{j}^{*}+\cdots+b_{j+k-1}^{*}+kw)E_{j,j+k}.
\end{align*}Since $k<N$, one knows from Assumption ($\mathbf{A}_{N-1}$) that $S_k(b_j^*,\cdots,b^*_{j+k-1})=0$. On the other hand, $w=0$ is a simple pole of the function $\phi(b_{j}^{*}+\cdots+b_{j+k-1}^{*}+kw)$. As a result, the function\[
\Psi_{(k),j}(w)\triangleq S_{k}(b_{j}^{*}+w,\cdots,b_{j+k-1}^{*}+w)\phi(b_{j}^{*}+\cdots+b_{j+k-1}^{*}+kw)
\]is analytic at $w=0$. This proves the assertion in the current case.

\vspace{2mm} We now focus on the more difficult case when $n\geqslant 2$ ($k\leqslant N$). According to Proposition \ref{prop:HnRecur}, it suffices
to prove the following claim.

\vspace{2mm}\noindent \textit{Claim}. Let $K_{1},\cdots,K_{r}$ be
given vectors (of positive integers) and let $n_{0}\geqslant1$ be
such that $|K_{r}|+\cdots+|K_{1}|+n_{0}=k\leqslant N$. Then one can
write 
\begin{align}
 & \hat{F}_{w}\big(H_{K_{r}}\partial_{{\rm e}_{1}}\hat{\otimes}_{{\rm s}}\cdots\hat{\otimes}_{{\rm s}}H_{K_{1}}\partial_{{\rm e}_{1}}(H_{1}(n_{0}))\big)\nonumber \\
 & =\sum_{j}\Phi_{K_{1},\cdots,K_{r},n_{0},j}(b_{j}^{*}+w,\cdots,b_{j+k-1}^{*}+w)E_{j,j+k},\label{eq:FSymProd}
\end{align}
where the $\Phi_{K_{1},\cdots,K_{r},n_{0},j}$'s are suitable meromorphic
functions in $k$ variables whose one-dimensional reductions
\[
w\mapsto\Phi_{K_{1},\cdots,K_{r},n_{0},j}(b_{j}^{*}+w,\cdots,b_{j+k-1}^{*}+w)
\]
are all analytic near $w=0$.

\vspace{2mm} By using Lemma \ref{lem:SymAd}, the $H_{1}$-formula
(\ref{eq:H1Formula}) and Corollary \ref{cor:FDkSk}, one can first
rewrite the left hand side of (\ref{eq:FSymProd}) as 
\begin{align}
 & \hat{F}_{w}\big(H_{K_{r}}\partial_{{\rm e}_{1}}\hat{\otimes}_{{\rm s}}\cdots\hat{\otimes}_{{\rm s}}H_{K_{1}}\partial_{{\rm e}_{1}}(H_{1}(n_{0}))\big)\nonumber \\
 & =\sum_{l=0}^{\infty}\frac{B_{l}}{l!}\sum_{\sigma\in{\cal S}_{r}}\sum_{\xi_{0}+\xi_{1}+\cdots+\xi_{r}=l-r}{\rm ad}_{A}^{\xi_{r}}\circ{\rm ad}_{\hat{F}_{w}(H_{K_{\sigma(r)}})}\circ{\rm ad}_{A}^{\xi_{r-1}}\nonumber \\
 & \ \ \ \ \ \ \circ\cdots\circ{\rm ad}_{A}^{\xi_{1}}\circ{\rm ad}_{\hat{F}_{w}(H_{K_{\sigma(1)}})}\circ{\rm ad}_{A}^{\xi_{0}}\big(\hat{F}_{w}(D_{n_{0}}\tilde{L})\big).\label{eq:MainSeries}
\end{align}
The main effort is to evaluate the above summation over the $\xi_{i}$'s
by using the induction hypothesis on the $\hat{F}_{w}(H_{K_{\sigma(j)}})$'s
(note that $|K_{\sigma(j)}|<k$). We first state the key lemma for
this purpose and then explain the heavy notation involved. 
\begin{lem}
\label{lem:MainSeriesKeyLem}Let $L_{1},\cdots,L_{r}$ be given vectors
of positive integers and let $n_{0}\geqslant1$. Suppose that $|L_{1}|+\cdots+|L_{r}|+n_{0}=k\leqslant N.$
Then one has 
\begin{align}
 & \sum_{\xi_{0}+\xi_{1}+\cdots+\xi_{r}=l-r}{\rm ad}_{A}^{\xi_{r}}\circ{\rm ad}_{\hat{F}_{w}(H_{L_{r}})}\circ\cdots\circ{\rm ad}_{A}^{\xi_{1}}\circ{\rm ad}_{\hat{F}_{w}(H_{L_{1}})}\circ{\rm ad}_{A}^{\xi_{0}}\big(\hat{F}_{w}(D_{n_{0}}\tilde{L})\big)\nonumber \\
 & =\sum_{i}\sum_{\mu=1}^{r+1}\Theta_{L_{\mu-1},\cdots,L_{1},n_{0}}^{i}\cdot(b_{i}^{i+n_{0}^{\mu-1}-1})^{l+1-\mu}\nonumber \\
 & \ \ \ \ \ \ \ \ \ \sum_{I\in{\cal W}_{\mu,r}}\frac{(-1)^{\varepsilon(I)}\Psi_{I,n_{0}^{\mu-1}}^{{\bf L}_{{\bf p},{\bf q}}^{\mu,r}}\cdot}{b_{(i_{\mu}),n_{0}^{\mu-1}}^{{\bf n}_{{\bf p},{\bf q}}^{\mu,r}}b_{(i_{\mu},i_{\mu+1}),n_{0}^{\mu-1}}^{{\bf n}_{{\bf p},{\bf q}}^{\mu,r}}\cdots b_{I,n_{0}^{\mu-1}}^{{\bf n}_{{\bf p},{\bf q}}^{\mu,r}}}E_{i-|{\bf n}_{{\bf p}}^{\mu,r}|,i+n_{0}^{\mu-1}+|{\bf n}_{{\bf q}}^{\mu,r}|}.\label{eq:MainSeriesKeyLem}
\end{align}
Here \textcolor{black}{$\mathcal{W}_{\mu,r}$} denotes the set of words $I=(i_{\mu},i_{\mu+1},\cdots,i_{r})\in\{\pm1\}^{r-\mu+1}$. Given such a word $I$, the vector $\mathbf{p}$
denotes the subword of $(\mu,\cdots,r)$ which records the locations
of $-1$'s in $I$ and ${\bf q}\triangleq(\mu,\cdots,r)\backslash{\bf p}$.
The quantity $\varepsilon(I)$ denotes the length of ${\bf p}$ (i.e.
number of $-1$'s in $I$). The function $\Theta_{L_{\mu-1},\cdots,L_{1},n_{0}}^{i}$
is defined by 
\[
\Theta_{L_{\mu-1},\cdots,L_{1},n_{0}}^{i}\triangleq\Theta_{L_{\mu-1},\cdots,L_{1},n_{0},i}(b_{i},b_{i+1},\cdots,b_{i+k-1}),
\]
where $\Theta_{L_{\mu-1},\cdots,L_{1},n_{0},i}$ is a suitable meromorphic
function in $m$ complex variables ($m\triangleq n_{0}+|L_{1}|+\cdots+|L_{\mu-1}|$)
whose shape depends on $(L_{1},\cdots,L_{\mu-1}),n_{0}$ and $i$.
The family $\{\Theta_{L_{1},\cdots,L_{\mu-1},n_{0}}^{i}\}$ of functions
satisfy the following property: for any integer $i$, $1\leqslant\nu\leqslant r$
and any sequence of vectors ${\bf V}=(V_{1},\cdots,V_{r})$, under
the one-dimensional reduction $b_{j}\triangleq b_{j}^{*}+w$ the function
\begin{equation}
w\mapsto\sum_{\sigma\in{\cal S}_{\nu}}\Theta_{V_{\sigma(\nu)},\cdots,V_{\sigma(1)},n_{0}}^{i}\label{eq:ThetaSym}
\end{equation}
is analytic near $w=0$ and vanishes at $w=0$. When $\mu=1,$ the
function $\Theta_{L_{\mu-1},\cdots,L_{1},n_{0}}^{i}$ depends only
on $n_{0}$ and $i$. As a convention, if $\mu=r+1$ the summation
$\sum_{I\in{\cal W}(\mu,r)}$ in (\ref{lem:MainSeriesKeyLem}) is
set to be one and $|{\bf n}_{{\bf p}}^{\mu,r}|=|{\bf n}_{{\bf q}}^{\mu,r}|\triangleq0$
. 
\end{lem}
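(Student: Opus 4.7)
The plan is to argue by induction on $r$ (the number of $H_{L_i}$-brackets in the iterated expression), while keeping $k = n_{0}+|L_{1}|+\cdots+|L_{r}|$ as a fixed budget. The base case $r=0$ is just
\[
\sum_{\xi_{0}=l}{\rm ad}_{A}^{l}(\hat{F}_{w}(D_{n_{0}}\tilde{L}))=\sum_{i}S_{n_{0}}(b_{i}^{*}+w,\cdots,b_{i+n_{0}-1}^{*}+w)\cdot(b_{i}^{i+n_{0}-1})^{l}E_{i,i+n_{0}},
\]
which by Corollary \ref{cor:FDkSk} agrees with the right-hand side under $\mu=1$ and $\Theta_{n_{0}}^{i}\triangleq S_{n_{0}}(b_{i},\ldots,b_{i+n_{0}-1})$. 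The symmetrised vanishing (\ref{eq:ThetaSym}) for this base-case reduces to a single term and follows from Assumption $({\bf A}_{N-1})$ together with the non-degeneracy of the fixed sequence $(b_{1}^{*},\cdots,b_{N}^{*})$ (so any consecutive subsequence is itself non-degenerate, hence $S_{n_{0}}$ vanishes at $w=0$ when $n_{0}\leqslant N-1$).

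For the inductive step, I would peel off the outermost factor ${\rm ad}_{A}^{\xi_{r}}\circ{\rm ad}_{\hat{F}_{w}(H_{L_{r}})}$. By the induction hypothesis on $r-1$, the inner expression (before summing over $\xi_{r}$) is a finite sum of scalar functions times matrices of the form $E_{j',j''}$. Using the induction hypothesis (\ref{eq:IndHypo}) of Lemma \ref{lem:AnaLem}, one writes $\hat{F}_{w}(H_{L_{r}})=\sum_{j}\Psi_{L_{r},j}\,E_{j,j+|L_{r}|}$, so each application of ${\rm ad}_{\hat{F}_{w}(H_{L_{r}})}$ to an $E_{j',j''}$ produces at most two contributions via the $\mathfrak{sl}_{N+1}(\mathbb{C})$ commutator $[E_{p,q},E_{p',q'}]=\delta_{qp'}E_{p,q'}-\delta_{qq'}E_{p',q}$. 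These two contributions are exactly what the $\pm 1$ entries of the word $I\in\mathcal{W}_{\mu,r}$ encode: the $+1$ case (encoded in ${\bf q}$) appends $|L_{r}|$ extra columns on the right, while the $-1$ case (encoded in ${\bf p}$) prepends $|L_{r}|$ extra rows on the left of the surviving $E$-block. Iterating this peeling produces the combinatorial book-keeping captured by $\varepsilon(I)$ and the shifts $|{\bf n}_{{\bf p}}^{\mu,r}|,|{\bf n}_{{\bf q}}^{\mu,r}|$ in the indices of the final $E_{i-|{\bf n}_{{\bf p}}^{\mu,r}|,\,i+n_{0}^{\mu-1}+|{\bf n}_{{\bf q}}^{\mu,r}|}$.

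The main combinatorial step is to evaluate the simplex sum over $\xi_{0}+\cdots+\xi_{r}=l-r$. Because each ${\rm ad}_{A}$ acts as a scalar on any $E_{p,q}$ (the scalar being the consecutive $b$-sum $b_{p}^{q-1}$), the $\xi$-summation becomes a symmetric polynomial in $r+1$ eigenvalues of total degree $l-r$. After splitting off the dominant eigenvalue (which produces the factor $(b_{i}^{i+n_{0}^{\mu-1}-1})^{l+1-\mu}$, the index $\mu$ recording the position at which the dominant eigenvalue arises), the remaining sum is precisely of the type handled by Lemma \ref{lem:ComLem}: the identity (\ref{eq:ComLem}) converts the raw simplex sum into the product of the consecutive $b$-denominators displayed on the right-hand side of (\ref{eq:MainSeriesKeyLem}). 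The numerators $\Psi_{I,n_{0}^{\mu-1}}^{{\bf L}_{{\bf p},{\bf q}}^{\mu,r}}$ collect the induction-hypothesis functions $\Psi_{L_{j},\cdot}$ attached to the sign-choices recorded by $I$, and the functions $\Theta_{L_{\mu-1},\ldots,L_{1},n_{0}}^{i}$ absorb the contribution coming from the innermost $\hat{F}_{w}(D_{n_{0}}\tilde{L})$ together with the first $\mu-1$ operators $\hat{F}_{w}(H_{L_{j}})$ before any sign-branching occurs.

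The most delicate part, and the one I expect to be the main obstacle, is establishing the symmetrised vanishing property (\ref{eq:ThetaSym}) for the $\Theta$ functions at $w=0$. In the case $\mu=1$ this reduces to $\Theta^{i}_{n_{0}}=S_{n_{0}}(b_{i},\ldots,b_{i+n_{0}-1})$, which vanishes at $w=0$ thanks to Assumption $({\bf A}_{N-1})$. For larger $\mu$, $\Theta_{L_{\mu-1},\ldots,L_{1},n_{0}}^{i}$ is, up to analytic factors produced by the $\Psi_{L_{j}}$'s, a product of an $S_{n_{0}}$-factor with lower-order Haus\-dorff denominators $\phi(b_{\cdot}^{\cdot})$, all of which are individually meromorphic but whose possible poles at $w=0$ must cancel upon symmetrisation over $\sigma\in\mathcal{S}_{\nu}$. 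To control this, I would proceed by induction on $\nu$: the key observation is that permuting the $V_{j}$'s only changes the order in which root-string shifts accumulate on the $b$-variables, so the symmetrised combination can be rewritten as a sum over cyclic/permuted denominators that is again subject to Lemma \ref{lem:ComLem} (possibly applied in reverse, to contract denominators into a single rational expression). The vanishing at $w=0$ then follows because the leading factor $S_{n_{0}}(b_{i}^{*}+w,\ldots)$ vanishes when $n_{0}\leqslant N-1$ (so one gets an overall factor of $w$), whereas any pole of order at most that much is produced by at most $\nu$ singular denominators $(b^{*}_{p}+\cdots+b^{*}_{q-1})+(q-p)w$; the non-degeneracy of $(b_{1}^{*},\cdots,b_{N}^{*})$ guarantees that only $w$-proportional denominators contribute, and bookkeeping shows that the symmetrised numerator gains at least one extra factor of $w$ from the vanishing of all relevant $S_{m}$'s ($m\leqslant N-1$). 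Pulling everything together produces the analyticity and the vanishing demanded by (\ref{eq:ThetaSym}), completing the induction step and hence the lemma.
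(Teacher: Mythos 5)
Your skeleton is right — induction on $r$, peeling off the outermost $\mathrm{ad}_{A}^{\xi_{r}}\circ\mathrm{ad}_{\hat{F}_{w}(H_{L_{r}})}$, the $\pm1$ branching of the word $I$ coming from the $\mathfrak{sl}_{N+1}(\mathbb{C})$ commutator, and the recognition that Lemma \ref{lem:ComLem} and the non-degeneracy of $(b_{1}^{*},\cdots,b_{N}^{*})$ must enter somewhere. This matches the paper's strategy (which proves a slightly more general claim $\mathbf{P}(r)$ with $l-r$ replaced by an arbitrary $v$, and handles the $\xi$-sum one variable at a time via $\sum_{\xi_{0}+\xi_{1}=v}x^{\xi_{0}}y^{\xi_{1}}=\frac{y^{v+1}-x^{v+1}}{y-x}$ rather than all at once; the telescoping denominators $b_{(i_{\mu}),\cdot}^{\cdot}b_{(i_{\mu},i_{\mu+1}),\cdot}^{\cdot}\cdots$ arise from this recursion, not from Lemma \ref{lem:ComLem}).

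However, there is a genuine gap in exactly the place you flag as "the most delicate part", and your proposed resolution of it does not work as stated. You assert that non-degeneracy of $(b_{1}^{*},\cdots,b_{N}^{*})$ "guarantees that only $w$-proportional denominators contribute". This is false: the dangerous denominators $b_{I,n_{0}^{\mu-1}}^{\mathbf{n}_{\mathbf{p},\mathbf{q}}^{\mu,r}}$ are sums of \emph{two disjoint consecutive blocks} of $b_{j}$'s (cf.\ (\ref{eq:bI})), and non-degeneracy only forbids the vanishing of \emph{single} consecutive sums; a sum of two separated blocks can perfectly well vanish at $w=0$. This is precisely why individual $\Theta$'s may be singular and only their symmetrisation is analytic. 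The actual mechanism, which your proposal does not contain, is: (i) factor each $\sigma\in\mathcal{S}_{r}$ as $\tau\circ(\zeta\otimes\theta)$ with $\tau$ a $(\mu-1,r+1-\mu)$-shuffle, so the $\zeta$-sum produces the inductively-known symmetrised $\Theta$ (analytic and vanishing at $w=0$), leaving a residual sum $\mathcal{T}_{s,\tau}$ over $\theta$ and the words $I$; (ii) observe that $\Psi_{I,\cdot}^{\cdot}$ depends on $I$ only through the number of $-1$'s (Remark \ref{rem:IDep}), identify the words with $s$ many $-1$'s with $(s,R-s)$-shuffles, and exchange the order of summation so that the shuffle-sum acts only on the bare reciprocals $\frac{1}{b\cdots b}$; (iii) apply Lemma \ref{lem:ComLem} to that shuffle-sum, which collapses the non-consecutive-block denominators into a product of genuinely consecutive sums, and only \emph{then} does non-degeneracy give analyticity. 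Your appeal to "induction on $\nu$" and "bookkeeping" skips all of (i)--(iii), and without (ii) in particular the shuffle-sum cannot be evaluated at all, since the numerators would otherwise vary with the shuffle. So the analyticity and vanishing in (\ref{eq:ThetaSym}) remain unproved in your proposal.
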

\begin{rem}
The functions $\Theta_{V_{\nu},\cdots,V_{1},n_{0}}^{i}$ may be singular
at $w=0$; it is their symmetrisation (\ref{eq:ThetaSym}) which will
eliminate the possible singularity. 
\end{rem}

\subsubsection*{Explanation of notation}

Here we explain the notation involved in the expression (\ref{eq:MainSeriesKeyLem}).

\vspace{2mm}\noindent (i) Let ${\bf L}\triangleq(L_{1},\cdots,L_{r})$
and $n_{i}\triangleq|L_{i}|$ ($1\leqslant i\leqslant r$). Given
$0\leqslant i\leqslant j\leqslant r,$ we set $n_{i}^{j}\triangleq n_{i}+n_{i+1}+\cdots+n_{j}$.
For each $1\leqslant\mu\leqslant r$, we set ${\bf n}^{\mu,r}\triangleq(n_{\mu},\cdots,n_{r})$.
Let ${\bf p}=(p_{1},\cdots,p_{l})$ be a given subword of ${\bf n}^{\mu,r}$
and ${\bf q}\triangleq{\bf n}^{\mu,r}\backslash{\bf p}=(q_{1},\cdots,q_{m})$.
Elements in ${\bf p},{\bf q}$ are arranged in the natural order.
We write 
\[
{\bf n}_{{\bf p}}^{\mu,r}\triangleq(n_{p_{1}},\cdots,n_{p_{l}}),\ {\bf n}_{{\bf q}}^{\mu,r}\triangleq(n_{q_{1}},\cdots,n_{q_{m}})
\]
respectively and set $|{\bf n}_{{\bf p}}^{\mu,r}|\triangleq n_{p_{1}}+\cdots+n_{p_{l}}.$

\vspace{2mm}\noindent (ii) Given $i\leqslant j$, we define $b_{i}^{j}\triangleq b_{i}+b_{i+1}+\cdots+b_{j}.$
We always use $\Psi_{L,j}$ to denote a function in $|L|$ variables
whose shape depends on a given vector $L$ and a number $j$. We write
\[
\Psi_{L}^{j}\triangleq\Psi_{L,j}(b_{j},b_{j+1},\cdots,b_{j+|K|-1}).
\]
Here $b_{j}\triangleq b_{j}^{*}+w$ is viewed as a function of $w$. 

\vspace{2mm}\noindent (iii) Let ${\bf L}$, $n_{i}$, $\mu$, ${\bf p},{\bf q}$
be given as in Part (i). Let $I=(i_{\mu},i_{\mu+1},\cdots,i_{r})$ be a
given word with $i_{j}=\pm1$. Denote $s$ (respectively, $t$) as
the number of $-1$'s (respectively, $1$'s) in $I$. Suppose that
$s\leqslant l$ and $t\leqslant m$. We define 
\begin{equation}
{\bf b}_{I,n_{0}^{\mu-1}}^{{\bf n}_{{\bf p},{\bf q}}^{\mu,r},}\triangleq b_{i-n_{p_{1}}-\cdots-n_{p_{s}}}^{i-1}+b_{i+n_{0}^{\mu-1}}^{i+n_{0}^{\mu-1}+n_{q_{1}}+\cdots+n_{q_{t}}-1}.\label{eq:bI}
\end{equation}
and
\begin{align}
\Psi_{I,n_{0}^{\mu-1}}^{{\bf L}_{{\bf p},{\bf q}}^{\mu,r}}\triangleq & \Psi_{L_{p_{1}}}^{i-n_{p_{1}}}\Psi_{L_{p_{2}}}^{i-n_{p_{1}}-n_{p_{2}}}\cdots\Psi_{L_{p_{s}}}^{i-n_{p_{1}}-\cdots-n_{p_{s}}}\nonumber \\
 & \ \ \ \times\Psi_{L_{q_{1}}}^{i+n_{0}^{\mu-1}}\Psi_{L_{q_{2}}}^{i+n_{0}^{\mu-1}+q_{1}}\cdots\Psi_{L_{q_{t}}}^{i+n_{0}^{\mu-1}+q_{1}+\cdots+q_{t-1}}.\label{eq:PsiI}
\end{align}

\begin{rem}
The superscripts in ${\bf b}$ and $\Psi$ mean that the movement
steps are taken from the word ${\bf n}^{\mu,r}$, where the indices
${\bf p}$ and ${\bf q}$ record backward and forward index movements
with sizes $n_{p_{j}}$ and $n_{q_{k}}$ respectively. The subscript
$\mu$ means that the index gap between the initial backward and forward
movements is $n_{0}^{\mu}(=n_{0}+n_{1}+\cdots+n_{\mu})$. In other
words, the backward movement starts from index $i-1$ and the forward
movement starts from $i+n_{0}^{\mu}$. In the above definitions, $i$
is fixed and is not reflected in the notation.
\end{rem}
\begin{rem}
\label{rem:IDep}The quantities ${\bf b}_{I,n_{0}^{\mu-1}}^{{\bf n}_{{\bf p},{\bf q}}^{\mu,r},}$
and $\Psi_{I,n_{0}^{\mu-1}}^{{\bf L}_{{\bf p},{\bf q}}^{\mu,r}}$
depend on the word $I$ only through the numbers $s,t$ (numbers of
$\mp1$'s respectively).
\end{rem}

\subsubsection*{Proof of Lemma \ref{lem:MainSeriesKeyLem}}

Consider the following slightly more general claim which depends on
the positive integer $r$. 

\vspace{2mm}\noindent Claim \textbf{P}($r$): The following representation
holds 
\begin{align}
 & \sum_{\xi_{0}+\xi_{1}+\cdots+\xi_{r}=v}{\rm ad}_{A}^{\xi_{r}}\circ{\rm ad}_{\hat{F}_{w}(H_{L_{r}})}\circ\cdots\circ{\rm ad}_{A}^{\xi_{1}}\circ{\rm ad}_{\hat{F}_{w}(H_{L_{1}})}\circ{\rm ad}_{A}^{\xi_{0}}\big(\hat{F}_{w}(D_{n_{0}}\tilde{L})\big)\nonumber \\
 & =\sum_{i}\sum_{\mu=1}^{r+1}\Theta_{L_{\mu-1},\cdots,L_{1},n_{0}}^{i}\cdot(b_{i}^{i+n_{0}^{\mu-1}-1})^{v+r+1-\mu}\nonumber \\
 & \ \ \ \ \ \ \ \ \ \ \ \ \sum_{I\in{\cal W}_{\mu,r}}\frac{(-1)^{\varepsilon(I)}\Psi_{I,n_{0}^{\mu-1}}^{{\bf L}_{{\bf p},{\bf q}}^{\mu,r}}\cdot}{b_{(i_{\mu}),n_{0}^{\mu-1}}^{{\bf n}_{{\bf p},{\bf q}}^{\mu,r}}b_{(i_{\mu},i_{\mu+1}),n_{0}^{\mu-1}}^{{\bf n}_{{\bf p},{\bf q}}^{\mu,r}}\cdots b_{I,n_{0}^{\mu-1}}^{{\bf n}_{{\bf p},{\bf q}}^{\mu,r}}}E_{i-|{\bf n}_{{\bf p}}^{\mu,r}|,i+n_{0}^{\mu-1}+|{\bf n}_{{\bf q}}^{\mu,r}|}\label{eq:Pr}
\end{align}
for any sequence of vectors ${\bf L}=(L_{1},\cdots,L_{r})$ and
$v\in\mathbb{N}$. Here the functions $\{\Theta_{L_{\mu-1},\cdots,L_{1},n_{0}}^{i}\}$
satisfy the properties stated in Lemma \ref{lem:MainSeriesKeyLem}. 

\vspace{2mm}\noindent Note that the result of Lemma \ref{lem:MainSeriesKeyLem}
follows immediately by taking $v=l-r$. We now prove the claim ${\bf P}(r)$
by induction on $r.$ 

\vspace{2mm}\noindent \textit{\uline{Step 1: Base case.}}

\vspace{2mm} For the base case $r=1$, one first recalls from Corollary
\ref{cor:FDkSk} that 
\[
\hat{F}_{w}(D_{n_{0}}\tilde{L})=\sum_{i}S_{n_{0}}^{i}E_{i,i+n_{0}}
\]
where $S_{n_{0}}^{i}\triangleq S_{n_{0}}(b_{i},b_{i+1},\cdots,b_{i+n_{0}-1}).$
Therefore, one has
\begin{equation}
{\rm ad}_{A}^{\xi_{0}}(\hat{F}_{w}(D_{n_{0}}\tilde{L}))=\sum_{i}S_{n_{0}}^{i}(b_{i}^{i+n_{0}-1})^{\xi_{0}}E_{i,i+n_{0}}.\label{eq:InitialAd}
\end{equation}
Next, recall that the assumption (\ref{eq:IndHypo}) is valid since
we are under the induction step for proving of the main Lemma \ref{lem:AnaLem}
(all the $L_{j}$'s are assumed to satisfy $|L_{j}|<k$ where $k$
is defined in the Claim (\ref{eq:FSymProd})). In particular, one
has 

\begin{equation}
\hat{F}_{w}(H_{L_{1}})=\sum_{j}\Psi_{L_{1}}^{j}E_{j,j+n_{1}}\label{eq:IndHypKeyLem}
\end{equation}
where $\Psi_{L_{1}}^{j}$ are analytic near $w=0$. It follows from
(\ref{eq:InitialAd}, \ref{eq:IndHypKeyLem}) that 
\begin{align*}
{\rm ad}_{\hat{F}_{w}(H_{L_{1}})}\circ{\rm ad}_{A}^{\xi_{0}}\big(\hat{F}_{w}(D_{n_{0}}\tilde{L})\big) & =\sum_{i}S_{n_{0}}^{i}\cdot(b_{i}^{i+n_{0}-1})^{\xi_{0}}\sum_{j}\Psi_{L_{1}}^{j}[E_{j,j+n_{1}},E_{i,i+n_{0}}].
\end{align*}
There are at most two $j$-cases to make the above Lie bracket nonzero:
$j=i-n_{1}$ and $j=i+n_{0}$. The former case is interpreted as a
``backward movement'' while the latter is a ``forward movement''.
In other words, one can write
\[
{\rm ad}_{\hat{F}_{w}(H_{L_{1}})}\circ{\rm ad}_{A}^{\xi_{0}}\big(\hat{F}_{w}(D_{n_{0}}\tilde{L})\big)=\sum_{i}S_{n_{0}}^{i}(b_{i}^{i+n_{0}-1})^{\xi_{0}}\big(\Psi_{L_{1}}^{i-n_{1}}E_{i-n_{1},i+n_{0}}-\Psi_{L_{1}}^{i+n_{0}}E_{i,i+n_{0}+n_{1}}\big).
\]
The additional ${\rm ad}_{A}^{\xi_{1}}$-action yields that
\begin{align*}
{\rm ad}_{A}^{\xi_{1}}\circ{\rm ad}_{\hat{F}_{w}(H_{L_{1}})}\circ{\rm ad}_{A}^{\xi_{0}}\big(\hat{F}_{w}(D_{n_{0}}\tilde{L})\big)=\sum_{i}S_{n_{0}}^{i} & (b_{i}^{i+n_{0}-1})^{\xi_{0}}\big[\Psi_{L_{1}}^{i-n_{1}}\cdot(b_{i-n_{1}}^{i+n_{0}-1})^{\xi_{1}}E_{i-n_{1},i+n_{0}}\\
 & \ \ \ -\Psi_{L_{1}}^{i+n_{0}}\cdot(b_{i}^{i+n_{0}+n_{1}-1})^{\xi_{1}}E_{i,i+n_{0}+n_{1}}\big].
\end{align*}
Now we perform the summation over $\xi_{0}+\xi_{1}=v$ by using the
elementary formula
\begin{equation}
\sum_{\xi_{0}+\xi_{1}=v}x^{\xi_{0}}y^{\xi_{1}}=\frac{y^{v+1}-x^{v+1}}{y-x}.\label{eq:SumProd}
\end{equation}
This gives that 
\begin{align}
 & \sum_{\substack{\xi_{0}+\xi_{1}=v}
}{\rm ad}_{A}^{\xi_{1}}\circ{\rm ad}_{\hat{F}_{w}(H_{L_{1}})}\circ{\rm ad}_{A}^{\xi_{0}}\big(F(D_{n_{0}}\tilde{L})\big)\nonumber \\
 & =\sum_{i}S_{n_{0}}^{i}\frac{\Psi_{L_{1}}^{i-n_{1}}}{b_{i-n_{1}}^{i-1}}\big[(b_{i-n_{1}}^{i+n_{0}-1})^{v+1}-(b_{i}^{i+n_{0}-1})^{v+1}\big]E_{i-n_{1},i+n_{0}}\nonumber \\
 & \ \ \ -\sum_{i}S_{n_{0}}^{i}\frac{\Psi_{L_{1}}^{i+n_{0}}}{b_{i+n_{0}}^{i+n_{0}+n_{1}-1}}\big[(b_{i}^{i+n_{0}+n_{1}-1})^{v+1}-(b_{i}^{i+n_{0}-1})^{v+1}\big]E_{i,i+n_{0}+n_{1}}\nonumber \\
 & =\sum_{i}S_{n_{0}}^{i}(b_{i}^{i+n_{0}-1})^{v+1}\big(-\frac{\Psi_{L_{1}}^{i-n_{1}}}{b_{i-n_{1}}^{i-1}}E_{i-n_{1},i+n_{0}}+\frac{\Psi_{L_{1}}^{i+n_{0}}}{b_{i+n_{0}}^{i+n_{0}+n_{1}-1}}E_{i,i+n_{0}+n_{1}}\big)\nonumber \\
 & \ \ \ +\sum_{i}\big[\big(S_{n_{0}}^{i+n_{1}}\frac{\Psi_{L_{1}}^{i}}{b_{i}^{i+n_{1}-1}}-S_{n_{0}}^{i}\frac{\Psi_{L_{1}}^{i+n_{0}}}{b_{i+n_{0}}^{i+n_{0}+n_{1}-1}}\big)(b_{i}^{i+n_{0}+n_{1}-1})^{v+1}\big]E_{i,i+n_{0}+n_{1}},\label{eq:S1Action}
\end{align}
where the last line follows from a change of indices $i-n_{1}\leftrightarrow i$.
The desired identity (\ref{eq:Pr}) thus follows by taking $\Theta_{n_{0}}^{i}\triangleq S_{n_{0}}^{i}$
($\mu=1$) and 
\begin{equation}
\Theta_{L_{1},n_{0}}^{i}\triangleq\big(S_{n_{0}}^{i+n_{1}}\frac{\Psi_{L_{1}}^{i}}{b_{i}^{i+n_{1}-1}}-S_{n_{0}}^{i}\frac{\Psi_{L_{1}}^{i+n_{0}}}{b_{i+n_{0}}^{i+n_{0}+n_{1}-1}}\big)(b_{i}^{i+n_{0}+n_{1}-1}).\label{eq:Theta1}
\end{equation}
The function $\Theta_{L_{1},n_{0}}^{i}$ is analytic near $w=0$ since
the sequence $(b_{1}^{*},\cdots,b_{N}^{*})$ is non-degenerate (the
denominators appearing in (\ref{eq:Theta1}) are nonzero near $w=0$).
It is clear that both $\Theta_{n_{0}}^{i}$ and $\Theta_{L_{1},n_{0}}^{i}$
vanishes at $w=0$ due to the presence of the $S_{n_{0}}$-terms (cf.
Assumption (${\bf A}_{N-1}$)).

\vspace{2mm}\noindent \textit{\uline{Step 2: Evaluation of (\mbox{\ref{eq:Pr}})
based on induction hypothesis.}}

\vspace{2mm} Now suppose that Claim ${\bf P}(r-1)$ is true and we
want to prove Claim ${\bf P}(r)$. Let $(L_{1},\cdots,L_{r})$ be
a given sequence of vectors and let $v\geqslant0.$ By applying (\ref{eq:IndHypo})
and Claim $\mathbf{P}(r-1)$ to $v-\xi_{r},$ one finds that 
\begin{align}
 & \sum_{\xi_{0}+\cdots+\xi_{r-1}+\xi_{r}=v}{\rm ad}_{A}^{\xi_{r}}\circ{\rm ad}_{\hat{F}_{w}(H_{L_{r}})}\circ{\rm ad}_{A}^{\xi_{r-1}}\circ\cdots\circ{\rm ad}_{\hat{F}_{w}(H_{L_{1}})}\circ{\rm ad}_{A}^{\xi_{0}}\big(\hat{F}_{w}(D_{n_{0}}\tilde{L})\big)\nonumber \\
 & =\sum_{\xi_{r}=0}^{v}{\rm ad}_{A}^{\xi_{r}}\big(\big[\sum_{j}\Psi_{L_{r}}^{j}E_{j,j+n_{r}},\sum_{\xi_{0}+\cdots+\xi_{r-1}=v-\xi_{r}}{\rm ad}_{A}^{\xi_{r-1}}\circ{\rm ad}_{\hat{F}_{w}(H_{L_{r-1}})}\circ\nonumber \\
 & \ \ \ \ \ \ \ \ \ \ \ \ \ \ \ \ \ \ \cdots\circ{\rm ad}_{A}^{\xi_{1}}\circ{\rm ad}_{\hat{F}_{w}(H_{L_{1}})}\circ{\rm ad}_{A}^{\xi_{0}}\big(\hat{F}_{w}(D_{n_{0}}\tilde{L})\big)\big]\big)\nonumber \\
 & =\sum_{\xi_{r}=0}^{v}\sum_{i}\sum_{\mu=1}^{r}\Theta_{L_{\mu-1},\cdots,L_{1},n_{0}}^{i}\cdot\sum_{I\in{\cal W}_{\mu,r-1}}\frac{(-1)^{\varepsilon(I)}\Psi_{I,n_{0}^{\mu-1}}^{{\bf L}_{{\bf p},{\bf q}}^{\mu,r-1}}\cdot\Psi_{L_{r}}^{i-|{\bf n}_{{\bf p}}^{\mu,r-1}|-n_{r}}}{b_{(i_{\mu}),n_{0}^{\mu-1}}^{{\bf n}_{{\bf p},{\bf q}}^{\mu,r-1}}\cdots b_{I,n_{0}^{\mu-1}}^{{\bf n}_{{\bf p},{\bf q}}^{\mu,r-1}}}\nonumber \\
 & \ \ \ \times\big(b_{i}^{i+n_{0}^{\mu-1}-1}\big)^{v-\xi_{r}+r-\mu}\big(b_{i-|{\bf n}_{{\bf p}}^{\mu,r-1}|-n_{r}}^{i+n_{0}^{\mu-1}+|{\bf n}_{{\bf q}}^{\mu,r-1}|-1}\big)^{\xi_{r}}E_{i-|{\bf n}_{{\bf p}}^{\mu,r-1}|-n_{r},i+n_{0}^{\mu-1}+|{\bf n}_{{\bf q}}^{\mu,r-1}|}\label{eq:Pf1}\\
 & \ -\sum_{\xi_{r}=0}^{v}\sum_{i}\sum_{\mu=1}^{r}\Theta_{L_{\mu-1},\cdots,L_{1},n_{0}}^{i}\cdot\sum_{I\in{\cal W}_{\mu,r-1}}\frac{(-1)^{\varepsilon(I)}\Psi_{I,n_{0}^{\mu-1}}^{{\bf L}_{{\bf p},{\bf q}}^{\mu,r-1}}\cdot\Psi_{L_{r}}^{i+n_{0}^{\mu-1}+|{\bf n}_{{\bf q}}^{\mu,r-1}|}}{b_{(i_{\mu}),n_{0}^{\mu-1}}^{{\bf n}_{{\bf p},{\bf q}}^{\mu,r-1}}\cdots b_{I,n_{0}^{\mu-1}}^{{\bf n}_{{\bf p},{\bf q}}^{\mu,r-1}}}\nonumber \\
 & \ \ \ \times\big(b_{i}^{i+n_{0}^{\mu-1}-1}\big)^{v-\xi_{r}+r-\mu}\big(b_{i-|{\bf n}_{{\bf p}}^{\mu,r-1}|}^{i+n_{0}^{\mu-1}+|{\bf n}_{{\bf q}}^{\mu,r-1}|+n_{r}-1}\big)^{\xi_{r}}E_{i-|{\bf n}_{{\bf p}}^{\mu,r-1}|,i+n_{0}^{\mu-1}+|{\bf n}_{{\bf q}}^{\mu,r-1}|+n_{r}}.\label{eq:Pf2}
\end{align}
Just like the base case, one performs the summation over $\xi_{r}$
in (\ref{eq:Pf1}) and (\ref{eq:Pf2}) respectively. This gives that
\begin{align}
 & \sum_{\xi_{r}=0}^{v}\big(b_{i}^{i+n_{0}^{\mu-1}-1}\big)^{v-\xi_{r}+r-\mu}\big(b_{i-|{\bf n}_{{\bf p}}^{\mu,r-1}|-n_{r}}^{i+n_{0}^{\mu-1}+|{\bf n}_{{\bf q}}^{\mu,r-1}|-1}\big)^{\xi_{r}}\nonumber \\
 & =(b_{i}^{i+n_{0}^{\mu-1}-1})^{r-\mu}\times\frac{\big(b_{i-|{\bf n}_{({\bf p},r)}^{\mu,r}|}^{i+n_{0}^{\mu-1}+|{\bf n}_{{\bf q}}^{\mu,r-1}|-1}\big)^{v+1}-\big(b_{i}^{i+n_{0}^{\mu-1}-1}\big)^{v+1}}{b_{(I,-1),n_{0}^{\mu-1}}^{{\bf n}_{({\bf p},r),{\bf q}}^{\mu,r}}}\label{eq:Pf11}
\end{align}
and respectively, 
\begin{align}
 & \sum_{\xi_{r}=0}^{v}\big(b_{i}^{i+n_{0}^{\mu-1}-1}\big)^{v-\xi_{r}+r-\mu}\big(b_{i-|{\bf n}_{{\bf p}}^{\mu,r-1}|}^{i+n_{0}^{\mu-1}+|{\bf n}_{{\bf q}}^{\mu,r-1}|+n_{r}-1}\big)^{\xi_{r}}\nonumber \\
 & =(b_{i}^{i+n_{0}^{\mu-1}-1})^{r-\mu}\times\frac{\big(b_{i-|{\bf n}_{{\bf p}}^{\mu,r}|}^{i+n_{0}^{\mu-1}+|{\bf n}_{({\bf q},r)}^{\mu,r}|-1}\big)^{v+1}-\big(b_{i}^{i+n_{0}^{\mu-1}-1}\big)^{v+1}}{b_{(I,1),n_{0}^{\mu-1}}^{{\bf n}_{{\bf p},({\bf q},r)}^{\mu,r}}}.\label{eq:Pf21}
\end{align}
By substituting (\ref{eq:Pf11}, \ref{eq:Pf21}) into (\ref{eq:Pf1},
\ref{eq:Pf2}) respectively, one can write
\begin{align}
 & \sum_{\xi_{0}+\cdots\xi_{r-1}+\xi_{r}=v}{\rm ad}_{A}^{\xi_{r}}\circ{\rm ad}_{\hat{F}_{w}(H_{L_{r}})}\circ\cdots\circ{\rm ad}_{A}^{\xi_{1}}\circ{\rm ad}_{\hat{F}_{w}(H_{L_{1}})}\circ{\rm ad}_{A}^{\xi_{0}}\big(\hat{F}_{w}(D_{n_{0}}\tilde{L})\big)\nonumber \\
 & =\sum_{i}\sum_{\mu=1}^{r}\Theta_{L_{\mu-1},\cdots,L_{1},n_{0}}^{i}(b_{i}^{i+n_{0}^{\mu-1}-1})^{v+r+1-\mu}\sum_{I\in{\cal W}_{\mu,r}}\frac{(-1)^{\varepsilon(I)}\Psi_{I,n_{0}^{\mu-1}}^{{\bf L}_{{\bf p},{\bf q}}^{\mu,r}}}{b_{(i_{\mu}),n_{0}^{\mu-1}}^{{\bf n}_{{\bf p},{\bf q}}^{\mu,r}}\cdots b_{I,n_{0}^{\mu-1}}^{{\bf n}_{{\bf p},{\bf q}}^{\mu,r}}}\nonumber \\
 & \ \ \ \ \ \ \times E_{i-|{\bf n}_{{\bf p}}^{\mu,r}|,i+n_{0}^{\mu-1}+|{\bf n}_{{\bf q}}^{\mu,r}|}-\sum_{i}\sum_{\mu=1}^{r}\Theta_{L_{\mu-1},\cdots,L_{1},n_{0}}^{i}(b_{i}^{i+n_{0}^{\mu-1}-1})^{r-\mu}\nonumber \\
 & \ \ \ \ \ \ \ \ \ \ \ \ \ \ \ \sum_{I\in{\cal W}_{\mu,r}}\frac{(-1)^{\varepsilon(I)}\Psi_{I,n_{0}^{\mu-1}}^{{\bf L}_{{\bf p},{\bf q}}^{\mu,r}}}{b_{(i_{\mu}),n_{0}^{\mu-1}}^{{\bf n}_{{\bf p},{\bf q}}^{\mu,r}}\cdots b_{I,n_{0}^{\mu-1}}^{{\bf n}_{{\bf p},{\bf q}}^{\mu,r}}}\big(b_{i-|{\bf n}_{{\bf p}}^{\mu,r}|}^{i+n_{0}^{\mu-1}+|{\bf n}_{{\bf q}}^{\mu,r}|}\big)^{v+1}E_{i-|{\bf n}_{{\bf p}}^{\mu,r}|,i+n_{0}^{\mu-1}+|{\bf n}_{{\bf q}}^{\mu,r}|}\nonumber \\
 & =:I+J.\label{eq:J}
\end{align}
Now we define 
\begin{align}
\Theta_{L_{r},\cdots,L_{1},n_{0}}^{j}\triangleq & -\big(b_{j}^{j+n_{0}^{r}}\big)\sum_{\mu=1}^{r}\sum_{i,I:i-|{\bf n}_{{\bf p}}^{\mu,r}|=j}\Theta_{L_{\mu-1},\cdots,L_{1},n_{0}}^{i}\nonumber \\
 & \ \ \ \ \ \ \ \ \ \ \ \ \ \ \ \times(b_{i}^{i+n_{0}^{\mu-1}-1})^{r-\mu}\frac{(-1)^{\varepsilon(I)}\Psi_{I,n_{0}^{\mu-1}}^{{\bf L}_{{\bf p},{\bf q}}^{\mu,r}}}{b_{(i_{\mu}),n_{0}^{\mu-1}}^{{\bf n}_{{\bf p},{\bf q}}^{\mu,r}}\cdots b_{I,n_{0}^{\mu-1}}^{{\bf n}_{{\bf p},{\bf q}}^{\mu,r}}}.\label{eq:Ther+1}
\end{align}
It follows that 
\[
J=\sum_{j}\Theta_{L_{r},\cdots,L_{1},n_{0}}^{j}(b_{j}^{j+n_{0}^{r}-1})^{v}E_{j,j+n_{0}^{r}}.
\]
This is regarded as the term corresponding to $\mu=r+1$ in the induction
claim ${\bf P}(r)$ (cf. (\ref{eq:Pr}) and the $I$-term corresponds
to the summation $\sum_{\mu=1}^{r}$ in the claim (\ref{eq:Pr})).
It is clear from its expression that $\Theta_{L_{r},\cdots,L_{_{1}},n_{0}}^{j}$
is a meromorphic function in the complex variables $(b_{j},b_{j+1},\cdots)$
(before the one-dimensional reduction $b_{j}=b_{j}^{*}+w$).

\vspace{2mm}\noindent \textit{\uline{Step 3: Analyticity of the
symmetrisation of \mbox{$J$}.}}

\vspace{2mm} To finish the proof, it remains to show that the function
\[
\sum_{\sigma\in{\cal S}_{r}}\Theta_{L(\sigma(r)),\cdots,L(\sigma(1)),n_{0}}^{j}
\]
is (as a function of $w$) analytic near $w=0$. To this end, it is
convenient to use the original expression of $J$ defined in (\ref{eq:J}).
Equivalently, we aim at showing that the $\mathfrak{sl}_{N+1}(\mathbb{C})$-valued
function
\begin{align}
\sum_{\sigma\in{\cal S}_{r}}J= & -\sum_{i}\sum_{\mu=1}^{r}\sum_{\sigma\in{\cal S}_{r}}\sum_{I\in{\cal W}_{\mu,r}}\Theta_{L_{\sigma(\mu-1)},\cdots,L_{\sigma(1)},n_{0}}^{i}(b_{i}^{i+(n^{\sigma})_{0}^{\mu-1}-1})^{r-\mu}\nonumber \\
 & \ \ \ \ \ \ \times\frac{(-1)^{\varepsilon(I)}\Psi_{I,(n^{\sigma})_{0}^{\mu-1}}^{({\bf L}^{\sigma})_{{\bf p},{\bf q}}^{\mu,r}}}{b_{(i_{\mu}),(n^{\sigma})_{0}^{\mu-1}}^{({\bf n}^{\sigma})_{{\bf p},{\bf q}}^{\mu,r}}\cdots b_{I,(n^{\sigma})_{0}^{\mu-1}}^{({\bf n}^{\sigma})_{{\bf p},{\bf q}}^{\mu,r}}}\big(b_{i-|({\bf n}^{\sigma})_{{\bf p}}^{\mu,r}|}^{i+(n^{\sigma})_{0}^{\mu-1}+|({\bf n}^{\sigma})_{{\bf q}}^{\mu,r}|-1}\big)^{v+1}\nonumber \\
 & \ \ \ \ \ \ \times E_{i-|({\bf n}^{\sigma})_{{\bf p}}^{\mu,r}|,i+(n^{\sigma})_{0}^{\mu-1}+|({\bf n}^{\sigma})_{{\bf q}}^{\mu,r}|}\label{eq:MainAnaPf}
\end{align}
is analytic near $w=0$. Here ${\bf L}^{\sigma}\triangleq(L_{\sigma(1)},\cdots,L_{\sigma(r)})$
and respectively, we denote ${\bf n}^{\sigma}\triangleq(n_{\sigma(1)},\cdots,n_{\sigma(r)})$
and $(n^{\sigma})_{0}^{\mu-1}\triangleq n_{0}+n_{\sigma(1)}+\cdots+n_{\sigma(\mu-1)}.$ 

Let $i$ and $\mu$ be given fixed. The first crucial observation
is that any permutation $\sigma\in{\cal S}_{r}$ can be written as
\[
\sigma=\tau\circ(\zeta\otimes\theta),
\]
where $\tau$ is a $(\mu-1,r+1-\mu)$-shuffle, $\zeta$ is a permutation
over $\{1,\cdots,\mu-1\}$ and $\theta$ is a permutation over $\{\mu,\cdots,r\}$.
In this way, the summation over $\sigma$ and $I$ in (\ref{eq:MainAnaPf})
can be rewritten as 
\begin{align}
 & \sum_{\tau\in{\cal P}(\mu-1,r+1-\mu)}\sum_{\zeta\in{\cal S}_{\{1,\cdots,\mu-1\}}}\Theta_{L_{\tau\circ\zeta(\mu-1)},\cdots,L_{\tau\circ\zeta(1)},n_{0}}^{i}(b_{i}^{i+(n^{\tau\circ\zeta})_{0}^{\mu-1}-1})^{r-\mu}\nonumber \\
 & \ \ \ \ \ \ \times\sum_{s=0}^{r+1-\mu}(-1)^{s}\sum_{\theta\in{\cal S}_{\{\mu,\cdots,r\}}}\sum_{I\in{\cal W}_{\mu,r}:|{\bf p}|=s}\frac{\Psi_{I,(n^{\tau\circ\zeta})_{0}^{\mu-1}}^{({\bf L}^{\tau\circ\theta})_{{\bf p},{\bf q}}^{\mu,r}}}{b_{(i_{\mu}),(n^{\tau\circ\zeta})_{0}^{\mu-1}}^{({\bf n}^{\tau\circ\theta})_{{\bf p},{\bf q}}^{\mu,r}}\cdots b_{I,(n^{\tau\circ\zeta})_{0}^{\mu-1}}^{({\bf n}^{\tau\circ\theta})_{{\bf p},{\bf q}}^{\mu,r}}}\nonumber \\
 & \ \ \ \ \ \ \times\big(b_{i-|({\bf n}^{\tau\circ\theta})_{{\bf p}}^{\mu,r}|}^{i+(n^{\tau\circ\zeta})_{0}^{\mu-1}+|({\bf n}^{\tau\circ\theta})_{{\bf q}}^{\mu,r}|-1}\big)^{v+1}E_{i-|({\bf n}^{\tau\circ\theta})_{{\bf p}}^{\mu,r}|,i+(n^{\tau\circ\zeta})_{0}^{\mu-1}+|({\bf n}^{\tau\circ\theta})_{{\bf q}}^{\mu,r}|}.\label{eq:MainAnaPf2}
\end{align}
It is important to note that the expression in the last two lines
depends only on $\tau$ and is independent of $\zeta,$ since 
\[
(n^{\tau\circ\zeta})_{0}^{\mu-1}\triangleq n_{0}+n_{\tau(\zeta(1))}+\cdots+n_{\tau(\zeta(\mu-1))}=n_{0}+n_{\tau(1)}+\cdots+n_{\tau(\mu-1)}.
\]
Let us denote the summation over $(\theta,I)$ in (\ref{eq:MainAnaPf2})
by ${\cal T}_{s,\tau}$. It follows that 
\begin{align*}
(\ref{eq:MainAnaPf2})\ = & \sum_{\tau\in{\cal P}(\mu-1,r+1-\mu)}(b_{i}^{i+(n^{\tau})_{0}^{\mu-1}-1})^{r-\mu}\sum_{s=0}^{r+1-\mu}(-1)^{s}{\cal T}_{s,\tau}\\
 & \ \ \ \ \ \ \times\sum_{\zeta\in{\cal S}_{\{1,\cdots,\mu-1\}}}\Theta_{L_{\tau\circ\zeta(\mu-1)},\cdots,L_{\tau\circ\zeta(1)},n_{0}}^{i}.
\end{align*}
According to the induction hypothesis ${\bf P}(r-1)$, the function
\[
\sum_{\zeta\in{\cal S}_{\{1,\cdots,\mu-1\}}}\Theta_{L_{\tau\circ\zeta(\mu-1)},\cdots,L_{\tau\circ\zeta(1)},n_{0}}^{i}
\]
is (as a function of $w$) analytic near $w=0$ and vanishes at $w=0$. 

Now our task is reduced to showing that the function ${\cal T}_{s,\tau}$
is analytic near $w=0$ for any fixed $s$ and $\tau$. To ease notation,
we will temporarily write 
\[
\bar{n}_{0}\triangleq(n^{\tau})_{0}^{\mu-1},\ \bar{{\bf L}}\triangleq(L_{\tau(\mu)},\cdots,L_{\tau(r)}),\ \bar{{\bf n}}\triangleq(n_{\tau(\mu)},\cdots,n_{\tau(r)}).
\]
Given a permutation $\theta\in{\cal S}_{\{\mu,\cdots,r\}}$, we denote
\[
\bar{{\bf L}}^{\theta}\triangleq(\bar{L}_{\theta(\mu)},\cdots,\bar{L}_{\theta(r)})=(L_{\tau(\theta(\mu))},\cdots,L_{\tau(\theta(r))})
\]
and respectively,
\[
\bar{{\bf n}}^{\theta}\triangleq(\bar{n}_{\theta(\mu)},\cdots,\bar{n}_{\theta(r)})=(n_{\tau(\theta(\mu))},\cdots,n_{\tau(\theta(r))}).
\]
Under the above simplified notation, our target ${\cal T}_{s,\tau}$
is defined by 
\begin{align}
{\cal T}_{s,\tau}\triangleq & \sum_{\theta\in{\cal S}_{\{\mu,\cdots,r\}}}\sum_{I\in{\cal W}_{\mu,r}:|{\bf p}|=s}\frac{\Psi_{I,\bar{n}_{0}}^{(\bar{{\bf L}}^{\theta})_{{\bf p},{\bf q}}^{\mu,r}}}{b_{(i_{\mu}),\bar{n}_{0}}^{(\bar{{\bf n}}^{\theta})_{{\bf p},{\bf q}}^{\mu,r}}\cdots b_{I,\bar{n}_{0}}^{(\bar{{\bf n}}^{\theta})_{{\bf p},{\bf q}}^{\mu,r}}}\nonumber \\
 & \ \ \ \ \ \ \times\big(b_{i-|(\bar{{\bf n}}^{\theta})_{{\bf p}}^{\mu,r}|}^{i+\bar{n}_{0}+|(\bar{{\bf n}}^{\theta})_{{\bf q}}^{\mu,r}|-1}\big)^{v+1}E_{i-|(\bar{{\bf n}}^{\theta})_{{\bf p}}^{\mu,r}|,i+\bar{n}_{0}+|(\bar{{\bf n}}^{\theta})_{{\bf q}}^{\mu,r}|}\label{eq:Tstau}
\end{align}

The next crucial observation is that the set of words $I\in{\cal W}_{\mu,r}$
with precisely $s$ number of $-1$'s is in one-to-one correspondence
with the set ${\cal P}(s;\mu,r)$ of $(s,r+1-\mu-s)$-shuffles over
$\{\mu,\cdots,r\}$, i.e. permutations $\rho\in{\cal S}_{\{\mu,\cdots,r\}}$
satisfying
\[
\rho(\mu)<\rho(\mu+1)<\cdots<\rho(\mu+s-1),\ \rho(\mu+s)<\cdots<\rho(r).
\]
Indeed, given $\rho\in{\cal P}(s;\mu,r)$ one can define 
\[
I_{\rho}=(i_{\mu},\cdots,i_{r}):\ i_{\rho(j)}=\begin{cases}
-1, & j=\mu,\cdots,\mu+s-1;\\
1, & j=\mu+s,\cdots,r.
\end{cases}
\]
In this way, ${\bf p}=(\rho(\mu),\cdots,\rho(\mu+s-1))$ and ${\bf q}=(\rho(\mu+s),\cdots,\rho(r))$.
Using this correspondence, one can write

\begin{align}
{\cal T}_{s,\tau}= & \sum_{\theta\in{\cal S}_{\{\mu,\cdots,r\}}}\sum_{\rho\in{\cal P}(s;\mu,r)}\frac{\Psi_{I_{\rho},\bar{n}_{0}}^{(\bar{{\bf L}}^{\theta\circ\rho})_{\bar{{\bf p}},\bar{{\bf q}}}^{\mu,r}}}{b_{(i_{\mu}),\bar{n}_{0}}^{(\bar{{\bf n}}^{\theta\circ\rho})_{\bar{{\bf p}},\bar{{\bf q}}}^{\mu,r}}\cdots b_{I_{\rho},\bar{n}_{0}}^{(\bar{{\bf n}}^{\theta\circ\rho})_{\bar{{\bf p}},\bar{{\bf q}}}^{\mu,r}}}\nonumber \\
 & \ \ \ \ \ \ \times\big(b_{i-|(\bar{{\bf n}}^{\theta\circ\rho})_{\bar{{\bf p}}}^{\mu,r}|}^{i+\bar{n}_{0}+|(\bar{{\bf n}}^{\theta\circ\rho})_{\bar{{\bf q}}}^{\mu,r}|-1}\big)^{v+1}E_{i-|(\bar{{\bf n}}^{\theta\circ\rho})_{\bar{{\bf p}}}^{\mu,r}|,i+\bar{n}_{0}+|(\bar{{\bf n}}^{\theta\circ\rho})_{\bar{{\bf q}}}^{\mu,r}|}.\label{eq:KeyLemPf3}
\end{align}
Here $\bar{{\bf p}}\triangleq(\mu,\cdots,\mu+s-1)$ and $\bar{{\bf q}}\triangleq(\mu+s,\cdots,r)$
are the two canonical subwords which are both independent of $\rho$.
Recall from Remark \ref{rem:IDep} that the function $\Psi_{I_{\rho},\bar{n}_{0}}^{(\bar{{\bf L}}^{\theta\circ\rho})_{\bar{{\bf p}},\bar{{\bf q}}}^{\mu,r}}$
depends on $I_{\rho}$ only through the numbers of $\mp1$'s in it.
In particular, one has 
\[
\Psi_{I_{\rho},\bar{n}_{0}}^{(\bar{{\bf L}}^{\theta\circ\rho})_{\bar{{\bf p}},\bar{{\bf q}}}^{\mu,r}}=\Psi_{\bar{I},\bar{n}_{0}}^{(\bar{{\bf L}}^{\theta\circ\rho})_{\bar{{\bf p}},\bar{{\bf q}}}^{\mu,r}}
\]
for all $\rho\in{\cal P}(s;\mu,r)$ where 
\[
\bar{I}=(\underbrace{-1,\cdots,-1}_{s},\underbrace{1,\cdots,1}_{r+1-\mu-s}).
\]
With this observation in mind, the point is that one can now exchange
the summations in (\ref{eq:KeyLemPf3}), and after doing so, since
the $\theta$-summation is a full symmetrisation one could just replace
$\theta\circ\rho$ by $\theta$ (ignoring the role of \textcolor{black}{$\rho$}).
In other words, one has 
\begin{align*}
{\cal T}_{s,\tau}= & \sum_{\rho\in{\cal P}(s;\mu,r)}\sum_{\theta\in{\cal S}_{\{\mu,\cdots,r\}}}\frac{\Psi_{\bar{I},\bar{n}_{0}}^{(\bar{{\bf L}}^{\theta})_{\bar{{\bf p}},\bar{{\bf q}}}^{\mu,r}}}{b_{(i_{\mu}),\bar{n}_{0}}^{(\bar{{\bf n}}^{\theta})_{\bar{{\bf p}},\bar{{\bf q}}}^{\mu,r}}\cdots b_{I_{\rho},\bar{n}_{0}}^{(\bar{{\bf n}}^{\theta})_{\bar{{\bf p}},\bar{{\bf q}}}^{\mu,r}}}\\
 & \ \ \ \ \ \ \times\big(b_{i-|(\bar{{\bf n}}^{\theta})_{\bar{{\bf p}}}^{\mu,r}|}^{i+\bar{n}_{0}+|(\bar{{\bf n}}^{\theta})_{\bar{{\bf q}}}^{\mu,r}|-1}\big)^{v+1}E_{i-|(\bar{{\bf n}}^{\theta})_{\bar{{\bf p}}}^{\mu,r}|,i+\bar{n}_{0}+|(\bar{{\bf n}}^{\theta})_{\bar{{\bf q}}}^{\mu,r}|}\\
= & \sum_{\theta\in{\cal S}_{\{\mu,\cdots,r\}}}\Psi_{\bar{I},\bar{n}_{0}}^{(\bar{{\bf L}}^{\theta})_{\bar{{\bf p}},\bar{{\bf q}}}^{\mu,r}}\cdot\big(b_{i-|(\bar{{\bf n}}^{\theta})_{\bar{{\bf p}}}^{\mu,r}|}^{i+\bar{n}_{0}+|(\bar{{\bf n}}^{\theta})_{\bar{{\bf q}}}^{\mu,r}|-1}\big)^{v+1}\\
 & \ \ \ \ \ \ \times\big[\sum_{\rho\in{\cal P}(s;\mu,r)}\frac{1}{b_{(i_{\mu}),\bar{n}_{0}}^{(\bar{{\bf n}}^{\theta})_{\bar{{\bf p}},\bar{{\bf q}}}^{\mu,r}}\cdots b_{I_{\rho},\bar{n}_{0}}^{(\bar{{\bf n}}^{\theta})_{\bar{{\bf p}},\bar{{\bf q}}}^{\mu,r}}}\big]E_{i-|(\bar{{\bf n}}^{\theta})_{\bar{{\bf p}}}^{\mu,r}|,i+\bar{n}_{0}+|(\bar{{\bf n}}^{\theta})_{\bar{{\bf q}}}^{\mu,r}|}.
\end{align*}
where we changed the order of summation back to reach the last equality. 

It remains to show that for every fixed $\theta,$ the summation
\[
{\cal T}_{s,\tau;\theta}\triangleq\sum_{\rho\in{\cal P}(s;\mu,r)}\frac{1}{b_{(i_{\mu}),\bar{n}_{0}}^{(\bar{{\bf n}}^{\theta})_{\bar{{\bf p}},\bar{{\bf q}}}^{\mu,r}}\cdots b_{I_{\rho},\bar{n}_{0}}^{(\bar{{\bf n}}^{\theta})_{\bar{{\bf p}},\bar{{\bf q}}}^{\mu,r}}}
\]
defines an analytic function near $w=0$. The third crucial observation
is that ${\cal T}_{s,\tau;\theta}$ is precisely in the form of Lemma
\ref{lem:ComLem}. To see this point, let us shift the indices $\{\mu,\cdots,r\}$
back to the standard form $\{1,\cdots,R\}$ ($R\triangleq r+1-\mu$).
Namely, we denote 
\[
\hat{{\bf n}}\triangleq(\hat{n}_{1},\cdots,\hat{n}_{R}),\ \hat{n}_{i}\triangleq\bar{n}_{\theta(\mu+i-1)};\ \hat{{\bf p}}\triangleq(1,\cdots,s),\hat{{\bf q}}\triangleq(s+1,\cdots,R).
\]
In addition, any shuffle $\rho\in{\cal P}(s;\mu,r)$ corresponds to
some $\hat{\rho}\in{\cal P}(s,R-s)$ in the obvious way (and also
at the level of words $I\leftrightarrow\hat{I}_{\rho}$). As a result,
one can rewrite 
\[
{\cal T}_{s,\tau;\theta}=\sum_{\hat{\rho}\in{\cal P}(s,R-s)}\frac{1}{b_{(\hat{i}_{1}),\bar{n}_{0}}^{\hat{{\bf n}}_{\hat{{\bf p}},\hat{{\bf q}}}^{1,R}}\cdots b_{\hat{I}_{\rho},\bar{n}_{0}}^{\hat{{\bf n}}{}_{\hat{{\bf p}},\hat{{\bf q}}}^{1,R}}}
\]
We define the sequence of numbers $\{c_{j}\}_{1\leqslant j\leqslant R+1}$
in the following way:

\[
\begin{array}{cccccccc}
b_{i-\sum_{j=1}^{s}\hat{n}_{j}}^{i-\sum_{j=1}^{s-1}\hat{n}_{j}-1} & \cdots & b_{i-\hat{n}_{1}-\hat{n}_{2}}^{i-\hat{n}_{1}-1} & b_{i-\hat{n}_{1}}^{i-1} & b_{i}^{i+\bar{n_{0}}-1} & b_{i+\bar{n}_{0}}^{i+\bar{n}_{0}+\hat{n}_{s+1}-1} & \cdots & b_{i+\bar{n}_{0}+\sum_{j=s+1}^{R-1}\hat{n}_{j}}^{i+\bar{n}_{0}+\sum_{j=s+1}^{R}\hat{n}_{j}-1}\\
c_{1} & \cdots & c_{s-1} & c_{s} & c_{s+1} & c_{s+2} & \cdots & c_{R+1}.
\end{array}
\]
Note that there is a one-to-one correspondence between ${\cal P}(s,R-s)$
and ${\cal T}(s,r-s)$ (cf. Lemma \ref{lem:ComLem} for its definition)
given by 
\begin{align*}
\hat{\rho}\mapsto\eta: & \ \big(\eta(1),\eta(2),\cdots,\eta(s),\eta(s+1),\eta(s+2),\cdots,\eta(R+1)\big)\\
 & \ \ \ \ \triangleq\big((\hat{\rho}(s)+1,\hat{\rho}(s-1)+1,\cdots,\hat{\rho}(1)+1,1,\hat{\rho}(s+1)+1,\cdots,\hat{\rho}(R)+1).
\end{align*}
It is then readily checked that 
\[
b_{(\hat{i}_{1}),\bar{n}_{0}}^{\hat{{\bf n}}_{\hat{{\bf p}},\hat{{\bf q}}}^{1,R}}\cdots b_{\hat{I},\bar{n}_{0}}^{\hat{{\bf n}}{}_{\hat{{\bf p}},\hat{{\bf q}}}^{1,R}}=c_{\eta^{-1}(2)}^{\eta^{-1}(2)}c_{\eta^{-1}(2)}^{\eta^{-1}(3)}\cdots c_{\eta^{-1}(2)}^{\eta^{-1}(R+1)},
\]
where the right hand side is defined by (\ref{eq:CSum}). According
to Lemma \ref{lem:ComLem}, one concludes that 
\begin{equation}
\sum_{\hat{\rho}\in{\cal P}(s,R-s)}\frac{1}{b_{(\hat{i}_{1}),\bar{n}_{0}}^{\hat{{\bf n}}_{\hat{{\bf p}},\hat{{\bf q}}}^{1,R}}\cdots b_{\hat{I},\bar{n}_{0}}^{\hat{{\bf n}}{}_{\hat{{\bf p}},\hat{{\bf q}}}^{1,R}}}=\big(\prod_{k=1}^{s}c_{k}^{s}\big)^{-1}\big(\prod_{k=s+2}^{R+1}c_{s+2}^{k}\big)^{-1}.\label{eq:KeyLemPf4}
\end{equation}
By the definition of $c_{j}$, it is apparent that both $c_{k}^{s}$
and $c_{s+2}^{k}$ are certain consecutive sums of the $b_{j}$'s.
In particular, (\ref{eq:KeyLemPf4}) is of the form $\frac{1}{b_{s_{1}}^{t_{1}}\cdots b_{s_{m}}^{t_{m}}}$
for suitable $s_{1}\leqslant t_{1},\cdots,s_{m}\leqslant t_{m}$.
Since the base point $(b_{1}^{*},\cdots,b_{N}^{*})$ is assumed to
be non-degenerate, it follows that the function ${\cal T}_{s,\tau;\theta}$
(as a function of $w$) is analytic near $w=0.$

The proof of the induction step is now complete.

\subsubsection*{Completing the proof of Lemma \ref{lem:AnaLem}}

We have obtained from (\ref{eq:MainSeries}) and Lemma \ref{lem:MainSeriesKeyLem}
that 
\begin{align}
 & \hat{F}_{w}\big(H_{K_{r}}\partial_{{\rm e}_{1}}\hat{\otimes}_{{\rm s}}\cdots\hat{\otimes}_{{\rm s}}H_{K_{1}}\partial_{{\rm e}_{1}}(H_{1}(n_{0}))\big)\nonumber \\
 & =\sum_{l}\frac{B_{l}}{l!}\sum_{i}\sum_{\mu=1}^{r+1}\sum_{\sigma\in{\cal S}_{r}}\Theta_{K_{\sigma(\mu-1)},\cdots,K_{\sigma(1)},n_{0}}^{i}\big(b_{i}^{i+(n^{\sigma})_{0}^{\mu-1}-1}\big)^{l+1-\mu}\nonumber \\
 & \ \ \ \ \ \ \sum_{I\in{\cal W}_{\mu,r}}\frac{(-1)^{\varepsilon(I)}\Psi_{I,(n^{\sigma})_{0}^{\mu-1}}^{({\bf K}^{\sigma})_{{\bf p},{\bf q}}^{\mu,r}}}{b_{(i_{\mu}),(n^{\sigma})_{0}^{\mu-1}}^{({\bf n}^{\sigma})_{{\bf p},{\bf q}}^{\mu,r}}\cdots b_{I,(n^{\sigma})_{0}^{\mu-1}}^{({\bf n}^{\sigma})_{{\bf p},{\bf q}}^{\mu,r}}}E_{i-|({\bf n}^{\sigma})_{{\bf p}}^{\mu,r}|,i+(n^{\sigma})_{0}^{\mu-1}+|({\bf n}^{\sigma})_{{\bf q}}^{\mu,r}|.}\label{eq:Pf5}
\end{align}
The argument for proving the analyticity of (\ref{eq:Pf5}) near $w=0$
is identical to the analysis in Step 3 of the proof of Lemma \ref{lem:MainSeriesKeyLem}.
The key idea, the same as before, is to write any permutation $\sigma\in{\cal S}_{r}$
as $\sigma=\tau\circ(\zeta\otimes\theta)$ where $\tau\in{\cal P}(\mu-1,r+1-\mu)$,
$\zeta\in{\cal S}_{\mu-1}$ and $\theta\in{\cal S}_{\{\mu,\cdots,r\}}$.
This allows one to split the $\sigma$-action in (\ref{lem:MainSeriesKeyLem})
into permutations over the $\{1,\cdots,\mu-1\}$ and $\{\mu,\cdots,r\}$
parts separately. By exactly the same analysis as in Step 3 of the
proof of Lemma \ref{lem:MainSeriesKeyLem}, one finds that 
\begin{align}
 & \hat{F}_{w}\big(H_{K_{r}}\partial_{{\rm e}_{1}}\hat{\otimes}_{{\rm s}}\cdots\hat{\otimes}_{{\rm s}}H_{K_{1}}\partial_{{\rm e}_{1}}(H_{1}(n_{0}))\big)\nonumber \\
 & =\sum_{i}\sum_{\mu=1}^{r+1}\sum_{\tau\in{\cal P}(\mu-1,r+1-\mu)}\big(b_{i}^{i+(n^{\tau})_{0}^{\mu}-1}\big)^{1-\mu}\phi\big(b_{i}^{i+(n^{\tau})_{0}^{\mu-1}}\big)\nonumber \\
 & \ \ \ \ \ \ \times\big(\sum_{\zeta\in{\cal S}_{\mu-1}}\Theta_{K_{\tau\circ\zeta(\mu-1)},\cdots,K_{\tau\circ\zeta(1),n_{0}}}^{i}\big)\cdot\Xi_{K_{\tau(\mu)},\cdots,K_{\tau(1)},(n^{\tau})_{0}^{\mu-1}}^{i}.\label{eq:MainAnaPf3}
\end{align}
Here 
\begin{align*}
\Xi_{K_{\tau(\mu)},\cdots,K_{\tau(r)},(n^{\tau})_{0}^{\mu-1}}^{i}\triangleq & \sum_{\theta\in{\cal S}_{\{\mu,\cdots,r\}}}\sum_{I\in{\cal W}_{\mu,r}:|{\bf p}|=s}\frac{(-1)^{\varepsilon(I)}\Psi_{I,(n^{\tau})_{0}^{\mu-1}}^{({\bf K}^{\tau\circ\theta})_{{\bf p},{\bf q}}^{\mu,r}}}{b_{(i_{\mu}),(n^{\tau})_{0}^{\mu-1}}^{({\bf n}^{\tau\circ\theta})_{{\bf p},{\bf q}}^{\mu,r}}\cdots b_{I,(n^{\tau})_{0}^{\mu-1}}^{({\bf n}^{\tau\circ\theta})_{{\bf p},{\bf q}}^{\mu,r}}}\\
 & \ \ \ \ \ \ \ \ \ E_{i-|({\bf n}^{\tau\circ\theta})_{{\bf p}}^{\mu,r}|,i+(n^{\tau})_{0}^{\mu-1}+|({\bf n}^{\sigma})_{{\bf q}}^{\mu,r}|}
\end{align*}
for $1\leqslant\mu\leqslant r$ and as a convention 
\[
\Xi_{K_{\tau(\mu)},\cdots,K_{\tau(r)},(n^{\tau})_{0}^{\mu-1}}^{i}\triangleq E_{i,i+n_{0}^{r}}
\]
if $\mu=r+1.$ The function $\Xi_{K_{\tau(\mu)},\cdots,K_{\tau(r)},(n^{\tau})_{0}^{\mu-1}}^{i}$
is an $\mathfrak{sl}_{N+1}(\mathbb{C})$-valued analytic function
near $w=0$ for the same reason leading to the analyticity of ${\cal T}_{s,\tau}$
as before (cf. (\ref{eq:Tstau})). The analyticity of (\ref{eq:MainAnaPf3})
near $w=0$ thus follows from the facts that 

\vspace{2mm}\noindent (i) $w=0$ is a simple pole for the function
\[
\phi\big(b_{i}^{i+(n^{\tau})_{0}^{\mu-1}}\big)=\phi(b_{i}^{*}+\cdots+b_{i+(n^{\tau})_{0}^{\mu-1}-1}^{*}+(n^{\tau})_{0}^{\mu-1}w);
\]
(ii) the function 
\[
\sum_{\zeta\in{\cal S}_{\mu-1}}\Theta_{K_{\tau\circ\zeta(\mu-1)},\cdots,K_{\tau\circ\zeta(1),n_{0}}}^{i}
\]
vanishes at $w=0.$ 

\vspace{2mm}\noindent This show that $\hat{F}_{w}\big(H_{K_{r}}\partial_{{\rm e}_{1}}\hat{\otimes}_{{\rm s}}\cdots\hat{\otimes}_{{\rm s}}H_{K_{1}}\partial_{{\rm e}_{1}}(H_{1}(n_{0}))\big)$
is an $\mathfrak{sl}_{N+1}(\mathbb{C})$-valued analytic function
near $w=0$. The proof of Lemma \ref{lem:AnaLem} is now complete.

\section*{Acknowledgement}

XG gratefully acknowledges the support from ARC grant DE210101352. SW gratefully acknowledges the support from Melbourne Research Scholarship. 

\bibliographystyle{plainurl}
\bibliography{bibliography}

\begin{thebibliography}{10}

\bibitem{Bau04}
F.~Baudoin.
\newblock {\em An introduction to the geometry of stochastic flows}.
\newblock Imperial College Press, 2004.

\bibitem{BG23}
H.~Boedihardjo and X.~Geng.
\newblock $\mathrm{SL}_2(\mathbb{R})$-developments and signature asymptotics
  for planar paths with bounded variation.
\newblock {\em Rev. Mat. Iberoamericana}, 293:720--737, 2016.

\bibitem{BGL16}
H.~Boedihardjo, X.~Geng, T.~Lyons, and D.~Yang.
\newblock The signature of a rough path: uniqueness.
\newblock {\em Adv. Math.}, 293:720--737, 2016.

\bibitem{BGS20}
H.~Boedihardjo, X.~Geng, and N.~Souris.
\newblock Path developments and tail asymptotics of signature for pure rough
  paths.
\newblock {\em Adv. Math.}, 364, 2020.

\bibitem{BNQ14}
H.~Boedihardjo, H.~Ni, and Z.~Qian.
\newblock Uniqueness of signature for simple curves.
\newblock {\em J. Funct. Anal.}, 267(6):1778--1806, 2014.

\bibitem{CT24}
T.~Cass and W.F. Turner.
\newblock Free probability, path developments and signature kernels as
  universal scaling limits.
\newblock {\em ArXiv preprint}, 2024.

\bibitem{Che57}
K.T. Chen.
\newblock Integration of paths, geometric invariants and a generalized
  baker-hausdorff formula.
\newblock {\em Ann. of Math.}, 65:163--178, 1957.

\bibitem{Che58}
K.T. Chen.
\newblock Integration of paths-a faithful representation of paths by
  noncommutative formal power series.
\newblock {\em Trans. Amer. Math. Soc.}, 89:395--407, 1958.

\bibitem{Che73}
K.T. Chen.
\newblock Iterated integrals of differential forms and loop space homology.
\newblock {\em Ann. of Math.}, 97(2):217--246, 1973.

\bibitem{CL16}
I.~Chevyrev and T.~Lyons.
\newblock Characteristic functions of measures on geometric rough paths.
\newblock {\em Ann. Probab.}, 44(6):4049--4082, 2016.

\bibitem{CGS23}
C.~Cuchiero, G.~Gazzani, and S.~Svaluto-Ferro.
\newblock Signature-based models: theory and calibration.
\newblock {\em SIAM J. Financial Math.}, 14(3):910--957, 2023.

\bibitem{FLM23}
A.~Fermanian, T.~Lyons, J.~Morrill, and C.~Salvi.
\newblock New directions in the applications of rough path theory.
\newblock {\em IEEE BITS the Information Theory Magazine}, pages 1--18, 2023.

\bibitem{FL15}
G.~Flint and T.~Lyons.
\newblock Pathwise approximation of sdes by coupling piecewise abelian rough
  paths.
\newblock {\em ArXiv preprint}, 2015.

\bibitem{FV10}
P.K. Friz and N.~Victoir.
\newblock {\em Multidimensional stochastic processes as rough paths}.
\newblock Cambridge University Press, 2010.

\bibitem{Gen21}
X.~Geng.
\newblock {\em An introduction to the theory of rough paths}.
\newblock Unpublished Online Lecture Notes, 2021.

\bibitem{HL10}
B.~Hambly and T.~Lyons.
\newblock Uniqueness for the signature of a path of bounded variation and the
  reduced path group.
\newblock {\em Ann. of Math.}, 171(1):109--167, 2010.

\bibitem{HH10}
K.~Hara and M.~Hino.
\newblock Fractional order taylor's series and the neo-classical inequality.
\newblock {\em Bull. Lond. Math. Soc.}, 42:467--477, 2010.

\bibitem{KS87}
S.~Kusuoka and D.~Stroock.
\newblock Applications of the malliavin calculus, part iii.
\newblock {\em J. Fac. Sci. Univ. Tokyo}, 34:391--442, 1987.

\bibitem{LCL07}
T.~Lyons, M.~Caruana, and T.~L\'evy.
\newblock {\em Differential equations driven by rough paths}.
\newblock Springer, 2007.

\bibitem{LS06}
T.~Lyons and N.~Sidorova.
\newblock On the radius of convergence of the logarithmic signature.
\newblock {\em Ill. J. Math.}, 50(4):763--790, 2006.

\bibitem{LV04}
T.~Lyons and N.~Victoir.
\newblock Cubature on wiener space.
\newblock {\em Proceedings of the Royal Society of London. Series A:
  Mathematical, Physical and Engineering Sciences}, 460(2041):169--198, 2004.

\bibitem{LX15}
T.~Lyons and X.~Weijun.
\newblock Hyperbolic development and the inversion of signature.
\newblock {\em J. Funct. Anal.}, 272(7):2933--2955, 2015.

\bibitem{Lyo98}
T.J. Lyons.
\newblock Differential equations driven by rough signals.
\newblock {\em Rev. Mat. Iberoamericana}, 14:215--310, 1998.

\bibitem{NSW23}
H.~Ni, L.~Szpruch, M.~Wiese, S.~Liao, and B.~Xiao.
\newblock Conditional sig- wasserstein gans for time series generation.
\newblock {\em Math. Finance}, 2023.

\bibitem{Nua06}
D.~Nualart.
\newblock {\em The Malliavin calculus and related topics}.
\newblock Springer-Verlag, 2006.

\bibitem{Reu93}
C.~Reutenauer.
\newblock {\em Free Lie algebras}.
\newblock Clarendon Press, 1993.

\bibitem{SCF21}
C.~Salvi, T.~Cass, J.~Foster, T.~Lyons, and W.~Yang.
\newblock The signature kernel is the solution of a goursat pde.
\newblock {\em SIAM J. Math. Data Sci.}, 3(3):873--899, 2021.

\bibitem{Str87}
R.S. Strichartz.
\newblock The campbell-baker-hausdorff-dynkin formula and solutions of
  differential equations.
\newblock {\em J. Funct. Anal.}, 72:320--345, 1987.

\bibitem{WMQ24}
B.~Walker, A.D. McLeod, T.~Qin, Y.~Cheng, H.~Li, and T.~Lyons.
\newblock Log neural controlled differential equations: the lie brackets make a
  difference.
\newblock {\em ArXiv Preprint}, 2024.

\end{thebibliography}

\end{document}